\documentclass[11pt]{amsart}
\usepackage{amsmath,amssymb,amsthm,mathtools}
\usepackage{enumitem}
\usepackage{float}
\usepackage{needspace}
\usepackage{tikz}
\usetikzlibrary{arrows.meta}
\usepackage[hidelinks]{hyperref}
\usepackage{microtype}
\newtheorem{theorem}{Theorem}[section]
\newtheorem{proposition}[theorem]{Proposition}
\newtheorem{lemma}[theorem]{Lemma}
\newtheorem{corollary}[theorem]{Corollary}
\newtheorem{remark}[theorem]{Remark}
\numberwithin{equation}{section}
\newcommand{\Ric}{\operatorname{Ric}}
\newcommand{\Scal}{\operatorname{Scal}}
\newcommand{\II}{\mathrm{II}}
\newcommand{\dist}{\operatorname{dist}}
\newcommand{\Sph}{\mathbb S}

\title[Counterexample to Yau's scalar-curvature conjecture]
{A Counterexample to Yau's Conjectured\\
Asymptotic Scalar-Curvature Integral Bound}
\author{Tianze Hao}
\address{Beijing International Center for Mathematical Research, Peking University}
\email{haotz@pku.edu.cn}
\author{Jintian Zhu}
\address{Institute for Theoretical Sciences, Westlake University}
\email{zhujintian@westlake.edu.cn}
\date{August 21, 2026}

\begin{document}
\begin{abstract}
A complete one-ended three-manifold with strictly positive
Ricci curvature is constructed such that
\[
 \limsup_{R\to\infty}\frac1R\int_{B(p,R)}\Scal\,dV=+\infty.
\]
The construction combines an explicit toric lens carrying a large
scalar-curvature integral with a three-dimensional angular pair of pants and
an adaptive sequence of hybrid blocks.
\end{abstract}
\maketitle
\raggedbottom

\section{Introduction}

The Cohn--Vossen inequality bounds the total Gaussian curvature of a
complete noncompact surface \cite{CohnVossen}.  Motivated by this result,
Yau asked whether every complete noncompact $n$-manifold with
$\Ric\geq0$ satisfies
\begin{equation}\label{eq:yau-question}
 \limsup_{R\to\infty}R^{2k-n}
 \int_{B(p,R)}\sigma_k(\Ric)\,dV<\infty,
 \qquad 1\leq k\leq n,
\end{equation}
where $\sigma_k(\Ric)$ is the $k$th elementary symmetric function of the
Ricci eigenvalues \cite[Problem~9]{YauOpenProblems}.  The case $k=1$ is
the scalar-curvature problem.  In dimension three it asks whether
\begin{equation}\label{eq:yau-three-dimensional}
 \limsup_{R\to\infty}\frac1R
 \int_{B(p,R)}\Scal\,dV<\infty.
\end{equation}
Yang constructed counterexamples to the higher-order assertions in
\eqref{eq:yau-question}, but did not settle the scalar-curvature case
\cite{Yang}.

Several positive results were known under additional hypotheses.
Shi and Yau proved a scale-invariant scalar-curvature estimate for complete
noncompact K\"ahler manifolds of complex dimension at least three with bounded
pinched nonnegative holomorphic bisectional curvature \cite{ShiYau}.
Petrunin proved the local estimate
$\int_{B(p,1)}\Scal\,dV\leq C(n)$ under $\sec\geq-1$; by rescaling, this
yields the scalar estimate in \eqref{eq:yau-question} under
$\sec\geq0$ \cite{Petrunin}.  In dimension three, Xu established a
Green-function-weighted estimate on nonparabolic manifolds
\cite{XuNonparabolic}.  Subsequent comparison results of
Munteanu--Wang and Zhu developed this approach
\cite{MunteanuWangComparison,ZhuComparison}.  Zhu proved the unweighted
estimate, with constant $20\pi$, for manifolds with a pole
\cite{ZhuPole}, and Xu later proved in the pole case the exact limit
$8\pi(1-\operatorname{AVR})$ \cite{XuPole}.  Liu obtained related K\"ahler
results \cite{LiuKahler}.
Munteanu and Wang proved the sharp upper bound $8\pi$ when the scalar
curvature is bounded above and below by positive constants.  They conjectured
that these extra bounds can be removed; by the splitting theorem the
multi-ended case reduces to a product, so the substantive unresolved case in
their discussion was the one-ended case \cite{MunteanuWang}.  Further
weighted or special-case results were obtained by Chen, Xu and Zhang,
Deng, and Ma \cite{ChenXuZhang,Deng,Ma}.

Throughout the paper, $B(p,R)$ denotes the open metric ball.  For a
measurable set $E$ we call $\int_E\Scal_g\,dV_g$ its scalar action.  We use
the convention
\[
 \operatorname{AVR}(M,g)
 :=\lim_{R\to\infty}\frac{\operatorname{Vol}_g B(p,R)}{\omega_3R^3},
 \qquad \omega_3=\frac{4\pi}{3},
\]
whenever $\Ric_g\geq0$; Bishop--Gromov monotonicity gives the limit and
shows that it is independent of $p$.

The following theorem gives a negative answer to
\eqref{eq:yau-three-dimensional}, even under strict positivity of the
Ricci tensor.  It also disproves the unrestricted one-ended conjecture of
Munteanu and Wang.

\begin{theorem}\label{thm:main}
There exists a complete one-ended three-manifold $(M^3,g)$, diffeomorphic
to $\mathbb R^3$, with $\Ric_g>0$ such that, for every $p\in M$,
\begin{equation}\label{eq:limsup}
 \limsup_{R\to\infty}\frac1R
 \int_{B(p,R)}\Scal_g\,dV_g=+\infty.
\end{equation}
\end{theorem}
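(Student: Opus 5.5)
The plan is to build $(M^3,g)$ as an infinite concatenation of positively Ricci-curved pieces glued along nearly round or nearly flat interfaces. Each piece contributes a scalar action much larger than its contribution to the distance from a base point. The abstract names the ingredients, and I would use them as follows.

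First, construct a \emph{toric lens}. This is a compact region, a thickened collar of a torus degenerating at its ends, with a metric of the form $g=dr^2+a(r)^2d\theta^2+b(r)^2d\phi^2$ (possibly warped over a short interval). The warping functions $a,b$ are chosen explicitly so that $\Ric>0$ and the diameter is $O(1)$. The point is that $\int\Scal\,dV$ can be made as large as desired: $\Scal$ is dominated by terms like $-2a''/a-2b''/b-2a'b'/(ab)$. Concavity concentrated on small scales produces large integrals, while the volume and diameter stay bounded, so the action is not controlled by a Gauss--Bonnet type quantity. I would verify by direct computation that for a parameter $\lambda\to\infty$ the action is $\ge c\lambda$ while the diameter stays $\le C$.

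Second, construct the \emph{angular pair of pants}. This is a three-dimensional region with $\Ric>0$ whose boundary consists of one outer component, a portion of a large cone or sphere, and an inner component, a slot matching the lens boundary. Inserting it lets the lens be attached to the main end without creating a second end, so the manifold stays one-ended and diffeomorphic to $\mathbb R^3$. Gluing would be done by the standard smoothing of $C^0$ metrics whose boundary second fundamental forms satisfy $\II_{\text{inner}}\ge \II_{\text{outer}}$ strictly. I would arrange strict inequalities at every interface so that $\Ric>0$ survives smoothing with arbitrarily small change in action and distances.

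Third, assemble an adaptive sequence of \emph{hybrid blocks}. Block $k$ consists of a conical annulus of slowly decreasing cone angle (giving $\Ric>0$ and completeness), a pair of pants, and a lens with parameter $\lambda_k$. The $k$-th block sits at distance $R_k$ from $p$. I would choose $\lambda_k$ inductively so that the lens action exceeds $kR_{k+1}$. This requires the lens, rescaled to fit inside the available angular slot at radius $R_k$, to still carry action $\gtrsim\lambda_k\cdot(\text{scale})$; since scalar action scales linearly in length, this is consistent with \eqref{eq:limsup}. Because $\Scal>0$ everywhere, the action of $B(p,R_{k+1})$ is at least the $k$-th lens action, giving ratio $\ge k$. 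The result holds for every $p$ because changing $p$ shifts radii by a bounded amount. The cone angles must decrease so that the total angle deficit stays below $2\pi$ (otherwise the end closes up). This is why the lens must gain action \emph{without} consuming angle: its large action must come from internal concentration, not from the Gauss--Bonnet budget.

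The main obstacle is exactly this budget issue. On an end with $\Ric\ge0$, the naive Cohn--Vossen heuristic suggests the action grows at most linearly. The lens has to exploit genuinely three-dimensional curvature, the cross terms $a'b'/(ab)$ in a torus-fibred region, with Ricci positivity checked in all three eigen-directions, including the mixed one. Compatibility of its boundary second fundamental form with the pants slot is what forces the ``adaptive'' choice of parameters. I would attack it first by an explicit ODE choice for $a,b$ and a perturbative Ricci check.
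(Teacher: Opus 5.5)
\textbf{There is a genuine gap.} Your outer bookkeeping agrees with the paper's: iterate annular blocks, use $\Scal>0$ so that the ball integral dominates the action of a protected set, use the linear scaling of scalar action under homotheties, and absorb a change of basepoint. The gap is in the two steps that actually carry the theorem. You assert them rather than construct them, and your heuristic for the first one points in the wrong direction.

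Large action at bounded access distance does not come from ``concavity concentrated on small scales'' with otherwise bounded geometry; it is paid for by collapse. In the doubly warped setting, $\Ric(e_\theta,e_\theta)\,af=-(fa')'$ is a pure flux. The action is therefore governed by the flux of $fa'$ through the collapsed $\theta$-bolt, not by how sharply curvature concentrates. The paper cuts a three-ball $\{|\phi|\le\Psi(r)\}$ out of the toric cap--core metric using two constant-geodesic-curvature graphs. Its $\theta$-Ricci action is $4\pi f(0)a'(0)\Delta_{\varepsilon,U}$ up to a negligible error. Normalizing the boundary meridian to $\pi\cos r_{\mathrm W}$ multiplies this by $D_{\varepsilon,U}\asymp_U\varepsilon^{-1}$, so the action is $\gtrsim 1/w$, where $w\asymp\varepsilon$ is the waist of the boundary sphere. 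Geometrically, the bolt has length $\asymp 1/w$ but stays at bounded distance from a boundary whose $\theta$-circles shrink to zero. So the boundary of your lens must degenerate as $\lambda\to\infty$ (compare Proposition~\ref{prop:constructed-collapse}). A piece with uniformly noncollapsed geometry, bounded diameter and unbounded action would be a counterexample to Naber's local conjecture, which is open.

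Consequently the gluing is where the real work lies. Inserting the lens into a slot of a conical annulus with a cone-angle budget does not address it; that two-dimensional budget plays no role, and the paper's end has $\operatorname{AVR}=0$ and is not conical. A sphere of waist $w\to0$ has to be matched isometrically to an inner boundary of a Ricci-positive region with $S^{\mathrm{pkt}}-S^{\mathrm{hole}}>0$. The paper does this in three steps:
\begin{enumerate}
\item It reconstructs the ambient $\theta$-profile $\mathcal R$ from the lens boundary, so the angular pair of pants is itself thin.
\item It proves $\mathcal R''<0$ by comparing the normal angle $\alpha$ with the normalized arclength $\chi$.
\item It checks the seam through the windows $\mathfrak m=2\kappa\ell/\pi>\cos r_{\mathrm W}$ and $\tan\alpha/\tan\chi>\cos r_{\mathrm W}$. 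These hold because $\mathfrak m=1-U^3/(8C_\varepsilon)+O(U^6)$ is close to $1$.
\end{enumerate}
The degeneration must then be undone on both sides. A companion neck, whose round end is the block's input, joins the second thin inner boundary to a round sphere of radius $\rho/\lambda$ with $\lambda>\rho/w$. A path cylinder rounds off the thin outer boundary. The whole block is then scaled by $c_j=R_{j-1}\lambda_j/\rho_j$.

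Finally, tune $Q_j$ against $R_j^{\mathrm{act}}=D_{j-1}+1+D_{\mathrm{acc}}c_j$, using that $D_{\mathrm{acc}}$ and the lower bound $c_j>2R_{j-1}/\rho_j$ do not depend on $Q_j$. Do not tune it against $R_{k+1}$, which in general depends on the block you are still choosing.
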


\begin{remark}
\label{rem:higher-dimensional}
The scalar-curvature assertion in \eqref{eq:yau-question} therefore also
fails in every dimension $n\geq4$.  Indeed, write $n=3+m$ and equip
$X^n=M^3\times\mathbb R^m$ with the product metric
$G=g+g_{\mathrm E}$.  This metric is complete, has
$\Ric_G=\Ric_g\oplus0\geq0$, and is one-ended on a manifold
diffeomorphic to $\mathbb R^n$.  If $r_j\to\infty$ is a sequence along
which the quotient in \eqref{eq:limsup} diverges, then
\[
 B_g(p,r_j)\times B_{g_{\mathrm E}}(0,r_j)
 \subset B_G((p,0),\sqrt2\,r_j).
\]
Since $\Scal_G=\Scal_g$ and $n-2=m+1$, it follows that
\[
 \begin{aligned}
 &(\sqrt2\,r_j)^{2-n}
 \int_{B_G((p,0),\sqrt2 r_j)}\Scal_G\,dV_G\\
 &\qquad\geq \omega_m2^{-(m+1)/2}
 \left(\frac1{r_j}\int_{B_g(p,r_j)}\Scal_g\,dV_g\right)
 \longrightarrow+\infty.
 \end{aligned}
\]
Here $\omega_m$ denotes the volume of the Euclidean unit $m$-ball.
Thus the product gives the required higher-dimensional counterexamples
with nonnegative Ricci curvature.  For $m>0$, this argument does not
preserve strict positivity of the Ricci tensor.
\end{remark}

\begin{proposition}
\label{prop:constructed-collapse}
The manifold constructed in Theorem~\ref{thm:main} has
\[\operatorname{AVR}(M,g)=0.\]  More precisely, there are a constant
$C_{\mathrm{col}}>0$, points
$x_j\in M$, radii $s_j>0$, and packet parameters $Q_j\to\infty$ such that
\begin{equation}\label{eq:collapsing-balls-intro}
 \frac{\operatorname{Vol}B(x_j,s_j)}{s_j^3}\leq\frac{C_{\mathrm{col}}}{Q_j}.
\end{equation}
\end{proposition}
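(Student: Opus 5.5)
We derive the statement from two facts: a general observation about $\operatorname{AVR}$ under $\Ric\geq0$, and an explicit volume estimate inside the packets of the construction.

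\textbf{Reduction to collapsing balls.} Bishop--Gromov says that $r\mapsto\operatorname{Vol}B(x,r)/(\omega_3r^3)$ is nonincreasing for every $x\in M$. Its limit is $\operatorname{AVR}(M,g)$, and this limit is independent of the base point. Hence
\[
\operatorname{AVR}(M,g)\le\frac{\operatorname{Vol}B(x,s)}{\omega_3 s^3}
\qquad\text{for every } x\in M,\ s>0 .
\]
Applying this with $x=x_j$ and $s=s_j$, and using \eqref{eq:collapsing-balls-intro}, we get $\operatorname{AVR}(M,g)\le C_{\mathrm{col}}/(\omega_3Q_j)$. Letting $Q_j\to\infty$ gives $\operatorname{AVR}=0$. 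So it suffices to produce $x_j$, $s_j$, $Q_j$ satisfying \eqref{eq:collapsing-balls-intro}.

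\textbf{Locating the collapsed balls.} The construction glues the toric lens, the angular pair of pants, and the adaptive sequence of hybrid blocks. In the $j$-th block (packet) the parameter $Q_j$ controls a circle factor. In that region the metric should be comparable, with constants independent of $j$, to a warped metric of the form
\[
dr^2+a(r)^2d\theta^2+b(r)^2d\phi^2,
\]
where one circle has length about $2\pi\lambda_j$, the other scale is about $\ell_j$, and $\lambda_j/\ell_j\lesssim 1/Q_j$. This is the mechanism by which a packet produces a large scalar action relative to its diameter. I take $x_j$ to be a point in the core of the $j$-th packet and $s_j$ comparable to $\ell_j$ (for instance $s_j=\ell_j$), chosen so that $B(x_j,s_j)$ stays inside the $j$-th block and away from the gluing collars.

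\textbf{Volume estimate.} Because the ball lies in the block, it is contained in the coordinate box $\{|r-r_j|<s_j\}$, covered in full by the short circle, with the other angular range of length $O(s_j)$. Integrating the volume form over this box gives
\[
\operatorname{Vol}B(x_j,s_j)\le C\, s_j\cdot s_j\cdot\lambda_j\le C\, s_j^3/Q_j ,
\]
with $C$ determined by the uniform comparability constants of the block profile.

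\textbf{Main obstacle.} The hard part is uniformity in $j$. The adaptive choice of the blocks must keep the warping functions within fixed multiplicative bounds of the model profile on a region of radius $\gtrsim s_j$. This must hold uniformly, even though the scales of successive blocks and their insertion radii grow, and even though each block is glued to the previous geometry. I would first try to extract this from the scale-invariant description of a single hybrid block: after rescaling by $\ell_j$, the block becomes a fixed model family depending only on $Q_j$, in which the short circle has length $\asymp 1/Q_j$. The ball estimate then reduces to a single compact computation in the model, and the gluing collars, of bounded rescaled size, contribute only to $C_{\mathrm{col}}$.
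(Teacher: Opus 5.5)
Your Bishop--Gromov reduction is exactly the paper's final step. Your collapse estimate, however, takes a different route.

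The paper never integrates over a ball. Lemma~\ref{lem:packet-collapse} bounds the \emph{total} volume of the normalized packet, $\operatorname{Vol}(C_Q,g_Q)\le D_{\mathrm{dep}}\operatorname{Area}(\partial C_Q)\le V_0/Q$. It flows inward along normals from the strictly convex packet boundary ($S^{\mathrm{pkt}}>S^{\mathrm{hole}}>0$). Convexity and $\Ric\ge0$ force the normal Jacobian to satisfy $J\le1$ up to the cut time, which is at most $D_{\mathrm{dep}}$, and the boundary area is $O(w_Q)$. The paper then only needs a point $x_Q$ (at $z=U/2$, $\phi=0$) whose distance to $\partial C_Q$ is at least a $Q$-independent $\delta$, and it takes $s_j=\delta c_j/4$.

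Your local estimate also works, and once made concrete it is more elementary. Because $a'>0$ on the whole packet, the $\theta$-orbit radius is everywhere at most the waist $w_Q\le C_w/Q$. So $\operatorname{Vol}B$ is at most $2\pi w_Q$ times the orbit-space area of the projected ball. On the rectangle $[U/3,2U/3]\times[-\eta,\eta]$ the orbit metric $D_{\varepsilon,U}^2(dr^2+f^2d\phi^2)$ is bounded above and below uniformly in $Q$, by the explicit core formulas and $D_{\varepsilon,U}\varepsilon\asymp_U1$. This is the same rectangle the paper uses for its inradius bound. The paper's route gives the stronger, coordinate-free fact that the entire packet is collapsed. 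Yours avoids the convexity/Riccati argument but needs upper as well as lower bounds on the orbit metric.

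Two points need tightening.
\begin{enumerate}
\item The uniformity you flag as the main obstacle is not threatened by the adaptive gluing. Every seam smoothing is confined to an arbitrarily thin collar; the paper puts the packet-seam smoothing inside the $\delta/4$ boundary collar, which also guarantees that the global ball $B_g(x_j,s_j)$ stays in the untouched packet. The homothety factor $c_j$ also cancels in the ratio.
\item You must prove $Q_j\to\infty$ rather than assume it. This follows from \eqref{eq:Q-choice}: the fraction there is less than $1/D_{\mathrm{acc}}$, which gives $Q_j>D_{\mathrm{acc}}j/c_L$.
\end{enumerate}
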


Naber proposed a different, local noncollapsing statement: a lower Ricci
bound together with a fixed lower bound for the volume of a unit ball
should control the average scalar curvature on that ball
\cite[Conjecture~2.18]{Naber}.  This endpoint conjecture remains open
\cite{CucinottaMondino}.  Proposition~\ref{prop:constructed-collapse}
shows that the large-action stages in the present construction are
collapsing after rescaling; consequently Theorem~\ref{thm:main} is not a
counterexample to Naber's conjecture.  Recent results prove volume-growth
bounds from positive scalar or intermediate curvature
\cite{HeatKernelVolumeGrowth,ScalarVolumeGrowth,
UniversalIntermediateVolumeGrowth}.  Those results
concern Gromov's volume-growth conjecture rather than the scalar-curvature
integral in \eqref{eq:yau-three-dimensional}.

A prior construction combines a positive-curvature angular pair of pants
and Perelman necks with relative boundary and path-cylinder tools in a
counterexample to Milnor's conjecture
\cite[Lemmas~3.1--3.2, Theorem~3.3, Proposition~4.2, Theorem~4.4 and
Appendix~A]{PriorAngularConstruction}.
We use this architecture in dimension three and replace one neck by an
anisotropic filling.

Start with a three-dimensional angular pair of pants.
Fill one inner boundary by the high-action toric packet and attach a
companion neck to the other; the round far end of the neck is the input of
the block.  An outgoing path cylinder attached to the outer boundary ends
in a round strictly convex sphere.  Thus the completed hybrid block is an
annulus.  Gluing its round input to the preceding compact prefix and
iterating these annuli produces the one-ended manifold.  At each stage the
packet is chosen after the preceding geometric cost is known, and a
protected subset of that packet supplies the large scalar action.

Section~2 constructs the high-action lens.  Section~3 realizes its boundary
as an angular sphere and constructs the companion neck.  Section~4 forms
the quantitative hybrid blocks and iterates them to prove
Theorem~\ref{thm:main} and Proposition~\ref{prop:constructed-collapse}.
Appendix~\ref{app:boundary-tools} records the relative gluing and
path-cylinder tools used in those sections.

We repeatedly use the standard relative form of Perelman's Ricci-positive
gluing lemma~\cite[Section~4]{PerelmanGluing}, recorded as
Proposition~\ref{prop:relative-gluing} in
Appendix~\ref{app:boundary-tools}; see also
\cite[Lemma~3.2]{PriorAngularConstruction}.  The appendix includes a proof
only to fix the localized collar and support properties used here.

\begin{figure}[H]
\centering
\begin{tikzpicture}[
 x=0.82cm,y=0.82cm,
 component/.style={draw=black!75,fill=black!4,line width=0.75pt},
 packet/.style={draw=blue!65!black,fill=blue!10,line width=0.8pt},
 seam/.style={draw=black!55,densely dashed,line width=0.65pt},
 flow/.style={-{Latex[length=2.2mm]},line width=0.8pt},
 every node/.style={font=\small}
]
\path[component]
 (-2.35,1.50) -- (0.80,1.50)
 .. controls (1.45,1.50) and (1.75,1.20) .. (1.75,0.84)
 -- (1.75,-0.84)
 .. controls (1.75,-1.20) and (1.45,-1.50) .. (0.80,-1.50)
 -- (-2.35,-1.50) -- (-2.35,-0.55)
 .. controls (-1.55,-0.55) and (-1.20,-0.30) .. (-1.05,0)
 .. controls (-1.20,0.30) and (-1.55,0.55) .. (-2.35,0.55)
 -- cycle;

\path[packet]
 (-2.35,0.55) -- (-4.78,0.64)
 .. controls (-5.45,0.68) and (-5.72,0.86) .. (-5.72,1.02)
 .. controls (-5.72,1.22) and (-5.43,1.40) .. (-4.78,1.44)
 -- (-2.35,1.50) -- cycle;

\path[component]
 (-5.55,-1.42)
 .. controls (-4.55,-1.42) and (-3.40,-1.48) .. (-2.35,-1.50)
 -- (-2.35,-0.55)
 .. controls (-3.40,-0.60) and (-4.55,-0.68) .. (-5.55,-0.68)
 .. controls (-5.75,-0.78) and (-5.75,-1.32) .. (-5.55,-1.42)
 -- cycle;

\path[component]
 (1.75,0.84)
 .. controls (2.80,0.82) and (3.85,0.70) .. (4.85,0.65)
 .. controls (5.05,0.54) and (5.05,-0.54) .. (4.85,-0.65)
 .. controls (3.85,-0.70) and (2.80,-0.82) .. (1.75,-0.84)
 -- cycle;

\draw[seam] (-2.35,0.55) -- (-2.35,1.50);
\draw[seam] (-2.35,-1.50) -- (-2.35,-0.55);
\draw[seam] (1.75,-0.84) -- (1.75,0.84);
\draw[line width=0.75pt] (-5.57,-1.42)
  .. controls (-5.75,-1.28) and (-5.75,-0.82) .. (-5.57,-0.68);
\draw[line width=0.75pt] (4.86,-0.65)
  .. controls (5.07,-0.49) and (5.07,0.49) .. (4.86,0.65);

\filldraw[draw=blue!65!black,fill=blue!35,line width=0.6pt]
 (-4.62,0.84) ellipse [x radius=0.35,y radius=0.18];

\node[align=center] at (-4.10,1.78) {high-action packet $C_Q$};
\node[blue!65!black] at (-4.62,0.84) {$E_Q$};
\node[align=center] at (-4.10,-1.02) {companion neck};
\node[align=center] at (0.15,0) {angular pair\\of pants $\Omega_Q$};
\node[align=center] at (3.38,0) {outgoing\\path cylinder};
\node[align=center] at (-5.55,-1.93) {round input};
\node[align=center] at (4.85,-1.16) {round convex output};
\draw[flow] (-5.55,-2.35) -- (4.85,-2.35)
 node[midway,below] {direction of iteration};
\end{tikzpicture}
\caption{Schematic of one hybrid block.  One inner boundary
of the angular pair of pants is filled by the high-action packet, while the
companion neck turns the other into a round input.  The outgoing path
cylinder turns the outer boundary into a round strictly convex output.
Thus the completed block is an annulus, and the construction iterates such
blocks.}
\label{fig:hybrid-block}
\end{figure}

\subsection*{AI usage}
The authors made substantial use of ChatGPT (GPT-5.6 Pro). In
particular, ChatGPT generated the main theorems and their proofs after
several conversations with the author.  In the course
of conversations with ChatGPT, the model pointed out that no
universal upper bound exists for the scalar-curvature integral over unit
geodesic balls in closed manifolds with nonnegative Ricci curvature.  The
authors then suggested that the iterative method used to construct
counterexamples to Milnor's conjecture might be useful here and directed the
model to the construction in \cite{PriorAngularConstruction}.  From these
inputs, the model generated the entire construction.  The authors
have verified all mathematical statements and proofs and take full
responsibility for the correctness of the paper.

\section*{Acknowledgments}
The second-named author thanks Professor Wenshuai Jiang for his hospitality
during a summer school at Zhejiang University, where he
taught a mini-course.  During this period, Gromov's volume-growth
conjecture was resolved in the works
\cite{HeatKernelVolumeGrowth,ScalarVolumeGrowth,
UniversalIntermediateVolumeGrowth}.  In a conversation with Professors
Wenshuai Jiang and Wenlong Wang, the second-named author learned that
the stronger conjecture of Yau studied here was believed to fail when the
manifold is collapsing.

\section{A high-action convex lens}

Throughout this section, the angular radius $r_{\mathrm W}$ is fixed with
\begin{equation}\label{eq:r-range}
 0<r_{\mathrm W}<\frac{\pi}{10}.
\end{equation}
We first
choose $U\in(0,U_0)$ and then take $\varepsilon/U$ sufficiently small.
Constants implicit in $O_U(\cdot)$ or $\asymp_U$ may depend on $U$ and on
$r_{\mathrm W}$, but not on $\varepsilon$.  Both angular variables have
period $2\pi$, and ``waist'' means orbit radius rather than orbit
circumference.
Plain terms $O(U^m)$ below have constants uniform for $0<U<U_0$ in the
iterated regime in which first $U\downarrow0$ and then
$\varepsilon/U\downarrow0$; only $O_U(\cdot)$ permits dependence on the
fixed value of $U$.

\begin{proposition}
\label{prop:high-action-lens}
Fix $0<r_{\mathrm W}<\pi/10$.  There is $U_0>0$ such that, for every
$U\in(0,U_0)$ and all sufficiently small $\varepsilon>0$, there is a
smooth Ricci-positive three-ball $(C_{\varepsilon,U},g_{\varepsilon,U})$
whose metric is normalized so that its rotational two-sphere boundary has
full meridian length $\pi\cos r_{\mathrm W}$ and waist
$w_{\varepsilon,U}\asymp_U\varepsilon$.  Moreover, it contains a compact
set $\mathcal E_{\varepsilon,U}$ and constants
$c,C>0$, independent of $\varepsilon$, such that

\begin{equation}\label{eq:high-action-summary}
 \int_{\mathcal E_{\varepsilon,U}}\Scal\,dV
 \geq \frac{c}{w_{\varepsilon,U}},\qquad
 \sup_{x\in C_{\varepsilon,U}}
 \dist(x,\partial C_{\varepsilon,U})\leq C.
\end{equation}
Here $c$ and $C$ may depend on $U$ and $r_{\mathrm W}$, but not on
$\varepsilon$.  The set $\mathcal E_{\varepsilon,U}$ is disjoint from a
sufficiently thin boundary collar.
\end{proposition}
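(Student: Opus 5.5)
We construct the lens as a doubly warped ("toric") metric
\[
 g=dt^2+a(t)^2\,d\theta_1^2+b(t)^2\,d\theta_2^2,\qquad t\in[0,T],
\]
on a solid region whose $T^2$-orbits degenerate at the two ends. At $t=0$ we let $a$ vanish, with $a'(0)=1$ and $b'(0)=0$. At the opposite end we let $b$ vanish. The resulting space is a three-sphere, and we cut it along a suitable hypersurface to obtain the ball $C_{\varepsilon,U}$. Alternatively, we can regard the lens as the region $\{t\le t_*\}$ together with a cap.

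The Ricci eigenvalues are explicit:
\[
 \Ric(\partial_t)=-\frac{a''}{a}-\frac{b''}{b},\qquad
 \Ric(\partial_{\theta_1})=-\frac{a''}{a}-\frac{a'b'}{ab},\qquad
 \Ric(\partial_{\theta_2})=-\frac{b''}{b}-\frac{a'b'}{ab}.
\]
The scalar curvature is
\[
 \Scal=-2\Big(\frac{a''}{a}+\frac{b''}{b}+\frac{a'b'}{ab}\Big).
\]
The volume form is $dV=(2\pi)^2ab\,dt$. Hence
\[
 \int\Scal\,dV=-8\pi^2\int\big(a''b+ab''+a'b'\big)\,dt
 =-8\pi^2\int (ab)''\,dt+8\pi^2\int a'b'\,dt .
\]
After integrating by parts, the bulk action over a set $\mathcal E=\{t_1\le t\le t_2\}$ is controlled by boundary terms and by $\int a'b'$. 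The mechanism for large action is therefore a thin circle factor, say $b\sim\varepsilon$, whose derivative changes rapidly. Concretely, we take $a(t)=\sin$-like of size $O(1)$ (this is the meridian direction), and we let $b$ have profile $b=\varepsilon\,\beta(t/U)$, or more precisely a profile in which $b''/b\sim -1/U^2$ on a window of length $\sim U$. On the window, $\Scal\gtrsim U^{-2}$ and $dV\sim\varepsilon\,dt$, which gives action of only $\sim\varepsilon/U$. That is far too small, so the anisotropy must instead come from the curvature $a'b'/(ab)$ near the pole where $b$ is large relative to its own derivative.

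We therefore use the opposite arrangement. The waist circle has radius $w\asymp\varepsilon$, and the other circle closes up at an axis. The lens is scaled so that its inner region is a round-like piece of size $O(1)$, glued through a thin neck region of waist $\varepsilon$. Scaling invariance makes $\int_{B}\Scal\,dV$ behave like length. A region that is a rescaling by factor $\lambda$ of a fixed positively curved cap contributes action $\lambda\cdot c_0$. We arrange, via the warping, that the protected set $\mathcal E$ has intrinsic extent $\sim 1/w$ in one direction while its cross sections have curvature $\sim1$ and area $\sim1$. Equivalently, in the formula above we choose $b$ to go from size $\sim1$ to $0$ while $a$ varies on scale $U$ with $a\sim w$ and $|a'|\sim 1$, so that the term $a'b'/(ab)\cdot ab=a'b'$ integrates to $\sim 1/w$ once the $t$-interval is normalized. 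Either way, the plan is as follows. First, write down explicit $a,b$ depending on $(\varepsilon,U)$. Second, verify $\Ric>0$ via the three displayed eigenvalue formulas, using concavity of $a$ and $b$ ($a'',b''<0$), which handles the first eigenvalue, together with $a'b'<0$ or a small enough product in the transition zones. The third eigenvalue condition $-b''/b>a'b'/(ab)$ is where $U<U_0$ is used, since it absorbs error terms $O(U^m)$. Third, check smoothness at the axes ($a'(0)=1$ and the odd extension condition), and check that the boundary orbit sphere is rotational with meridian length $\pi\cos r_{\mathrm W}$ and waist $w\asymp_U\varepsilon$. The fixed range $r_{\mathrm W}<\pi/10$ makes the boundary comparable to a latitude of the round sphere, so that later angular-sphere realization is possible.

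The diameter bound in \eqref{eq:high-action-summary} follows because $T=O_U(1)$ and every point lies on a $t$-line reaching the boundary. The action bound is computed directly from the integral identity on $\mathcal E=\{t_1\le t\le t_2\}$, with $t_2$ bounded away from the boundary value; this gives disjointness from a thin collar.

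The main obstacle is reconciling three requirements: $\Ric>0$, the action growing like $1/w$, and a bounded diameter. By Bishop--Gromov the manifold must collapse, so the large action can only come from the mixed term concentrated where one circle is tiny yet has derivative of order one. We must check that the competing eigenvalue $-a''/a-a'b'/(ab)$ stays positive there. I would first try the explicit model $a=\varepsilon\sin(t/\varepsilon)$-type near the axis, matched with $b=\cos$-type of size one, and then tune the transition with $U$.
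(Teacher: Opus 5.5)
There is a genuine gap. The toric ansatz, the Ricci eigenvalue formulas and the radial-line depth argument are reasonable ingredients. But you never fix profiles or a domain, and the heuristics you offer for both point the wrong way.

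\textbf{The domain.} A sublevel set $\{t\le t_*\}$ around the axis where $a$ vanishes is a solid torus with torus boundary. So the ball must be cut out transversally to the second circle, and that cut is where the action comes from. Your own identity shows this. On $\{0\le t\le t_2\}$ it gives $\int\Scal\,dV=8\pi^2\bigl(b(0)-(ab)'(t_2)\bigr)+8\pi^2\int_0^{t_2}a'b'\,dt$. The action is therefore controlled by boundary terms, and at a collapsed axis the boundary term is $8\pi^2b(0)$, i.e.\ $4\pi$ times the length $2\pi b(0)$ of that axis. The integral $\int a'b'\,dt$ is at most $\sup|a'|$ times the variation of $b$, and it does not produce the factor $1/w$.

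Since action scales like length, what matters is how much axis is enclosed relative to the boundary meridian. After normalization the ball must contain a piece of axis of length $\gtrsim1/w$, while its boundary meridian has length $\pi\cos r_{\mathrm W}$ and every point lies within $C$ of the boundary. This rules out two of your pictures.
\begin{itemize}
\item A region of ``extent $\sim1/w$ with unit cross sections'' is impossible, because the ball has diameter bounded independently of $\varepsilon$.
\item A second profile $b$ of size one is also impossible. The boundary meridian runs from one end of the enclosed axis segment to the other, so it sweeps the whole opening in the second angle. At bounded normalized cost this is possible only where that circle is shorter than at the axis by a factor comparable to $1/w$.
\end{itemize}
The paper builds exactly this. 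The lens $\{0\le r\le r_U,\ |\phi|\le\Psi(r)\}$ is bounded by two graphs of constant meridional curvature, solved in closed form by $y=\kappa I/f$ with $y(r_U)=1$ (Lemma~\ref{lem:lens-ode}). Along the core $f$ drops from $1$ to $2\varepsilon/U$, the opening $\Delta_{\varepsilon,U}=\Psi(0)$ tends to $\Delta_U>0$, and $\ell,r_U\asymp_U\varepsilon$ (Lemmas~\ref{lem:packet-asymptotics} and~\ref{lem:action}). Integrating $\Ric(e_\theta,e_\theta)af=-(fa')'$ against $2\Psi$ gives action $\approx4\pi f(0)a'(0)\Delta_{\varepsilon,U}$, with pole flux $f(0)a'(0)=1+\varepsilon^3/2$. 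After scaling by $D_{\varepsilon,U}=\pi\cos r_{\mathrm W}/(2\ell)$ this is $\asymp_U1/w$, while the depth becomes $D_{\varepsilon,U}r_U\approx\cos r_{\mathrm W}$.

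\textbf{Ricci positivity.} You plan to get $\Ric>0$ from $a'',b''<0$ together with $a'b'\le0$, up to small transition terms. Then $-a''/a$, $-b''/b$ and $-a'b'/(ab)$ are all nonnegative. Since the curvature operator of a doubly warped metric is diagonal in the toric frame, this gives $\sec\ge0$. More generally, suppose the transition terms keep $\sec$ bounded below uniformly in $\varepsilon$. Then Petrunin's estimate quoted in the introduction, applied in the ambient doubly warped manifold, bounds the action of every region of bounded diameter uniformly in $\varepsilon$. That contradicts $\int_{\mathcal E_{\varepsilon,U}}\Scal\,dV\ge c/w\to\infty$.

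So any valid construction needs sectional curvatures tending to $-\infty$ while $\Ric$ stays positive. In the paper this negative curvature is $K_{r\phi}=-f''/f$. The profile $f$, which plays the role of your $b$, is convex on the core (see \eqref{eq:fsecond}). Ricci positivity survives only because $K_{r\theta}$ and $K_{\theta\phi}$ are larger still, which is what the explicit choice $a=c_\varepsilon\varepsilon^2A_\varepsilon(z)$, $f=2\varepsilon/z$ is designed to achieve (Lemma~\ref{lem:core-curvature}).

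On top of this one still needs:
\begin{itemize}
\item a Ricci-positive cap--core smoothing (Lemma~\ref{lem:cap-smoothing});
\item smoothness of the lens boundary at its poles and equator (Lemma~\ref{lem:lens-smooth});
\item the endpoint asymptotics giving $w\asymp_U\varepsilon$ and $D_{\varepsilon,U}\Delta_{\varepsilon,U}\gtrsim1/w$.
\end{itemize}
Your proposal has no counterpart to any of these steps. As written it is a list of heuristics, several of which point the wrong way, rather than a proof.
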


\begin{lemma}\label{lem:packet-data}
For every sufficiently small $\varepsilon>0$, there is a $C^1$ toric
cap--core metric
\begin{equation}\label{eq:packetmetric}
 g_\varepsilon=dr^2+a_\varepsilon(r)^2d\theta^2
                 +f_\varepsilon(r)^2d\phi^2.
\end{equation}
Its cap has positive Ricci curvature, the cap and core agree to first order,
and the transition radius is $O(e^{-c/\varepsilon^3})$.
\end{lemma}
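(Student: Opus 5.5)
The plan is to build the cap directly as a doubly warped product on $[0,r_\varepsilon]\times T^2$ and to fit it $C^1$ onto the core profile $(a^{\mathrm{core}}_\varepsilon,f^{\mathrm{core}}_\varepsilon)$ at a single radius $r_\varepsilon$. Here $a$ closes up smoothly at $r=0$ and $f$ stays positive. For a metric of the form \eqref{eq:packetmetric} the Ricci curvatures in the orthonormal frame are
\[
 \Ric(\partial_r)=-\frac{a''}{a}-\frac{f''}{f},\qquad
 \Ric(\hat\theta)=-\frac{a''}{a}-\frac{a'f'}{af},\qquad
 \Ric(\hat\phi)=-\frac{f''}{f}-\frac{a'f'}{af},
\]
and the mixed terms vanish. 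It therefore suffices to produce profiles on the cap with
\[
 a>0,\quad a''<0,\quad 0\le a'\le 1,\qquad f>0,\quad f''\le0,\quad f'\le0,
\]
with at least one strict inequality in each Ricci expression. We also need the smooth closing conditions $a(0)=0$, $a'(0)=1$, $a^{\mathrm{even}}(0)=0$, $f'(0)=0$ and $f^{\mathrm{odd}}(0)=0$.

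\emph{Step 1: the core data at the transition.} I will use the explicit form of the core profiles to locate a radius $r_\varepsilon$ at which $0<a^{\mathrm{core}}_\varepsilon(r_\varepsilon)\ll r_\varepsilon$, with slope $0<(a^{\mathrm{core}}_\varepsilon)'(r_\varepsilon)<1$, $f^{\mathrm{core}}_\varepsilon(r_\varepsilon)>0$ and $(f^{\mathrm{core}}_\varepsilon)'(r_\varepsilon)\le0$. I expect that the core's angular profile degenerates like $\exp(-c/\varepsilon^{3})$ as one follows it inward. This is because its logarithmic derivative is controlled by an $\varepsilon^{-3}$ quantity coming from the waist normalization. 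Taking $r_\varepsilon$ to be the point where $a^{\mathrm{core}}_\varepsilon$ reaches this size is what gives the bound $r_\varepsilon=O(e^{-c/\varepsilon^3})$.

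\emph{Step 2: the cap profiles.} For $a$, I will take $a(r)=r_\varepsilon\,\alpha(r/r_\varepsilon)$ with $\alpha$ a fixed strictly concave odd-at-$0$ function. It should satisfy $\alpha'(0)=1$, with $\alpha(1)$ and $\alpha'(1)$ adjusted to the rescaled core data $a^{\mathrm{core}}_\varepsilon(r_\varepsilon)/r_\varepsilon$ and $(a^{\mathrm{core}}_\varepsilon)'(r_\varepsilon)$. A one-parameter concave family, for instance $\alpha(s)=s-\lambda s^3$ glued to a linear piece, reaches any admissible pair by the intermediate value theorem. For $f$, I will take $f(r)=f^{\mathrm{core}}_\varepsilon(r_\varepsilon)+\beta(r)$ with $\beta$ even, concave and nonincreasing, $\beta(r_\varepsilon)=0$ and $\beta'(r_\varepsilon)=(f^{\mathrm{core}}_\varepsilon)'(r_\varepsilon)$. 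Since $r_\varepsilon$ is tiny, $\beta$ is uniformly small.

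\emph{Step 3: positivity.} On the cap, $-a''/a\gtrsim r_\varepsilon^{-2}$ dominates. The cross term $a'f'/(af)$ is of size $|f'|/(af)\le |(f^{\mathrm{core}}_\varepsilon)'(r_\varepsilon)|/(af)$. Near $r=0$ it is of lower order against $a''/a$, because $f'(r)=O(r)$ and $a''(r)=O(r)$ with the $a''$ coefficient $\lambda r_\varepsilon^{-2}$ huge. In $\Ric(\hat\phi)$ both terms are nonnegative, and strict positivity comes from $f''<0$. Hence all three Ricci curvatures are positive on the cap. Matching value and first derivative at $r_\varepsilon$ gives the $C^1$ agreement.

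The main obstacle will be Step 1. I must check from the core's defining ODE that the required matching inequalities, in particular $(f^{\mathrm{core}}_\varepsilon)'\le0$ together with the right sign and size of the $a$-slope, hold simultaneously. I also need the quantitative estimate showing that they first hold at a radius of size $e^{-c/\varepsilon^3}$. I would attempt this by integrating $(\log a^{\mathrm{core}}_\varepsilon)'$ inward and comparing with the $\varepsilon^{-3}$ coefficient.
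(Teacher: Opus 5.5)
There is a genuine gap: you never say what the core is, and Step~1, which you yourself flag as the main obstacle, is where the whole content of the lemma lives. The core is not given in advance. The paper's proof introduces it explicitly in the parameter $z\in[2\varepsilon,1]$ by $a_\varepsilon=c_\varepsilon\varepsilon^2A_\varepsilon(z)$, $f_\varepsilon=2\varepsilon/z$, $dr=4c_\varepsilon\varepsilon A_\varepsilon(z)z^{-3}\,dz$, with $A_\varepsilon(z)=z^{1/6}\exp\{\tfrac{8C_\varepsilon}{3}(1-z^{-3})\}$. Two design choices then make the cap essentially trivial. First, $C_\varepsilon=1-\varepsilon^3/6$ is chosen so that $a_\varepsilon'=\varepsilon(2C_\varepsilon/z+z^2/24)$ equals exactly $1$ at the inner end $z=2\varepsilon$, where also $f_\varepsilon=1$ and $f_\varepsilon'=-\varepsilon^3/a_\varepsilon$. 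Second, the free additive constant in $r$ is fixed so that $r=a_\varepsilon$ there. The cap is then $a=r$, $f=f^{\mathrm{pole}}_\varepsilon(1-\beta^{\mathrm{cap}}_\varepsilon r^2)$, whose Ricci eigenvalues $2\beta f^{\mathrm{pole}}/f$, $2\beta f^{\mathrm{pole}}/f$, $4\beta f^{\mathrm{pole}}/f$ can be read off directly. The transition radius is just the core value of $a_\varepsilon$ at $z=2\varepsilon$, namely $c_\varepsilon\varepsilon^2(2\varepsilon)^{1/6}\exp\{\tfrac{8C_\varepsilon}{3}-\tfrac{C_\varepsilon}{3\varepsilon^3}\}$. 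Without a definite core, your bound $r_\varepsilon=O(e^{-c/\varepsilon^3})$ and the existence of a suitable join point remain expectations, not proofs.

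Even granting some core, your Step~1 targets do not feed Step~2. If $a$ is concave with $a(0)=0$ and $a'(0)=1$, then $a'(r_\varepsilon)\,r_\varepsilon\le a(r_\varepsilon)\le r_\varepsilon$. So ``$a\ll r_\varepsilon$ with $0<a'<1$'' is admissible only if also $a'\ll1$. Your family must match two numbers, and this constraint, not merely $0<a'<1$, is what an intermediate-value argument needs. Moreover, $a^{\mathrm{core}}(r_\varepsilon)/r_\varepsilon$ is not intrinsic, because the core's radial coordinate is defined only up to translation. The cap length is yours to choose subject to $a_c\le r_\varepsilon\le a_c/a_c'$, and it is $a_c/a_c'$ that must be shown exponentially small. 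The paper takes the extreme $r_\varepsilon=a_c$ with $a_c'=1$, the opposite of $a\ll r_\varepsilon$.

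There are smaller issues as well. You need $f_c'<0$ strictly: if $f_c'=0$, your $\beta$ vanishes identically and $\Ric(\hat\phi)=0$. The linear piece in $\alpha$ leaves an extra $C^1$ corner inside the cap. The cross term $-a'f'/(af)$ in $\Ric(\hat\theta)$ is nonnegative, so nothing has to dominate it.

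Finally, the flat choice $a=r$ is not cosmetic. Later the paper uses the exact cap jets, the pole flux $f(0)a'(0)=1+\varepsilon^3/2$, and $a'I-af=\tfrac23f^{\mathrm{pole}}_\varepsilon\beta^{\mathrm{cap}}_\varepsilon r^3\ge0$ on the cap. For $a=r-\mu r^3$ and $f=f(0)(1-br^2)$, one instead gets $a'I-af=f(0)\,r^3(\tfrac23b-2\mu)+O(r^5)$, which is negative as soon as $\mu>b/3$.
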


\begin{proof}
We use the doubly warped metric \eqref{eq:packetmetric}.
Where $a_\varepsilon,f_\varepsilon>0$, the standard connection formulas
show that the orthonormal frame
\[
 e_r=\partial_r,\qquad
 e_\theta=a_\varepsilon^{-1}\partial_\theta,\qquad
 e_\phi=f_\varepsilon^{-1}\partial_\phi
\]
diagonalizes the Ricci tensor.  Thus Ricci positivity is determined by its
three diagonal eigenvalues; at a smooth collapsed orbit these eigenvalues
extend by continuity.
The packet cap is joined to the following explicit core.
Put
\begin{equation}\label{eq:constants}
 C_\varepsilon=1-\frac{\varepsilon^3}{6},\qquad
 c_\varepsilon=\sqrt{2C_\varepsilon+\frac7{24}}.
\end{equation}
On the core, use $z\in[2\varepsilon,1]$ as the parameter and define
\begin{equation}\label{eq:A}
 A_\varepsilon(z)
 =z^{1/6}\exp\left\{\frac{8C_\varepsilon}{3}(1-z^{-3})\right\}.
\end{equation}
Let $r=r(z)$ be the increasing radial coordinate, determined up to an
additive constant by
\begin{equation}\label{eq:dr}
 dr=4c_\varepsilon\varepsilon A_\varepsilon(z)z^{-3}\,dz.
\end{equation}
Define the core profiles by
\begin{equation}\label{eq:af}
 a_\varepsilon(r(z))=c_\varepsilon\varepsilon^2A_\varepsilon(z),
 \qquad
 f_\varepsilon(r(z))=\frac{2\varepsilon}{z}.
\end{equation}
Equivalently, the reciprocal fiber-radius variable is
\[
 x:=\frac{1}{f_\varepsilon(r(z))}=\frac{z}{2\varepsilon}.
\]
Differentiating \eqref{eq:af} and applying the chain rule gives
\begin{equation}\label{eq:aprime}
\begin{aligned}
 a_\varepsilon'(r)
 &=\frac{c_\varepsilon\varepsilon^2A_\varepsilon'(z)}
         {4c_\varepsilon\varepsilon A_\varepsilon(z)z^{-3}}
  =\varepsilon
       \left(\frac{2C_\varepsilon}{z}+\frac{z^2}{24}\right).
\end{aligned}
\end{equation}
The same chain-rule calculation, using $f_\varepsilon=1/x$, gives
\[
 f_\varepsilon'(r)=-\frac{\varepsilon^3x}{a_\varepsilon(r)}.
\]
Define the transition radius by
\[
 r_\varepsilon^{\mathrm{tr}}
 :=c_\varepsilon\varepsilon^2A_\varepsilon(2\varepsilon),
\]
and choose the additive constant in $r(z)$ so that
$r(2\varepsilon)=r_\varepsilon^{\mathrm{tr}}$, equivalently
$z(r_\varepsilon^{\mathrm{tr}})=2\varepsilon$.  On the cap, for
$0\leq r\leq r_\varepsilon^{\mathrm{tr}}$, set
\begin{equation}\label{eq:cap}
 \begin{aligned}
 a_\varepsilon(r)&=r,&
 f_\varepsilon(r)&=f_\varepsilon^{\mathrm{pole}}
       (1-\beta_\varepsilon^{\mathrm{cap}}r^2),\\
 \beta_\varepsilon^{\mathrm{cap}}
       (r_\varepsilon^{\mathrm{tr}})^2
   &=\frac{\varepsilon^3}{2+\varepsilon^3},&
 f_\varepsilon^{\mathrm{pole}}
   &=\frac{2+\varepsilon^3}{2}.
 \end{aligned}
\end{equation}
The cap is smooth at $r=0$: $a_\varepsilon(r)=r$ is odd with
$a_\varepsilon'(0)=1$, while
$f_\varepsilon(r)=f_\varepsilon^{\mathrm{pole}}
(1-\beta_\varepsilon^{\mathrm{cap}}r^2)$ is positive and even.
Here $A_\varepsilon'$ denotes differentiation with respect to $z$, whereas
primes on $a_\varepsilon$ and $f_\varepsilon$ denote differentiation with
respect to $r$.  At $r=r_\varepsilon^{\mathrm{tr}}$,
\begin{equation}\label{eq:transition-first-jet}
 (a_\varepsilon,f_\varepsilon,a_\varepsilon',f_\varepsilon')
 =\left(r_\varepsilon^{\mathrm{tr}},1,1,
 -\frac{\varepsilon^3}{r_\varepsilon^{\mathrm{tr}}}\right),
\end{equation}
which agrees to first order with the core at $x=1$.  With respect to
$e_r,e_\theta,e_\phi$, respectively, the cap Ricci eigenvalues for $r>0$
are
\[
 \frac{2\beta_\varepsilon^{\mathrm{cap}}
 f_\varepsilon^{\mathrm{pole}}}{f_\varepsilon},\qquad
 \frac{2\beta_\varepsilon^{\mathrm{cap}}
 f_\varepsilon^{\mathrm{pole}}}{f_\varepsilon},\qquad
 \frac{4\beta_\varepsilon^{\mathrm{cap}}
 f_\varepsilon^{\mathrm{pole}}}{f_\varepsilon}.
\]
They are positive and extend to positive limits at $r=0$.
Thus the transition radius satisfies
\begin{equation}\label{eq:transition-radius-decay}
 r_\varepsilon^{\mathrm{tr}}
 =c_\varepsilon\varepsilon^2(2\varepsilon)^{1/6}
 \exp\left\{\frac{8C_\varepsilon}{3}
       -\frac{C_\varepsilon}{3\varepsilon^3}\right\}
 =O(e^{-c/\varepsilon^3}).
\end{equation}
The piecewise metric is $C^1$.
This proves the stated cap--core properties.
\end{proof}

\begin{remark}
Although it is not needed to prove Lemma~\ref{lem:packet-data}, the cap
formula \eqref{eq:cap} also gives the identity
\begin{equation}\label{eq:pole-flux}
 f_\varepsilon(0)a_\varepsilon'(0)
 =f_\varepsilon^{\mathrm{pole}}=1+\frac{\varepsilon^3}{2},
\end{equation}
and hence $f_\varepsilon(0)a_\varepsilon'(0)\to1$.  This pole-flux identity
is used in the proofs of Lemmas~\ref{lem:action} and
\ref{lem:protected-set}.
\end{remark}

Until the smoothing in Lemma~\ref{lem:cap-smoothing} has been fixed, after
$\varepsilon$ is fixed we abbreviate the piecewise profiles by
$a:=a_\varepsilon$ and $f:=f_\varepsilon$.

\begin{lemma}\label{lem:core-curvature}
The explicit core constructed in Lemma~\ref{lem:packet-data} has positive
Ricci curvature.  Moreover,
$K_{r\theta}>0$ and $K_{\phi\theta}>0$ throughout the core.
\end{lemma}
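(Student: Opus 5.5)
The plan is a direct computation using the doubly warped curvature formulas. For $g=dr^2+a^2d\theta^2+f^2d\phi^2$ the frame $e_r,e_\theta,e_\phi$ diagonalizes the curvature operator, with
\[
 K_{r\theta}=-\frac{a''}{a},\qquad K_{r\phi}=-\frac{f''}{f},\qquad
 K_{\theta\phi}=-\frac{a'f'}{af}.
\]
The Ricci eigenvalues are then
\[
 \Ric(e_r)=K_{r\theta}+K_{r\phi},\qquad
 \Ric(e_\theta)=K_{r\theta}+K_{\theta\phi},\qquad
 \Ric(e_\phi)=K_{r\phi}+K_{\theta\phi}.
\]
I would express every quantity as an explicit function of $z\in[2\varepsilon,1]$. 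The tools are \eqref{eq:dr}, \eqref{eq:af}, \eqref{eq:aprime} and $f'=-\varepsilon^3x/a=-\varepsilon^2z/(2a)$, together with the chain rule $d/dr=\frac{z^3}{4c_\varepsilon\varepsilon A_\varepsilon}\,d/dz$.

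\emph{Step 1: the two mixed curvatures.} Since $a'>0$ by \eqref{eq:aprime} and $f'<0$, we get $K_{\theta\phi}>0$ immediately. Explicitly, with $D:=4c_\varepsilon^2\varepsilon^2A_\varepsilon^2$,
\[
 K_{\theta\phi}=\frac{\varepsilon z^2a'}{4a^2}
 =\frac{2C_\varepsilon z+z^4/24}{D}.
\]
Differentiating \eqref{eq:aprime} gives
\[
 a''=\frac{-2C_\varepsilon z+z^4/12}{4c_\varepsilon A_\varepsilon},
 \qquad\text{hence}\qquad
 K_{r\theta}=\frac{2C_\varepsilon z-z^4/12}{D}.
\]
This is positive because $z^3\le 1<24C_\varepsilon$. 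Together these give the two claimed sectional inequalities, and they already give $\Ric(e_\theta)>0$.

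\emph{Step 2: the sign-indefinite term.} The only curvature that is not positive is $K_{r\phi}$. Differentiating $f'=-\varepsilon^2z/(2a)$ once more, using $a=c_\varepsilon\varepsilon^2A_\varepsilon$ and \eqref{eq:aprime}, I expect
\[
 f''=\frac{2C_\varepsilon-5z^3/24}{2c_\varepsilon^2\varepsilon A_\varepsilon^2},
 \qquad
 K_{r\phi}=-\frac{2C_\varepsilon z-5z^4/24}{D}<0.
\]
The leading $2C_\varepsilon z$ terms cancel exactly against those in $K_{r\theta}$ and in $K_{\theta\phi}$. This exact cancellation is the main point to verify; the constants $8C_\varepsilon/3$ and $1/6$ in \eqref{eq:A} appear designed precisely to produce it. The remainders are
\[
 \Ric(e_r)=\frac{z^4}{8D}>0,\qquad \Ric(e_\phi)=\frac{z^4}{4D}>0.
\]

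\emph{Main obstacle.} The hardest step is the bookkeeping in $f''$: the second $r$-derivative mixes $dz/dr$ with $a'$. Any arithmetic slip would destroy the cancellation, and then positivity on the whole interval would fail near $z$ small, where the $z^4$ remainders are dominated. I would double-check this step by recomputing $f''$ with $x$ as the variable, using $f=1/x$.

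Finally, $A_\varepsilon>0$ on the core, so $D>0$ and all the signs above hold uniformly on $[2\varepsilon,1]$. This proves the lemma.
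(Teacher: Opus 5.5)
Your proposal is correct and follows the paper's proof: it computes the three sectional curvatures $K_{r\theta},K_{r\phi},K_{\theta\phi}$ of the doubly warped core and sums them to get the Ricci eigenvalues, including the same exact cancellation. Your formulas, including the $f''$ you flagged as the main risk, agree with the paper's $\Ric(e_r)=\varepsilon^6x^4/(2a^2)$, $\Ric(e_\phi)=\varepsilon^6x^4/a^2$, $\Ric(e_\theta)=(2C_\varepsilon\varepsilon^3x-\tfrac16\varepsilon^6x^4)/a^2$ under $x=z/(2\varepsilon)$; the paper simply works in $x$ rather than $z$.
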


\begin{proof}
For this doubly warped metric, the orthonormal frame
$e_r=\partial_r$, $e_\theta=a^{-1}\partial_\theta$, and
$e_\phi=f^{-1}\partial_\phi$ is the Ricci eigenframe identified above.
Its pairwise sectional curvatures are
\begin{equation}\label{eq:sectional}
 K_{r\theta}=-\frac{a''}{a},\qquad
 K_{r\phi}=-\frac{f''}{f},\qquad
 K_{\theta\phi}=-\frac{a'f'}{af}.
\end{equation}
Direct differentiation gives
\begin{align}
 a''&=\frac{-C_\varepsilon\varepsilon^3x+
               \frac13\varepsilon^6x^4}{a},                    \label{eq:asecond}\\
 \frac{f''}{f}
 &=\frac{C_\varepsilon\varepsilon^3x-
               \frac56\varepsilon^6x^4}{a^2},\qquad
 -\frac{a'f'}{af}=\frac{a'\varepsilon^3x^2}{a^2}.              \label{eq:fsecond}
\end{align}
Because, in dimension three, each Ricci eigenvalue is the sum of the two
sectional curvatures containing its eigendirection, one obtains
\begin{align}
 \Ric(e_r,e_r)&=\frac{\varepsilon^6x^4}{2a^2}>0,                 \label{eq:ric-r}\\
 \Ric(e_\phi,e_\phi)&=\frac{\varepsilon^6x^4}{a^2}>0,          \label{eq:ric-phi}\\
 \Ric(e_\theta,e_\theta)
 &=\frac{2C_\varepsilon\varepsilon^3x-
               \frac16\varepsilon^6x^4}{a^2}.                  \label{eq:ric-theta}
\end{align}
On the core, $x\leq(2\varepsilon)^{-1}$, so
$\varepsilon^3x^3\leq1/8$.  The expression in
\eqref{eq:ric-theta} is therefore positive for all sufficiently small
$\varepsilon$, since
\[
 2C_\varepsilon-\frac16\varepsilon^3x^3
 \geq 2\left(1-\frac{\varepsilon^3}{6}\right)-\frac1{48}>0.
\]
Indeed,
\[
 K_{r\theta}=\frac{\varepsilon^3x}{a^2}
 \left(C_\varepsilon-\frac{\varepsilon^3x^3}{3}\right)>0,
 \qquad
 K_{\phi\theta}=\frac{a'\varepsilon^3x^2}{a^2}>0.
\]
\end{proof}

\begin{lemma}\label{lem:core-comparison}
Let $I(r)=\int_0^r f(\tau)\,d\tau$.  If $a'I-af>0$ at one point of the explicit core,
then this inequality remains strict at every later core point.
\end{lemma}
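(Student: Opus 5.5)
The approach is a first-crossing (barrier) argument for the function
\[
 h(r):=a'(r)I(r)-a(r)f(r)
\]
on the explicit core. The plan is to show that at any core point where $h=0$ we have $h'>0$. Then $h$ can never return to zero after being positive.

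First I will differentiate. Since $I'=f$, the terms $a'f$ cancel and
\[
 h'=a''I+a'f-a'f-af'=a''I-af'.
\]
Now suppose, for contradiction, that $h(r_0)>0$ at some core point $r_0$ but $h(r_1)\le 0$ at some later core point. Let $r_*\in(r_0,r_1]$ be the first zero of $h$ after $r_0$. Then $h>0$ on $[r_0,r_*)$ and $h(r_*)=0$, which forces $h'(r_*)\le 0$. On the core the profiles are smooth, so this differentiation is legitimate there.

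At a zero of $h$ we have $I=af/a'$, which uses $a'>0$ from \eqref{eq:aprime}. Substituting,
\[
 h'=af\Bigl(\frac{a''}{a'}-\frac{f'}{f}\Bigr).
\]
It therefore suffices to show $a''/a'-f'/f>0$ everywhere on the core. I will express everything in the variable $x=1/f$, using the formulas already derived in Lemma~\ref{lem:packet-data} and Lemma~\ref{lem:core-curvature}:
\begin{itemize}[leftmargin=*]
\item from $f'=-\varepsilon^3x/a$ we get $f'/f=-\varepsilon^3x^2/a$;
\item from \eqref{eq:asecond}, $a''=\bigl(-C_\varepsilon\varepsilon^3x+\tfrac13\varepsilon^6x^4\bigr)/a$;
\item from \eqref{eq:aprime} with $z=2\varepsilon x$, $a'=C_\varepsilon/x+\varepsilon^3x^2/6$.
\end{itemize}
Combining these,
\[
 \frac{a''}{a'}-\frac{f'}{f}
 =\frac{\varepsilon^3x}{a\,a'}\Bigl(-C_\varepsilon+\tfrac13\varepsilon^3x^3+x\,a'\Bigr)
 =\frac{\varepsilon^3x}{a\,a'}\cdot\frac{\varepsilon^3x^3}{2}>0 .
\]
The $C_\varepsilon$ terms cancel exactly, which is the key point.

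Hence $h'(r_*)>0$, contradicting $h'(r_*)\le 0$. So $h$ stays strictly positive at every later core point.

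The main obstacle is the algebraic step. One must check that the $C_\varepsilon$ contributions cancel, so that the sign is decided by the $\varepsilon^6$ terms, whose coefficient is $\tfrac13+\tfrac16$. I will recheck the expression for $a'$ in terms of $x$ carefully, since an error there would destroy the cancellation. A naive estimate of $h'=a''I-af'$ without first substituting the zero condition $I=af/a'$ has no evident sign, which is why the proof goes through the first-crossing argument.
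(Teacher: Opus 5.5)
Your proof is correct. The identity $h'=a''I-af'$, the substitution $I=af/a'$ at a zero, and the cancellation
\[
 \frac{a''}{a'}-\frac{f'}{f}=\frac{\varepsilon^6x^4}{2aa'}
\]
all check against \eqref{eq:aprime}, \eqref{eq:asecond} and $f'=-\varepsilon^3x/a$. The first-zero argument is also sound.

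The paper uses a monotone quantity rather than a first-crossing argument. It writes $a'I-af=a'(I-R)$ with $R=a/(xa')$ and $x=1/f$. In the variable $x$ it shows
\[
 \frac{R_x}{I_x}=1-\frac{(\varepsilon^3x^3)^2}{2V^2}\le 1,
\]
so $I-R$ is nondecreasing along the whole core.

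Both arguments rest on the same cancellation of the $C_\varepsilon$ terms. In your notation the paper's identity reads
\[
 \Bigl(\frac{h}{a'}\Bigr)'=\frac{af}{a'}\Bigl(\frac{a''}{a'}-\frac{f'}{f}\Bigr)=\frac{\varepsilon^6x^3}{2(a')^2}>0 ,
\]
and it holds at every core point, not only at zeros of $h$. Your remark that $h'$ has no evident sign is true for $h$ itself. Dividing by $a'$, however, removes $I$ from the derivative entirely, which is why the paper needs no barrier argument.

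What each approach gives:
\begin{itemize}
\item The paper's version yields a genuine monotone normalized gap $h/a'$, which is slightly more than sign preservation.
\item Your version is a self-contained transversality argument that only needs the sign of $h'$ at crossings.
\end{itemize}
Neither argument uses anything about $I$ beyond $I'=f$ on the core. Either therefore applies verbatim to the smoothed primitive $\widetilde I$, as required in Step~5 of Lemma~\ref{lem:cap-smoothing}.
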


\begin{proof}
Return to $x=f^{-1}$ and put
\[
 V=xa'=C_\varepsilon+\frac16\varepsilon^3x^3,
 \qquad R=\frac aV.
\]
Since $af=a/x$, the inequality $a'I\geq af$ is equivalent to $I\geq R$.
The core equations give
\[
 I_x=\frac{a}{\varepsilon^3x^4},\qquad
 \frac{a_x}{a}=\frac1{6x}+
 \frac{C_\varepsilon}{\varepsilon^3x^4},\qquad
 V_x=\frac12\varepsilon^3x^2,
\]
and hence
\begin{equation}\label{eq:R-I-comparison}
 \frac{R_x}{I_x}
 =\frac{\varepsilon^3x^4}{V}
   \left(\frac{a_x}{a}-\frac{V_x}{V}\right)
 =1-\frac{(\varepsilon^3x^3)^2}{2V^2}\leq1.
\end{equation}
Thus a strict inequality $I>R$ propagates along the core: explicitly,
\eqref{eq:R-I-comparison} gives
$(I-R)_x=I_x(1-R_x/I_x)\geq0$.
\end{proof}

\begin{lemma}
\label{lem:cap-smoothing}
For every fixed sufficiently small $\varepsilon$, the cap--core join has a
smooth modification whose change is compactly supported in an arbitrarily
short collar of $r=r_\varepsilon^{\mathrm{tr}}$,
equal to the displayed cap and core outside that collar, such that
\begin{equation}\label{eq:smoothing-monotone}
 \Ric>0,\qquad a'>0,\qquad a''\leq0,\qquad
 f'(0)=0,\quad f'(r)<0\quad(r>0).
\end{equation}
The collar can be chosen so that, for the smoothed primitive
$I(r)=\int_0^r f$,
\begin{equation}\label{eq:smoothing-W}
 a'I-af>0
\end{equation}
throughout the collar and on the ensuing unchanged core.  It can moreover be
chosen with radial width at most $(r_\varepsilon^{\mathrm{tr}})^2$.  Its
full-torus scalar-action contribution is
$O(\varepsilon^3r_\varepsilon^{\mathrm{tr}})$.
\end{lemma}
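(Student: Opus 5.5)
The plan is to smooth only the profile $a$ near $r_0:=r_\varepsilon^{\mathrm{tr}}$ and to transfer everything else from the explicit cap and core. By \eqref{eq:transition-first-jet}, both pieces share the first jet $(r_0,1,1,-\varepsilon^3/r_0)$ at $r_0$, so the piecewise metric is $C^1$ and only the second derivatives jump.

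\textbf{Jumps of the second derivatives.} On the cap, $a''=0$ and $f''=-2f^{\mathrm{pole}}_\varepsilon\beta^{\mathrm{cap}}_\varepsilon<0$. On the core, \eqref{eq:asecond} gives $a''<0$, and \eqref{eq:fsecond} gives $f''<0$ at $x=1$. Write $\delta:=r_0^2$ and fix a standard mollifier $\eta_\delta$ of width $\delta/2$. I would replace the pair $(a'',f'')$ on $[r_0-\delta,r_0+\delta]$ by a convex interpolation of the cap and core second derivatives, $h''=(1-\chi)h''_{\mathrm{cap}}+\chi h''_{\mathrm{core}}$ with $\chi$ a monotone cutoff. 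I would then integrate back and add a tiny compactly supported correction that restores the first jet at $r_0+\delta$. The required correction is $O(\delta^2\sup|h''|)$, and it can be absorbed by perturbing $\chi$, in the usual manner of Perelman-type $C^1\to C^\infty$ smoothing.

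\textbf{Sign conditions.} Since $a''$ and $f''$ remain between two nonpositive functions, we get $a''\le0$ and $f''\le 0$. Also $a'$ stays within $O(\delta\sup|a''|)$ of $1$, and $f'$ stays within $O(\delta)$ of $-\varepsilon^3/r_0<0$. Outside the collar the properties $a'>0$, $f'<0$ for $r>0$, and $f'(0)=0$ come directly from the explicit formulas: on the core $a'>0$ by \eqref{eq:aprime}.

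\textbf{Ricci positivity.} By \eqref{eq:sectional}, each Ricci eigenvalue is a sum of the terms $-a''/a$, $-f''/f$ and $-a'f'/(af)$. The term $-a'f'/(af)\approx \varepsilon^3/r_0^2$ is strictly positive in the collar and varies by a relative amount $O(\delta)$. The other two terms are interpolated convexly, so each smoothed eigenvalue is, up to an error $O(\delta)\cdot\varepsilon^3/r_0^2$, a convex combination of the cap and core eigenvalues at nearby radii. Both of those are positive: the cap eigenvalues are displayed in Lemma~\ref{lem:packet-data}, and the core eigenvalues are \eqref{eq:ric-r}--\eqref{eq:ric-theta} at $x\approx1$, all of size $\gtrsim\varepsilon^3/r_0^2$. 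So the error is absorbed for $\delta=r_0^2$ small.

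\textbf{The comparison \eqref{eq:smoothing-W}.} At $r_0$ we have $I(r_0)=\int_0^{r_0}f\ge r_0$, since $f\ge1$ on the cap, while $af=r_0$ and $a'=1$. That gives only $\ge$, so strictness needs care. Because $f>1$ on $[0,r_0)$ (as $f^{\mathrm{pole}}_\varepsilon>1$ and $f$ is decreasing), we in fact have $I(r_0)-r_0\ge c\,\varepsilon^3 r_0$. In the collar, $a'I-af$ changes by $O(\delta)$, which is $O(r_0^2)\ll\varepsilon^3r_0$ because $r_0$ is exponentially small; so the inequality is strict throughout the collar. Lemma~\ref{lem:core-comparison} then propagates it along the unchanged core.

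\textbf{Action estimate.} The scalar curvature is $2(K_{r\theta}+K_{r\phi}+K_{\theta\phi})$, and the volume element is $af\,dr\,d\theta\,d\phi$. Hence
\[
\int\Scal\,dV=8\pi^2\int\bigl(-2a''f-2af''-2a'f'\bigr)\,dr .
\]
The pointwise bound $|\Scal|\,af\lesssim r_0\cdot\varepsilon^3/r_0^2$ integrated over width $\delta=r_0^2$ already gives $O(\varepsilon^3r_0)$.

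\textbf{Main obstacle.} I expect the Ricci step to be the main obstacle: the $\theta$-eigenvalue $-a''/a-a'f'/(af)$ with the cap value $a''=0$ must stay positive while $a''$ is interpolated. It does, because $-a'f'/(af)$ is the dominant positive term and the added $-a''/a$ is nonnegative.
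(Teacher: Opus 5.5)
There is a genuine gap in your Ricci-positivity step, and it comes from two incorrect inputs. First, on the core at the join ($x=1$, $a=r_0$, $f=1$), \eqref{eq:fsecond} gives $f''=\varepsilon^3(1-\varepsilon^3)/r_0^2>0$, not $f''<0$. So $-f''/f$ swings from $+\varepsilon^3/r_0^2$ on the cap to about $-\varepsilon^3/r_0^2$ on the core. Second, the core eigenvalues are not all $\gtrsim\varepsilon^3/r_0^2$. By \eqref{eq:ric-r} and \eqref{eq:ric-phi}, $\Ric(e_r,e_r)=\varepsilon^6/(2r_0^2)$ and $\Ric(e_\phi,e_\phi)=\varepsilon^6/r_0^2$ there. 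This is because $-a''/a$ nearly cancels $-f''/f$, and $-f''/f$ nearly cancels $-a'f'/(af)$. Along the segment of second jets joining the two sides, the least eigenvalue is $\varepsilon^6/(2r_0^2)$, roughly a factor $\varepsilon^3$ below the jump $J$ of the second derivatives. The binding directions are $e_r$ and $e_\phi$ on the core side. The $e_\theta$ direction you single out as the main obstacle keeps a margin of at least $\varepsilon^3/r_0^2$.

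This is fatal for the jet-restoring correction, which you call tiny. The jet mismatch is $O(\delta^2\sup|h''|)$. But removing a value mismatch of size $\asymp\delta^2|J|$ by a compactly supported, zero-mean change of $h''$ on a window of length $\asymp\delta$ forces that change to have size comparable to $|J|$. In the $f$-component that is about $\varepsilon^3/r_0^2$, far above the $\varepsilon^6/r_0^2$ margin. Where it raises $f''$ on the core side, it destroys $\Ric(e_r,e_r)>0$ and $\Ric(e_\phi,e_\phi)>0$.

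Nor can the correction be hidden by perturbing a monotone $\chi$. Up to the slow variation of $J$, the right-end value mismatch is proportional to
\[
\int s\bigl(\chi-\mathbf 1_{\{s>0\}}\bigr)\,ds=-\tfrac12\int s^2\,d\chi<0 .
\]
So matching the zero jet forces $\chi$ out of the monotone class, and then the convex-combination argument no longer applies. The same is true of your argument for $a''\le0$.

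The missing idea is a separation of scales. Interpolate the second jets only on a window of width $\eta$ about $r_0$. Cancel the zeroth and first moments of the resulting defect with fixed bumps at scale $\delta$ placed on the core side, and impose $\eta/\delta\le c_1\varepsilon^3$. The correction is then $O(c_1\varepsilon^6/r_0^2)$ in the curvature-normalized norm, below the margin. On its support $-a''\ge\varepsilon^3/(4r_0)$ dominates the $a$-component of the correction, which preserves $a''\le0$.

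Once this is repaired, the rest of your plan is fine. That includes your treatment of \eqref{eq:smoothing-W} via $I(r_0)-r_0=\varepsilon^3r_0/3$, the monotonicity of $a'$ and $f'$, and the action bound. Two harmless slips remain: your action formula carries an extra factor of $2$, and the collar $[r_0-r_0^2,r_0+r_0^2]$ has width $2r_0^2$ rather than $r_0^2$.
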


\begin{proof}
Fix a sufficiently small $\varepsilon$ such that
\[
 \nu_\varepsilon\leq1,
 \qquad
 r_\varepsilon^{\mathrm{tr}}\leq1;
\]
the second condition follows from
\eqref{eq:transition-radius-decay}.  Put
\[
 \nu:=\nu_\varepsilon=\varepsilon^3,
 \qquad R:=r_\varepsilon^{\mathrm{tr}},
 \qquad s:=r-R,
 \qquad \mathbf u:=(a,f),
\]
and use the curvature-adapted norm
\[
 \lVert(p,q)\rVert_*:=\frac{|p|}{R}+|q|.
\]
The common zero and first jet from \eqref{eq:transition-first-jet} is
\[
 (a,f,a',f')=(R,1,1,-\nu/R).
\]
The cap formula \eqref{eq:cap} and the core formulas
\eqref{eq:asecond}--\eqref{eq:fsecond}, evaluated at $x=1$, give the
one-sided second jets
\begin{equation}\label{eq:smoothing-jets}
\begin{aligned}
 Z_-&:=(a'',f'')_-=\left(0,-\frac{\nu}{R^2}\right),\\
 Z_+&:=(a'',f'')_+
 =\left(-\frac{\nu(1-\nu/2)}R,
          \frac{\nu(1-\nu)}{R^2}\right).
\end{aligned}
\end{equation}

\smallskip
\noindent\emph{Step 1. The Ricci-positive model segment.}
For the doubly warped metric $dr^2+a^2d\theta^2+f^2d\phi^2$, the natural
orthonormal frame is a Ricci eigenframe, with eigenvalues
\begin{equation}\label{eq:smoothing-Ricci-formula}
\begin{aligned}
 R_r&=-\frac{a''}a-\frac{f''}f,\\
 R_\theta&=-\frac{a''}a-\frac{a'f'}{af},\\
 R_\phi&=-\frac{f''}f-\frac{a'f'}{af}.
\end{aligned}
\end{equation}
Freeze the common zero and first jet above and interpolate only the second
jet:
\[
 Z_\tau=(1-\tau)Z_-+\tau Z_+,
 \qquad 0\leq\tau\leq1.
\]
Because \eqref{eq:smoothing-Ricci-formula} is affine in $(a'',f'')$ when
the lower jets are fixed, direct substitution gives
\begin{equation}\label{eq:smoothing-Ricci-segment}
\begin{aligned}
 R_r(Z_\tau)
 &=\frac{\nu}{R^2}\left(1-\tau(1-\nu/2)\right)
   \geq\frac{\nu^2}{2R^2},\\
 R_\theta(Z_\tau)
 &=\frac{\nu}{R^2}\left(1+\tau(1-\nu/2)\right)
   \geq\frac{\nu}{R^2},\\
 R_\phi(Z_\tau)
 &=\frac{\nu}{R^2}\left(2-\tau(2-\nu)\right)
   \geq\frac{\nu^2}{R^2}.
\end{aligned}
\end{equation}
Thus the entire segment lies strictly inside the Ricci-positive cone, with
smallest margin $\nu^2/(2R^2)$.  The rest of the construction keeps the
actual smoothed jets within a fixed fraction of this margin.

\smallskip
\noindent\emph{Step 2. The collar and the preliminary second-jet interpolation.}
Let $C\geq1$ be an absolute constant large enough to dominate all fixed
constants in the estimates below, and choose once and for all
\[
 0<c_1\leq\min\left\{1,\frac1{8C}\right\}.
\]
With $\varepsilon$ and $c_1$ now fixed, every subsequent approximation
requirement will be met by decreasing $\delta$ and then $\eta/\delta$.
Then choose
\begin{equation}\label{eq:smoothing-collar-width}
 0<\delta\leq c_1\min\{\nu R,R^2\}
\end{equation}
so small that, on each unsmoothed half-collar, the zero and first jets
satisfy
\begin{equation}\label{eq:smoothing-first-jet-error}
 \frac{|a-R|}{R}+|f-1|+|a'-1|
 +\frac R\nu\left|f'+\frac\nu R\right|
 \leq c_1\nu,
\end{equation}
and, with the appropriate sign in \eqref{eq:smoothing-jets},
\[
 \frac{|a''-a''_\pm|}{R}+|f''-f''_\pm|
 \leq c_1\frac{\nu^2}{R^2}.
\]
This is possible because $\varepsilon$, and hence $R>0$, is fixed and the
two one-sided profiles are smooth up to the join.

Extend the one-sided second-derivative germs smoothly across the whole
collar as $G_-$ and $G_+$, respectively, so that
\begin{align*}
 G_-(s)&=\mathbf u''(s) &&(-\delta\leq s<0),\\
 G_+(s)&=\mathbf u''(s) &&(0<s\leq\delta),\\
 \sup_{|s|\leq\delta}\lVert G_-(s)-Z_-\rVert_*
 &\leq 2c_1\frac{\nu^2}{R^2},\\
 \sup_{|s|\leq\delta}\lVert G_+(s)-Z_+\rVert_*
 &\leq 2c_1\frac{\nu^2}{R^2}.
\end{align*}
Their first components can also be kept nonpositive: the cap component is
identically zero, whereas the core component is strictly negative near the
join.

Choose a smooth nondecreasing cutoff $\vartheta$ which is zero on
$(-\infty,-1]$ and one on $[1,\infty)$, take $0<\eta<\delta/10$, and set
\[
 \tau(s):=\vartheta(s/\eta),\qquad
 G^0(s):=(1-\tau(s))G_-(s)+\tau(s)G_+(s).
\]
The function $G^0$ is a preliminary candidate for
$\widetilde{\mathbf u}''$.  It is smooth and agrees with the original cap
and core second derivatives for $s\leq-\eta$ and $s\geq\eta$,
respectively.

\smallskip
\noindent\emph{The endpoint obstruction and its two moment corrections.}
Initially, let $\mathbf u''$ denote the piecewise classical second
derivative of the original $C^1$ profile.  For every
$\varphi\in C_c^\infty((-\delta,\delta))$, integration by parts on the two
half-collars gives, componentwise,
\[
 \left\langle\partial_s^2\mathbf u,\varphi\right\rangle
 =\int_{-\delta}^0\mathbf u''(s)\varphi(s)\,ds
  +\int_0^\delta\mathbf u''(s)\varphi(s)\,ds
  +\bigl(\mathbf u'(0+)-\mathbf u'(0-)\bigr)\varphi(0).
\]
The last term vanishes because the cap and core have the same first jet at
$s=0$.  Thus the piecewise classical second derivative is also the weak
second derivative and carries no Dirac mass at the join.  Set
\[
 D:=G^0-\mathbf u'',
\]
so that $D$ is supported in $[-\eta,\eta]$.

The need for two moment conditions can be seen before making any
correction.  If a profile $\mathbf v$ is reconstructed from the exact
left-hand jet with $\mathbf v''=G^0$, then, to the right of the support of
$D$,
\[
 (\mathbf v-\mathbf u)'=\int_{-\delta}^{\delta}D(t)\,dt,
 \qquad
 \mathbf v-\mathbf u
 =s\int_{-\delta}^{\delta}D(t)\,dt
   -\int_{-\delta}^{\delta}tD(t)\,dt.
\]
Thus the zeroth moment of $D$ is the mismatch of the right first jet, and,
after it vanishes, the first moment is the remaining mismatch of the right
zero jet.

From \eqref{eq:smoothing-jets},
\[
 \lVert Z_+-Z_-\rVert_*
 \leq3\frac\nu{R^2}.
\]
The extension bounds therefore give
\begin{align*}
 \lVert D\rVert_*&\leq C\frac\nu{R^2},\\
 \left\lVert\int_{-\delta}^{\delta}D(s)\,ds\right\rVert_*
 &\leq C\eta\frac\nu{R^2},\\
 \left\lVert\int_{-\delta}^{\delta}sD(s)\,ds\right\rVert_*
 &\leq C\eta^2\frac\nu{R^2}.
\end{align*}

To cancel these two moments, choose fixed scalar bumps
$\psi_0,\psi_1\in C_c^\infty((1/3,2/3))$ such that

\[
 M:=\begin{pmatrix}
  \int\psi_0&\int\psi_1\\
  \int t\psi_0(t)\,dt&\int t\psi_1(t)\,dt
 \end{pmatrix}
\]
is invertible.  After replacing $\psi_j(t)$ by $\psi_j(s/\delta)$, their
moment matrix is
\[
 \begin{pmatrix}\delta&0\\0&\delta^2\end{pmatrix}M.
\]
Solving the resulting two-by-two system in each component produces a
correction $\Gamma$, supported in $(\delta/3,2\delta/3)$, such that
\begin{equation}\label{eq:smoothing-moments}
 \mathbf E:=G^0+\Gamma-\mathbf u'',\qquad
 \int_{-\delta}^{\delta}\mathbf E(s)\,ds=0,
 \qquad
 \int_{-\delta}^{\delta}s\mathbf E(s)\,ds=0.
\end{equation}
Put
\[
 m_j:=\int_{-\delta}^{\delta}s^jD(s)\,ds,
 \qquad j=0,1,
\]
and set
\[
 \lVert\Gamma\rVert_{*,\infty}
 :=\sup_{|s|\leq\delta}\lVert\Gamma(s)\rVert_*.
\]
Then inversion of the scaled moment matrix gives
\[
 \lVert\Gamma\rVert_{*,\infty}
 \leq C\left(\frac{\lVert m_0\rVert_*}{\delta}
              +\frac{\lVert m_1\rVert_*}{\delta^2}\right)
 \leq C\left(\frac\eta\delta+\frac{\eta^2}{\delta^2}\right)
             \frac\nu{R^2}.
\]
We now impose the scale separation
\begin{equation}\label{eq:smoothing-scale-hierarchy}
 \frac\eta\delta\leq c_1\nu.
\end{equation}
Since $c_1\nu<1$, this yields
\[
 \lVert\Gamma\rVert_{*,\infty}
 \leq Cc_1\frac{\nu^2}{R^2}.
\]

Set $\widetilde G:=G^0+\Gamma$ and reconstruct the profiles from the exact
left-hand jet:
\begin{equation}\label{eq:smoothing-reconstruction}
 \widetilde{\mathbf u}(s)=\mathbf u(-\delta)
 +(s+\delta)\mathbf u'(-\delta)
 +\int_{-\delta}^{s}(s-t)\widetilde G(t)\,dt.
\end{equation}
Then $\widetilde{\mathbf u}=(\widetilde a,\widetilde f)$ is smooth and
$\widetilde{\mathbf u}''=\widetilde G$.  Near the left endpoint,
$\mathbf E=0$ and the initial jets agree, so
$\widetilde{\mathbf u}=\mathbf u$.  Near the right endpoint, the support
of $\mathbf E$ has been passed, and \eqref{eq:smoothing-moments} gives
\[
 (\widetilde{\mathbf u}-\mathbf u)'
 =\int_{-\delta}^{\delta}\mathbf E(t)\,dt=0,
\]
and
\[
 \widetilde{\mathbf u}-\mathbf u
 =s\int_{-\delta}^{\delta}\mathbf E(t)\,dt
  -\int_{-\delta}^{\delta}t\mathbf E(t)\,dt=0.
\]
Hence the modified and original profiles agree identically on
neighborhoods of both collar boundaries; in particular, all endpoint jets
match.

\smallskip
\noindent\emph{Step 3. Ricci positivity and the lower-jet estimates.}
Because $Z_{\tau(s)}=(1-\tau(s))Z_-+\tau(s)Z_+$, the extension estimates
give
\[
 \lVert G^0(s)-Z_{\tau(s)}\rVert_*
 \leq2c_1\frac{\nu^2}{R^2}.
\]
Together with the estimate for $\Gamma$, this proves
\begin{equation}\label{eq:smoothing-second-jet-error}
 \frac{|\widetilde a''(s)-a''_{\tau(s)}|}{R}
 +|\widetilde f''(s)-f''_{\tau(s)}|
 \leq Cc_1\frac{\nu^2}{R^2}.
\end{equation}
More sharply, the support and size estimates for $D$ and $\Gamma$ give
\[
 \int_{-\delta}^{\delta}\lVert\mathbf E(s)\rVert_*\,ds
 \leq C\frac{\eta\nu}{R^2}.
\]
Indeed, the $D$-term has support of length $2\eta$, while the integral of
the $\Gamma$-term is bounded by
\[
 C\delta\left(\frac\eta\delta+\frac{\eta^2}{\delta^2}\right)
   \frac\nu{R^2}
 \leq C\frac{\eta\nu}{R^2}.
\]
Consequently,
\[
 \frac{|(\widetilde a-a)'|}{R}+|(\widetilde f-f)'|
 \leq C\frac{\eta\nu}{R^2},
 \qquad
 \frac{|\widetilde a-a|}{R}+|\widetilde f-f|
 \leq C\frac{\delta\eta\nu}{R^2}.
\]
In view of \eqref{eq:smoothing-collar-width} and
\eqref{eq:smoothing-scale-hierarchy}, the modified zero and first jets
satisfy \eqref{eq:smoothing-first-jet-error} with upper bound
$Cc_1\nu$.  By the fixed choice of $c_1$, these same estimates give
\[
 \widetilde a\geq\frac R2,
 \qquad
 \widetilde f\geq\frac12,
\]
so every denominator in the following Ricci comparison is uniformly
controlled at its natural scale.

Direct subtraction in \eqref{eq:smoothing-Ricci-formula}, now comparing
with the already defined point $Z_{\tau(s)}$, gives
\begin{align*}
 |\widetilde R_i-R_i(Z_{\tau(s)})|\leq C\bigg(&
 \frac{|\widetilde a''-a''_{\tau(s)}|}{R}
 +|\widetilde f''-f''_{\tau(s)}|\\
 &+\frac\nu{R^2}
  \left(\frac{|\widetilde a-R|}{R}
        +|\widetilde f-1|+|\widetilde a'-1|\right)
 +\frac1R\left|\widetilde f'+\frac\nu R\right|\bigg).
\end{align*}
By the choice of the absolute constant $C$, the preceding estimates imply
\[
 |\widetilde R_i-R_i(Z_{\tau(s)})|
 \leq Cc_1\frac{\nu^2}{R^2}.
\]
The choice of $c_1$ makes the Ricci perturbation at most
$\nu^2/(8R^2)$, strictly smaller
than the least margin in \eqref{eq:smoothing-Ricci-segment}.  Hence
$\Ric>0$ throughout the smoothing collar.

\smallskip
\noindent\emph{Step 4. Monotonicity.}
The first components of $G_-$ and $G_+$ are nonpositive, so the first
component of $G^0$ is nonpositive.  Because $\eta<\delta/10$, the
correction $\Gamma_a$ is supported where $\tau=1$ and hence where
$G^0=G_+=\mathbf u''$ is the genuine core second derivative.  On this
support, \eqref{eq:smoothing-jets} and the one-sided second-jet estimate
following \eqref{eq:smoothing-first-jet-error} give
\[
 -a''\geq
 \frac{\nu(1-\nu/2)}R-c_1\frac{\nu^2}R
 \geq\frac{\nu}{4R}.
\]
On the other hand,
\[
 |\Gamma_a|\leq R\lVert\Gamma\rVert_{*,\infty}
 \leq Cc_1\frac{\nu^2}{R}.
\]
The chosen bound on $c_1$ and $\nu\leq1$ imply
\[
 |\Gamma_a|\leq\frac{\nu}{8R}.
\]
Thus $\widetilde a''<0$ on the support of $\Gamma_a$; off that support,
$\Gamma_a=0$ and the first component of $G^0$ is nonpositive.  Therefore
$\widetilde a''\leq0$ throughout the collar.  The first-jet estimate gives
more explicitly
\[
 \widetilde a'=1+O(c_1\nu)>0,
 \qquad
 \widetilde f'=-\frac\nu R\bigl(1+O(c_1\nu)\bigr)<0.
\]
The collar does not meet the axis, so the unchanged cap still has
$\widetilde f'(0)=0$.  Outside the collar, all the asserted curvature and
sign conditions hold by the preceding cap and core calculations.

\smallskip
\noindent\emph{Step 5. The strict lens inequality.}
On the unchanged cap,
\begin{equation}\label{eq:cap-I}
 I(r)=f_\varepsilon^{\mathrm{pole}}
 \left(r-\frac{\beta_\varepsilon^{\mathrm{cap}}r^3}{3}\right),\qquad
 I(r)-rf(r)=\frac23f_\varepsilon^{\mathrm{pole}}
 \beta_\varepsilon^{\mathrm{cap}}r^3\geq0.
\end{equation}
At the unsmoothed join, $a=R$, $a'=1$, and $f=1$, so
\[
 a'I-af=I(R)-R=\frac{R\nu}{3}>0.
\]
With $\varepsilon$ and $c_1$ fixed, decrease $\delta$ if necessary.
Continuity on the two one-sided profiles then gives
\[
 a'I-af\geq\frac{R\nu}{4}
\]
throughout the unsmoothed half-collars.

Set $\widetilde I(r):=\int_0^r\widetilde f(t)\,dt$.  Since the change in
$f$ is supported in a collar of length $2\delta$,
\[
 \lVert\widetilde I-I\rVert_\infty
 \leq2\delta\lVert\widetilde f-f\rVert_\infty.
\]
Since $R\leq1$, \eqref{eq:smoothing-collar-width} gives
$\delta\leq R$.  Using $a,I,\widetilde I=O(R)$ and
$a',f,\widetilde f=O(1)$ in the collar, the preceding bound absorbs the
$\widetilde I-I$ term when the difference of the two lens expressions is
expanded.  Hence
\[
 \left|\widetilde a'\widetilde I-\widetilde a\widetilde f
       -(a'I-af)\right|
 \leq C\left(
 R\lVert\widetilde a'-a'\rVert_\infty
 +\lVert\widetilde a-a\rVert_\infty
 +R\lVert\widetilde f-f\rVert_\infty\right).
\]
The integrated estimates above and $\delta\leq R$ bound the right-hand
side by $C\eta\nu$.  With $R$ and $\delta$ fixed, reduce $\eta/\delta$ if
necessary so that $C\eta<R/8$.  Thus
\[
 \widetilde a'\widetilde I-\widetilde a\widetilde f
 \geq\frac{R\nu}{8}>0
\]
throughout the collar.  At its right endpoint, the modified warping
profiles agree with the explicit core profiles.  Since there $f=1/x$ and
$a'>0$, the last inequality is equivalent to
\[
 \widetilde I>\frac{a}{xa'}.
\]
The proof of Lemma~\ref{lem:core-comparison} applies to the new primitive
$\widetilde I$ and propagates this strict inequality through the unchanged
core.

\smallskip
\noindent\emph{Step 6. Width and scalar action.}
All choices above are made after fixing $\varepsilon$; hence $\delta$ and
$\eta$ may be reduced as far as required.  From
\eqref{eq:smoothing-collar-width} and $c_1\leq1/4$, the full collar width
satisfies
\[
 2\delta\leq\frac{R^2}{2}\leq R^2.
\]
The transition curvatures are $O(\nu/R^2)$, while $af=O(R)$ and the radial
width is $O(\delta)$.  Absorbing the angular factor, the absolute
full-torus scalar-action contribution is therefore
\[
 O\left(\frac\nu{R^2}\,R\delta\right)
 =O\left(\frac{\nu\delta}{R}\right)
 =O(\nu R)
 =O(\varepsilon^3r_\varepsilon^{\mathrm{tr}}).
\]
This proves the lemma.
\end{proof}

For each sufficiently small $\varepsilon$, fix one smoothing furnished by
Lemma~\ref{lem:cap-smoothing}.  Henceforth
$g_\varepsilon,a_\varepsilon,f_\varepsilon$ denote the resulting smooth
metric and its warping profiles; they agree with the explicit cap and core
outside the smoothing collar.  Set
$I_\varepsilon(r):=\int_0^r f_\varepsilon(\tau)\,d\tau$.  Once the relevant
parameters are fixed, we suppress their subscripts when no ambiguity can
result.

\begin{lemma}
\label{lem:lens-ode}
For each fixed $U\in(0,1)$ and all sufficiently small $\varepsilon>0$,
let $r_U$ be the core point $z=U$.
There is a continuous function $\Psi:[0,r_U]\to[0,\infty)$, smooth on
$(0,r_U)$, with $\Psi(r_U)=0$ and $\Psi(0)>0$ such that the two graphs
$\phi=\pm\Psi(r)$ have constant positive meridional principal curvature
$\kappa$ for some $\kappa>0$.  Set
$\Delta_{\varepsilon,U}:=\Psi(0)$.  If
$\Delta_{\varepsilon,U}<\pi$, then
\begin{equation}\label{eq:domain}
 \Omega_{\varepsilon,U}
 =\{0\leq r\leq r_U,\ -\Psi(r)\leq\phi\leq\Psi(r)\}
\end{equation}
is embedded in the $\phi$-circle.  On either graph face,
the induced rotational metric $h=du^2+b(u)^2d\theta^2$ satisfies
$b'>0$ before the equator, $b''(0)=0$, and $b''<0$ away from the pole.
\end{lemma}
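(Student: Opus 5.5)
The plan is to reduce the constant-curvature condition to a first-order Clairaut-type ODE in the $(r,\phi)$ half-plane and integrate it explicitly. The reflection $\theta\mapsto-\theta$ is an isometry of $g_\varepsilon$, so each slice $\{\theta=\mathrm{const}\}$ is totally geodesic with metric $dr^2+f^2d\phi^2$. Consequently, for a $\theta$-invariant surface $\phi=\pm\Psi(r)$, the meridional principal curvature equals the geodesic curvature of the profile curve in this two-dimensional metric. Parametrize the profile curve by arclength $u$. I would work with the Clairaut quantity
\[
 w:=f^2\frac{d\phi}{du}=f\cdot(\text{$\phi$-component of the unit tangent}).
\]
A direct computation, the same one that shows $w$ is constant along geodesics of a surface of revolution, should give $dw/dr=\kappa f$ for a curve of constant geodesic curvature $\kappa$ with the appropriate orientation. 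Smoothness at the axis $r=0$, where the $\theta$-circle collapses, forces the face to meet $r=0$ radially, so $w(0)=0$. Integrating then gives $w=\kappa I(r)$ with $I=\int_0^r f$.

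This leads to the explicit formula
\[
 \Psi'(r)=-\frac{\kappa I(r)}{f(r)\sqrt{f(r)^2-\kappa^2I(r)^2}},
 \qquad
 \Psi(r)=\int_r^{r_U}\frac{\kappa I}{f\sqrt{f^2-\kappa^2I^2}}\,d\tau .
\]
I would choose $\kappa:=f(r_U)/I(r_U)>0$. By Lemma~\ref{lem:cap-smoothing}, $f'<0$ for $r>0$, while $I$ is increasing. Hence $I/f$ is strictly increasing, which gives $\kappa I<f$ on $[0,r_U)$ with equality exactly at $r_U$.

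Near $r_U$, the function $f^2-\kappa^2I^2$ has a simple zero, since its derivative is $2ff'-2\kappa^2If<0$. The integrand is therefore $O((r_U-r)^{-1/2})$. This makes $\Psi$ continuous on $[0,r_U]$, smooth on $(0,r_U)$, with $\Psi(r_U)=0$, and $\Psi(0)>0$ because the integrand is positive on $(0,r_U)$. Geometrically, at $r_U$ the profile becomes tangent to $\partial_\phi$, so the two graphs $\pm\Psi$ join into one arc of constant curvature $\kappa$; smoothness there follows because $r$ is a smooth function of arclength with a nondegenerate maximum. If $\Delta_{\varepsilon,U}<\pi$, the region \eqref{eq:domain} occupies an interval of length less than $2\pi$ in the $\phi$-circle of period $2\pi$, so it is embedded. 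I would also check that the sign of $\kappa$ is the one that makes the faces convex toward $\Omega_{\varepsilon,U}$.

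For the face metric, the induced metric is $du^2+b(u)^2d\theta^2$ with $b(u)=a(r(u))$. By the ODE,
\[
 \frac{dr}{du}=\sqrt{1-w^2/f^2},\qquad w/f=\kappa I/f .
\]
Then $b'=a'\,dr/du>0$ before the equator $r=r_U$, using $a'>0$ from Lemma~\ref{lem:cap-smoothing}. Differentiating again gives
\[
 b''=a''\Bigl(\frac{dr}{du}\Bigr)^2+a'\,\frac{d^2r}{du^2},
 \qquad
 \frac{d^2r}{du^2}=-\frac{w}{f}\,\frac{d}{dr}\Bigl(\frac wf\Bigr).
\]
The first term is $\leq0$ by $a''\leq0$ in \eqref{eq:smoothing-monotone}. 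The second is strictly negative for $r>0$, since $w>0$ there and $I/f$ is strictly increasing. At the pole, $w=0$ and $a''(0)=0$ because $a=r$ on the cap, so $b''(0)=0$.

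The main obstacle I expect is making the identification of the meridional principal curvature with the geodesic curvature rigorous, together with the correct sign convention for $dw/dr=\kappa f$. A second delicate point is the regularity of the joined faces at both ends: at $r=0$, on the axis where $a$ vanishes, and at $r=U$, where $\Psi'$ blows up. For the latter I would reparametrize by arclength, or use $\phi$ as the graph variable near $r_U$, and check smoothness there.
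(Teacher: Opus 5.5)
Your proposal is correct and follows the paper's proof essentially step by step. Your Clairaut quantity $w=f^2\,d\phi/du$ is, up to orientation, the paper's $fy$ with $y=\langle\nu,\partial_r\rangle$, and from there the equation $dw/dr=\kappa f$, the pole condition $w(0)=0$, the closing choice $\kappa=f(r_U)/I(r_U)$, the monotonicity of $I/f$, the square-root integrability at $r_U$, and the formula $b''=a''(1-y^2)-a'yy_r$ all coincide with the paper's argument. The only cosmetic difference is that you identify the meridional principal curvature with the geodesic curvature of the profile via the totally geodesic $\theta$-slices, whereas the paper computes $k_m=(fy)'/f$ directly from the connection of the doubly warped metric.
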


\begin{proof}
Fix $U\in(0,1)$ independently of $\varepsilon$, take
$0<\varepsilon<U/2$ sufficiently small, and set
\[
 a:=a_\varepsilon,\qquad f:=f_\varepsilon,\qquad
 I(r)=\int_0^r f(\tau)\,d\tau.
\]
By construction and \eqref{eq:smoothing-monotone},
\[
 f>0,\qquad f'\leq0,\qquad a'>0,\qquad a''\leq0.
\]
The graph $\phi=\Psi(r)$ is generated by a curve in the meridional
metric $dr^2+f(r)^2d\phi^2$.  With the signed convention
$k_g=\langle\nabla_T\nu,T\rangle$, its geodesic curvature $k_g$ is
exactly the meridional principal curvature of the graph face.  Thus the
condition below prescribes a constant-geodesic-curvature generating
curve.
We seek the lens \eqref{eq:domain} as two symmetric graphs
$\phi=\pm\Psi(r)$.  The upper face bounds the region
$\phi-\Psi(r)\leq0$.  Normalizing the gradient of
$\phi-\Psi(r)$ gives the outward unit normal; the construction below will
show that its radial component points toward $\partial_r$:
\[
 \nu=\frac{-f\Psi'\,\partial_r+f^{-1}\partial_\phi}
 {\sqrt{1+f^2(\Psi')^2}}.
\]
For $0\leq r<r_U$, let $y$ be its radial component:
\begin{equation}\label{eq:y}
 y:=\langle\nu,\partial_r\rangle
 =-\frac{f\Psi'}{\sqrt{1+f^2(\Psi')^2}}.
\end{equation}
Thus $|y|\leq1$, and, wherever $|y|<1$,
\[
 -\Psi'=\frac{y}{f\sqrt{1-y^2}}.
\]
This variable records the normal angle and turns the prescribed-curvature
equation into a first-order equation.  Indeed, on $0<r<r_U$, introduce the
orthonormal frame
\[
 e_r=\partial_r,\qquad e_\theta=a^{-1}\partial_\theta,
 \qquad e_\phi=f^{-1}\partial_\phi.
\]
Then the unit normal and the unit meridional tangent directed toward
increasing $r$ are
\[
 \nu=y e_r+\sqrt{1-y^2}\,e_\phi,
 \qquad
 T=\sqrt{1-y^2}\,e_r-y e_\phi.
\]
The relevant connection identities for the doubly warped metric are
\[
 \begin{aligned}
 \nabla_{e_r}e_r&=0,
 &\qquad \nabla_{e_r}e_\phi&=0,\\
 \nabla_{e_\phi}e_r&=\frac{f'}f e_\phi,
 &\qquad \nabla_{e_\phi}e_\phi&=-\frac{f'}f e_r,\\
 \nabla_{e_\theta}e_r&=\frac{a'}a e_\theta,
 &\qquad \nabla_{e_\theta}e_\phi&=0.
 \end{aligned}
\]
Since $T(r)=\sqrt{1-y^2}$ and
$(\sqrt{1-y^2})'=-yy'/\sqrt{1-y^2}$, these identities give
\[
 \begin{aligned}
 \nabla_T\nu
 &=\sqrt{1-y^2}\left(y'+\frac{f'}fy\right)e_r
   -y\left(y'+\frac{f'}fy\right)e_\phi \\
 &=\left(y'+\frac{f'}fy\right)T,\\
 \nabla_{e_\theta}\nu
 &=\frac{a'}ay\,e_\theta.
 \end{aligned}
\]
Thus $T$ and $e_\theta$ are principal directions.  With the convention
$k(E)=\langle\nabla_E\nu,E\rangle$, their principal curvatures are
\begin{equation}\label{eq:shapes}
 k_m=y'+\frac{f'}fy=\frac{(fy)'}f,
 \qquad k_\theta=\frac{a'}ay.
\end{equation}
A radial graph over the collapsed $\theta$-disk can be differentiable at
its center only if $\Psi'(0)=0$; otherwise its directional derivative at
the center would depend on $\theta$.
Thus first-order pole regularity requires $\Psi'(0)=0$, equivalently
$y(0)=0$.  We prescribe
$k_m\equiv\kappa>0$.  Then $(fy)'=\kappa f$, so the pole condition gives
\[
 y(r)=\frac{\kappa I(r)}{f(r)}.
\]
First-order matching at $r=r_U$ requires the two graphs to become vertical
as $r$-graphs.  Since $y=\kappa I/f>0$ for $r>0$, this is equivalent to
$y(r_U)=1$, which determines the curvature:
\begin{equation}\label{eq:y-solution}
 \kappa=\frac{f(r_U)}{I(r_U)},\qquad
 y(r)=\frac{f(r_U)I(r)}{I(r_U)f(r)},\qquad y(r_U)=1.
\end{equation}

Since $f>0$ and $f'\leq0$,
\[
 \left(\frac If\right)'=1-\frac{If'}{f^2}>0,
 \qquad y'=\kappa-\frac{f'}fy\geq\kappa>0.
\]
Thus $y$ increases strictly from $0$ to $1$, so $0<y<1$ on
$(0,r_U)$ and the slope identity gives $\Psi'<0$ there.  In particular,
$y'(r_U)>0$.  Near $r_U$,
\[
 1-y(r)^2=2y'(r_U)(r_U-r)+O((r_U-r)^2),
\]
so the integrand below is $O((r_U-r)^{-1/2})$ and hence integrable at the
equator.  At the pole, $I(r)=O(r)$ and $f$ stays positive, so $y(r)=O(r)$
and the same integrand is $O(r)$; in particular, the resulting function
will satisfy $\Psi'(0)=0$.  The slope identity, together with the closing
condition $\Psi(r_U)=0$, therefore defines
\begin{equation}\label{eq:Psi}
 \Psi(r)=\int_r^{r_U}
 \frac{y(\tau)}{f(\tau)\sqrt{1-y(\tau)^2}}\,d\tau.
\end{equation}
This is the advantage of prescribing constant geodesic curvature:
$k_m\equiv\kappa$ integrates once to $fy=\kappa I$, the closing condition
determines $\kappa$, and \eqref{eq:y-solution}--\eqref{eq:Psi} give
explicit formulas for both the normal angle and the opening
$\Delta_{\varepsilon,U}=\Psi(0)$.  Thus
$\Delta_{\varepsilon,U}$ is not chosen independently: it is the total
angular displacement forced by the constant-curvature equation and the
closing normalization $\Psi(r_U)=0$.
Differentiating \eqref{eq:Psi} recovers the slope identity, so the radial
component of the graph normal is the prescribed function $y$; consequently
\eqref{eq:shapes} gives $k_m\equiv\kappa$.
The integrand is positive, so $\Psi$ decreases from
\[
 \Delta_{\varepsilon,U}:=\Psi(0)>0
\]
at the pole to $0$ at the equator.

For every $r$, the retained $\phi$-interval has length
$2\Psi(r)\leq2\Delta_{\varepsilon,U}$.  Hence, whenever
$\Delta_{\varepsilon,U}<\pi$, it has length less than the $2\pi$ period of
the $\phi$-circle and is embedded.  Pulling the ambient metric back to one
graph face gives
\[
 h=\bigl(1+f^2(\Psi')^2\bigr)dr^2+a(r)^2d\theta^2
   =\frac{dr^2}{1-y^2}+a(r)^2d\theta^2.
\]
Thus, after introducing meridional arclength $u$, this becomes
\begin{equation}\label{eq:lensmetric}
 h=du^2+b(u)^2d\theta^2,\qquad
 du=\frac{dr}{\sqrt{1-y^2}},\qquad b(u(r))=a(r),
 \qquad0\leq u\leq\ell.
\end{equation}
The endpoint estimates used for \eqref{eq:Psi} also show that
\[
 \ell=\int_0^{r_U}\frac{dr}{\sqrt{1-y(r)^2}}<\infty.
\]
The full meridian has length $2\ell$.  Differentiating gives
\begin{equation}\label{eq:bderivatives}
 b'=a'\sqrt{1-y^2},\qquad
 b''=a''(1-y^2)-a'yy'.
\end{equation}
Here primes on $b$ mean $d/du$, whereas primes on $a,f,y$ mean $d/dr$.
The right side of \eqref{eq:bderivatives} extends continuously to both
endpoints.  Before the equator, $a'>0$ and $0\leq y<1$, so $b'>0$.  At the
pole,
$a''(0)=0$ and $y(0)=0$, hence $b''(0)=0$.  For $0<u\leq\ell$, the first
term in $b''$ is nonpositive, whereas the second is strictly negative
because $a'>0$, $y>0$, and $y'>0$.  Therefore $b''<0$ away from the pole.
Reflection under $\phi\mapsto-\phi$ gives the same curvature and induced
metric conclusions for the lower graph face.  This proves the lemma.
\end{proof}

\begin{lemma}\label{lem:lens-smooth}
Assume \(\Delta_{\varepsilon,U}<\pi\).
The two graphs $\phi=\pm\Psi(r)$ join to a smooth rotational
two-sphere, and $\Omega_{\varepsilon,U}$ is a smooth three-ball.
\end{lemma}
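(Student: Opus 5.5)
The plan is to check smoothness separately at the two places where the graph description $\phi=\pm\Psi(r)$ degenerates: the pole $r=0$, where the $\theta$-circle collapses, and the equator $r=r_U$, where $\Psi'$ blows up. Away from these places each face is the graph of a function smooth on $(0,r_U)$ over a smooth region of the toric metric, so nothing needs to be done there. Note that $r_U$ corresponds to $z=U>2\varepsilon$. It therefore lies in the unchanged explicit core, beyond the smoothing collar of Lemma~\ref{lem:cap-smoothing}, and the ambient metric is smooth and nondegenerate near the equator.

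\emph{Equator.} Here I will abandon the graph parametrization and work with the generating curve in the meridional surface $(r,\phi)$ with metric $dr^2+f^2d\phi^2$. By Lemma~\ref{lem:lens-ode}, the upper face is generated by a curve of constant geodesic curvature $\kappa$. It reaches $(r_U,0)$ with tangent proportional to $\partial_\phi$, because $y(r_U)=1$ means the normal there is $e_r$. Curves of constant geodesic curvature, parametrized by arclength, solve a smooth second-order ODE whose coefficients are smooth near $(r_U,0)$. Let $\gamma$ be the unique smooth solution through $(r_U,0)$ with unit tangent $-e_\phi$ and curvature $\kappa$. By uniqueness, $\gamma$ coincides on one side with the upper generating curve. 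The reflection $\phi\mapsto-\phi$ is an isometry of the meridional metric; it fixes the initial point and reverses the tangent. Hence the reflected curve, with its orientation reversed, solves the same initial value problem, and the other half of $\gamma$ is therefore the lower generating curve. So the two graphs together form a single smooth embedded curve near the equator. Rotating in $\theta$, where $a(r_U)>0$, gives smoothness of the surface there. Equivalently, near $\phi=0$ one can write the curve as $r=\rho(\phi)$ with $\rho$ smooth and even, $\rho'(0)=0$ and $\rho''(0)<0$. This matches the expansion $1-y^2\approx 2y'(r_U)(r_U-r)$. I expect this equator step to be the main obstacle, because the square-root singularity of $\Psi$ must be shown to be only a parametrization artifact; the ODE-uniqueness-plus-reflection argument is what I would try first.

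\emph{Pole.} Near $r=0$ we are on the unchanged cap, where $a(r)=r$ and $f$ is smooth and even. Using Cartesian coordinates $(x_1,x_2)=(r\cos\theta,r\sin\theta)$, the metric is smooth near the axis. The primitive $I$ is odd, so $y=\kappa I/f$ is smooth and odd. Then $\Psi'=-y/(f\sqrt{1-y^2})$ is smooth and odd near $0$, and $\Psi$ is smooth and even. A smooth even function can be written as $\Psi(r)=\widetilde\Psi(r^2)$ with $\widetilde\Psi$ smooth, so $\phi=\pm\widetilde\Psi(x_1^2+x_2^2)$ are smooth graphs over a disk. This is the regularity statement; the condition $\Psi'(0)=0$ already used in Lemma~\ref{lem:lens-ode} is its first-order shadow.

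\emph{Topology.} Each face is the graph of a function over the closed $\theta$-disk $D=\{r\le r_U\}$. The two faces meet exactly along the circle $r=r_U$, $\phi=0$. The union $S$ is therefore two closed disks glued along their boundaries, i.e.\ a two-sphere, and it is smooth and embedded by the two local arguments together with $\Delta_{\varepsilon,U}<\pi$. It is rotational because $\theta$-rotation preserves it. For the region, $\Omega_{\varepsilon,U}=\{(p,\phi):p\in D,\ |\phi|\le\Psi(p)\}$ is a compact domain with smooth boundary $S$, fibered over $D$ by intervals that degenerate only over $\partial D$. The map $(p,t)\mapsto(p,t\Psi(p))$ exhibits it as a ``spindle'' over $D$, homeomorphic to a closed three-ball. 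To obtain a diffeomorphism, I would use the fact that a compact smooth domain with boundary $S^2$, homeomorphic to a ball, is diffeomorphic to the ball in dimension three; this follows from the Alexander/Schoenflies theorem applied inside the solid torus chart. Alternatively, I would flow along a vector field that is transverse to $S$ and radial with respect to the center $(0,0)$.
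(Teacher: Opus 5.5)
Your proposal is correct and follows essentially the paper's route: parity on the exact cap ($y$ odd, hence $\Psi$ even) at the pole, ODE uniqueness combined with reflection symmetry at the equator, and then rotating the orbit-space disk to obtain the three-ball. The only differences are cosmetic. At the equator the paper reflects the reduced scalar equation $r_{ss}=-y(r)\,y_r(r)$ about the equatorial value of $s$, whereas you reflect the full constant-geodesic-curvature system under $\phi\mapsto-\phi$. Your topology step (Whitney's lemma and star-shapedness in the $(x_1,x_2,\phi)$ chart) is also more detailed than the paper's.
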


\begin{proof}
Near the $\theta$-bolt, put
$f_0:=f_\varepsilon^{\mathrm{pole}}$ and
$\beta:=\beta_\varepsilon^{\mathrm{cap}}$.  The exact cap formulas give
\[
 a=r,\qquad f=f_0(1-\beta r^2),\qquad
 I=f_0\left(r-\frac{\beta}{3}r^3\right).
\]
Consequently
\[
 y=\kappa r\frac{1-\beta r^2/3}{1-\beta r^2}
   =\kappa r+O(r^3).
\]
Thus $y$ is smooth and odd in $r$.  Since $f$ and
$\sqrt{1-y^2}$ are smooth and even near $r=0$, the slope identity
\[
 -\Psi'=\frac{y}{f\sqrt{1-y^2}}
\]
shows that $\Psi'$ is smooth and odd.  Hence $\Psi$ is smooth and even;
in particular,
\[
 \Psi(r)=\Delta_{\varepsilon,U}
 -\frac{\kappa}{2f_0}r^2+O(r^4).
\]
Thus each graph is a smooth rotational disk at its pole.

At $r=r_U$, let $s$ be boundary arclength on the upper graph.  Then
\[
 \frac{dr}{ds}=\sqrt{1-y^2},\qquad
 \frac{d\phi}{ds}=-\frac yf.
\]
The condition $y(r_U)=1$ gives $r_s(\ell)=0$, which is only the
first-order tangent-matching condition.
Continue past the equator using the negative square root for $dr/ds$
and the same second equation.  On both sides,
\begin{equation}\label{eq:r-equator-ode}
 \frac{d^2r}{ds^2}=-y(r)y_r(r).
\end{equation}
Equation~\eqref{eq:y-solution} gives a smooth $y$ up to $r_U$.
Thus the right side of \eqref{eq:r-equator-ode} is smooth there.  This is a
smooth autonomous equation with $r(\ell)=r_U$ and $r_s(\ell)=0$.  Reflection
about $s=\ell$ gives another solution with the same initial data, so
uniqueness shows that $r-r_U$ is even in $s-\ell$.  It follows from
$\phi_s=-y(r(s))/f(r(s))$ that $\phi-\phi(\ell)$ is odd.  The two graphs
therefore join smoothly.  The orbit domain under
the $\theta$-rotation is a disk whose $r=0$ segment is the rotation
axis.  Rotating it gives a three-ball with the displayed two-sphere as
boundary.
\end{proof}

\medskip
\noindent\textit{Core-coordinate formulas.}
We now rewrite the lens quantities in the explicit core coordinate $z$;
for a radial quantity $Q$, write $Q(z):=Q(r(z))$.  Let
$z_{\mathrm{pre}}$ be the first unchanged-core point after the cap and
smoothing collar, and define
\begin{equation}\label{eq:H}
 \begin{aligned}
 H_\varepsilon(z)
 &=H_{\mathrm{pre},\varepsilon}+
   \int_{z_{\mathrm{pre}}}^zA_\varepsilon(v)v^{-4}\,dv,\\
 H_{\mathrm{pre},\varepsilon}
 &:=\frac{I(r_{\mathrm{pre}})}{8c_\varepsilon\varepsilon^2}>0,
 & I(z)&=8c_\varepsilon\varepsilon^2H_\varepsilon(z).
 \end{aligned}
\end{equation}
Indeed, \eqref{eq:af} and \eqref{eq:dr} give
\[
 \frac{dI}{dz}
 =f_\varepsilon(r(z))\frac{dr}{dz}
 =\frac{2\varepsilon}{z}
  \bigl(4c_\varepsilon\varepsilon A_\varepsilon(z)z^{-3}\bigr)
 =8c_\varepsilon\varepsilon^2A_\varepsilon(z)z^{-4}.
\]
The collar-width bound \eqref{eq:smoothing-collar-width} and
\eqref{eq:dr} give
$z_{\mathrm{pre}}=2\varepsilon+o(\varepsilon)$.  Indeed,
$dr/dz$ is increasing on the core and
$(dr/dz)(2\varepsilon)=r_\varepsilon^{\mathrm{tr}}/(2\varepsilon^4)$,
whereas
$r_{\mathrm{pre}}-r_\varepsilon^{\mathrm{tr}}=O(\delta)$ and
$\delta\leq(r_\varepsilon^{\mathrm{tr}})^2$.
In both the piecewise and smoothed constructions,
$H_{\mathrm{pre},\varepsilon}=O(e^{-c/\varepsilon^3})$ for some $c>0$, and
$H_\varepsilon'=A_\varepsilon z^{-4}$ exactly on the unchanged core.
Equation~\eqref{eq:y-solution}, together with \eqref{eq:af} and
\eqref{eq:H}, gives
\begin{equation}\label{eq:y-core}
 \kappa\varepsilon
 =\frac{1}{4c_\varepsilon U H_\varepsilon(U)},\qquad
 y(z)=\frac{zH_\varepsilon(z)}{UH_\varepsilon(U)}.
\end{equation}
Using \eqref{eq:dr} to change variables in \eqref{eq:Psi} and
\eqref{eq:lensmetric} then gives
\begin{align}
 \ell&=\ell_{\mathrm{cap},\varepsilon}+
 4c_\varepsilon\varepsilon
 \int_{z_{\mathrm{pre}}}^U\frac{A_\varepsilon(z)z^{-3}}
                    {\sqrt{1-y(z)^2}}\,dz,                       \label{eq:ell}\\
 \Psi(z)&=2c_\varepsilon\int_z^U
 \frac{A_\varepsilon(v)v^{-2}y(v)}
                    {\sqrt{1-y(v)^2}}\,dv
 \quad(z_{\mathrm{pre}}\leq z\leq U),                            \label{eq:Psi-z}\\
 \Delta_{\varepsilon,U}
 &=\Delta_{\mathrm{cap},\varepsilon}
 +2c_\varepsilon\int_{z_{\mathrm{pre}}}^U
 \frac{A_\varepsilon(v)v^{-2}y(v)}
                    {\sqrt{1-y(v)^2}}\,dv.                       \label{eq:Delta-split}
\end{align}
Here $\ell_{\mathrm{cap},\varepsilon}$ and
$\Delta_{\mathrm{cap},\varepsilon}$ are the
positive cap and transition-collar contributions.

We next record the two smallness estimates used below.  Let
$\delta\leq(r_\varepsilon^{\mathrm{tr}})^2$ be the collar half-width from
Lemma~\ref{lem:cap-smoothing}.  For small $\varepsilon$,
$z_{\mathrm{pre}}<U/2$, and the unchanged segment $[U/2,U]$ gives
\begin{equation}\label{eq:H-lower}
 H_\varepsilon(U)\geq
 \int_{U/2}^{U}A_\varepsilon(z)z^{-4}\,dz\geq h_U>0,
\end{equation}
where $h_U$ is independent of $\varepsilon$.  Since $c_\varepsilon$ is
bounded away from zero, \eqref{eq:y-core} and \eqref{eq:H-lower} give
$\kappa=O_U(\varepsilon^{-1})$.  On the cap and collar,
$r_{\mathrm{pre}}=r_\varepsilon^{\mathrm{tr}}+O(\delta)$,
$I=O(r_\varepsilon^{\mathrm{tr}})$, and $f\asymp1$.  Therefore, using
$y=\kappa I/f$,
\begin{equation}\label{eq:y-cap-transition}
 \sup_{0\leq r\leq r_{\mathrm{pre}}}y(r)
 =O_U\!\left(\frac{r_\varepsilon^{\mathrm{tr}}}{\varepsilon}\right)
 =o_U(1).
\end{equation}
Hence $(1-y^2)^{-1/2}=O_U(1)$ there.  Since the collar has $r$-width
$O(\delta)$, its contributions to $I$, $\ell$, and
$\Delta_{\varepsilon,U}$ are respectively $O(\delta)$,
$O_U(\delta)$, and
$O_U(\delta r_\varepsilon^{\mathrm{tr}}/\varepsilon)$.
On the exact cap, the corresponding bounds are
$O(r_\varepsilon^{\mathrm{tr}})$,
$O_U(r_\varepsilon^{\mathrm{tr}})$, and
$O_U((r_\varepsilon^{\mathrm{tr}})^2/\varepsilon)$.  Since
$\delta\leq(r_\varepsilon^{\mathrm{tr}})^2$, equation
\eqref{eq:transition-radius-decay} absorbs every polynomial factor in
$\varepsilon^{-1}$; consequently
\begin{equation}\label{eq:cap-transition-negligible}
 r_{\mathrm{pre}}+\ell_{\mathrm{cap},\varepsilon}
 +\Delta_{\mathrm{cap},\varepsilon}
 =O_U(e^{-c/\varepsilon^3}).
\end{equation}
Thus the cap and collar do not affect any fixed-$U$ endpoint asymptotic.

For later use we also retain the corresponding estimate on a fixed initial
core segment.  If $K>2$ is fixed and $r_K$ denotes the point
$z=K\varepsilon$, then
$A_\varepsilon(K\varepsilon)=O_K(e^{-c_K/\varepsilon^3})$.
Monotonicity of $A_\varepsilon$, together with \eqref{eq:H},
\eqref{eq:dr}, and \eqref{eq:H-lower}, therefore gives
\begin{equation}\label{eq:initial-core-smallness}
 \begin{gathered}
 H_\varepsilon(K\varepsilon)=O_K(e^{-c_K/\varepsilon^3}),
 \qquad r_K=O_K(e^{-c_K/\varepsilon^3}),\\
 \sup_{0\leq r\leq r_K}y(r)=y(r_K)
 =\frac{K\varepsilon H_\varepsilon(K\varepsilon)}
        {UH_\varepsilon(U)}
 =O_{U,K}(e^{-c_K/\varepsilon^3}).
 \end{gathered}
\end{equation}
\medskip

\noindent
The next three lemmas separate the quantitative information according to
its later use.  Lemma~\ref{lem:uniform-endpoint} evaluates the endpoint-layer
integrals, Lemma~\ref{lem:packet-asymptotics} converts them into the global
scales needed for the high-action rescaling, and
Lemma~\ref{lem:normalized-identities} supplies the pointwise boundary
identities used for the angular attachment in Section~3.  We begin with the
uniform endpoint estimates.

\begin{lemma}\label{lem:uniform-endpoint}
Fix $U\in(0,1)$ sufficiently small and put
\begin{equation}\label{eq:endpoint-sigma}
 \sigma=\frac{U^3}{8C_\varepsilon}.
\end{equation}
After $U$ is fixed, take $\varepsilon/U$ sufficiently small.  Uniformly
for $U/2\leq z\leq U$,
\begin{equation}\label{eq:H-asymptotic}
 H_\varepsilon(z)=\frac{A_\varepsilon(z)}{8C_\varepsilon}
 \left(1-\frac{z^3}{48C_\varepsilon}+O(U^6)\right).
\end{equation}
The radial-coordinate integral and the two endpoint integrals in
\eqref{eq:ell} and \eqref{eq:Delta-split} satisfy
\begin{align}
 \int_{z_{\mathrm{pre}}}^U A_\varepsilon(z)z^{-3}\,dz
 &=\frac{UA_\varepsilon(U)}{8C_\varepsilon}
   \bigl(1+O(\sigma)\bigr),                                      \label{eq:endpoint-radial-integral}\\
 \int_{z_{\mathrm{pre}}}^U
 \frac{A_\varepsilon(z)z^{-3}}{\sqrt{1-y(z)^2}}\,dz
 &=\frac{\pi UA_\varepsilon(U)}{16C_\varepsilon}
   \left(1-\frac76\sigma+O(\sigma^2)\right),                    \label{eq:endpoint-length-integral}\\
 \int_{z_{\mathrm{pre}}}^U
 \frac{A_\varepsilon(z)z^{-2}y(z)}{\sqrt{1-y(z)^2}}\,dz
 &=\frac{U^2A_\varepsilon(U)}{8C_\varepsilon}
   \bigl(1+O(\sigma)\bigr).                                     \label{eq:endpoint-angle-integral}
\end{align}
All error constants are independent of $\varepsilon$ and of the endpoint
variable.  The cap and smoothing terms in \eqref{eq:ell} and
\eqref{eq:Delta-split} are $O(\sigma^2)$ relative to the corresponding
leading terms.
\end{lemma}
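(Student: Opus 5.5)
The plan is to reduce everything to Laplace-type integrals dominated by the endpoint $z=U$, using that $A_\varepsilon$ grows extremely fast. Write $A_\varepsilon(z)=z^{1/6}e^{\Phi(z)}$ with $\Phi(z)=\frac{8C_\varepsilon}{3}(1-z^{-3})$, so that $\Phi'(z)=8C_\varepsilon z^{-4}$. For \eqref{eq:H-asymptotic} I would integrate by parts:
\[
\int A_\varepsilon v^{-4}\,dv=\int \frac{z^{1/6}}{8C_\varepsilon}\,d(e^\Phi)=\frac{A_\varepsilon}{8C_\varepsilon}-\frac{1}{48C_\varepsilon}\int v^{-5/6}e^{\Phi}\,dv .
\]
Rewriting $v^{-5/6}e^\Phi=v^3\cdot A_\varepsilon v^{-4}$, the remainder is $\frac1{48C_\varepsilon}\int v^3 A_\varepsilon v^{-4}\,dv$. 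Integrating by parts once more gives the factor $1-z^3/(48C_\varepsilon)+O(z^6)$. The lower-limit terms at $z_{\mathrm{pre}}$ and $H_{\mathrm{pre},\varepsilon}$ are $O(e^{-c/\varepsilon^3})$, which is negligible relative to $A_\varepsilon(U/2)$, a quantity independent of $\varepsilon$ up to $C_\varepsilon$. This step is routine and is uniform on $[U/2,U]$.

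Substituting into \eqref{eq:y-core} gives
\[
y(z)=\frac{zA_\varepsilon(z)}{UA_\varepsilon(U)}\bigl(1+O(z^3)-O(U^3)\bigr)\approx \exp\{\Phi(z)-\Phi(U)\}(z/U)^{7/6}.
\]
Near the endpoint I would introduce the Laplace variable $t=\Phi(U)-\Phi(z)$, so that $dt=-8C_\varepsilon z^{-4}dz$ and $z=U(1+O(\sigma t))$ over the effective range $t=O(1)$; the natural small parameter is indeed $\sigma=U^3/(8C_\varepsilon)$. Then $y\approx e^{-t}(1-\tfrac76\sigma t+\dots)$, and
\[
A_\varepsilon z^{-3}\,dz=-\frac{UA_\varepsilon(U)}{8C_\varepsilon}\,e^{-t}\bigl(1+c\,\sigma t+O(\sigma^2t^2)\bigr)\,dt .
\]
Each of the three integrals then becomes an integral over $t\in[0,\infty)$ with exponentially small truncation error. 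The radial integral gives $\int e^{-t}\,dt=1$. The length integral gives
\[
\int_0^\infty \frac{e^{-t}}{\sqrt{1-e^{-2t}}}\,dt=\int_0^1\frac{dy}{\sqrt{1-y^2}}=\frac\pi2,
\]
which accounts for the $\pi/16$. The angle integral gives $\int_0^1 y\,dy/\sqrt{1-y^2}=1$ times $U$. The region $z\in[z_{\mathrm{pre}},U/2]$ contributes $O(A_\varepsilon(U/2)/A_\varepsilon(U))$, which is $O(e^{-c/\sigma})$ and hence $O(\sigma^2)$.

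The main obstacle is the precise first-order coefficient $-\tfrac76\sigma$ in \eqref{eq:endpoint-length-integral}, since only an $O(\sigma)$ error is claimed in the other two. I would first expand $y$ to first order in $\sigma t$. This combines three contributions: the factor $(z/U)^{7/6}$, the $z^3$ correction in \eqref{eq:H-asymptotic}, and the relation $\Phi(U)-\Phi(z)=t$, which forces $z^{-3}=U^{-3}+3t/(8C_\varepsilon)$. The same expansion is applied to the Jacobian $z\,dz$. I would then change variables to $y$ itself rather than $t$, which handles the square-root endpoint singularity cleanly, and evaluate the resulting $\sigma$-linear terms via elementary integrals of the form $\int_0^1\log(1/y)\,dy/\sqrt{1-y^2}$ type. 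These logarithmic pieces must cancel or combine so as to leave the rational coefficient $-7/6$; I would check the cancellation carefully.

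Finally, the cap and smoothing contributions, namely $\ell_{\mathrm{cap}}$ and $\Delta_{\mathrm{cap}}$, are $O_U(e^{-c/\varepsilon^3})$ by \eqref{eq:cap-transition-negligible}. The leading terms are $\varepsilon UA_\varepsilon(U)$, which are of size $\asymp_U\varepsilon$ and $\asymp_U 1$ respectively. Taking $\varepsilon/U$ small therefore makes the cap and smoothing contributions $O(\sigma^2)$ relative to the leading terms.
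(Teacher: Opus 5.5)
Your architecture is sound. The double integration by parts via $\Phi$ proves \eqref{eq:H-asymptotic} correctly, and since the remainder is at most $Cz^6\int_0^zA_\varepsilon v^{-4}\,dv\le Cz^6A_\varepsilon(z)$, you avoid the paper's splitting at $z/2$. Your variable $t=\Phi(U)-\Phi(z)$ agrees with the paper's $\xi$ to first order. The leading integrals, the $e^{-c/\sigma}$ bound below $U/2$, and the cap estimate all match the paper.

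\textbf{The gap.} The proposal does not establish the coefficient $-\frac76$ in \eqref{eq:endpoint-length-integral}, which is the one step carrying the content of the lemma. You leave the Jacobian constant $c$ unspecified and assert that the ensuing logarithmic integrals ``must cancel or combine'' to give $-\frac76$. That is an appeal to the statement, not a derivation.

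You also never say how the error in $y$ is controlled at the endpoint. Your displayed $y=\frac{zA_\varepsilon(z)}{UA_\varepsilon(U)}(1+O(z^3)-O(U^3))$ is only accurate to $1+O(\sigma)$. Even a remainder of size $O(\sigma^2)$ that does not vanish at $t=0$ is amplified by $(1-y^2)^{-1/2}$, where $1-y^2\asymp t$, into an $O(\sigma)$ uncertainty in the length integral. That is exactly the order to be resolved. The paper avoids this by integrating logarithmic derivatives, so the remainders in \eqref{eq:A-layer}--\eqref{eq:y-layer} carry a factor $\xi$.

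\textbf{How to close it along your own line.} Integrating in $y$, as you suggest, handles both points, and no logarithmic term survives at first order.
\begin{itemize}
\item In your variable, $A_\varepsilon z^{-3}\,dz=-\frac{UA_\varepsilon(U)}{8C_\varepsilon}(1+3\sigma t)^{-7/18}e^{-t}\,dt$ exactly. So $c=-\frac76$, the same coefficient as in $y$.
\item Hence the measure is $\frac{UA_\varepsilon(U)}{8C_\varepsilon}\,y\,dt$, up to orientation and a factor $1+O(\sigma^2t)$.
\item After substituting $y$, the first-order term comes only from the constant in $-\frac{d\log y}{dt}=1+\frac76\sigma+O(\sigma^2(1+t))$. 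Compute this directly from $(\log y)_z=z^{-1}+A_\varepsilon z^{-4}/H_\varepsilon$, not by differentiating an expansion of $y$.
\end{itemize}

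More cleanly, differentiating \eqref{eq:y-core} with $H_\varepsilon'=A_\varepsilon z^{-4}$ gives, on the whole unchanged core,
\[
 A_\varepsilon z^{-3}\,dz=\frac{UH_\varepsilon(U)}{1+T}\,dy,
 \qquad T:=\frac{z^3H_\varepsilon}{A_\varepsilon}.
\]
The pieces are then:
\begin{itemize}
\item By \eqref{eq:H-asymptotic}, $UH_\varepsilon(U)=\frac{UA_\varepsilon(U)}{8C_\varepsilon}\bigl(1-\frac\sigma6+O(\sigma^2)\bigr)$.
\item On $[U/2,U]$, $T=\sigma(z/U)^3+O(\sigma^2)=\sigma+O\bigl(\sigma^2(1+\log(1/y))\bigr)$. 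This holds because $(\log y)_z=z^{-1}(1+T^{-1})\geq cU^{-4}$, which gives $U-z\leq CU^4\log(1/y)$.
\item Below $U/2$ you only need $0<(1+T)^{-1}\leq1$ and $y\leq e^{-c/\sigma}$.
\item Finally, $\int_0^1\log(1/y)(1-y^2)^{-1/2}\,dy=\frac\pi2\log2$ is finite.
\end{itemize}
So the length integral is $\frac{\pi UA_\varepsilon(U)}{16C_\varepsilon}\bigl((1-\frac\sigma6)(1-\sigma)+O(\sigma^2)\bigr)$, i.e.\ $-\frac76=-\frac16-1$. The radial integral $UH_\varepsilon(U)\int(1+T)^{-1}\,dy$ and the angular integral $UH_\varepsilon(U)\int zy(1+T)^{-1}(1-y^2)^{-1/2}\,dy$ follow the same way.

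\textbf{Comparison with the paper.} The paper expands $A_\varepsilon/A_\varepsilon(U)$ and $y$ in $\xi$, truncates at $\xi=\sigma^{-1/3}$, and extracts $-\frac{7\pi}{12}$ by integrating by parts with $W'=-W/(1-e^{-2\xi})$. The $y$-route keeps the singularity in the exact factor $(1-y^2)^{-1/2}$, so it needs neither endpoint-vanishing remainders nor the truncation. The paper's route in return produces the pointwise majorant \eqref{eq:endpoint-majorant}, which later lemmas reuse.
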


\begin{proof}
On the unchanged core,
\[
 \delta_\varepsilon
 :=H_\varepsilon(z)-\int_0^zA_\varepsilon(v)v^{-4}\,dv
\]
is constant.  Equations \eqref{eq:H}, \eqref{eq:A}, and
\eqref{eq:cap-transition-negligible} show that this constant and all
non-core contributions are super-exponentially small.  Thus, after $U$ is
fixed, we may decrease $\varepsilon/U$ until
\begin{equation}\label{eq:endpoint-cap-error}
 \frac{|\delta_\varepsilon|}{A_\varepsilon(U/2)}
 +\frac{r_{\mathrm{pre}}+\ell_{\mathrm{cap},\varepsilon}}
       {\varepsilon UA_\varepsilon(U)}
 +\frac{\Delta_{\mathrm{cap},\varepsilon}}{U^2A_\varepsilon(U)}
 \leq\sigma^2.
\end{equation}
The denominators are the leading scales of the quantities concerned, so it
remains to estimate the explicit core integrals.

Since
\[
 \frac{A_\varepsilon'}{A_\varepsilon}
 =z^{-4}\left(8C_\varepsilon+\frac{z^3}{6}\right),
\]
setting $m_\varepsilon(z)=(8C_\varepsilon+z^3/6)^{-1}$ and integrating by
parts gives
\[
 \int_0^zA_\varepsilon(v)v^{-4}\,dv
 =m_\varepsilon(z)A_\varepsilon(z)
  -\int_0^zm_\varepsilon'(v)A_\varepsilon(v)\,dv.
\]
There is no lower boundary term because $A_\varepsilon(v)$ decays like
$e^{-c/v^3}$ as $v\downarrow0$.  Since
$|m_\varepsilon'(v)|\leq Cv^2$ and
$A_\varepsilon(v)\leq A_\varepsilon(z)e^{-c(z-v)/z^4}$ on $[z/2,z]$,
the contribution of this interval is at most
\[
 Cz^2A_\varepsilon(z)\int_{z/2}^ze^{-c(z-v)/z^4}\,dv
 \leq Cz^6A_\varepsilon(z).
\]
For $0<v\leq z/2$, the explicit formula \eqref{eq:A} gives
\[
 \frac{A_\varepsilon(v)}{A_\varepsilon(z)}
 =\left(\frac vz\right)^{1/6}
 \exp\!\left\{\frac{8C_\varepsilon}{3}(z^{-3}-v^{-3})\right\}
 \leq e^{-c/z^3}.
\]
Consequently,
\[
 \int_0^{z/2}|m_\varepsilon'(v)|A_\varepsilon(v)\,dv
 \leq Cz^3e^{-c/z^3}A_\varepsilon(z)
 \leq Cz^6A_\varepsilon(z).
\]
The last inequality uses $e^{-c/z^3}=O(z^3)$.
Combining the two intervals gives
$\int_0^z|m_\varepsilon'|A_\varepsilon\,dv
\leq Cz^6A_\varepsilon(z)$.
Finally,
\[
 m_\varepsilon(z)=\frac1{8C_\varepsilon}
 \left(1-\frac{z^3}{48C_\varepsilon}+O(z^6)\right).
\]
Together with \eqref{eq:endpoint-cap-error}, these estimates prove
\eqref{eq:H-asymptotic}.

Set
\begin{equation}\label{eq:endpoint-variable}
 z=U(1-\sigma\xi),\qquad t:=1-\sigma\xi.
\end{equation}
This is the natural endpoint scale because
\[
 -\frac{d}{d\xi}\log A_\varepsilon(Ut)
 =t^{-4}+\frac{\sigma}{6t}.
\]
Also, \eqref{eq:H-asymptotic} and $H_\varepsilon'=A_\varepsilon z^{-4}$
give
\[
 \frac{H_\varepsilon'}{H_\varepsilon}
 =8C_\varepsilon z^{-4}+\frac1{6z}+O(U^2).
\]
Using \eqref{eq:y-core}, we therefore obtain, uniformly for
$U/2\leq z\leq U$,
\[
 -\frac{d}{d\xi}\log y(Ut)
 =t^{-4}+\frac{7\sigma}{6t}+O(\sigma^2).
\]
Both negative logarithmic derivatives are therefore bounded below by a
constant $c_0>0$.  Both normalized functions equal $1$ at $\xi=0$, so
integration gives
\[
 \frac{A_\varepsilon(z)}{A_\varepsilon(U)}\leq e^{-c_0\xi},
 \qquad y(z)\leq e^{-c_0\xi}.
\]
\begin{equation}\label{eq:endpoint-majorant}
 \frac{A_\varepsilon(z)}{A_\varepsilon(U)}+y(z)
 \leq 2e^{-c_0\xi},\qquad
 1-y(z)^2\geq c\min\{\xi,1\}.
\end{equation}
Thus every normalized integrand in
\eqref{eq:endpoint-radial-integral}--
\eqref{eq:endpoint-angle-integral} is bounded by
\[
 Ce^{-c\xi}\bigl(1+\xi^{-1/2}\bigr),
\]
which is independent of $\varepsilon$ and integrable at both endpoints.
To control the lower part, put $b=U/2$.  Since
$A_\varepsilon'/A_\varepsilon\geq cz^{-4}$, for $p=2,3$,
\[
 \int_{z_{\mathrm{pre}}}^{b}A_\varepsilon(z)z^{-p}\,dz
 \leq Cb^{4-p}\int_{z_{\mathrm{pre}}}^{b}A_\varepsilon'(z)\,dz
 \leq Cb^{4-p}A_\varepsilon(b).
\]
The same differential inequality on $[b,U]$ gives
\[
 \frac{A_\varepsilon(b)}{A_\varepsilon(U)}
 \leq\exp\!\left(-c\int_b^Us^{-4}\,ds\right)
 \leq e^{-c/\sigma}.
\]
At $b$, one has $\xi=(2\sigma)^{-1}$, so
\eqref{eq:endpoint-majorant} and the monotonicity of $y$ give
$y(z)\leq y(b)\leq e^{-c/\sigma}$ for $z\leq b$; hence
$(1-y^2)^{-1/2}=O(1)$ there.  Thus the lower parts of the radial and
length integrals are $O(UA_\varepsilon(U)e^{-c/\sigma})$, while that of
the angular integral is $O(U^2A_\varepsilon(U)e^{-c/\sigma})$.
The substitution \eqref{eq:endpoint-variable} maps the remaining interval
$[U/2,U]$ exactly onto $[0,(2\sigma)^{-1}]$.

Integrating the two logarithmic-derivative formulas from $\xi=0$ and using
$y(U)=1$ now gives, uniformly for
$0\leq\xi\leq\sigma^{-1/3}$,
\begin{align}
 \frac{A_\varepsilon(z)}{A_\varepsilon(U)}
 &=e^{-\xi}\left[1-\sigma\left(2\xi^2+\frac\xi6\right)
   +O\!\left(\sigma^2\xi(1+\xi)^3\right)\right],               \label{eq:A-layer}\\
 y(z)
 &=e^{-\xi}\left[1-\sigma\left(2\xi^2+\frac{7\xi}{6}\right)
   +O\!\left(\sigma^2\xi(1+\xi)^3\right)\right].              \label{eq:y-layer}
\end{align}
The factor $\xi$ in each remainder comes from integrating the corresponding
logarithmic-derivative error over $[0,\xi]$; thus the error vanishes at the
normalized endpoint, where
$A_\varepsilon(U)/A_\varepsilon(U)=y(U)=1$.  For the expansion of $y$,
this vanishing is needed because $1-e^{-2\xi}\asymp\xi$ near $\xi=0$.
For the length integral, put
\[
 q(\xi)=e^{-2\xi},\qquad
 W(\xi)=\frac{e^{-\xi}}{\sqrt{1-q(\xi)}},\qquad
 B(\xi)=2\xi^2+\frac{7\xi}{6}.
\]
Since $t^{-3}=1+3\sigma\xi+O(\sigma^2\xi^2)$, equation
\eqref{eq:y-layer} gives
\[
 1-y^2
 =1-q+2\sigma qB
 +O\!\left(\sigma^2q\xi(1+\xi)^3\right).
\]
Using $1-q\asymp\min\{\xi,1\}$ and the expansion of
$(1+s)^{-1/2}$, equations \eqref{eq:A-layer}--\eqref{eq:y-layer} yield
\[
 \frac{A_\varepsilon(z)}{A_\varepsilon(U)}
 \frac{t^{-3}}{\sqrt{1-y(z)^2}}
 =
 W\left[1+\sigma\left(
 3\xi-\left(2\xi^2+\frac{\xi}{6}\right)
 -\frac{qB}{1-q}\right)\right]+\mathcal R_\sigma(\xi).
\]
Because $B=2\xi^2+7\xi/6$, the first-order coefficient equals
\[
 3\xi-\left(2\xi^2+\frac{\xi}{6}\right)-\frac{qB}{1-q}
 =4\xi-\frac{B}{1-q}.
\]
On $[0,\sigma^{-1/3}]$, the function $qB/(1-q)$ is bounded.  The
first-order coefficients above are at most quadratic in $\xi$, while the
remainders in \eqref{eq:A-layer}--\eqref{eq:y-layer} are
$O(\sigma^2(1+\xi)^4)$.  Moreover,
$\sigma(1+\xi)^2=O(\sigma^{1/3})$ on this interval, so higher products
are absorbed into the same bound.  Consequently
\[
 |\mathcal R_\sigma(\xi)|
 \leq C\sigma^2e^{-c\xi}(1+\xi)^4(1+\xi^{-1/2}).
\]
The displayed bound is integrable, so the remainder contributes
$O(\sigma^2)$.  On the rest of the actual interval,
$[\sigma^{-1/3},(2\sigma)^{-1}]$,
\eqref{eq:endpoint-majorant} makes the contribution smaller than every
power of $\sigma$.  The tails of the model profiles beyond
$(2\sigma)^{-1}$ are $O(e^{-c/\sigma})$.  We may therefore extend only
the model integrals, not the original change of variables, to
$[0,\infty)$ with an $O(\sigma^2)$ error.
The change of variables in the length integral now gives
\[
 \begin{aligned}
 &\int_{z_{\mathrm{pre}}}^U
 \frac{A_\varepsilon(z)z^{-3}}{\sqrt{1-y(z)^2}}\,dz\\
 &\quad=\frac{UA_\varepsilon(U)}{8C_\varepsilon}
 \left\{\int_0^\infty W\,d\xi
 +\sigma\int_0^\infty
 W\left(4\xi-\frac{B}{1-e^{-2\xi}}\right)d\xi
 +O(\sigma^2)\right\}.
 \end{aligned}
\]
The substitution $u=e^{-\xi}$ gives
$\int_0^\infty W\,d\xi=\pi/2$, and differentiation gives
$W'=-W/(1-e^{-2\xi})$.  Moreover,
$B'=4\xi+7/6$.  Since $B=O(\xi)$ and $W=O(\xi^{-1/2})$ at $0$, while
$W$ decays exponentially at infinity, $B(\xi)W(\xi)\to0$ at both
endpoints.  Therefore
\[
 \begin{aligned}
 \int_0^\infty\bigl(4\xi W+BW'\bigr)\,d\xi
 &=[BW]_0^\infty+\int_0^\infty(4\xi-B')W\,d\xi\\
 &=-\frac76\int_0^\infty W\,d\xi=-\frac{7\pi}{12}.
 \end{aligned}
\]
This proves \eqref{eq:endpoint-length-integral}.  The same change of
variables has prefactors
\[
 U\sigma U^{-3}=\frac{U}{8C_\varepsilon},\qquad
 U\sigma U^{-2}=\frac{U^2}{8C_\varepsilon}.
\]
The radial and angular leading profiles are, respectively,
\[
 e^{-\xi}\qquad\text{and}\qquad
 \frac{e^{-2\xi}}{\sqrt{1-e^{-2\xi}}}.
\]
Their integrals are
\[
 \int_0^\infty e^{-\xi}\,d\xi=1,\qquad
 \int_0^\infty\frac{e^{-2\xi}}{\sqrt{1-e^{-2\xi}}}\,d\xi=1,
\]
where the second identity follows again from $u=e^{-\xi}$.  These identities
prove \eqref{eq:endpoint-radial-integral} and
\eqref{eq:endpoint-angle-integral}.  The cap assertion follows from
\eqref{eq:endpoint-cap-error}.
\end{proof}

\medskip
\noindent
We now convert the uniform endpoint estimates into global lens data.  The
length, radial size, and opening angle will determine the normalized waist,
depth, and scalar-action scale.  For the angular attachment, it is convenient
to normalize the total meridional bending.  Here
$\kappa=\kappa_{\varepsilon,U}$ is the value determined by the closing
condition $y(r_U)=1$ in Lemma~\ref{lem:lens-ode}, not an independently fixed
parameter.  Set
\[
 \mathfrak m:=\frac{2\kappa\ell}{\pi}.
\]

\begin{lemma}
\label{lem:packet-asymptotics}
Fix $U\in(0,1)$ sufficiently small and then let
$\varepsilon/U\to0$.  The lens
data satisfy
\begin{align*}
 \ell&=\frac{\pi c_\varepsilon\varepsilon U A_\varepsilon(U)}
                   {4C_\varepsilon}
 \left(1-\frac{7U^3}{48C_\varepsilon}+O(U^6)\right),\\
 r_U&=\frac{c_\varepsilon\varepsilon U A_\varepsilon(U)}
                   {2C_\varepsilon}\bigl(1+O(U^3)\bigr),\\
 \Delta_{\varepsilon,U}
 &=\frac{c_\varepsilon U^2A_\varepsilon(U)}
                   {4C_\varepsilon}\bigl(1+O(U^3)\bigr),\\
 \mathfrak m&=1-\frac{U^3}{8C_\varepsilon}+O(U^6).
\end{align*}
In particular, $\Delta_{\varepsilon,U}<\pi$ when $U$ and then
$\varepsilon/U$ are sufficiently small.
\end{lemma}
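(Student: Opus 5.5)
The plan is to substitute the endpoint integrals of Lemma~\ref{lem:uniform-endpoint} into the exact core-coordinate formulas \eqref{eq:ell}, \eqref{eq:Delta-split} and \eqref{eq:y-core}. The cap and smoothing pieces are treated as relative $O(\sigma^2)$ errors using \eqref{eq:endpoint-cap-error}. Throughout I use $\sigma=U^3/(8C_\varepsilon)$, so that $O(\sigma)=O(U^3)$ and $O(\sigma^2)=O(U^6)$ uniformly, since $C_\varepsilon\to1$.

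\emph{Length.} By \eqref{eq:ell} and \eqref{eq:endpoint-length-integral},
\[
 \ell=4c_\varepsilon\varepsilon\cdot\frac{\pi UA_\varepsilon(U)}{16C_\varepsilon}\Bigl(1-\tfrac76\sigma+O(\sigma^2)\Bigr),
\]
with $\ell_{\mathrm{cap},\varepsilon}$ absorbed into the $O(\sigma^2)$ term. Since $\tfrac76\sigma=7U^3/(48C_\varepsilon)$, this is the stated formula.

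\emph{Radius.} I write $r_U=r_{\mathrm{pre}}+\int_{z_{\mathrm{pre}}}^U 4c_\varepsilon\varepsilon A_\varepsilon z^{-3}\,dz$ using \eqref{eq:dr}. Then \eqref{eq:endpoint-radial-integral}, together with $r_{\mathrm{pre}}\le\sigma^2\varepsilon UA_\varepsilon(U)$ from \eqref{eq:endpoint-cap-error}, gives
\[
 r_U=\frac{c_\varepsilon\varepsilon UA_\varepsilon(U)}{2C_\varepsilon}\bigl(1+O(U^3)\bigr).
\]

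\emph{Opening angle.} By \eqref{eq:Delta-split} and \eqref{eq:endpoint-angle-integral},
\[
 \Delta_{\varepsilon,U}=2c_\varepsilon\,\frac{U^2A_\varepsilon(U)}{8C_\varepsilon}\bigl(1+O(\sigma)\bigr),
\]
again with the cap term absorbed.

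\emph{Bending.} This is the step requiring the most care, because the first-order coefficients must be tracked exactly rather than only up to $O(U^3)$. From \eqref{eq:y-core} and \eqref{eq:H-asymptotic} at $z=U$,
\[
 \kappa=\frac{1}{4c_\varepsilon\varepsilon UH_\varepsilon(U)}
 =\frac{2C_\varepsilon}{c_\varepsilon\varepsilon UA_\varepsilon(U)}\Bigl(1+\frac{U^3}{48C_\varepsilon}+O(U^6)\Bigr).
\]
Multiplying by $2\ell/\pi$, the prefactors $c_\varepsilon$, $\varepsilon$, $U$, $A_\varepsilon(U)$ and $C_\varepsilon$ cancel exactly. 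What remains is
\[
 \Bigl(1-\frac{7U^3}{48C_\varepsilon}\Bigr)\Bigl(1+\frac{U^3}{48C_\varepsilon}\Bigr)+O(U^6)
 =1-\frac{6U^3}{48C_\varepsilon}+O(U^6)
 =1-\frac{U^3}{8C_\varepsilon}+O(U^6).
\]
The main obstacle is therefore the uniformity of the errors. Every $O(\cdot)$ must have constants independent of $\varepsilon$ once $U$ is fixed, and the $O(U^6)$ remainder in \eqref{eq:H-asymptotic} must be uniform in the iterated limit. I would rely on the uniformity statements already built into Lemma~\ref{lem:uniform-endpoint} and on \eqref{eq:endpoint-cap-error}, which controls the non-core terms relative to the leading scales.

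\emph{Embeddedness.} For the final claim, $A_\varepsilon(U)=U^{1/6}\exp\{\tfrac{8C_\varepsilon}{3}(1-U^{-3})\}$ and $c_\varepsilon\le 2$ for small $\varepsilon$. Hence $\Delta_{\varepsilon,U}\le CU^{13/6}e^{8/3}e^{-U^{-3}}\bigl(1+O(U^3)\bigr)$, which is far below $\pi$ once $U$ is small and then $\varepsilon/U$ is small.
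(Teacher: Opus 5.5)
Your proposal is correct and is essentially the paper's proof: you substitute the endpoint integrals of Lemma~\ref{lem:uniform-endpoint} into \eqref{eq:ell}, \eqref{eq:dr} and \eqref{eq:Delta-split}, absorb the cap and collar terms through \eqref{eq:endpoint-cap-error}, and combine \eqref{eq:H-asymptotic} at $z=U$ with \eqref{eq:y-core}. This gives $\mathfrak m=\ell/(2\pi c_\varepsilon\varepsilon UH_\varepsilon(U))=1-\sigma+O(\sigma^2)$, which is exactly your cancellation $(1-\tfrac76\sigma)(1+\tfrac16\sigma)$. Your explicit exponential bound for $\Delta_{\varepsilon,U}$ is a concrete version of the paper's appeal to the $\Delta$-asymptotic for embeddedness.
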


\begin{proof}
Let $\sigma$ be as in \eqref{eq:endpoint-sigma}.  Substitution of
the uniform endpoint estimates of Lemma~\ref{lem:uniform-endpoint}, in
particular \eqref{eq:endpoint-length-integral}, in \eqref{eq:ell}, including the
cap estimate \eqref{eq:endpoint-cap-error}, gives
\begin{align}
 \ell&=\frac{\pi c_\varepsilon\varepsilon U A_\varepsilon(U)}
 {4C_\varepsilon}
 \left(1-\frac76\sigma+O(\sigma^2)\right).                       \label{eq:ell-asymp}
\end{align}
Equation \eqref{eq:dr}, the radial estimate
\eqref{eq:endpoint-radial-integral}, and the $r_{\mathrm{pre}}$ term in
\eqref{eq:endpoint-cap-error} give the first formula below.  Equations
\eqref{eq:Delta-split}, \eqref{eq:endpoint-angle-integral}, and the
$\Delta_{\mathrm{cap},\varepsilon}$ term in
\eqref{eq:endpoint-cap-error} give the second:
\begin{align}
 r_U&=\frac{c_\varepsilon\varepsilon U A_\varepsilon(U)}
                   {2C_\varepsilon}\bigl(1+O(U^3)\bigr),            \label{eq:rU-asymp}\\
 \Delta_{\varepsilon,U}
 &=\frac{c_\varepsilon U^2A_\varepsilon(U)}
                   {4C_\varepsilon}\bigl(1+O(U^3)\bigr).            \label{eq:Delta-asymp}
\end{align}
At $z=U$, equation \eqref{eq:H-asymptotic} reads
\[
 H_\varepsilon(U)=\frac{A_\varepsilon(U)}{8C_\varepsilon}
 \left(1-\frac\sigma6+O(\sigma^2)\right).
\]
Since $\kappa\varepsilon=(4c_\varepsilon UH_\varepsilon(U))^{-1}$,
\begin{equation}\label{eq:charge-asymp}
 \mathfrak m=\frac{2\kappa\ell}{\pi}
 =\frac{\ell}{2\pi c_\varepsilon\varepsilon UH_\varepsilon(U)}
 =1-\sigma+O(\sigma^2)
 =1-\frac{U^3}{8C_\varepsilon}+O(U^6).
\end{equation}
In particular, after first fixing \(U\) sufficiently small and then taking
\(\varepsilon/U\) sufficiently small, \eqref{eq:Delta-asymp} gives
\(\Delta_{\varepsilon,U}<\pi\), as required in
Lemma~\ref{lem:lens-smooth}.
\end{proof}

\medskip
\noindent
The preceding asymptotics control the global size and normalized bending of
the lens.  For the pointwise concavity and seam comparisons in Section~3,
we must also relate meridional arclength to the turning of the outward normal.
On either pole-to-equator graph face, orient meridional arclength $u$ from
$u=0$ at the pole to $u=\ell$ at the equator, and define
\[
 \alpha(u):=\arcsin y(r(u)),\qquad
 \chi(u):=\frac{\pi u}{2\ell}.
\]
Thus
\[
 \nu=\sin\alpha\,e_r+\cos\alpha\,e_\phi,
 \qquad
 \alpha(0)=\chi(0)=0,
 \qquad
 \alpha(\ell)=\chi(\ell)=\frac\pi2.
\]
The lens equation gives $d\alpha/du=dy/dr>0$, so $\alpha$ is a valid
meridional coordinate.  The variable $\chi$ is the normalized meridional
arclength; at this stage it does not refer to an ambient angular sphere.

\begin{lemma}\label{lem:normalized-identities}
On the explicit core define
\begin{equation}\label{eq:T-E}
 T(z)=\frac{z^3H_\varepsilon(z)}{A_\varepsilon(z)},\qquad
 E_{\mathrm{core}}(z)=\frac1{1+T(z)},\qquad
 \Xi(z)=
 \frac{1-z^3/(24C_\varepsilon)}
      {1+z^3/(48C_\varepsilon)}.
\end{equation}
On the entire pole-to-equator graph face define
\[
 \widehat E(\alpha):=\kappa\frac{du}{d\alpha}.
\]
Then $\widehat E(\alpha)=E_{\mathrm{core}}(z(\alpha))$ on the explicit
core, and on the entire graph face
\begin{equation}
 \frac{d\chi}{d\alpha}=\frac{\widehat E(\alpha)}{\mathfrak m}.
                                                               \label{eq:chi-alpha}
\end{equation}
Finally, at interior points of the explicit core,
\begin{equation}
 -\frac{2\ell}{\pi}\frac{b''}{b'}
 =\mathfrak m\left((1+T)\tan\alpha+T\Xi\cot\alpha\right).
                                                               \label{eq:exact-L}
\end{equation}
After $U$ is fixed sufficiently small and $\varepsilon/U$ is sufficiently
small, one has, for $0<\alpha\leq\pi/2$,
\begin{equation}\label{eq:M-refined}
 \widehat E(\alpha)
 =E_{\mathrm{core}}(U)+O\!\left(U^6|\log\sin\alpha|\right),\qquad
 \mathfrak m=E_{\mathrm{core}}(U)+O(U^6).
\end{equation}
\end{lemma}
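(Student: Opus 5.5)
The plan is to reduce everything to the core coordinate $z$, compute $du/d\alpha$ from the explicit formulas, and then take endpoint asymptotics.

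\emph{Step 1: $\widehat E$ and the identity \eqref{eq:chi-alpha}.} Since $\alpha=\arcsin y$, we have $d\alpha/du=(dy/dr)(dr/du)\cdot(1-y^2)^{-1/2}$. Together with $dr/du=\sqrt{1-y^2}$, this gives $d\alpha/du=dy/dr$, as the text already notes. Hence
\[
\widehat E=\frac{\kappa}{dy/dr}.
\]
From $y=\kappa I/f$ we get $dy/dr=\kappa-(f'/f)y$, so
\[
\widehat E=\Bigl(1-\frac{f'}{f}\frac{I}{f}\Bigr)^{-1}.
\]
On the core, $f=2\varepsilon/z$ and $f'=-\varepsilon^3x/a$ with $x=z/(2\varepsilon)$. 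Also $I=8c_\varepsilon\varepsilon^2H_\varepsilon$ and $a=c_\varepsilon\varepsilon^2A_\varepsilon$. This gives
\[
-\frac{f'I}{f^2}=\frac{\varepsilon^3x^3\cdot 8c_\varepsilon\varepsilon^2H}{c_\varepsilon\varepsilon^2A}=\frac{z^3H}{A}=T,
\]
so $\widehat E=E_{\mathrm{core}}$. For \eqref{eq:chi-alpha}, note that $d\chi/d\alpha=(\pi/2\ell)\,du/d\alpha=\widehat E\,\pi/(2\kappa\ell)=\widehat E/\mathfrak m$. This holds on the whole face.

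\emph{Step 2: the identity \eqref{eq:exact-L}.} Use \eqref{eq:bderivatives}:
\[
-\frac{b''}{b'}=\frac{-a''\sqrt{1-y^2}}{a'}+\frac{yy'}{\sqrt{1-y^2}}.
\]
Write $y=\sin\alpha$ and $y'=\kappa/\widehat E=\kappa(1+T)$. The second term then becomes $\kappa(1+T)\tan\alpha$. Multiplying by $2\ell/\pi=\mathfrak m/\kappa$ yields $\mathfrak m(1+T)\tan\alpha$.

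For the first term, substitute \eqref{eq:asecond} and \eqref{eq:aprime}, written as $a'=\varepsilon^3x^2(C_\varepsilon+\varepsilon^3x^3/6)\cdot(\dots)$; more precisely use $V=xa'$. This gives
\[
-\frac{a''}{a'}=\frac{x(C_\varepsilon-\varepsilon^3x^3/3)\varepsilon^3}{aV}.
\]
Now express $\varepsilon^3x^3=z^3/8$ and use $\kappa=f(r_U)/I(r_U)$. One needs to check that $\kappa^{-1}$ times this quantity divided by $\cot\alpha$, i.e.\ by $\sqrt{1-y^2}/y$, collapses to $T\,\Xi$ once we substitute $y=\kappa I/f$. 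Indeed, $(1-z^3/24C)/(1+z^3/48C)$ is exactly $(C-z^3/24)/(C+z^3/48)$, which matches $(C-\varepsilon^3x^3/3)/V$ with $V=C+z^3/48$. I expect this to be a routine algebraic verification.

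\emph{Step 3: the asymptotics \eqref{eq:M-refined}.} This is the main obstacle. The second claim follows by comparing \eqref{eq:charge-asymp}, namely $\mathfrak m=1-\sigma+O(\sigma^2)$, with the value at $z=U$. By \eqref{eq:H-asymptotic},
\[
T(U)=\frac{U^3}{8C_\varepsilon}\bigl(1-\tfrac{U^3}{48C_\varepsilon}+O(U^6)\bigr),
\]
so $E_{\mathrm{core}}(U)=1-\sigma+O(\sigma^2)$ as well. Both quantities agree to $O(U^6)$.

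For the first claim, use \eqref{eq:H-asymptotic} again to get $T(z)=z^3/(8C_\varepsilon)+O(z^6)$ on $[U/2,U]$. This gives
\[
|T(z)-T(U)|\leq C U^2|U-z|+O(U^6).
\]
In the endpoint variable, $U-z=U\sigma\xi$, so the difference is $O(U^6\xi)$. Then relate $\xi$ to $\alpha$ via \eqref{eq:y-layer}: $\sin\alpha=y\asymp e^{-\xi}$, so $\xi\leq|\log\sin\alpha|+C$. This gives the bound $O(U^6(1+|\log\sin\alpha|))$.

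The constant term must be absorbed differently near $\alpha=\pi/2$. Near the equator $\xi\to0$, so the difference there is $O(U^6\xi)$, and $\xi\lesssim 1-y\asymp|\log\sin\alpha|$ holds via the logarithmic-derivative lower bound $c_0$. Hence only the $O(U^6)$ from $H$ needs care. I would handle it by noting that this error is a smooth function of $z$ with matching values at $U$: the correction comes from $m_\varepsilon(z)$, which is smooth, plus the integral remainder, which is differentiable in $z$. So the error is also Lipschitz, of size $O(U^5|U-z|)$.

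For $z<U/2$ and on the cap and collar, $y\leq e^{-c/\sigma}$, so $|\log\sin\alpha|\gtrsim 1/\sigma\sim U^{-3}$, and $T\le CU^3$ there. The bound $U^6|\log\sin\alpha|\gtrsim U^3$ therefore dominates trivially. On the cap and collar one additionally uses the formula $\widehat E=(1-f'I/f^2)^{-1}$ with $f'I/f^2=O(1)$ small, by \eqref{eq:cap-I} and \eqref{eq:smoothing-monotone}.
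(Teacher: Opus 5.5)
Your proposal is correct. For the identities and the pointwise estimate it follows the paper's structure:
\begin{itemize}
\item $\widehat E=(1-f'I/f^2)^{-1}$, with $-f'I/f^2=T$ on the core;
\item the two-term split of $-b''/b'$ from \eqref{eq:bderivatives};
\item on $[U/2,U]$, a bound $|E_{\mathrm{core}}(z)-E_{\mathrm{core}}(U)|\leq CU^2(U-z)$ combined with $U-z\leq CU^4|\log\sin\alpha|$;
\item off that segment, the crude bound $T=O(U^3)$ together with $|\log\sin\alpha|\gtrsim U^{-3}$.
\end{itemize}
The bound $U-z\leq CU^4|\log\sin\alpha|$ follows directly from $y\leq e^{-c_0\xi}$ in \eqref{eq:endpoint-majorant}, so your detour through $\xi\leq|\log\sin\alpha|+C$ is unnecessary.

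Two steps differ from the paper.

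\emph{The Lipschitz bound.} The paper never differentiates error terms. It uses the exact identity $zT_z=1+\frac{17}{6}T-8C_\varepsilon T/z^3$ together with the two-sided bound $\frac{z^3}{8C_\varepsilon}\bigl(1-\frac{z^3}{48C_\varepsilon}\bigr)\leq T\leq\frac{z^3}{8C_\varepsilon}$, which gives $T_z=O(z^2)$ at once. Your route differentiates $T=z^3m_\varepsilon+z^3\delta_\varepsilon/A_\varepsilon-z^3A_\varepsilon^{-1}\int_0^zm_\varepsilon'A_\varepsilon$. It also works, but differentiability of the remainder is not what makes it work. You need $\bigl|\frac{d}{dz}\bigl(z^3A_\varepsilon^{-1}\int_0^zm_\varepsilon'A_\varepsilon\bigr)\bigr|\leq Cz^5$. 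This follows from $|m_\varepsilon'|\leq Cz^2$, $A_\varepsilon'/A_\varepsilon\leq Cz^{-4}$, and $\int_0^z|m_\varepsilon'|A_\varepsilon\leq Cz^6A_\varepsilon(z)$ from the proof of Lemma~\ref{lem:uniform-endpoint}. The $\delta_\varepsilon$ term is absorbed using \eqref{eq:endpoint-cap-error}.

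\emph{The estimate for $\mathfrak m$.} You compare \eqref{eq:charge-asymp} with $E_{\mathrm{core}}(U)=1-\sigma+O(\sigma^2)$. This is legitimate, since Lemma~\ref{lem:packet-asymptotics} does not use the present lemma, and it is short. However, it rests on the delicate $-\frac76\sigma$ coefficient in \eqref{eq:endpoint-length-integral}. The paper instead integrates the pointwise estimate against the exact identity $\mathfrak m=\frac2\pi\int_0^{\pi/2}\widehat E\,d\alpha$, using $\int_0^{\pi/2}|\log\sin\alpha|\,d\alpha=\frac\pi2\log 2$. That route is self-contained and in effect gives an independent check of \eqref{eq:charge-asymp}.

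Two smaller points. First, $T=O(U^3)$ on the lower core needs a reason. The paper bounds $H_\varepsilon/A_\varepsilon$ using $H^{\mathrm{core}}_\varepsilon<A_\varepsilon/(8C_\varepsilon)$ and $H_{\mathrm{pre},\varepsilon}=O(A_\varepsilon(2\varepsilon))$. Second, in Step~2 you wrote a single factor $x$, but in fact $-a''/a'=\varepsilon^3x^2(C_\varepsilon-\varepsilon^3x^3/3)/(aV)$. With $x^2$, multiplying by $y/\kappa=I/f=xI$ gives $8\varepsilon^3x^3H_\varepsilon(C_\varepsilon-z^3/24)/(A_\varepsilon V)=T\Xi$, exactly as required.
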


\begin{proof}
On the explicit core write $z=z(\alpha)$.  Since
$\sin\alpha=y$, $\cos\alpha=\sqrt{1-y^2}$, and
$du/dr=(1-y^2)^{-1/2}$, equations \eqref{eq:dr}, \eqref{eq:H}, and
\eqref{eq:y-core} give
\[
 \frac{du}{dz}=\frac{4c_\varepsilon\varepsilon
 A_\varepsilon z^{-3}}{\cos\alpha},\qquad
 y_z=\frac{A_\varepsilon z^{-3}(1+T)}
 {UH_\varepsilon(U)},
 \qquad
 \frac{d\alpha}{dz}
 =\frac{A_\varepsilon z^{-3}(1+T)}
 {UH_\varepsilon(U)\cos\alpha}.
\]
Taking the quotient of the first and third expressions and using
$4c_\varepsilon\varepsilon U H_\varepsilon(U)=\kappa^{-1}$ from
\eqref{eq:y-core} gives
\[
 \frac{du}{d\alpha}
 =4c_\varepsilon\varepsilon U H_\varepsilon(U)
 E_{\mathrm{core}}(z(\alpha))
 =\kappa^{-1}E_{\mathrm{core}}(z(\alpha)).
\]
Thus $\widehat E(\alpha)=E_{\mathrm{core}}(z(\alpha))$ there.  The strict
monotonicity of $\alpha$ recorded above makes $\widehat E$ well defined
also on the cap and smoothing collar.  Direct differentiation of
$\chi=\pi u/(2\ell)$ gives
\[
 \frac{d\chi}{d\alpha}
 =\frac{\pi}{2\ell}\frac{du}{d\alpha}
 =\frac{\widehat E(\alpha)}{\mathfrak m},
\]
which proves \eqref{eq:chi-alpha}.  Integrating
$\widehat E\,d\alpha=\kappa\,du$ from the pole to the equator also gives
\[
 \mathfrak m=\frac2\pi\int_0^{\pi/2}\widehat E(\beta)\,d\beta,
\]
which will be used at the end.

The reciprocal quotient above gives
$y_r=d\alpha/du=\kappa(1+T)$.  Equation \eqref{eq:bderivatives} therefore
implies
\[
 -\frac1\kappa\frac{b''}{b'}
 =\frac{y_r}{\kappa}\tan\alpha
  -\frac{a''}{\kappa a'}\cos\alpha.
\]
To identify the second term, equations \eqref{eq:af},
\eqref{eq:aprime}, \eqref{eq:asecond}, and \eqref{eq:y-core} give
\[
\begin{aligned}
 a''&=-\frac{C_\varepsilon\varepsilon^2z}{2a}
 \left(1-\frac{z^3}{24C_\varepsilon}\right),\\
 a'&=\frac{2C_\varepsilon\varepsilon}{z}
 \left(1+\frac{z^3}{48C_\varepsilon}\right),\\
 \frac{y}{\kappa}&=4c_\varepsilon\varepsilon zH_\varepsilon(z).
\end{aligned}
\]
Since $a=c_\varepsilon\varepsilon^2A_\varepsilon$, these formulas yield
\[
 -\frac{a''y}{\kappa a'}=T\Xi.
\]
Using $y=\sin\alpha$ in the preceding expression for $b''/b'$ now gives
\[
 -\frac1\kappa\frac{b''}{b'}
 =(1+T)\tan\alpha+T\Xi\cot\alpha.
\]
Multiplication by $2\kappa\ell/\pi=\mathfrak m$ proves
\eqref{eq:exact-L}.

It remains to estimate $\widehat E$.  Define
$H_\varepsilon^{\mathrm{core}}(z)
:=\int_0^zA_\varepsilon(v)v^{-4}\,dv$.  Since
\[
 A_\varepsilon(v)v^{-4}
 =\frac{A_\varepsilon'(v)}
 {8C_\varepsilon+v^3/6}
\]
and the denominator is increasing, while $A_\varepsilon(0)=0$, one has
\[
 \frac{A_\varepsilon(z)}{8C_\varepsilon+z^3/6}
 <H_\varepsilon^{\mathrm{core}}(z)
 <\frac{A_\varepsilon(z)}{8C_\varepsilon}.
\]
The difference
$H_\varepsilon-H_\varepsilon^{\mathrm{core}}$ is independent of $z$ on
the unchanged core and is super-exponentially small by \eqref{eq:H}.
After fixing $U$ and reducing $\varepsilon/U$, this correction is absorbed
by the strict margins above.  Using $(1+s)^{-1}\geq1-s$ gives, uniformly
for $U/2\leq z\leq U$,
\[
 \frac{z^3}{8C_\varepsilon}
 \left(1-\frac{z^3}{48C_\varepsilon}\right)
 \leq T(z)\leq\frac{z^3}{8C_\varepsilon}.
\]
In particular, $T\asymp z^3$ on this interval.  Since
$\sin\alpha=y=zH_\varepsilon(z)/(UH_\varepsilon(U))$ and
$T=z^3H_\varepsilon/A_\varepsilon$, direct differentiation gives the
core identities
\begin{equation}\label{eq:core-differential-identities}
 \frac{d}{dz}\log\sin\alpha
 =\frac1z\left(1+\frac1T\right),
 \qquad
 zT_z=1+\frac{17}{6}T-\frac{8C_\varepsilon T}{z^3}.
\end{equation}
On $U/2\leq z\leq U$, the right-hand side of the first identity is at
least $cU^{-4}$.  Integration from $z$ to $U$, using $y(U)=1$, therefore
gives
\[
 0\leq U-z(\alpha)
 \leq CU^4|\log\sin\alpha|.
\]
The second identity in \eqref{eq:core-differential-identities} and the
two-sided estimate for $T$ show that
\[
 0\leq1-\frac{8C_\varepsilon T}{z^3}\leq Cz^3,
 \qquad T=O(z^3).
\]
Thus $T_z=O(z^2)=O(U^2)$ and
$(E_{\mathrm{core}})_z=-T_z/(1+T)^2=O(U^2)$ on this endpoint region.
The mean-value theorem now gives
\[
 |E_{\mathrm{core}}(z(\alpha))-E_{\mathrm{core}}(U)|
 \leq CU^6|\log\sin\alpha|.
\]

On the entire graph face, $y=\kappa I/f$ and the lens equation give
\[
 \frac{d\alpha}{du}=y_r
 =\kappa\left(1-\frac{f_rI}{f^2}\right),\qquad
 \widehat E
 =\left(1-\frac{f_rI}{f^2}\right)^{-1}.
\]

To justify the estimate uniformly on the entire core, note from
\eqref{eq:H} that
\[
 H_\varepsilon(z)
 \leq H_{\mathrm{pre},\varepsilon}
      +H_\varepsilon^{\mathrm{core}}(z).
\]
The cap and collar estimates give
$H_{\mathrm{pre},\varepsilon}=O(A_\varepsilon(2\varepsilon))$,
while monotonicity of $A_\varepsilon$ and the upper bound above for
$H_\varepsilon^{\mathrm{core}}$ give
$H_\varepsilon(z)/A_\varepsilon(z)=O(1)$ for
$z_{\mathrm{pre}}\leq z\leq U$.  Consequently
$T(z)=z^3H_\varepsilon(z)/A_\varepsilon(z)=O(U^3)$ throughout the core.
Thus, on the core, $-f_rI/f^2=T=O(U^3)$; on the cap and smoothing collar,
the cap formulas and the smoothing first-jet estimates give
$-f_rI/f^2=O(\varepsilon^3)=O(U^3)$.  Hence
$\widehat E=1+O(U^3)$ on the entire graph face.  Moreover,
$T(U)=O(U^3)$ implies
\[
 E_{\mathrm{core}}(U)=\frac1{1+T(U)}=1+O(U^3).
\]
The triangle inequality therefore gives
\[
 |\widehat E-E_{\mathrm{core}}(U)|
 \leq |\widehat E-1|+|1-E_{\mathrm{core}}(U)|
 =O(U^3)
\]
uniformly on the entire graph face.  On the complement
of the endpoint core segment, \eqref{eq:endpoint-majorant}, evaluated at
$z=U/2$, and the monotonicity of $y$ give
\[
 \sin\alpha=y\leq e^{-c/U^3},
 \qquad
 |\log\sin\alpha|\geq cU^{-3}.
\]
Thus the global $O(U^3)$ error is bounded by
$CU^6|\log\sin\alpha|$ there.  Together with the endpoint estimate, this
proves the first assertion of \eqref{eq:M-refined}.  Finally, the average
formula for $\mathfrak m$ and the first assertion give
\[
\begin{aligned}
 \left|\mathfrak m-E_{\mathrm{core}}(U)\right|
 &\leq \frac{2CU^6}{\pi}
 \int_0^{\pi/2}|\log\sin\alpha|\,d\alpha \\
 &=CU^6\log 2=O(U^6).
\end{aligned}
\]
This proves the second assertion of \eqref{eq:M-refined}.
\end{proof}

\medskip
\noindent
With these pointwise identities recorded for the later angular matching,
we return to the packet itself and estimate its scalar action and the
localization of that action away from the future gluing collar.

\begin{lemma}\label{lem:action}
Fix $U\in(0,1)$ sufficiently small, as in
Lemma~\ref{lem:uniform-endpoint}.  For all sufficiently small
$\varepsilon>0$ for which $\Delta_{\varepsilon,U}<\pi$, one has
\begin{equation}\label{eq:action-limit}
 \int_{\Omega_{\varepsilon,U}}\Scal_{g_\varepsilon}\,dV
 \geq4\pi f_\varepsilon(0)a_\varepsilon'(0)\Delta_{\varepsilon,U}
       -O_U\!\left(\varepsilon^2A_\varepsilon(U)\right).
\end{equation}
Moreover, $\Delta_{\varepsilon,U}\to\Delta_U>0$ as
$\varepsilon\downarrow0$ for some constant $\Delta_U$.  The pole flux
satisfies $f_\varepsilon(0)a_\varepsilon'(0)=1+\varepsilon^3/2$ by
\eqref{eq:pole-flux}.
\end{lemma}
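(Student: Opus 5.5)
The plan is to compute the scalar action exactly in the doubly warped coordinates and integrate by parts once in $r$. The pole boundary term should produce the main term $\Delta_{\varepsilon,U}f_\varepsilon(0)a_\varepsilon'(0)$, and everything else should be an error of size $O_U(\varepsilon^2A_\varepsilon(U))$. For $g=dr^2+a^2d\theta^2+f^2d\phi^2$, twice the sum of the three sectional curvatures in \eqref{eq:sectional} gives
\[
 \Scal=-2\frac{a''}{a}-2\frac{f''}{f}-2\frac{a'f'}{af}.
\]
Using $dV=af\,dr\,d\theta\,d\phi$ and the identity $a''f+af''+a'f'=(af)''-a'f'$, this becomes
\[
 \Scal\,dV=\bigl(-2(af)''+2a'f'\bigr)\,dr\,d\theta\,d\phi .
\]
I will integrate over $\theta\in[0,2\pi)$ and over $\phi\in[-\Psi(r),\Psi(r)]$ using \eqref{eq:domain}, which gives
\[
 \int_{\Omega_{\varepsilon,U}}\Scal\,dV
 =8\pi\int_0^{r_U}\Psi\bigl(-(af)''+a'f'\bigr)\,dr .
\]
Integrating the first term by parts leaves the boundary term $-[\Psi(af)']_0^{r_U}$. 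At $r_U$ it vanishes because $\Psi(r_U)=0$. At $r=0$ we have $a(0)=0$, so $(af)'(0)=f(0)a'(0)$, and the boundary term equals $\Delta_{\varepsilon,U}f_\varepsilon(0)a_\varepsilon'(0)$. Hence
\[
 \int_{\Omega_{\varepsilon,U}}\Scal\,dV
 =8\pi\Delta_{\varepsilon,U}f(0)a'(0)
 +8\pi\int_0^{r_U}\Psi'(af)'\,dr
 +8\pi\int_0^{r_U}\Psi a'f'\,dr .
\]
Because $8\pi\geq4\pi$ and the main term is positive, \eqref{eq:action-limit} follows once the two remainder integrals are shown to be $O_U(\varepsilon^2A_\varepsilon(U))$ in absolute value.

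\emph{Bounding the remainders on the core.} I will work in the core variable $z$, using \eqref{eq:af}, \eqref{eq:aprime} and \eqref{eq:dr}.
\begin{itemize}
\item[(i)] On the core, $af=2c_\varepsilon\varepsilon^3A_\varepsilon/z$. Together with $A_\varepsilon'/A_\varepsilon=z^{-4}(8C_\varepsilon+z^3/6)$ and $dr/dz=4c_\varepsilon\varepsilon A_\varepsilon z^{-3}$, this gives $(af)_r=O(\varepsilon^2z^{-2})$.
\item[(ii)] Since $\Psi'<0$ and $\int|\Psi'|\,dr=\Delta_{\varepsilon,U}$, the first remainder is at most $\sup(\varepsilon^2z^{-2})$ weighted by where $|\Psi'|$ lives.
\item[(iii)] By the endpoint majorant \eqref{eq:endpoint-majorant}, $|\Psi'|$ is concentrated where $z\asymp U$. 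For $z\leq U/2$, the contribution is super-exponentially suppressed by $y\leq e^{-c/U^3}$, and I need to check that this suppression beats the $z^{-2}$ blow-up.
\item[(iv)] Steps (i)--(iii) bound the first remainder by $O_U(\varepsilon^2)\Delta_{\varepsilon,U}$. By Lemma~\ref{lem:packet-asymptotics}, $\Delta_{\varepsilon,U}=O(U^2A_\varepsilon(U))$, so this is $O_U(\varepsilon^2A_\varepsilon(U))$.
\item[(v)] For the second remainder, $a'f'=-\varepsilon^3xa'/a=O(\varepsilon^2/A_\varepsilon)$. Combined with $dr=4c_\varepsilon\varepsilon A_\varepsilon z^{-3}dz$ and $\Psi\leq\Delta_{\varepsilon,U}$, it is bounded by $\varepsilon^3\Delta_{\varepsilon,U}\int z^{-3}\,dz\lesssim\varepsilon^3\varepsilon^{-2}\Delta_{\varepsilon,U}$, which is even smaller.
\end{itemize}
Alternatively, one can again restrict to $z\geq U/2$ and use super-exponential smallness below that point.

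\emph{Cap and collar, and the limit of $\Delta_{\varepsilon,U}$.} On the cap and smoothing collar, I will use \eqref{eq:cap-transition-negligible}: the $r$-width, $\Psi'$, and the jets are controlled there, so both remainder integrals are $O_U(e^{-c/\varepsilon^3})$ on that region. For the limit, I will pass to $\varepsilon\downarrow0$ in \eqref{eq:Delta-split} with $U$ fixed. Here $C_\varepsilon\to1$, $c_\varepsilon\to\sqrt{2+7/24}$, $A_\varepsilon\to A_0$ locally uniformly on $(0,U]$, and $H_\varepsilon\to\int_0^zA_0v^{-4}$ because $H_{\mathrm{pre},\varepsilon}\to0$. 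Consequently $y\to y_0$, and $z_{\mathrm{pre}}\to0$. The $\varepsilon$-independent majorant $Ce^{-c\xi}(1+\xi^{-1/2})$ from the proof of Lemma~\ref{lem:uniform-endpoint}, together with the lower-part bound, justifies dominated convergence, and $\Delta_{\mathrm{cap},\varepsilon}\to0$. This gives $\Delta_{\varepsilon,U}\to\Delta_U$. Positivity of $\Delta_U$ follows from the leading term $c_0U^2A_0(U)/4>0$ in Lemma~\ref{lem:packet-asymptotics}.

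The main obstacle is the uniform control of $\int|\Psi'||(af)'|\,dr$ near the small-$z$ end of the core, where $(af)'\sim\varepsilon^2z^{-2}$ is large. I will handle it by splitting at $z=K\varepsilon$ and at $z=U/2$, using \eqref{eq:initial-core-smallness} on the first piece and the decay $y\leq e^{-c/U^3}$ on the middle piece. Both bounds are exponentially small in $\varepsilon^{-3}$ or $U^{-3}$, which should absorb the polynomial factors. The identity $f_\varepsilon(0)a_\varepsilon'(0)=1+\varepsilon^3/2$ is just \eqref{eq:pole-flux}.
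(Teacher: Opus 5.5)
Your integration by parts is a correct identity, but your estimate of the second remainder $8\pi\int_0^{r_U}\Psi a'f'\,dr$ is wrong, and your conclusion depends on it.

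On the core $f'=-\varepsilon^3x/a$ and $xa'=C_\varepsilon+z^3/48$. Hence $a'f'=-\varepsilon(C_\varepsilon+z^3/48)/(c_\varepsilon A_\varepsilon)$, one power of $\varepsilon$ larger than your $O(\varepsilon^2/A_\varepsilon)$, and so $a'f'\,dr=-4\varepsilon^2(C_\varepsilon+z^3/48)z^{-3}\,dz$. Since $z_{\mathrm{pre}}\approx2\varepsilon$, this integrates to $\int_0^{r_U}a'f'\,dr=-\tfrac12+O_U(\varepsilon^2)$. Equivalently, $a'\approx C_\varepsilon f$ on the core while $f$ falls from $1$ to $2\varepsilon/U$.

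All but an $O(K^{-2})$ fraction of this mass lies in $z\le K\varepsilon$. There $\Psi=\Delta_{\varepsilon,U}-O(e^{-c_K/\varepsilon^3})$, by the computation in the proof of Lemma~\ref{lem:protected-set}. So your second remainder equals $-4\pi\Delta_{\varepsilon,U}(1+o(1))$, the same size as the main term in the statement. Restricting to $z\geq U/2$ cannot help, because this integrand carries no factor of $y$.

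This is consistent with \eqref{eq:ric-r}--\eqref{eq:ric-phi}, which give $(R_r+R_\phi)\,af\,dr=\tfrac34\varepsilon^2\,dz$ on the core. The true scalar action is therefore $4\pi\Delta_{\varepsilon,U}(1+O_U(\varepsilon^2))$, not $8\pi\Delta_{\varepsilon,U}$, and the slack ``$8\pi\geq4\pi$'' you rely on does not exist.

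Your first remainder is indeed $O_U(\varepsilon^2A_\varepsilon(U))$, but not by the route you sketch on the middle range. The fixed constant $e^{-c/U^3}$ cannot absorb the $\varepsilon^{-2}$ lost by bounding $z^{-2}\leq(K\varepsilon)^{-2}$. Keep the weight instead: $|\Psi'|\,|(af)'|\,dr\leq C\varepsilon^2yA_\varepsilon z^{-4}(1-y^2)^{-1/2}\,dz$, then use either $\int A_\varepsilon z^{-4}\,dz\leq A_\varepsilon/(8C_\varepsilon)$ below $U/2$, or $y/z\leq1/U$ together with the length integral.

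The missing idea is to discard, rather than estimate, the terms that produce this cancellation. As the paper does, use $\Ric>0$ to bound $\Scal\geq\Ric(e_\theta,e_\theta)=-(fa')'/(af)$. One integration by parts then gives $4\pi f(0)a'(0)\Delta_{\varepsilon,U}+4\pi\int_0^{r_U}\Psi'fa'\,dr$. Since $\Psi'f=-y/\sqrt{1-y^2}$, the remainder on the core is $-16\pi c_\varepsilon\varepsilon^2\int A_\varepsilon(2C_\varepsilon z^{-4}+z^{-1}/24)\,y(1-y^2)^{-1/2}\,dz$. The bounds $y/z\leq1/U$ and $z^{-1}\leq Uz^{-2}$, together with the endpoint length and angle integrals, make this $O_U(\varepsilon^2A_\varepsilon(U))$.

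Alternatively, you can salvage your identity by an exact computation instead of a smallness claim. Since $a'f'\leq0$ and $\Psi\leq\Delta_{\varepsilon,U}$, you get $8\pi\int\Psi a'f'\,dr\geq8\pi\Delta_{\varepsilon,U}\int_0^{r_U}a'f'\,dr\geq-4\pi\Delta_{\varepsilon,U}-O_U(\varepsilon^2A_\varepsilon(U))$. Combined with $f(0)a'(0)\geq1$, this yields the statement.

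Your treatment of $\Delta_{\varepsilon,U}\to\Delta_U>0$ is fine.
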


\begin{proof}
Because $\Ric>0$, one has
$\Scal\geq\Ric(e_\theta,e_\theta)$, so it suffices to integrate the
$\theta$-eigenvalue.  From
\eqref{eq:sectional},
\[
 \Ric(e_\theta,e_\theta)=-\frac{(fa')'}{af},\qquad
 dV=af\,dr\,d\theta\,d\phi.
\]
The angular range at $r$ has length $2\Psi(r)$, hence
we first integrate by parts on $[\eta,r_U]$ and then let
$\eta\downarrow0$.  Using
$\Psi(0)=\Delta_{\varepsilon,U}$, $\Psi(r_U)=0$, and the pole-flux
identity \eqref{eq:pole-flux}, we obtain
\begin{align}
 \int_{\Omega_{\varepsilon,U}}\Ric(e_\theta,e_\theta)\,dV
 &=4\pi\int_0^{r_U}\Psi(r)\bigl(-(fa')'(r)\bigr)\,dr \notag\\
 &=4\pi f_\varepsilon(0)a_\varepsilon'(0)\Delta_{\varepsilon,U}
   +4\pi\int_0^{r_U}\Psi'(r)f(r)a'(r)\,dr.                        \label{eq:flux-lens}
\end{align}
The final integral equals
\[
 -4\pi\int_0^{r_U}\frac{a'(r)y(r)}
              {\sqrt{1-y(r)^2}}\,dr.
\]
On the unchanged core, equations
\eqref{eq:dr}, \eqref{eq:aprime}, and \eqref{eq:y-core} rewrite this term as
\[
 -16\pi c_\varepsilon\varepsilon^2
 \int_{z_{\mathrm{pre}}}^{U}A_\varepsilon(z)
 \left(2C_\varepsilon z^{-4}+\frac{z^{-1}}{24}\right)
 \frac{y(z)}{\sqrt{1-y(z)^2}}\,dz.
\]
Since $H_\varepsilon$ is increasing, \eqref{eq:y-core} gives
\[
 \frac{y(z)}z=\frac{H_\varepsilon(z)}{UH_\varepsilon(U)}\leq\frac1U.
\]
Also $z^{-1}\leq Uz^{-2}$ for $z\leq U$.  Therefore
\begin{align*}
 &\int_{z_{\mathrm{pre}}}^{U}A_\varepsilon(z)
 \left(2C_\varepsilon z^{-4}+\frac{z^{-1}}{24}\right)
 \frac{y(z)}{\sqrt{1-y(z)^2}}\,dz\\
 &\quad\leq\frac{2C_\varepsilon}{U}
 \int_{z_{\mathrm{pre}}}^{U}
 \frac{A_\varepsilon(z)z^{-3}}{\sqrt{1-y(z)^2}}\,dz
 +\frac{U}{24}\int_{z_{\mathrm{pre}}}^{U}
 \frac{A_\varepsilon(z)z^{-2}y(z)}{\sqrt{1-y(z)^2}}\,dz\\
 &=O_U\!\left(A_\varepsilon(U)\right)
\end{align*}
by \eqref{eq:endpoint-length-integral} and
\eqref{eq:endpoint-angle-integral}.  Thus the core correction is
$O_U(\varepsilon^2A_\varepsilon(U))$.

On the cap and transition collar, $0<a'\leq1$.  Equations
\eqref{eq:y-cap-transition} and \eqref{eq:cap-transition-negligible}
give
\[
 \left|\int_0^{r_{\mathrm{pre}}}
 \frac{a'(r)y(r)}{\sqrt{1-y(r)^2}}\,dr\right|
 =O_U\!\left(
 \frac{r_{\mathrm{pre}}r_\varepsilon^{\mathrm{tr}}}{\varepsilon}\right)
 =O_U(e^{-c/\varepsilon^3}).
\]
Because $A_\varepsilon(U)\asymp_U1$, this term is absorbed into
$O_U(\varepsilon^2A_\varepsilon(U))$.  This proves
\eqref{eq:action-limit}.

For the asserted limit, put
\[
 \begin{gathered}
 c_{\mathrm{lim}}=\sqrt{\frac{55}{24}},\qquad
 \mathcal A_{\mathrm{lim}}(z)
 =z^{1/6}\exp\!\left\{\frac83(1-z^{-3})\right\},\\
 H_{\mathrm{lim}}(z)
 =\int_0^z\mathcal A_{\mathrm{lim}}(v)v^{-4}\,dv,\qquad
 y_{\mathrm{lim}}(z)
 =\frac{zH_{\mathrm{lim}}(z)}{UH_{\mathrm{lim}}(U)}.
 \end{gathered}
\]
The facts $z_{\mathrm{pre}}\to0$ and
$H_{\mathrm{pre},\varepsilon}=O(e^{-c/\varepsilon^3})$ give
$A_\varepsilon\to\mathcal A_{\mathrm{lim}}$,
$H_\varepsilon\to H_{\mathrm{lim}}$, and $y\to y_{\mathrm{lim}}$
uniformly on compact subintervals of $(0,U]$.  On $[U/2,U]$,
\eqref{eq:endpoint-majorant} controls the square-root singularity at $U$.
At $U/2$ the same estimate makes $y$ uniformly smaller than $1/2$ once
$U$ is fixed sufficiently small; monotonicity of $y$ and \eqref{eq:A}
then control the lower core.  Together these estimates give the
integrable bound
\[
 \frac{A_\varepsilon(z)z^{-2}y(z)}{\sqrt{1-y(z)^2}}
 \leq C_Uz^{-11/6}e^{-cz^{-3}}(U-z)^{-1/2}
 \qquad(z_{\mathrm{pre}}\leq z<U).
\]
Extend this integrand by zero on $(0,z_{\mathrm{pre}})$.  Since
$\Delta_{\mathrm{cap},\varepsilon}\to0$, dominated convergence gives
\[
 \Delta_{\varepsilon,U}\longrightarrow
 \Delta_U:=2c_{\mathrm{lim}}\int_0^U
 \frac{\mathcal A_{\mathrm{lim}}(z)z^{-2}y_{\mathrm{lim}}(z)}
      {\sqrt{1-y_{\mathrm{lim}}(z)^2}}\,dz>0.
\]
The same $L^1$ convergence in \eqref{eq:Psi-z} gives local uniform
convergence of $\Psi$ on $(0,U)$ to the positive function obtained by
replacing $c_\varepsilon,A_\varepsilon$, and $y$ by their limiting
counterparts.
This proves the asserted limit and completes the proof.
\end{proof}

\begin{lemma}
\label{lem:depth}
If $\Delta_{\varepsilon,U}<\pi$, then every point of
$\Omega_{\varepsilon,U}$ has distance at most $r_U$ from
$\partial\Omega_{\varepsilon,U}$.
\end{lemma}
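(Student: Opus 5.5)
Fix a point $p=(r_0,\theta_0,\phi_0)\in\Omega_{\varepsilon,U}$, so that $0\le r_0\le r_U$ and $|\phi_0|\le\Psi(r_0)$. We will exhibit an explicit path from $p$ to $\partial\Omega_{\varepsilon,U}$ whose length is at most $r_U$. The natural candidate is the radial curve $\gamma(s)=(s,\theta_0,\phi_0)$ for $s\in[r_0,r^*]$, where
\[
 r^*:=\sup\{r\in[r_0,r_U]:\ |\phi_0|\le\Psi(r)\}.
\]
Since $e_r=\partial_r$ is a unit vector field for $g_\varepsilon=dr^2+a^2d\theta^2+f^2d\phi^2$, this curve has length exactly $r^*-r_0\le r_U$. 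The distance bound will then follow once we know that $\gamma(r^*)\in\partial\Omega_{\varepsilon,U}$ and that $\gamma$ stays in $\Omega_{\varepsilon,U}$.

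Both facts come from Lemma~\ref{lem:lens-ode}: $\Psi$ is continuous, $\Psi'<0$ on $(0,r_U)$, and $\Psi(r_U)=0$. Strict monotonicity of $\Psi$ gives $|\phi_0|\le\Psi(s)$ for all $s\in[r_0,r^*]$, so $\gamma$ stays in the domain \eqref{eq:domain}. Continuity of $\Psi$ gives two cases. Either $r^*<r_U$, in which case $|\phi_0|=\Psi(r^*)$ and $\gamma(r^*)$ lies on one of the graph faces $\phi=\pm\Psi$. Or $r^*=r_U$, in which case $\Psi(r_U)=0$ forces $\phi_0=0$ and $\gamma(r_U)$ is the equatorial circle. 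By Lemma~\ref{lem:lens-smooth}, that circle is part of the boundary two-sphere. In either case $\gamma(r^*)$ lies on the boundary sphere formed by the two graphs, which Lemma~\ref{lem:lens-smooth} identifies with $\partial\Omega_{\varepsilon,U}$. The hypothesis $\Delta_{\varepsilon,U}<\pi$ enters here: it guarantees that the domain is embedded in the $\phi$-circle, so the description \eqref{eq:domain} in coordinates is faithful and the radial curve is a genuine curve in the three-ball.

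The main point needing care is the axis $r=0$, the $\theta$-bolt, where the coordinates degenerate. For $r_0=0$ the curve $s\mapsto(s,\theta_0,\phi_0)$ is still a smooth unit-speed geodesic leaving the bolt orthogonally, for any choice of $\theta_0$, so the same argument applies verbatim. I would also note that $p$ might already lie on the boundary, in which case the distance is zero. Finally, the bound is stated for the fixed smoothed metric, whose radial coordinate $r$ is still arclength along $\partial_r$, so no further estimate is needed.
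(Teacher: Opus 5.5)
Your proposal is correct and follows the paper's proof. Both arguments move radially with $(\theta,\phi)$ fixed until the first radius $r_*$ at which $\Psi(r_*)=|\phi|$; this radius exists because $\Psi$ decreases continuously from $\Delta_{\varepsilon,U}$ to $0$, and the path has length $r_*-r\le r_U$.
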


\begin{proof}
At $(r,\theta,\phi)$ increase $r$ while holding $(\theta,\phi)$ fixed.
Since $\Psi$ decreases continuously from $\Delta_{\varepsilon,U}$ to zero,
there is $r_*\in[r,r_U]$ with $\Psi(r_*)=|\phi|$, and the curve first
meets the boundary there.  Its length is
$r_*-r\leq r_U$; in particular, the path does not traverse the
$f\,d\phi$ direction.
\end{proof}

\begin{lemma}
\label{lem:protected-set}
Fix $U\in(0,1)$ sufficiently small, as in Lemma~\ref{lem:action}, and
then fix $K>6$.  For all sufficiently small $\varepsilon$ satisfying
$K\varepsilon<U$ and $\Delta_{\varepsilon,U}<\pi$, let $r_K$ be the
unchanged-core point $z=K\varepsilon$ and set
\begin{equation}\label{eq:protected-set}
 \mathcal E_{\varepsilon,U,K}
 =\left\{0\leq r\leq r_K,\quad
          |\phi|\leq\frac{\Delta_{\varepsilon,U}}3\right\}.
\end{equation}
Then
$\mathcal E_{\varepsilon,U,K}\Subset
\Omega_{\varepsilon,U}$ and,
for a constant $c_0>0$ independent of $\varepsilon$,
\begin{equation}\label{eq:protected-action}
 \int_{\mathcal E_{\varepsilon,U,K}}\Scal\,dV
 \geq c_0\Delta_{\varepsilon,U}.
\end{equation}
A sufficiently thin relative gluing collar is disjoint from
$\mathcal E_{\varepsilon,U,K}$.
\end{lemma}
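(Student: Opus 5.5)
The plan is to establish three facts in order: that $\mathcal E_{\varepsilon,U,K}$ sits compactly inside $\Omega_{\varepsilon,U}$, that its scalar action is controlled by the $\theta$-flux exactly as in Lemma~\ref{lem:action}, and that the collar claim follows from compactness.

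\emph{Containment.} First, $r_K<r_U$ because $K\varepsilon<U$ and $r(z)$ is increasing. Next I compare $\Psi$ on $[0,r_K]$ with $\Delta_{\varepsilon,U}=\Psi(0)$. By \eqref{eq:Psi},
\[
 \Psi(0)-\Psi(r)=\int_0^{r}\frac{y}{f\sqrt{1-y^2}}\,d\tau
 \leq \frac{r_K\,y(r_K)}{\min_{[0,r_K]}f\cdot\sqrt{1-y(r_K)^2}} .
\]
Here $y$ is increasing, and on $[0,r_K]$ we have $f\geq 2/K$, since $f$ is decreasing and $f=2\varepsilon/z$ on the core. The estimate \eqref{eq:initial-core-smallness} gives $r_K,\;y(r_K)=O_{U,K}(e^{-c_K/\varepsilon^3})$. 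Meanwhile Lemma~\ref{lem:action} gives $\Delta_{\varepsilon,U}\to\Delta_U>0$. Hence, for small $\varepsilon$,
\[
 \Psi(r)\geq \Psi(r_K)\geq \tfrac{2}{3}\Delta_{\varepsilon,U}>\tfrac13\Delta_{\varepsilon,U}
 \qquad\text{for } 0\leq r\leq r_K .
\]
So every point of $\mathcal E_{\varepsilon,U,K}$ satisfies $r\leq r_K<r_U$ and $|\phi|<\Psi(r)$ strictly. By continuity of $\Psi$ and compactness of $\mathcal E_{\varepsilon,U,K}$, the set stays a positive distance from the two graph faces $\phi=\pm\Psi(r)$. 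This positive distance also means any sufficiently thin collar of $\partial\Omega_{\varepsilon,U}$ misses $\mathcal E_{\varepsilon,U,K}$, which is the final assertion.

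\emph{Action.} Since $\Ric>0$ by Lemmas~\ref{lem:core-curvature} and \ref{lem:cap-smoothing}, we have $\Scal\geq\Ric(e_\theta,e_\theta)=-(fa')'/(af)$. With $dV=af\,dr\,d\theta\,d\phi$ and a $\phi$-range of length $2\Delta_{\varepsilon,U}/3$, the integral becomes an exact telescoping sum, with no $\Psi'$ correction because the $\phi$-width is constant:
\[
 \int_{\mathcal E}\Scal\,dV\geq \frac{4\pi\Delta_{\varepsilon,U}}{3}\bigl(f(0)a'(0)-f(r_K)a'(r_K)\bigr).
\]
By \eqref{eq:pole-flux}, $f(0)a'(0)=1+\varepsilon^3/2$. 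At $z=K\varepsilon$, \eqref{eq:af} gives $f=2/K$, and \eqref{eq:aprime} gives $a'=2C_\varepsilon/K+O(K^2\varepsilon^3)$. Hence $f(r_K)a'(r_K)\leq 4/K^2+o(1)<1/8$, using $K>6$. This yields \eqref{eq:protected-action} with, say, $c_0=\pi$.

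\emph{Main obstacle.} The only delicate point is the uniform-in-$\varepsilon$ lower bound $\Psi(r_K)\geq\frac23\Delta_{\varepsilon,U}$. It depends on the super-exponential smallness of $r_K$ and $y(r_K)$ from \eqref{eq:initial-core-smallness}, together with the positive limit $\Delta_U$ from Lemma~\ref{lem:action}. Both inputs are already available, so I expect this step to go through directly.
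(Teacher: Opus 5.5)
Your proposal is correct and follows the paper's proof essentially step for step. In both, the super-exponential smallness from \eqref{eq:initial-core-smallness}, together with $f\geq 2/K$ on $[0,r_K]$ and $\Delta_{\varepsilon,U}\to\Delta_U>0$, gives $\Psi(r_K)>\Delta_{\varepsilon,U}/3$ and hence compact containment; the constant-width $\phi$-slab then turns the $\theta$-eigenvalue integral into the exact flux difference $f(0)a'(0)-f(r_K)a'(r_K)$. Your $c_0=\pi$ differs from the paper's $c_0=\tfrac{2\pi}{3}(1-4/K^2)$ only in bookkeeping.
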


\begin{proof}
Equation \eqref{eq:initial-core-smallness} gives
\[
 r_K+\sup_{0\leq r\leq r_K}y(r)
 =O_{U,K}\!\left(e^{-c_K/\varepsilon^3}\right).
\]
Thus $y\leq1/2$ on $[0,r_K]$ for small $\varepsilon$.  Moreover,
$f$ is decreasing and $f(r_K)=2/K$, so $f\geq2/K$ there.  Hence
\[
 \begin{aligned}
 0\leq\Delta_{\varepsilon,U}-\Psi(r_K)
 &=\int_0^{r_K}\frac{y(r)}{f(r)\sqrt{1-y(r)^2}}\,dr\\
 &\leq\frac{K}{\sqrt3}\,r_K\sup_{0\leq r\leq r_K}y(r)
 =O_{U,K}\!\left(e^{-c_K/\varepsilon^3}\right).
 \end{aligned}
\]
Because $\Delta_{\varepsilon,U}\to\Delta_U>0$ and $\Psi$ is decreasing,
for all sufficiently small $\varepsilon$,
\[
 \Psi(r)\geq\Psi(r_K)>\frac{\Delta_{\varepsilon,U}}3
 \qquad(0\leq r\leq r_K).
\]
Also $r_K<r_U$ because $K\varepsilon<U$.  Thus
$\mathcal E_{\varepsilon,U,K}$ is compactly contained in
$\Omega_{\varepsilon,U}$; the locus $r=0$ is a smooth $\theta$-bolt,
not a boundary component.

Since $\Ric>0$, one has $\Scal\geq\Ric(e_\theta,e_\theta)$.  From
\eqref{eq:sectional},
$\Ric(e_\theta,e_\theta)af=-(fa')'$.  Integrating first over
$[\eta,r_K]$ and then letting $\eta\downarrow0$, with angular ranges
$2\pi$ and $2\Delta_{\varepsilon,U}/3$, gives
\[
 \begin{aligned}
 \int_{\mathcal E_{\varepsilon,U,K}}\Scal\,dV
 &\geq\int_{\mathcal E_{\varepsilon,U,K}}
          \Ric(e_\theta,e_\theta)\,dV\\
 &=\frac{4\pi\Delta_{\varepsilon,U}}3
   \left(f_\varepsilon(0)a_\varepsilon'(0)-f(r_K)a'(r_K)\right)\\
 &=\frac{4\pi\Delta_{\varepsilon,U}}3
   \left(1-\frac4{K^2}+O_K(\varepsilon^3)\right).
 \end{aligned}
\]
Here \eqref{eq:af} and \eqref{eq:aprime} at $z=K\varepsilon$ give
$f(r_K)a'(r_K)=4C_\varepsilon/K^2+K\varepsilon^3/12$, and the pole term
is \eqref{eq:pole-flux}.  Since $K>6$, the last parenthesis is at least
$\frac12(1-4/K^2)>0$ after decreasing $\varepsilon$.  This proves
\eqref{eq:protected-action}, for example with
$c_0=\frac{2\pi}{3}(1-4/K^2)$.

Finally, compact containment gives positive distance from
$\mathcal E_{\varepsilon,U,K}$ to the boundary for each fixed
$\varepsilon$.  Proposition~\ref{prop:relative-gluing} permits an arbitrarily
thin gluing collar, which can therefore be chosen disjoint from this set.
\end{proof}

\medskip
\noindent
We now normalize the boundary meridian for the angular attachment.  Define
\begin{equation}\label{eq:D}
 D_{\varepsilon,U}=\frac{\pi\cos r_{\mathrm W}}{2\ell},
\end{equation}
so that the full meridian of the scaled boundary has length
$2D_{\varepsilon,U}\ell=\pi\cos r_{\mathrm W}$, and set
\begin{equation}\label{eq:w}
 w_{\varepsilon,U}
 =D_{\varepsilon,U}a_\varepsilon(r_U)
 =D_{\varepsilon,U}c_\varepsilon\varepsilon^2A_\varepsilon(U).
\end{equation}
Equations \eqref{eq:ell-asymp} and \eqref{eq:rU-asymp} give
\begin{align}
 w_{\varepsilon,U}
 &=\frac{2C_\varepsilon\cos r_{\mathrm W}}{U}\,
       \varepsilon\bigl(1+O(U^3)\bigr),                             \label{eq:w-asymp}\\
 D_{\varepsilon,U}r_U
 &=\cos r_{\mathrm W}\bigl(1+O(U^3)\bigr).                         \label{eq:depth-scaled}
\end{align}
Thus $w_{\varepsilon,U}\asymp_U\varepsilon$, while the rescaled radial
scale is bounded.  Combining this normalization with the preceding depth
and protected-action estimates gives the following scaled form.

\begin{corollary}\label{cor:scaled-protected}
Fix $K>6$.  Under the choices of
Lemma~\ref{lem:packet-asymptotics} and the normalization \eqref{eq:D},
there are constants
$c(U,r_{\mathrm W},K),C(U,r_{\mathrm W})>0$ such that,
after scaling the packet by $D_{\varepsilon,U}^2$, the set
$\mathcal E_{\varepsilon,U,K}$
satisfies
\begin{equation}\label{eq:protected-scaled}
 \begin{aligned}
  \int_{\mathcal E_{\varepsilon,U,K}}
  \Scal_{D_{\varepsilon,U}^2g_\varepsilon}
       \,dV_{D_{\varepsilon,U}^2g_\varepsilon}
  &\geq\frac{c(U,r_{\mathrm W},K)}{w_{\varepsilon,U}},\\
  \sup_{p\in\mathcal E_{\varepsilon,U,K}}
  d_{D_{\varepsilon,U}^2g_\varepsilon}
       (p,\partial\Omega_{\varepsilon,U})
  &\leq C(U,r_{\mathrm W}).
 \end{aligned}
\end{equation}
\end{corollary}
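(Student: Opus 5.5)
The plan is to combine the scaling behaviour of the scalar action in dimension three with Lemma~\ref{lem:protected-set}, Lemma~\ref{lem:depth} and the normalization \eqref{eq:D}--\eqref{eq:depth-scaled}.

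\emph{Scalar action.} For a constant $\lambda>0$, the metric $\lambda^2g$ in dimension three has $\Scal_{\lambda^2g}=\lambda^{-2}\Scal_g$ and $dV_{\lambda^2g}=\lambda^3dV_g$, so the scalar action of a fixed set is multiplied by $\lambda$. With $\lambda=D_{\varepsilon,U}$, Lemma~\ref{lem:protected-set} gives
\[
 \int_{\mathcal E_{\varepsilon,U,K}}\Scal_{D^2_{\varepsilon,U}g_\varepsilon}\,dV_{D^2_{\varepsilon,U}g_\varepsilon}
 \geq c_0D_{\varepsilon,U}\Delta_{\varepsilon,U}.
\]
It then suffices to show $D_{\varepsilon,U}\Delta_{\varepsilon,U}\geq c/w_{\varepsilon,U}$. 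By \eqref{eq:w}, $D_{\varepsilon,U}=w_{\varepsilon,U}/(c_\varepsilon\varepsilon^2A_\varepsilon(U))$. Combining this with Lemma~\ref{lem:packet-asymptotics}, which gives $\Delta_{\varepsilon,U}\asymp U^2A_\varepsilon(U)$, yields
\[
 D_{\varepsilon,U}\Delta_{\varepsilon,U}\asymp\frac{U^2w_{\varepsilon,U}}{\varepsilon^2}.
\]
By \eqref{eq:w-asymp}, $w_{\varepsilon,U}\asymp_U\varepsilon$, so $w_{\varepsilon,U}/\varepsilon^2\asymp_U 1/w_{\varepsilon,U}$. Alternatively, one can compute directly: $D_{\varepsilon,U}=\pi\cos r_{\mathrm W}/(2\ell)$ and $\ell\asymp\varepsilon UA_\varepsilon(U)$, so $D_{\varepsilon,U}\Delta_{\varepsilon,U}\asymp U/\varepsilon\asymp_U 1/w_{\varepsilon,U}$. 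Either route produces a constant $c(U,r_{\mathrm W},K)$ proportional to $c_0=\tfrac{2\pi}{3}(1-4/K^2)$, and all $O(U^3)$ corrections are harmless once $U$ is fixed small.

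\emph{Depth bound.} By Lemma~\ref{lem:depth}, every point of $\Omega_{\varepsilon,U}$ lies within $g_\varepsilon$-distance $r_U$ of the boundary. Distances scale by $D_{\varepsilon,U}$, so every point is within $D_{\varepsilon,U}r_U$ in the scaled metric, and by \eqref{eq:depth-scaled} this is at most $2\cos r_{\mathrm W}$ for $U$ small. This gives the second inequality of \eqref{eq:protected-scaled} with $C$ independent of $\varepsilon$.

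\emph{Main obstacle.} The only delicate point is bookkeeping of the constants: they must be independent of $\varepsilon$, while the leading asymptotics of Lemma~\ref{lem:packet-asymptotics} hold only after $U$ is fixed small and $\varepsilon/U$ is taken small. Since $A_\varepsilon(U)$ cancels exactly between $\ell$ and $\Delta_{\varepsilon,U}$, the ratio is controlled uniformly, and I would simply absorb the factors $(1+O(U^3))$ into constants depending on $U$.
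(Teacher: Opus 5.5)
Your proposal is correct and is essentially the paper's own argument. The paper likewise multiplies the unscaled bound $c_0\Delta_{\varepsilon,U}$ from Lemma~\ref{lem:protected-set} by $D_{\varepsilon,U}$, using the three-dimensional homothety law for scalar action. It then combines \eqref{eq:D}, \eqref{eq:ell-asymp}, \eqref{eq:Delta-asymp} and \eqref{eq:w-asymp} to get $D_{\varepsilon,U}\Delta_{\varepsilon,U}\geq c/w_{\varepsilon,U}$, and takes the distance bound from Lemma~\ref{lem:depth} and \eqref{eq:depth-scaled}. Your explicit cancellation of $A_\varepsilon(U)$, which gives $D_{\varepsilon,U}\Delta_{\varepsilon,U}\asymp U/\varepsilon\asymp_U 1/w_{\varepsilon,U}$, simply spells out the step the paper cites in one line.
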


\begin{proof}
Lemma~\ref{lem:protected-set} gives the unscaled lower bound
$c_0\Delta_{\varepsilon,U}$ on the protected set.  Under this homothety,
a scalar action is multiplied by $D_{\varepsilon,U}$.
Equations \eqref{eq:D}, \eqref{eq:ell-asymp},
\eqref{eq:Delta-asymp}, and \eqref{eq:w-asymp} give
$D_{\varepsilon,U}\Delta_{\varepsilon,U}
\geq c/w_{\varepsilon,U}$.  The distance
estimate follows from Lemma~\ref{lem:depth} and
\eqref{eq:depth-scaled}.
\end{proof}

\begin{proof}[Proof of Proposition~\ref{prop:high-action-lens}]
Take $C_{\varepsilon,U}=\Omega_{\varepsilon,U}$ with metric
$g_{\varepsilon,U}=D_{\varepsilon,U}^2g_\varepsilon$.  The cap smoothing
and Lemma~\ref{lem:core-curvature} give $\Ric>0$.
Lemma~\ref{lem:lens-smooth} identifies the domain as a smooth three-ball,
while \eqref{eq:D} gives the boundary normalization and
\eqref{eq:w-asymp} gives the asserted waist.  Finally take
$\mathcal E_{\varepsilon,U}
=\mathcal E_{\varepsilon,U,7}$.  This is admissible because $7>6$.
Corollary~\ref{cor:scaled-protected} gives the
action estimate,
while Lemma~\ref{lem:protected-set} permits the gluing collar to avoid this
set.  Lemma~\ref{lem:depth}, together with
\eqref{eq:depth-scaled}, gives the distance estimate for every point of
$C_{\varepsilon,U}$.
\end{proof}

\section{The angular port and companion neck}

Let
\[
 h_+=du^2+b(u)^2d\theta^2,\qquad 0\leq u\leq\ell,
\]
be the unscaled metric induced on one graph face of
$\Omega_{\varepsilon,U}$, and let $h$ denote its smooth double.  The
boundary metric of the normalized lens in
Proposition~\ref{prop:high-action-lens} is $D_{\varepsilon,U}^2h$.
Our immediate purpose is to realize this metric as an inner angular
sphere of the attachment model, so that the two boundaries can be
identified isometrically.  On one scaled graph face, the meridional
line element and the $\theta$-orbit radius are, respectively,
$D_{\varepsilon,U}\,du$ and $D_{\varepsilon,U}b(u)$.

The attachment model uses the ambient metric
\begin{equation}\label{eq:ambient}
 g_{\mathrm{amb}}=\cot^2r_{\mathrm W}
 \left(dt^2+\cos^2t\,d\zeta^2+\mathcal R(t)^2d\theta^2\right),
\end{equation}
where $t$ is latitude and $\mathcal R(t)$ is the unscaled
$\theta$-warping profile.  The ambient domain used below has
$0\leq t\leq5r_{\mathrm W}$, with $\zeta$ and $\theta$ periodic; the
$\theta$-circle collapses smoothly along $t=0$, so $t$ is a nonnegative
radial coordinate from that collapsed orbit.  For $\zeta_0\in\mathbb R$
(understood modulo the $\zeta$-period), let
$o_{\zeta_0}$ denote the point with $(t,\zeta)=(0,\zeta_0)$ at which
the $\theta$-orbit collapses, and define
\[
 \begin{aligned}
 \varrho_{\zeta_0}(t,\zeta)
 &:=\arccos\!\bigl(\cos t\cos(\zeta-\zeta_0)\bigr),\\
 \mathcal B_s(o_{\zeta_0})
 &:=\{(t,\zeta,\theta):\varrho_{\zeta_0}(t,\zeta)\leq s\},\\
 \mathcal S_s(o_{\zeta_0})
 &:=\partial\mathcal B_s(o_{\zeta_0})
   =\{(t,\zeta,\theta):\varrho_{\zeta_0}(t,\zeta)=s\}
 \qquad(0<s\leq4r_{\mathrm W}).
 \end{aligned}
\]
We call $\mathcal S_s(o_{\zeta_0})$ the \emph{angular sphere} of
angular radius $s$ centered at $o_{\zeta_0}$, and
$\mathcal B_s(o_{\zeta_0})$ its angular ball.  Thus an angular sphere is
obtained by lifting the round geodesic circle
$\varrho_{\zeta_0}=s$ and adjoining its $\theta$-orbits.  Since the
$\theta$-orbit collapses at $o_{\zeta_0}$,
$\mathcal B_s(o_{\zeta_0})$ is the closed $g_{\mathrm{amb}}$-metric
ball of radius $s\cot r_{\mathrm W}$; the term \emph{angular radius}
distinguishes $s$ from this rescaled metric radius.

To match the lens boundary, consider on the unit round surface with
coordinates $(t,\zeta)$ the geodesic circle of angular radius
$r_{\mathrm W}$ centered at $(0,0)$.  Use the normalized meridional
coordinate $\chi=\pi u/(2\ell)$ introduced in Section~2 to parametrize
its pole-to-equator half.  Under the standard embedding
\[
 (t,\zeta)\longmapsto
 (\cos t\cos\zeta,\cos t\sin\zeta,\sin t)
\]
this half-circle is
\[
 (\cos r_{\mathrm W},
   \sin r_{\mathrm W}\cos\chi,
   \sin r_{\mathrm W}\sin\chi).
\]
Hence
$\sin t=\sin r_{\mathrm W}\sin\chi$, and its unscaled arclength
element is $\sin r_{\mathrm W}\,d\chi$.  In \eqref{eq:ambient}, its
lift has meridional line element $\cos r_{\mathrm W}\,d\chi$ and
$\theta$-orbit radius $\cot r_{\mathrm W}\mathcal R(t(\chi))$.

The earlier normalization
$D_{\varepsilon,U}=\pi\cos r_{\mathrm W}/(2\ell)$ was chosen precisely
so that the two pole-to-equator lengths agree:
$D_{\varepsilon,U}\ell=\pi\cos r_{\mathrm W}/2$.  Pointwise matching of
the meridional line elements and the $\theta$-orbit radii therefore
requires
\[
 \cos r_{\mathrm W}\,d\chi=D_{\varepsilon,U}\,du,
 \qquad
 \cot r_{\mathrm W}\mathcal R(t(u))
 =D_{\varepsilon,U}b(u).
\]
The definition of $\chi$ and the normalization of $D_{\varepsilon,U}$
make the first equality automatic.  The half-circle determines $t$ from
$\chi$, and the second equality determines $\mathcal R$.  Explicitly,
\begin{equation}\label{eq:ambient-profile}
 \chi=\frac{\pi u}{2\ell},\qquad
 t(u)=\arcsin\!\bigl(\sin r_{\mathrm W}\sin\chi\bigr),\qquad
 \mathcal R(t(u))
 =\tan r_{\mathrm W}\,D_{\varepsilon,U}\,b(u).
\end{equation}
The map $u\mapsto t(u)$ is strictly increasing from $[0,\ell]$ onto
$[0,r_{\mathrm W}]$, so \eqref{eq:ambient-profile} uniquely defines
$\mathcal R$ on $[0,r_{\mathrm W}]$.  Let $g_{\mathrm{ang}}$ denote the
metric induced by $g_{\mathrm{amb}}$ on the particular angular sphere
$\mathcal S_{r_{\mathrm W}}(o_0)$.  Its identification with
$D_{\varepsilon,U}^2h$ is verified in Lemma~\ref{lem:inverse-realization}.
The smoothness of $\mathcal R$ at the collapsed orbit and equator, and
the concavity needed for positive sectional curvature, are also proved
below.  Thus the boundary-isometry requirement reduces the attachment
problem to controlling and extending this reconstructed profile.

We first construct the angular pair of pants and record its own boundary
geometry; the following corollary then records the packet and neck
attachments.

\begin{theorem}
\label{thm:attachment}
Fix $r_{\mathrm W}$ as in \eqref{eq:r-range}.  There is
$U_0=U_0(r_{\mathrm W})>0$ such that, for every fixed
$U\in(0,U_0)$, there is $\varepsilon_0=\varepsilon_0(U,r_{\mathrm W})>0$
for which the following holds whenever
$0<\varepsilon<\varepsilon_0$.

The reconstructed profile $\mathcal R$ extends smoothly from
$[0,r_{\mathrm W}]$ to $[0,5r_{\mathrm W}]$, and the metric
\eqref{eq:ambient} is smooth across the collapsed $\theta$-orbit at
$t=0$.  The initial data at the collapsed orbit are
\[
 \mathcal R(0)=0,\qquad \mathcal R'(0)=1,\qquad
 \mathcal R''(0)=0.
\]
On the extended interval the profile is positive, strictly increasing, and
strictly concave away from the collapsed orbit:
\[
 \mathcal R(t)>0,\qquad \mathcal R'(t)>0,\qquad
 \mathcal R''(t)<0
 \qquad(0<t\leq5r_{\mathrm W}).
\]
Using the angular balls and spheres defined above, set
\[
 \mathcal P_{\varepsilon,U}
 :=\mathcal B_{4r_{\mathrm W}}(o_0)\setminus
 \left(
  \operatorname{int}\mathcal B_{r_{\mathrm W}}(o_{2r_{\mathrm W}})
  \cup
  \operatorname{int}\mathcal B_{r_{\mathrm W}}(o_{-2r_{\mathrm W}})
 \right),
 \qquad
 g_{\mathcal P}:=g_{\mathrm{amb}}|_{\mathcal P_{\varepsilon,U}}.
\]
Then $(\mathcal P_{\varepsilon,U},g_{\mathcal P})$ is a compact
sectionally positive three-manifold, diffeomorphic to a
three-ball with the interiors of two disjoint three-balls removed.  Its
boundary components are
\[
 \Sigma_{\mathrm{out}}=\mathcal S_{4r_{\mathrm W}}(o_0),\qquad
 \Sigma_{\mathrm{lens}}=\mathcal S_{r_{\mathrm W}}(o_{2r_{\mathrm W}}),
 \qquad
 \Sigma_{\mathrm{neck}}=\mathcal S_{r_{\mathrm W}}(o_{-2r_{\mathrm W}}),
\]
and they have the following properties.
\begin{enumerate}[label=\textup{(\roman*)}]
\item Both inner boundary metrics are isometric to the prescribed
normalized rotational metric:
\[
 (\Sigma_{\mathrm{lens}},g_{\mathcal P})
 \cong(\Sigma_{\mathrm{neck}},g_{\mathcal P})
 \cong(\mathbb S^2,D_{\varepsilon,U}^2h).
\]
With respect to the meridional--circle principal frame, the shape
operator of either removed angular ball, for the normal pointing from
that ball into $\mathcal P_{\varepsilon,U}$, is
\[
 S^{\mathrm{hole}}=\operatorname{diag}(1,q_\theta),\qquad
 q_\theta=\tan t\,\frac{\mathcal R'(t)}{\mathcal R(t)},\qquad
 0<q_\theta\leq q_*:=\frac{\tan r_{\mathrm W}}{r_{\mathrm W}},
\]
where $q_\theta$ is understood by its smooth limit at a pole.

\item The outer boundary $\Sigma_{\mathrm{out}}$ is strictly convex,
and its induced metric has positive Gaussian curvature.
\end{enumerate}
\end{theorem}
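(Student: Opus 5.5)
The plan is in four steps: analyse the reconstructed profile $\mathcal R$ on $[0,r_{\mathrm W}]$, extend it to $[0,5r_{\mathrm W}]$, read off the curvature of $g_{\mathrm{amb}}$ from the profile, and then compute the geometry of the angular spheres.

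\emph{Step 1: the profile on $[0,r_{\mathrm W}]$.} Differentiate \eqref{eq:ambient-profile} in $t$. Since $\sin t=\sin r_{\mathrm W}\sin\chi$, we have $\cos t\,dt=\sin r_{\mathrm W}\cos\chi\,d\chi$, and hence
\[
 \mathcal R'(t)=\tan r_{\mathrm W}D_{\varepsilon,U}b'(u)\frac{du}{dt}
 =\frac{\cos t\,b'(u)}{\cos r_{\mathrm W}\cos\chi}.
\]
We use \eqref{eq:bderivatives} in the form $b'=a'\cos\alpha$ and \eqref{eq:chi-alpha}, which gives $\chi\approx\alpha$ up to $O(U^3)$. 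With these, $\mathcal R'(t)$ becomes $\cos t\,a'\cos\alpha/(\cos r_{\mathrm W}\cos\chi)$. At the pole this reduces to $a'(0)=1$ times $D_{\varepsilon,U}\tan r_{\mathrm W}\cdot(\text{Jacobian})$, and it should equal $1$ exactly because $D\,du=\cos r_{\mathrm W}d\chi$ and $dt=\sin r_{\mathrm W}d\chi$ at $\chi=0$. Smoothness at $t=0$, with $\mathcal R$ odd, follows from the smoothness and oddness of $b$ at the pole (Lemma~\ref{lem:lens-smooth}). Then $\mathcal R''(0)=0$ follows from $b''(0)=0$.

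For strict concavity on $(0,r_{\mathrm W}]$, compute $\mathcal R''$ via the chain rule. The key comparison is
\[
 -\frac{2\ell}{\pi}\frac{b''}{b'}\quad\text{versus}\quad \frac{d}{d\chi}\log\frac{\cos t}{\cos\chi},
\]
the latter coming from the round-circle reparametrization, which equals $\tan\chi-\tan t\,\frac{dt}{d\chi}$. By \eqref{eq:exact-L}, the left side is $\mathfrak m((1+T)\tan\alpha+T\Xi\cot\alpha)$. By \eqref{eq:M-refined} together with \eqref{eq:chi-alpha}, $\alpha-\chi=O(U^6|\log|)$, so the left side dominates $\tan\chi$ up to small errors. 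The term $\tan t\,dt/d\chi$ is of size $\sin^2r_{\mathrm W}$, which is strictly positive for $\chi>0$, and this supplies the margin. Near the equator both sides have a $\tan$ singularity. There I would match the leading coefficients, using $\mathfrak m=E_{\mathrm{core}}(U)+O(U^6)$ and $1+T(U)=1/E_{\mathrm{core}}(U)$, so that $\mathfrak m(1+T)\to1$ at $\alpha=\pi/2$. Smoothness at $t=r_{\mathrm W}$ would come from the evenness argument at the lens equator in Lemma~\ref{lem:lens-smooth}.

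\emph{Step 2: extension to $[0,5r_{\mathrm W}]$.} At $t=r_{\mathrm W}$, $\mathcal R'>0$ and $\mathcal R''<0$. I would glue on a smooth, positive, increasing, strictly concave extension, for example by prescribing $\mathcal R''$ as a smooth negative function agreeing with the given one near $r_{\mathrm W}$. The decrease of $\mathcal R'$ must be kept small enough to stay positive out to $5r_{\mathrm W}$; this is possible since $5r_{\mathrm W}<\pi/2$.

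\emph{Step 3: curvature of $g_{\mathrm{amb}}$.} This metric is doubly warped over the round $(t,\zeta)$ hemisphere. Its sectional curvatures in the natural frame are, up to the factor $\tan^2 r_{\mathrm W}$: $1$ for the $(t,\zeta)$ plane, $-\mathcal R''/\mathcal R>0$, and $\tan t\,\mathcal R'/\mathcal R>0$. Since the frame is a Killing-type warped frame, mixed terms vanish and sectional positivity follows. I would verify this through the curvature operator, which is diagonal on the relevant bivectors.

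\emph{Step 4: the angular spheres and the pair of pants.} An angular sphere $\mathcal S_s(o_{\zeta_0})$ is a round circle lifted by $\theta$-orbits. Its meridional principal curvature is the geodesic curvature $\cot s$ of the circle, rescaled by the factor $\tan r_{\mathrm W}$; at $s=r_{\mathrm W}$ this gives $1$. The $\theta$-direction principal curvature is $\partial_\nu\log\mathcal R$ scaled, and since $\partial_\nu t=\sin\cdot$ this gives $q_\theta=\tan t\,\mathcal R'/\mathcal R$ after normalization. The bound $q_\theta\le\tan r_{\mathrm W}/r_{\mathrm W}$ follows from concavity ($\mathcal R'/\mathcal R\le1/t$) together with $t\le r_{\mathrm W}$ on these spheres. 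The isometry in (i) holds by construction for $o_0$ and transfers to $o_{\pm2r_{\mathrm W}}$ by $\zeta$-translation invariance. Disjointness and the topology follow from the spherical distance $4r_{\mathrm W}$ between the centers.

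For (ii), the outer sphere of radius $4r_{\mathrm W}$ has positive principal curvatures: $\cot(4r_{\mathrm W})>0$ because $4r_{\mathrm W}<\pi/2$, and $\mathcal R'>0$ on $t\le4r_{\mathrm W}$. The Gauss equation, with positive ambient curvature and positive principal curvatures, then gives positive Gaussian curvature.

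The main obstacle is the concavity comparison in Step 1 near the equator, $\alpha\to\pi/2$. There the leading singular terms cancel, so the sign of $\mathcal R''$ must be extracted from the $O(U^3)$ corrections, which requires the refined $O(U^6)$ estimates \eqref{eq:M-refined}.
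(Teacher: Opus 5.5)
There is a genuine gap in Step 1, which is the real content of the theorem (Lemma~\ref{lem:uniform-concavity}), and it sits at the equator. Your overall architecture is otherwise the paper's. Your log-derivative comparison is exactly the criterion \eqref{eq:concavity-criterion}, namely $L>\mathcal F(\chi)$ with $L=-\frac{2\ell}{\pi}\frac{b''}{b'}$ and $\mathcal F(\chi)=\tan\chi-\tan t\,\frac{dt}{d\chi}$. Steps 2--4 follow Lemma~\ref{lem:R-extension}, the sectional curvatures \eqref{eq:ambient-sections}, and the angular-sphere shape formulas.

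By \eqref{eq:target-gap} the margin is $\tan\chi-\mathcal F=\sin^2r_{\mathrm W}\sin\chi\cos\chi/\cos^2t\approx\tan^2r_{\mathrm W}\cos\chi$. Relative to $\tan\chi\sim1/\cos\chi$ it is only $O(\cos^2\chi)$, so every error in $L/\tan\chi$ must vanish to second order in $x:=\pi/2-\alpha$. Your matching does not achieve this.
\begin{itemize}
\item The factor $\mathfrak m(1+T)$ does not tend to $1$ at $\alpha=\pi/2$. It tends to the fixed constant $\mathfrak m/E_{\mathrm{core}}(U)=1+O(U^6)$.
\item Replacing $\tan\alpha$ by $\tan\chi$ costs a further uniform relative $O(U^6)$ (cf.\ \eqref{eq:angle-near-equator}).
\item Treated as errors, these leave $L-\tan\chi=O(U^6/\cos\chi)$, which a margin of size $\tan^2r_{\mathrm W}\cos\chi$ cannot absorb.
\end{itemize}
The two constants in fact cancel exactly, because $\tan\alpha/\tan\chi\to E_{\mathrm{core}}(U)/\mathfrak m$, but your sketch never sets this up. The sign then comes from the $U$-independent gap \eqref{eq:target-gap}, not from ``$O(U^3)$ corrections''. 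Note also that \eqref{eq:exact-L} holds only on the explicit core, so the cap and smoothing collar need their own argument.

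The paper avoids asymptotics on the core. With $\Theta=\arctan L$ it proves $\Theta'<\chi'=1/(\mathfrak m(1+T))$ in the variable $\alpha$. This reduces to $\mathcal Q<1$ in \eqref{eq:Q} and uses the exact bound $T\ge6s/(1+s)$ inherited from \eqref{eq:smoothing-W}. Integrating backward from the common equatorial value $\pi/2$ gives $L>\tan\chi$ on the whole core. The cap and collar are handled by $L\ge\mathfrak m\tan(\mathfrak m\chi)$ together with $\mathfrak m>\cos r_{\mathrm W}$.

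Your route can also be repaired, as follows.
\begin{itemize}
\item From \eqref{eq:bderivatives} with $a''\le0$, you get $L\ge\mathfrak m\tan\alpha/\widehat E(\alpha)$ on the whole face.
\item Also $X:=\pi/2-\chi=\mathfrak m^{-1}\int_\alpha^{\pi/2}\widehat E$. Hence $L/\tan\chi\ge\frac{\mathfrak m}{\widehat E(\alpha)}\frac{\tan X}{\tan x}$, and the constants cancel exactly.
\item For $\chi\ge\pi/3$, the bound $|\log\sin\alpha|\le Cx^2$ in \eqref{eq:M-refined} gives $L/\tan\chi\ge1-CU^6x^2$. The relative margin there is at least $\sin^2r_{\mathrm W}\sin^2X$, which wins for small $U$.
\item For $\chi\le\pi/3$, the bound $L\ge\mathfrak m\tan(\mathfrak m\chi)$ with $1-\mathfrak m=O(U^3)$ suffices.
\end{itemize}

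You also omit three endpoint limits the statement needs, since your formulas are $0/0$ there.
\begin{itemize}
\item $\mathcal R'(r_{\mathrm W})=-\frac{2\ell}{\pi}b''(\ell)\cos r_{\mathrm W}>0$. Your Step 2 assumes this.
\item $\mathcal R''(r_{\mathrm W})<0$, which follows via \eqref{eq:Rsecond-factorized}. Your Step 2 also assumes this.
\item $\lim_{t\downarrow0}(-\mathcal R''/\mathcal R)>0$. Sectional positivity on the collapsed orbit $t=0$, which meets $\mathcal P_{\varepsilon,U}$, requires it, and it does not follow from $\mathcal R''<0$ on $(0,r_{\mathrm W}]$. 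The paper computes it as $(\mathfrak m^2-\cos^2r_{\mathrm W})/\sin^2r_{\mathrm W}$.
\end{itemize}

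Finally, your displayed formula for $\mathcal R'$ has a spurious factor $1/\cos r_{\mathrm W}$. The correct formula is $\mathcal R'=b'\cos t/\cos\chi$, consistent with your own value $\mathcal R'(0)=1$.
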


The theorem constructs the central three-boundary piece.  To use it in
the final gluing, one must also match the packet to one inner boundary
and a round-ended neck to the other, with the strict shape inequalities
required by Proposition~\ref{prop:relative-gluing}.  The next corollary
records these two compatible attachments; its proof is given after the
supporting boundary and neck estimates below.

\begin{corollary}
\label{cor:angular-attachments}
After decreasing $U_0$ in Theorem~\ref{thm:attachment} if necessary,
and then decreasing $\varepsilon_0$ for each fixed $U\in(0,U_0)$, the
following consequences hold for every $0<\varepsilon<\varepsilon_0$.
\begin{enumerate}[label=\textup{(\roman*)}]
\item The normalized packet boundary is isometric to
$\Sigma_{\mathrm{lens}}$.  Under this isometry, the outward shape
operator $S^{\mathrm{pkt}}$ of the packet lens satisfies
\[
 S^{\mathrm{pkt}}-S^{\mathrm{hole}}>0.
\]
Since the outward shape operator of $\mathcal P_{\varepsilon,U}$ along
$\Sigma_{\mathrm{lens}}$ is $-S^{\mathrm{hole}}$, the packet and the
angular pair of pants satisfy the relative Ricci-positive gluing
hypothesis at this seam.

\item There is a Ricci-positive companion neck whose angular end is
isometric to $\Sigma_{\mathrm{neck}}$ and whose outward shape operator
there satisfies
\[
 S^{\mathrm{neck}}>q_*\operatorname{Id}
 \geq S^{\mathrm{hole}}.
\]
Thus the neck and the angular pair of pants also satisfy the relative
Ricci-positive gluing hypothesis.  For some $\rho,\lambda>0$, the other
neck end is round of radius $\rho/\lambda$, has outward shape operator
$-\lambda\operatorname{Id}$ with
$\lambda>\rho/w_{\varepsilon,U}$, and is at distance less than $1$ from
the angular end.
\end{enumerate}
\end{corollary}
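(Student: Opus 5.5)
The plan is to treat the two seams separately, using Theorem~\ref{thm:attachment}(i) for the boundary isometries and reducing both gluing hypotheses to explicit principal-curvature comparisons in the meridional--circle frame.

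\emph{Part (i): the packet seam.}
The isometry comes directly from Theorem~\ref{thm:attachment}(i). The packet boundary carries $D_{\varepsilon,U}^2h$, and so does $\Sigma_{\mathrm{lens}}$. The identification sends the pole--equator meridian to the pole--equator meridian, with $\chi=\pi u/(2\ell)$.

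Next I compute $S^{\mathrm{pkt}}$. By \eqref{eq:shapes}, on a graph face of the unscaled lens the principal curvatures are $k_m=\kappa$ and $k_\theta=a'y/a$. After scaling by $D_{\varepsilon,U}^2$ they become $\kappa/D_{\varepsilon,U}$ and $a'y/(D_{\varepsilon,U}a)$. The normalization $D_{\varepsilon,U}=\pi\cos r_{\mathrm W}/(2\ell)$ gives
\[
\frac{\kappa}{D_{\varepsilon,U}}=\frac{2\kappa\ell}{\pi\cos r_{\mathrm W}}=\frac{\mathfrak m}{\cos r_{\mathrm W}}.
\]

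The hole shape operator is $\operatorname{diag}(1,q_\theta)$, but in the ambient metric \eqref{eq:ambient}, which carries the factor $\cot^2 r_{\mathrm W}$. I will recheck this normalization first. The unit round geodesic circle of radius $r_{\mathrm W}$ has curvature $\cot r_{\mathrm W}$, and after scaling by $\cot r_{\mathrm W}$ it has curvature $1$. So the meridional inequality to verify is
\[
\frac{\mathfrak m}{\cos r_{\mathrm W}}>1.
\]
By Lemma~\ref{lem:packet-asymptotics}, $\mathfrak m=1-O(U^3)$. Since $r_{\mathrm W}>0$ is fixed, $1/\cos r_{\mathrm W}>1$ strictly, and the inequality holds once $U_0$ is small.

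For the circle direction, I express $q_\theta$ through \eqref{eq:ambient-profile}. Differentiating $\mathcal R(t(u))=\tan r_{\mathrm W}D_{\varepsilon,U}b(u)$ gives $\mathcal R'/\mathcal R=(b'/b)(du/dt)$. Then I compare $\tan t\,(b'/b)(du/dt)$ with $a'y/(D_{\varepsilon,U}a)$, using $b'=a'\cos\alpha$, $y=\sin\alpha$, $\sin t=\sin r_{\mathrm W}\sin\chi$, and \eqref{eq:chi-alpha}. This reduces the $\theta$-inequality to a comparison of $\sin\alpha$ with an explicit function of $\chi$, weighted by $\widehat E/\mathfrak m$. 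I expect this comparison to follow from \eqref{eq:M-refined}: there $\chi=\alpha+O(U^3)$, and the factor $\cos r_{\mathrm W}$ again leaves a fixed strict margin. Near the poles and the equator I will handle the comparison through smooth limits of the ratios; these limits exist by Lemma~\ref{lem:lens-smooth} and Theorem~\ref{thm:attachment}. This pointwise $\theta$-comparison, uniform up to the pole where both sides degenerate, is the step I expect to be the main obstacle.

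\emph{Part (ii): the companion neck.}
I build the neck as a doubly warped, or rotational, cylinder. Its angular end is a collar near a lens-like boundary, realized isometrically to $D_{\varepsilon,U}^2h$ (for instance a thin Perelman-type neck with bent metric $ds^2+\lambda(s)^2h$), and its other end is round.

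The plan follows the Perelman neck construction as in \cite[Lemmas~3.1--3.2]{PriorAngularConstruction}. First, I start from the rotational metric $D_{\varepsilon,U}^2h$, which has positive curvature since $b''<0$ by Lemma~\ref{lem:lens-ode}. Second, I take a warped product $ds^2+\phi(s)^2\,\cdot$ with $\phi'$ large at the angular end, so that all principal curvatures there exceed $q_*$. Third, I interpolate, within positive Ricci curvature, to a round metric of small radius $\rho/\lambda$. The round radius is comparable to the waist $w_{\varepsilon,U}$, which gives $\lambda>\rho/w_{\varepsilon,U}$. Fourth, I control the length: the neck's length is $O(\text{diameter of } D^2h)$ scaled down, so it can be made less than $1$.

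Finally, the inequality $q_*\operatorname{Id}\geq S^{\mathrm{hole}}$ is exactly the bound $q_\theta\le q_*$ from Theorem~\ref{thm:attachment}(i) together with $1\le q_*$. Proposition~\ref{prop:relative-gluing} then applies at both seams.
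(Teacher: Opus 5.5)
The genuine gap is in part~(ii). Positive curvature of $g_{\mathrm{ang}}$ (from $b''<0$) is far from enough, and making $\phi'$ large at the angular end is the wrong direction. At that end the Gauss equation gives tangential sectional curvature $K_{g_{\mathrm{ang}}}-k_1k_2$. At the two poles of the lens boundary one has exactly $K_{g_{\mathrm{ang}}}=(\mathfrak m/\cos r_{\mathrm W})^2$, because on the cap $K_{r\theta}=0$ and $k_\theta\to\kappa$.

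So the construction has to place the end principal curvatures in the narrow window between $q_*=\tan r_{\mathrm W}/r_{\mathrm W}$ and about $\mathfrak m/\cos r_{\mathrm W}$. This window is nonempty only because $\sin r_{\mathrm W}<r_{\mathrm W}$ and $\mathfrak m=1-O(U^3)$. Two ingredients are missing from your plan:
\begin{enumerate}[label=(\alph*)]
\item The quantitative bound of Lemma~\ref{lem:Kgreater}, together with constants $q_*<V<\Gamma<\Gamma_-<1/\cos r_{\mathrm W}$ and a further shrinking of $U_0$ so that $\mathfrak m/\cos r_{\mathrm W}\geq\Gamma_-$. This gives $K_{g_{\mathrm{ang}}}>\Gamma^2>V^2$.
\item A path of slice metrics from a round sphere to the very eccentric spindle $g_{\mathrm{ang}}$ (waist $w\asymp\varepsilon$, meridian length $\pi\cos r_{\mathrm W}$) whose curvature stays above $\Gamma^2$ at the relevant scale.
\end{enumerate}
A fixed-slice warped product $ds^2+\phi(s)^2h$ never becomes round. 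The cited \cite[Lemmas~3.1--3.2]{PriorAngularConstruction} are the Gaussian formulas and the gluing lemma, not such an interpolation.

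Proposition~\ref{prop:neck} is exactly this missing construction. It uses affine interpolation of $A_{\mathrm{ang}}/w$ at scale $\rho\tau^{-1/p_0}$. It then shows that every critical point of the curvature along the path is a strict maximum, so the minimum sits at the endpoints. Finally, a slow conical reparametrization makes the end principal curvatures close to $V$.

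The proposition also requires $w<\min\{\rho^2,\rho^2\Gamma/V^2\}$, which is why $\rho$ is fixed before $\varepsilon_0$ is reduced. In that cone-type neck, $\lambda>\rho/w$ is simply $R_{\mathrm{neck}}(0)<R_{\mathrm{neck}}(\ell_{\mathrm{neck}})=w$, by monotonicity of the radius. The bound $\ell_{\mathrm{neck}}<1/V<1$ comes from the end curvature being about $V$, not from a rescaled diameter of $g_{\mathrm{ang}}$, which is fixed.

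Part~(i) follows the paper's route, but two points need correcting. First, the circle comparison reduces exactly to $\tan\alpha>\cos r_{\mathrm W}\tan\chi$, with no $\widehat E$ weight, using $b=a$, $b'=a'\cos\alpha$ and $2\ell/\pi=\cos r_{\mathrm W}/D_{\varepsilon,U}$; this is Lemma~\ref{lem:seam}. Second, an additive estimate $\chi=\alpha+O(U^3)$ does not control $\tan\alpha/\tan\chi$ near either endpoint.

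The pole is the easy end. Integrating $y_r\geq\kappa$ gives $\alpha\geq\mathfrak m\chi$, so the ratio is at least $1-CU^3$ on $\chi\leq\pi/3$, with pole limit $\mathfrak m$. The delicate end is the equator, where you need the relative estimate $\pi/2-\chi=(1+O(U^6))(\pi/2-\alpha)$. It follows by integrating $d\chi/d\alpha=\widehat E/\mathfrak m$ backward from the equator, using that the weight $|\log\sin\alpha|$ in \eqref{eq:M-refined} is $O((\pi/2-\alpha)^2)$ there. This is Lemma~\ref{lem:docking-windows}; together with Lemma~\ref{lem:seam} and $\mathfrak m/\cos r_{\mathrm W}>1$, it gives part~(i) directly.
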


We now establish the ingredients needed to prove the theorem and its
corollary.  The first step is to verify that the reconstruction
\eqref{eq:ambient-profile} reproduces the normalized lens boundary, is
smooth at the collapsed orbit and the equator, and converts the required
ambient concavity into an inequality involving the lens profile.

\begin{lemma}
\label{lem:inverse-realization}
With $h_+$, $h$, and $\mathcal R$ as defined above, the sphere
$(\mathbb S^2,D_{\varepsilon,U}^2h)$ is isometric to
$(\mathcal S_{r_{\mathrm W}}(o_0),g_{\mathrm{ang}})$.
The reconstructed profile $\mathcal R$ is smooth on
$[0,r_{\mathrm W}]$, extends smoothly and oddly across $t=0$ with
$\mathcal R'(0)=1$, and satisfies $\mathcal R'>0$ on
$[0,r_{\mathrm W}]$.  In particular, $g_{\mathrm{amb}}$ is smooth
across the collapsed $\theta$-orbit.  At interior points of one
meridional half, $0<u<\ell$, the inequality $\mathcal R''<0$ is
equivalent to
\begin{equation}\label{eq:concavity-criterion}
 -\frac{2\ell}{\pi}\frac{b''}{b'}>
 \frac{\cos^2r_{\mathrm W}\tan\chi}
 {1-\sin^2r_{\mathrm W}\sin^2\chi}.
\end{equation}
At every regular point $0<t\leq r_{\mathrm W}$ where
$\mathcal R''<0$, the ambient metric has positive sectional curvature.
\end{lemma}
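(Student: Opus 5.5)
The proof is a direct computation in four steps: the isometry, smoothness at the collapsed orbit, the concavity criterion, and positivity of sectional curvature.

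\emph{Isometry.} Parametrize one half of $\mathcal S_{r_{\mathrm W}}(o_0)$ by $\chi\in[0,\pi/2]$ via the half-circle displayed before \eqref{eq:ambient-profile}, with $\sin t=\sin r_{\mathrm W}\sin\chi$. On the unit round $(t,\zeta)$-surface this curve has arclength $\sin r_{\mathrm W}\,d\chi$. Multiplying by the prefactor $\cot r_{\mathrm W}$ of \eqref{eq:ambient}, its meridional line element is $\cos r_{\mathrm W}\,d\chi=D_{\varepsilon,U}\,du$, by the normalization \eqref{eq:D}. Its $\theta$-orbit radius is $\cot r_{\mathrm W}\mathcal R(t(u))=D_{\varepsilon,U}b(u)$, by \eqref{eq:ambient-profile}. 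So the induced metric on each half is $D_{\varepsilon,U}^2h_+$. The two halves $\chi\in[0,\pi/2]$ and $\chi\in[\pi/2,\pi]$ are exchanged by the reflection $\zeta\mapsto-\zeta$, and this matches the reflection defining the double $h$. Smoothness of the double was established in Lemma~\ref{lem:lens-smooth}.

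\emph{Smoothness at the collapsed orbit.} Near $u=0$ we have $t\approx\sin r_{\mathrm W}\,\pi u/(2\ell)$, with $t$ an odd smooth function of $u$ and $dt/du(0)=\pi\sin r_{\mathrm W}/(2\ell)\neq0$. Hence $u$ is an odd smooth function of $t$. By Lemma~\ref{lem:lens-smooth}, $b$ is smooth and odd in $u$, with $b'(0)=a'(0)=1$. Therefore $\mathcal R$ is smooth and odd in $t$, and
\[
\mathcal R'(0)=\tan r_{\mathrm W}\,D_{\varepsilon,U}\cdot\frac{2\ell}{\pi\sin r_{\mathrm W}}=1.
\]
This is the standard criterion for $dt^2+\mathcal R^2d\theta^2$ to close smoothly, so $g_{\mathrm{amb}}$ is smooth across the collapsed orbit. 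Positivity of $\mathcal R'$ on $[0,r_{\mathrm W}]$ follows from $b'>0$ before the equator (Lemma~\ref{lem:lens-ode}) and $dt/du>0$. At $t=r_{\mathrm W}$ itself we have $du/dt=\infty$, so I would handle the endpoint by computing the limit $b'/(dt/du)$: both factors vanish like $\cos\chi$, so the limit is positive. \textbf{This equatorial endpoint is the step I expect to need the most care.}

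\emph{Concavity criterion.} Write $\mathcal R=k\,b$ with $k=\tan r_{\mathrm W}D_{\varepsilon,U}$, and use $u$ as the parameter. Then $\mathcal R_t=k\,b'/t_u$ and
\[
\mathcal R_{tt}=k\,\frac{b''t_u-b't_{uu}}{t_u^3}.
\]
Since $t_u>0$ and $b'>0$, the condition $\mathcal R''<0$ is equivalent to $-b''/b'>-t_{uu}/t_u$. From $\sin t=s\sin\chi$ with $s=\sin r_{\mathrm W}$, differentiating gives
\[
t_\chi=\frac{s\cos\chi}{\sqrt{1-s^2\sin^2\chi}},\qquad
-\frac{t_{\chi\chi}}{t_\chi}=\tan\chi-\frac{s^2\sin\chi\cos\chi}{1-s^2\sin^2\chi}=\frac{(1-s^2)\tan\chi}{1-s^2\sin^2\chi}.
\]
Converting with $d\chi/du=\pi/(2\ell)$ gives exactly \eqref{eq:concavity-criterion}.

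\emph{Sectional curvature.} The ambient metric is $\cot^2 r_{\mathrm W}$ times a doubly warped product over the round $2$-sphere (in the coordinates $t,\zeta$) with a $\theta$-fiber of radius $\mathcal R(t)$. In the frame $\partial_t$, $(\cos t)^{-1}\partial_\zeta$, $\mathcal R^{-1}\partial_\theta$, the curvature operator is diagonal. I would verify this from the warped-product formulas, noting that the mixed terms vanish because $\mathcal R$ depends only on $t$ while the $\zeta$-warping is also a function of $t$ alone. The eigenvalues are:
\begin{itemize}
\item $K_{t\zeta}=1$, from the round metric;
\item $K_{t\theta}=-\mathcal R''/\mathcal R$;
\item $K_{\zeta\theta}=\tan t\,\mathcal R'/\mathcal R$.
\end{itemize}
All three are positive whenever $\mathcal R''<0$, $\mathcal R'>0$, $\mathcal R>0$ and $0<t\le r_{\mathrm W}<\pi/2$. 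Since the curvature operator is diagonal with positive eigenvalues, every sectional curvature is positive, not just the coordinate-plane ones.
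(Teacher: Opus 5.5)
Your isometry argument, the smooth closing at the axis with $\mathcal R'(0)=1$, the concavity criterion (via $\mathcal R_{tt}=k(b''t_u-b't_{uu})/t_u^3$ and $-t_{\chi\chi}/t_\chi=\cos^2r_{\mathrm W}\tan\chi/(1-\sin^2r_{\mathrm W}\sin^2\chi)$), and the diagonal curvature operator all agree with the paper's proof.

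There is, however, a gap at the equatorial endpoint $t=r_{\mathrm W}$. The lemma asserts that $\mathcal R$ is smooth on the closed interval $[0,r_{\mathrm W}]$, and you never prove smoothness there. At $u=\ell$ the map $u\mapsto t$ folds: $t_u=0$, and $\ell-u$ behaves like $\sqrt{r_{\mathrm W}-t}$. So $u$ is not a smooth function of $t$, and the inverse-function argument you use in the interior and at the axis breaks down. Your limit of $b'/t_u$ shows at most that $\mathcal R'$ extends continuously to $r_{\mathrm W}$, which gives $C^1$ by the mean value theorem. It says nothing about $\mathcal R''$ at $r_{\mathrm W}$. That is needed for the final assertion at $t=r_{\mathrm W}$. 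It is needed again for the continuity of $\mathcal R''$ at $r_{\mathrm W}$ in Lemma~\ref{lem:uniform-concavity} and for the smooth extension in Lemma~\ref{lem:R-extension}.

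The missing idea is a parity argument in $v=\ell-u$:
\begin{enumerate}
\item Smooth doubling (Lemma~\ref{lem:lens-smooth}) makes $b(\ell-v)$ smooth and even in $v$, hence a smooth function of $v^2$.
\item Likewise $\sin t=\sin r_{\mathrm W}\cos(\pi v/(2\ell))$ is even in $v$, so
\[
 r_{\mathrm W}-t(\ell-v)=\frac{\pi^2\tan r_{\mathrm W}}{8\ell^2}v^2+O(v^4)
\]
is a smooth function of $v^2$ with positive derivative at $v^2=0$.
\item Inverting in the variable $v^2$ expresses $v^2$ as a smooth function of $r_{\mathrm W}-t$. Then $\mathcal R=\tan r_{\mathrm W}D_{\varepsilon,U}\,b$ is smooth in $t$ at $r_{\mathrm W}$.
\end{enumerate}
For your positivity claim at the endpoint, you should also cite $b''(\ell)<0$ from Lemma~\ref{lem:lens-ode}. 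This is what makes $b'$ vanish exactly to first order, so that ``both factors vanish like $\cos\chi$'' is justified. The limit is then $\mathcal R'(r_{\mathrm W})=-\frac{2\ell}{\pi}b''(\ell)\cos r_{\mathrm W}>0$.
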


\begin{proof}
The two matching identities preceding \eqref{eq:ambient-profile}
identify the metric on the pole-to-equator half with
$D_{\varepsilon,U}^2h_+$.  Smooth reflection therefore gives
\begin{equation}\label{eq:exact-metric}
 g_{\mathrm{ang}}=D_{\varepsilon,U}^2h.
\end{equation}

At the rotational pole $u=0$, the smoothness of $h_+$ means that
$b$ extends smoothly and oddly, with $b'(0)=1$.  The middle identity
in \eqref{eq:ambient-profile} makes $t(u)$ an odd local coordinate with
$t'(0)=\pi\sin r_{\mathrm W}/(2\ell)$.  Since
$\tan r_{\mathrm W}D_{\varepsilon,U}
=\pi\sin r_{\mathrm W}/(2\ell)$, the last identity there shows that
$\mathcal R(t)$ is smooth and odd at the axis, with
\[
 \mathcal R(0)=0,\qquad \mathcal R'(0)=1,\qquad
 \mathcal R^{(2j)}(0)=0\quad(j\geq1).
\]
Thus $g_{\mathrm{amb}}$ is smooth across the collapsed orbit.

At the equator $u=\ell$, put $v=\ell-u$.  Smooth doubling makes
$b(\ell-v)$ a smooth even function of $v$.  Equation
\eqref{eq:ambient-profile} gives
\[
 t(\ell-v)
 =\arcsin\!\left(
   \sin r_{\mathrm W}\cos\frac{\pi v}{2\ell}
  \right),\qquad
 r_{\mathrm W}-t(\ell-v)
 =\frac{\pi^2\tan r_{\mathrm W}}{8\ell^2}v^2+O(v^4).
\]
Both $r_{\mathrm W}-t(\ell-v)$ and
$\mathcal R(t(\ell-v))$ are smooth functions of $v^2$, and the first
has positive derivative at $v^2=0$.  The inverse-function theorem
therefore shows that $\mathcal R$ is smooth as a function of $t$ at
$t=r_{\mathrm W}$.

In the following formulas, primes on $b$ denote $u$-derivatives and
primes on $\mathcal R$ denote $t$-derivatives.  Differentiating on
$0<u<\ell$ gives
\begin{align}
 \mathcal R'(t)
 &=b'(u)\frac{\cos t}{\cos\chi},                                  \label{eq:R-prime}\\
 \mathcal R''(t)
 &=\frac{b'(u)\cos^2t}
 {\sin r_{\mathrm W}\cos^2\chi}
 \left(
  \frac{2\ell}{\pi}\frac{b''(u)}{b'(u)}
  +\frac{\cos^2r_{\mathrm W}\tan\chi}{\cos^2t}
 \right).                                                        \label{eq:R-second}
\end{align}
Lemma~\ref{lem:lens-ode} gives $b'>0$ on $0<u<\ell$, so
\eqref{eq:R-prime} yields $\mathcal R'>0$ there.  At the equator,
$b'(\ell-v)=-b''(\ell)v+O(v^3)$ and
$\cos\chi=\pi v/(2\ell)+O(v^3)$; hence
\[
 \mathcal R'(r_{\mathrm W})
 =-\frac{2\ell}{\pi}b''(\ell)\cos r_{\mathrm W}>0.
\]
Here $b''(\ell)<0$ by Lemma~\ref{lem:lens-ode}.  Together with
$\mathcal R'(0)=1$, this proves $\mathcal R'>0$ on the closed interval.

All factors outside the parentheses in \eqref{eq:R-second} are positive.
Since
$\cos^2t=1-\sin^2r_{\mathrm W}\sin^2\chi$, the condition
$\mathcal R''<0$ is therefore exactly
\eqref{eq:concavity-criterion}.

The three ambient sectional curvatures, apart from the common positive
factor $\tan^2r_{\mathrm W}$, are
\begin{equation}\label{eq:ambient-sections}
 1,\qquad-\frac{\mathcal R''}{\mathcal R},\qquad
 \frac{\mathcal R'}{\mathcal R}\tan t.
\end{equation}
For $0<t\leq r_{\mathrm W}$ one has $\mathcal R>0$ and
$\mathcal R'>0$, so all three are positive wherever
$\mathcal R''<0$.  At the collapsed orbit the quotients are interpreted
by their smooth limits.
\end{proof}

The reconstructed profile is presently defined only for
$0\leq t\leq r_{\mathrm W}$, whereas the pair-of-pants construction
requires it for $0\leq t\leq5r_{\mathrm W}$.  Once strict concavity on
the initial interval has been established, the following elementary
lemma extends the profile to the full interval without changing its
reconstructed boundary data.

\begin{lemma}\label{lem:R-extension}
Let $\mathcal R$ be smooth on $[0,r]$ and suppose
\[
\mathcal R(0)=0,\qquad \mathcal R''(0)=0,
\]
and
\[
 \mathcal R(t)>0,\qquad
 \mathcal R'(t)>0,\qquad
 \mathcal R''(t)<0
 \qquad(0<t\leq r).
\]
Then it has a smooth extension to $[0,5r]$ satisfying
\[
 \mathcal R(0)=0,\qquad \mathcal R''(0)=0,
\]
and
\[
 \mathcal R(t)>0,\qquad
 \mathcal R'(t)>0,\qquad
 \mathcal R''(t)<0
 \qquad(0<t\leq5r).
\]
\end{lemma}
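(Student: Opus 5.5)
The extension will be built at the level of the second derivative. Set $\mathcal R_0:=\mathcal R(r)>0$, $\mathcal R_1:=\mathcal R'(r)>0$ and $\mathcal R_2:=\mathcal R''(r)<0$. We look for a smooth function $k$ on $[0,5r]$ that equals $\mathcal R''$ on $[0,r]$, is strictly negative on $(0,5r]$, and has all one-sided derivatives at $t=r$ equal to those of $\mathcal R''$. Then we define the extension by
\[
 \mathcal R(t):=\mathcal R_0+\mathcal R_1(t-r)+\int_r^t(t-s)k(s)\,ds,\qquad t\geq r.
\]
This is automatically smooth across $t=r$ and agrees with the given profile on $[0,r]$. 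Hence $\mathcal R(0)=0$, $\mathcal R''(0)=0$, and all the sign conditions on $(0,r]$ are inherited. On $(r,5r]$ we get $\mathcal R''=k<0$ directly.

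It remains to keep $\mathcal R'>0$ and $\mathcal R>0$ on $(r,5r]$. Since $\mathcal R'(t)=\mathcal R_1+\int_r^tk$, it is enough to make the total integral $\int_r^{5r}|k|$ smaller than $\mathcal R_1$. Once $\mathcal R'>0$ holds on $[r,5r]$, $\mathcal R$ is increasing there, so $\mathcal R>\mathcal R_0>0$.

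To construct $k$, first take any smooth extension $\widetilde k$ of $\mathcal R''$ to a short interval $[r,r+\eta]$; this can be obtained from Seeley/Borel extension, or by extending $\mathcal R$ smoothly to a slightly larger interval. Since $\widetilde k(r)=\mathcal R_2<0$, shrinking $\eta$ gives $\widetilde k<\mathcal R_2/2$ on $[r,r+\eta]$ and $\int_r^{r+\eta}|\widetilde k|<\mathcal R_1/4$. Then choose a cutoff $\chi$ that equals $1$ near $r$ and has support in $[r,r+\eta)$, together with a small constant $\mu>0$, and set
\[
 k:=\chi\widetilde k-(1-\chi)\mu
\]
on $[r,5r]$. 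This is a convex combination of two negative functions, so it is strictly negative. Its jet at $r$ is that of $\mathcal R''$. Its total integral satisfies $\int_r^{5r}|k|\leq\mathcal R_1/4+4r\mu<\mathcal R_1$ once $\mu<\mathcal R_1/(16r)$.

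This completes the argument. The only delicate point is guaranteeing smoothness at $t=r$; working with $k=\mathcal R''$ and a flat cutoff handles it, because the integral formula preserves every jet at the matching point. Everything else is an elementary smallness choice and presents no real obstacle.
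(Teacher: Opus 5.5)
Your proposal is correct and follows essentially the same route as the paper. Both arguments extend the germ of $\mathcal R''$ (the paper uses $\psi=-\mathcal R''$) a short distance past $r$, blend it by a cutoff into a small negative constant whose total integral is less than a fixed fraction of $\mathcal R'(r)$, and reconstruct the extension by the Taylor-type integral formula. That formula preserves the full jet at $r$ and gives $\mathcal R'>0$ and $\mathcal R>0$ on $(r,5r]$.
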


\begin{proof}
Put $\psi=-\mathcal R''$.  Extend the positive germ of $\psi$ at $r$ to a
positive smooth function $\widetilde\psi$ on $[r,r+2\delta]$.  Choose a
cutoff which is one near $r$ and zero near $r+2\delta$, and interpolate
there between $\widetilde\psi$ and a positive constant $\eta$.  This
produces a smooth $\psi_{\mathrm{ext}}>0$ on $[r,5r]$, agreeing with the
full germ of $\psi$ at $r$.  First choose $\delta$ small and then $\eta$
small so that
\[
 \int_r^{5r}\psi_{\mathrm{ext}}(s)\,ds<\frac12\mathcal R'(r).
\]
For $t\geq r$ define
\[
 \mathcal R_{\mathrm{ext}}(t)
 =\mathcal R(r)+\mathcal R'(r)(t-r)
 -\int_r^t(t-s)\psi_{\mathrm{ext}}(s)\,ds.
\]
It agrees smoothly with the given function at $r$, and
\[
 \mathcal R_{\mathrm{ext}}''=-\psi_{\mathrm{ext}}<0,\qquad
 \mathcal R_{\mathrm{ext}}'(t)
 =\mathcal R'(r)-\int_r^t\psi_{\mathrm{ext}}(s)\,ds
 >\frac12\mathcal R'(r)>0.
\]
Hence $\mathcal R_{\mathrm{ext}}(t)\geq\mathcal R(r)>0$, proving the
claim.
\end{proof}

The extension lemma applies once the reconstructed profile is known to be
strictly concave on its initial interval.  The next lemma proves exactly
this missing concavity statement.  The normalized lens identities from
Section~2 reduce it to a pointwise comparison between the normal angle
$\alpha$ and normalized arclength $\chi$.

\begin{lemma}
\label{lem:uniform-concavity}
Fix $r_{\mathrm W}$ as in \eqref{eq:r-range}.  For every sufficiently
small fixed $U\in(0,1)$, and then every sufficiently small
$\varepsilon>0$, inequality \eqref{eq:concavity-criterion} holds at every
interior point $0<u<\ell$, including the interiors of the cap and the
cap-to-core smoothing collar.  The associated ambient profile satisfies
\[
 \mathcal R''(t)<0\quad(0<t\leq r_{\mathrm W}),
 \qquad \mathcal R''(0)=0,
 \qquad \lim_{t\downarrow0}-\frac{\mathcal R''(t)}{\mathcal R(t)}>0.
\]
\end{lemma}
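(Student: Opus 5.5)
The plan is to rewrite \eqref{eq:concavity-criterion} in the variable $\alpha$ and check it separately in the endpoint layer, on the rest of the core, and on the cap/collar.

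\emph{Change of variables.} By \eqref{eq:chi-alpha}, $\chi$ is a strictly increasing function of $\alpha$. Its derivative $\widehat E/\mathfrak m$ is, by \eqref{eq:M-refined},
\[
 1+O\!\left(U^6|\log\sin\alpha|\right)+O(U^6).
\]
Integrating from $\alpha=0$ gives $\chi=\alpha+O(U^6\alpha(1+|\log\sin\alpha|))$, and integrating backwards from $\alpha=\chi=\pi/2$ gives $\pi/2-\chi=(\pi/2-\alpha)(1+O(U^6))$. Hence $\tan\chi\le(1+CU^6)\tan\alpha$ uniformly near the equator. Near the pole, $\tan\chi\le C\alpha(1+U^6|\log\alpha|)$.

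\emph{The explicit core.} Use \eqref{eq:exact-L}. Put $s:=\sin^2r_{\mathrm W}\le\sin^2(\pi/10)<0.1$. Then the right side of \eqref{eq:concavity-criterion} is
\[
 \frac{(1-s)\tan\chi}{1-s\sin^2\chi}\le\tan\chi
\]
away from the equator. At the equator the ratio tends to $\tan\chi$ times a factor $(1-s)/(1-s)=1$, so the comparison must use the $(1+T)$ margin. So it suffices to show
\[
 \mathfrak m\left((1+T)\tan\alpha+T\Xi\cot\alpha\right)>\frac{(1-s)\tan\chi}{1-s\sin^2\chi}.
\]

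In the endpoint layer $U/2\le z\le U$ one has $T\asymp U^3$ and $\mathfrak m=E_{\mathrm{core}}(U)+O(U^6)=(1+T(U))^{-1}+O(U^6)$. Thus $\mathfrak m(1+T(z))=1+O(U^6|\log\sin\alpha|)$, using $T_z=O(U^2)$ and $U-z\le CU^4|\log\sin\alpha|$. The left side is therefore
\[
 \tan\alpha\,(1-O(U^6|\log\sin\alpha|))+cU^3\cot\alpha .
\]
Near the equator the comparison of $\tan\chi$ with $\tan\alpha$ loses only a factor $1+CU^6$. This is beaten by the $\cot\alpha$ term where $\tan\alpha\lesssim U^{-3/2}$. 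Where $\tan\alpha$ is larger, the right side is at most $(1-s)\tan\chi/(1-s\sin^2\chi)$ with $1-s\sin^2\chi\ge 1-s$ and $\sin^2\chi\approx1$. In that regime I need the $U^6$ error to be beaten by the positive $U^3\cot\alpha$ term relative to $\tan\alpha$. I expect this to be the main obstacle. The precise expansion $\mathfrak m=1-\sigma+O(\sigma^2)$ and $T(U)=\sigma(1-\sigma/6+\dots)$ will have to show that $\mathfrak m(1+T)-1$ is $O(U^6)$ with a favorable or absorbable sign. Then $(1+CU^6)\tan\alpha$ versus $\tan\alpha+cU^3\cot\alpha$ reduces to $CU^6\tan^2\alpha<cU^3$. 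To handle the region where $\tan^2\alpha\gtrsim U^{-3}$, i.e.\ $\pi/2-\alpha\lesssim U^{3/2}$, I will instead compare $\pi/2-\chi$ and $\pi/2-\alpha$ directly via $\widehat E$ near $\alpha=\pi/2$, where $|\log\sin\alpha|=O((\pi/2-\alpha)^2)$. This gives $\pi/2-\chi=(\pi/2-\alpha)\,\mathfrak m^{-1}E_{\mathrm{core}}(U)(1+O(U^6(\pi/2-\alpha)^2))$, which reduces the problem to the sign of the constant $\mathfrak m-E_{\mathrm{core}}(U)$ against the $cU^3$ margin, and that margin dominates.

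On the lower core ($z<U/2$) and on the cap/collar, $\sin\alpha\le e^{-c/U^3}$. There $\tan\chi\le C\alpha|\log\alpha|U^6+\alpha$, while the left side contains $\mathfrak m T\Xi\cot\alpha\gtrsim T/\alpha$. On the core $T$ is bounded below by a positive quantity, using $T\ge z^3/(8C_\varepsilon+z^3/6)$ up to super-exponential corrections. Since $\alpha^2\le e^{-2c/U^3}$, this term dominates. On the cap and collar I will instead use the general form
\[
 -\frac{b''}{b'}=\frac{y_r}{\sqrt{1-y^2}}\tan\alpha\cdot(\dots)-\frac{a''\cos\alpha}{a'}.
\]
On the cap $a''=0$, but $y_r\ge\kappa$ there, and \eqref{eq:y-cap-transition} makes $\alpha$ tiny. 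The first term, $\frac{2\ell}{\pi}\kappa\widehat E^{-1}\tan\alpha\ge\mathfrak m(1+O(\varepsilon^3))\tan\alpha$, must beat $(1-s)\tan\chi/(1-s\sin^2\chi)\approx(1-s)\chi$. This holds because $\chi\le(1+O(U^3))\alpha$ and $1-s<1-O(U^3)$ for $U$ small relative to $r_{\mathrm W}$; on the collar, $a''\le0$ only helps.

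\emph{The final assertions.} Lemma~\ref{lem:inverse-realization} converts the criterion into $\mathcal R''<0$ on $0<t<r_{\mathrm W}$. At $t=r_{\mathrm W}$, I will compute the limit of \eqref{eq:R-second}; it is governed by $b'''(\ell)$ and the equator expansion above, and it will be strictly negative by the same margin. Oddness of $\mathcal R$ gives $\mathcal R''(0)=0$. Finally, $-\mathcal R''/\mathcal R\to-\mathcal R'''(0)$, and near the pole the cap formulas give
\[
 \frac{2\ell}{\pi}\frac{-b''}{b'}\sim\mathfrak m\,\alpha,\qquad \text{criterion right side}\sim(1-s)\chi\sim(1-s)\alpha .
\]
Their difference $(\mathfrak m-(1-s))\alpha$ with $\mathfrak m-(1-s)>0$ yields a positive third derivative gap and hence the positive limit.
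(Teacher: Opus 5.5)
\textbf{There is a genuine gap at the equator.} Your handling of the polar region and the middle of the core can be made to work. The near-equator step, however, rests on a mechanism that is false.

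As $\alpha\to\pi/2$, the three quantities $L$, $\tan\chi$ and $\mathcal F(\chi)$ all blow up like $(\pi/2-\alpha)^{-1}$ with the \emph{same} leading coefficient:
\begin{itemize}
\item by \eqref{eq:exact-L}, $L/\tan\alpha\to\mathfrak m(1+T(U))=\mathfrak m/E_{\mathrm{core}}(U)$;
\item by \eqref{eq:chi-alpha}--\eqref{eq:M-refined}, $\tan\chi/\tan\alpha\to\mathfrak m/E_{\mathrm{core}}(U)$;
\item $\mathcal F/\tan\chi\to1$.
\end{itemize}
So the factor $(1+T)$ is not a margin at the equator. It is exactly absorbed by $d\chi/d\alpha=\widehat E/\mathfrak m$ with $\widehat E=(1+T)^{-1}$, and the inequality is decided at order $x:=\pi/2-\alpha$.

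Your writeup misses this in two places:
\begin{itemize}
\item You write $\mathfrak m(1+T(z))=1+O(U^6|\log\sin\alpha|)$. This drops the constant $\mathfrak m/E_{\mathrm{core}}(U)-1=O(U^6)$, which does not vanish at the equator.
\item You then claim the problem reduces to the sign of $\mathfrak m-E_{\mathrm{core}}(U)$ against the $cU^3$ margin, and that this margin dominates. It does not. A relative discrepancy of size $U^6$ in the coefficient of $\tan\alpha$ is an absolute error of order $U^6/x$, while $\mathfrak m T\Xi\cot\alpha\asymp U^3x$. For $x\ll U^{3/2}$ the margin loses. Moreover, the paper gives only $\mathfrak m-E_{\mathrm{core}}(U)=O(U^6)$, with no sign.
\end{itemize}
The same confusion affects your treatment of $\mathcal R''(r_{\mathrm W})<0$. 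Note also that $b'''(\ell)=0$ by evenness, so it is not what governs that limit.

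\textbf{The missing idea is this exact leading-order cancellation.} It can be supplied within your own framework. Put $k:=E_{\mathrm{core}}(U)/\mathfrak m$. On $\chi\geq\pi/3$, which lies in the endpoint layer, you have:
\begin{itemize}
\item $\pi/2-\chi=kx\,(1+O(U^6x^2))$;
\item $T(U)-T(z)=O(U^6x^2)$;
\item $k^{-1}\cot x-\cot(kx)=O(|k-1|\,x)$.
\end{itemize}
Together these give $L-\tan\chi\geq x\,(cU^3-CU^6)>0$ there. Adding $\tan\chi-\mathcal F\approx\tan^2r_{\mathrm W}\cos\chi$, and dividing by $\cos\chi$, also yields $\mathcal R''(r_{\mathrm W})<0$.

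The paper avoids asymptotics on the core altogether. It proves $L>\tan\chi$ on the whole core by the exact comparison $\Theta'<\chi'$ for $\Theta=\arctan L$. This reduces to $\mathcal Q<1$ via the core identities and the lens bound $T\geq6s/(1+s)$. It then integrates backward from the common equatorial value $\Theta=\chi=\pi/2$, and that common value is precisely the cancellation your argument misses.

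Two minor points:
\begin{itemize}
\item At the pole $\alpha/\chi\to\mathfrak m$, so the relevant gap is $\mathfrak m^2-\cos^2r_{\mathrm W}$, not $\mathfrak m-(1-s)$.
\item On the lower core your $T/\alpha$ argument needs a pointwise bound on $\alpha$, since $T$ can be as small as order $\varepsilon^3$. However, $L\geq\mathfrak m\tan(\mathfrak m\chi)$ already suffices wherever $\chi$ is small.
\end{itemize}
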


\begin{proof}
For $0<u<\ell$, set
\[
 L:=-\frac{2\ell}{\pi}\frac{b''}{b'},\qquad
 \mathcal F(\chi):=
 \frac{\cos^2r_{\mathrm W}\tan\chi}
 {1-\sin^2r_{\mathrm W}\sin^2\chi}.
\]
By Lemma~\ref{lem:inverse-realization}, proving
$\mathcal R''<0$ is equivalent to proving $L>\mathcal F(\chi)$.
By the lens equation in Lemma~\ref{lem:lens-ode},
$d\alpha/du=y_r\geq\kappa$ on the entire pole-to-equator graph face.
Since $\mathfrak m=2\kappa\ell/\pi$ and
$\chi=\pi u/(2\ell)$, integration from the pole gives
\begin{equation}\label{eq:alpha-lower}
 \alpha\geq\mathfrak m\chi.
\end{equation}
At the equator $\alpha=\chi=\pi/2$, so this inequality also shows that
$\mathfrak m\leq1$.  Moreover, \eqref{eq:bderivatives}, $a''\leq0$,
$a'>0$, and $y_r\geq\kappa$ give
\begin{equation}\label{eq:L-polar}
 L
 \geq\mathfrak m\tan\alpha
 \geq\mathfrak m\tan(\mathfrak m\chi).
\end{equation}
This estimate will be used only on the cap and smoothing collar.  Their
endpoint has normalized angle
\[
 \chi_{\mathrm{cap}}
 :=\frac{\pi\ell_{\mathrm{cap},\varepsilon}}{2\ell}
 \longrightarrow0
\]
by \eqref{eq:cap-transition-negligible} and \eqref{eq:ell-asymp}.
Uniformly for $\mathfrak m$ near $1$,
\[
 \frac{\mathfrak m\tan(\mathfrak m\chi)}{\mathcal F(\chi)}
 \longrightarrow
 \frac{\mathfrak m^2}{\cos^2r_{\mathrm W}}
 \qquad(\chi\downarrow0).
\]
Equation~\eqref{eq:charge-asymp} therefore allows us first to choose $U$
so that $\mathfrak m>\cos r_{\mathrm W}$ with a uniform margin for all
sufficiently small $\varepsilon$, and then to reduce $\varepsilon$ so
that \eqref{eq:L-polar} implies $L>\mathcal F(\chi)$ throughout the
punctured cap and smoothing collar.

On the explicit core put
\[
 s=\frac{z^3}{48C_\varepsilon},\qquad v=\tan\alpha,\qquad
 \mathcal B=(1+T)v+\frac{T\Xi}{v}.
\]
Equation~\eqref{eq:exact-L} gives $L=\mathfrak m\mathcal B$, while
\eqref{eq:T-E} and \eqref{eq:chi-alpha} give
\[
 \chi'=\frac{1}{\mathfrak m(1+T)},
\]
where a prime now denotes $d/d\alpha$.  On the core we prove the
stronger comparison $L>\tan\chi$.  This is sufficient because
\begin{equation}\label{eq:target-gap}
 \tan\chi-\mathcal F(\chi)
 =
 \frac{\sin^2r_{\mathrm W}\sin\chi\cos\chi}
 {1-\sin^2r_{\mathrm W}\sin^2\chi}>0.
\end{equation}
To compare $L$ with $\tan\chi$, define
\[
 \Theta:=\arctan L=\arctan(\mathfrak m\mathcal B).
\]
Since $0<\chi<\pi/2$, the desired inequality $L>\tan\chi$ is
equivalent to $\Theta>\chi$.  Moreover, $v=\tan\alpha\to\infty$ and
$\mathcal B\sim(1+T)v$ at the equator, so both $\Theta$ and $\chi$
tend to $\pi/2$.  It therefore suffices to prove
$\Theta'<\chi'$ and then integrate backward from the equator.  For
this derivative comparison, it is enough to prove
\begin{equation}\label{eq:Q-less}
 \mathcal Q:=(1+T)\mathcal B'-\mathcal B^2<1
\end{equation}
Indeed, this inequality implies
\begin{align*}
 \Theta'
 &=\frac{\mathfrak m\mathcal B'}
 {1+\mathfrak m^2\mathcal B^2}\notag\\
 &<\frac{\mathfrak m(1+\mathcal B^2)}
 {(1+T)(1+\mathfrak m^2\mathcal B^2)}
 \leq\frac{1}{\mathfrak m(1+T)}=\chi'.
\end{align*}
The last inequality uses $0<\mathfrak m\leq1$.  We now
verify \eqref{eq:Q-less}.  The first identity in
\eqref{eq:core-differential-identities} gives the reciprocal relation
\[
 z_\alpha=\frac{zT}{(1+T)\tan\alpha}.
\]
Moreover, $\Xi=(1-2s)/(1+s)$ and $zs_z=3s$, so
\[
 z\Xi_z=-\frac{9s}{(1+s)^2}.
\]
Using these relations, the second identity in
\eqref{eq:core-differential-identities}, rewritten using
$8C_\varepsilon/z^3=1/(6s)$, and $v'=1+v^2$, expansion of
$\mathcal Q-1$ gives
\begin{align}
 \mathcal Q-1={}&
 \frac{9Ts}{1+s}
 +T^2\left(\frac56+\frac{9s}{1+s}-\frac1{6s}\right)                \label{eq:Q}\\
 &+\frac{T^2}{v^2}
 \left[
 \Xi\left(\frac{11}{6}-\frac1{6s}-\Xi\right)
 -\frac{9s}{(1+s)^2}
 \right].\notag
\end{align}
The strict lens comparison \eqref{eq:smoothing-W}, already established
in Section~2 on the smoothing collar and ensuing unchanged core, implies
after substitution of \eqref{eq:af}, \eqref{eq:aprime}, and \eqref{eq:H}
the exact lower bound
\begin{equation}\label{eq:T-exact-lower}
 T(z)\geq\frac{z^3}{8C_\varepsilon(1+s)}
 =\frac{6s}{1+s}
\end{equation}
for the actual primitive, with no discarded cap term.

After reducing $U$, we have $0<s<1/17$ throughout the core.  In
particular, $T>0$, $v>0$, and $\Xi>0$.  The coefficient of $T^2/v^2$
in \eqref{eq:Q} is negative because
\[
 \frac{11}{6}-\frac1{6s}-\Xi<0.
\]
The remaining two terms have negative sum.  Indeed,
\[
 \frac56+\frac{9s}{1+s}-\frac1{6s}<0,
\]
and equation \eqref{eq:T-exact-lower} gives
\begin{align*}
 \frac1T\left\{\frac{9Ts}{1+s}
 +T^2\left(\frac56+\frac{9s}{1+s}-\frac1{6s}\right)\right\}
 &\leq\frac{9s}{1+s}+\frac{6s}{1+s}
 \left(\frac56+\frac{9s}{1+s}-\frac1{6s}\right)\\
 &=\frac{s}{1+s}
 \left(14+\frac{54s}{1+s}-\frac1s\right)<0.
\end{align*}
This proves \eqref{eq:Q-less}, and hence $\Theta'<\chi'$.  Integrating
backward from the common equatorial limit, for every interior core
point, gives
\[
 \frac\pi2-\Theta(\alpha)
 =\int_\alpha^{\pi/2}\Theta'(\beta)\,d\beta
 <\int_\alpha^{\pi/2}\chi'(\beta)\,d\beta
 =\frac\pi2-\chi(\alpha).
\]
Thus $\Theta>\chi$ and therefore $L>\tan\chi$ on the explicit core.
Together with \eqref{eq:target-gap}, this proves
$L>\mathcal F(\chi)$ throughout the open core.

At the equator, where the gap in \eqref{eq:target-gap} tends to zero,
rearranging \eqref{eq:R-second} gives
\begin{equation}\label{eq:Rsecond-factorized}
 \mathcal R''
 =-\frac{b'}{\cos\chi}\,
 \frac{\cos^2t}{\sin r_{\mathrm W}}\,
 \frac{L-\mathcal F}{\cos\chi}.
\end{equation}
Smoothness of the doubled lens boundary across the equator and
$b''(\ell)<0$ give
\[
 \lim_{\chi\uparrow\pi/2}\frac{b'}{\cos\chi}
 =-\frac{2\ell}{\pi}b''(\ell)>0.
\]
Moreover $L\geq\tan\chi$ implies
\[
 \liminf_{\chi\uparrow\pi/2}\frac{L-\mathcal F}{\cos\chi}
 \geq
 \lim_{\chi\uparrow\pi/2}
 \frac{\tan\chi-\mathcal F}{\cos\chi}
 =\tan^2r_{\mathrm W}>0.
\]
Since $\mathcal R''$ is continuous at $r_{\mathrm W}$,
\eqref{eq:Rsecond-factorized} therefore gives
$\mathcal R''(r_{\mathrm W})<0$.

On the exact cap, $a''=0$, so \eqref{eq:bderivatives} gives
\[
 L=\mathfrak m\frac{y_r}{\kappa}\tan\alpha.
\]
As $u\downarrow0$, one has $y_r\to\kappa$ and
$\alpha/\chi\to\mathfrak m$.  Hence
$L/\tan\chi\to\mathfrak m^2$, whereas
$\mathcal F(\chi)/\tan\chi\to\cos^2r_{\mathrm W}$.  Using
\eqref{eq:Rsecond-factorized}, $t/\chi\to\sin r_{\mathrm W}$, and
$\mathcal R(t)/t\to1$ gives
\begin{equation}\label{eq:axis-curvature-limit}
 \lim_{t\downarrow0}-\frac{\mathcal R''(t)}{\mathcal R(t)}
 =\frac{\mathfrak m^2-\cos^2r_{\mathrm W}}
 {\sin^2r_{\mathrm W}}>0.
\end{equation}
Smooth oddness of $\mathcal R$ at the axis gives
$\mathcal R''(0)=0$.  The last display also shows that the ambient
sectional curvature represented by $-\mathcal R''/\mathcal R$ has a
strictly positive axis limit.
\end{proof}

Together with Lemma~\ref{lem:R-extension}, this completes the construction
of the strictly concave ambient profile.  The following lemma gives the
intrinsic curvature margin for the matched angular sphere that will be
used in the companion-neck construction.

\begin{lemma}
\label{lem:Kgreater}
Suppose that
\begin{equation}\label{eq:charge-window}
 2\kappa\ell>\pi\cos r_{\mathrm W},
\end{equation}
then the Gaussian curvature of
$g_{\mathrm{ang}}=D_{\varepsilon,U}^2h$ satisfies the quantitative bound
\begin{equation}\label{eq:angular-curvature-lower}
 K_{g_{\mathrm{ang}}}
 \geq\left(\frac{2\kappa\ell}
 {\pi\cos r_{\mathrm W}}\right)^2>1.
\end{equation}
\end{lemma}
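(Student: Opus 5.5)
The plan is to reduce \eqref{eq:angular-curvature-lower} to a pointwise inequality on the unscaled face metric $h_+=du^2+b(u)^2d\theta^2$, and then recognize that inequality as the lens comparison $a'I-af\geq0$ already established in Section~2.

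\emph{Reduction to the unscaled face.} For a rotational metric, $K_h=-b''/b$, where primes denote $d/du$. Under the homothety by $D_{\varepsilon,U}^2$ this becomes
\[
 K_{g_{\mathrm{ang}}}=-\frac{b''}{D_{\varepsilon,U}^2\,b}.
\]
Since $D_{\varepsilon,U}=\pi\cos r_{\mathrm W}/(2\ell)$ by \eqref{eq:D}, the right-hand side of \eqref{eq:angular-curvature-lower} equals $\kappa^2/D_{\varepsilon,U}^2$. So the claim is exactly
\[
 -\frac{b''}{b}\geq\kappa^2
\]
on one graph face. Reflection gives the other face, and continuity handles the pole and the equator. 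The strict inequality $>1$ is then just the hypothesis \eqref{eq:charge-window}.

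\emph{Estimating $-b''/b$.} I would use \eqref{eq:bderivatives}, namely $b=a$ and $b''=a''(1-y^2)-a'yy'$, where primes on $a,y$ denote $d/dr$. Since $a''\leq0$ by \eqref{eq:smoothing-monotone} and $0\leq y\leq1$, the first term contributes nonnegatively to $-b''$. From the proof of Lemma~\ref{lem:lens-ode} we have $y'\geq\kappa$, and also $a'>0$ and $y\geq0$. Hence
\[
 -\frac{b''}{b}\geq\frac{\kappa\,a'y}{a}.
\]
Substituting $y=\kappa I/f$ from \eqref{eq:y-solution} gives
\[
 -\frac{b''}{b}\geq\kappa^2\,\frac{a'I}{af}.
\]
It therefore suffices to show $a'I\geq af$ for $0<r\leq r_U$.

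\emph{Verifying $a'I\geq af$ region by region.}
\begin{itemize}
\item On the smoothing collar and the ensuing unchanged core, this is the strict inequality \eqref{eq:smoothing-W} from Lemma~\ref{lem:cap-smoothing}, propagated by Lemma~\ref{lem:core-comparison}.
\item On the exact cap, $a=r$ and $a'=1$, so the quantity is $I-rf$. By \eqref{eq:cap-I} this equals $\tfrac23 f^{\mathrm{pole}}_\varepsilon\beta^{\mathrm{cap}}_\varepsilon r^3\geq0$.
\item At the pole, $-b''/b$ extends continuously because $b$ is smooth and odd. The bound $\geq\kappa^2$ passes to the limit; the limit of $a'I/(af)$ is $1$.
\item At the equator ($y=1$) the same pointwise estimate holds directly.
\end{itemize}

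\emph{Main obstacle.} The only real point to check is that no cap or collar term was discarded when passing from $y=\kappa I/f$ to the comparison with the actual smoothed primitive $I=\int_0^r f$. That is exactly why the comparison must be invoked in its smoothed form \eqref{eq:smoothing-W}, with the actual primitive, rather than in the piecewise form. Everything else is a direct substitution.
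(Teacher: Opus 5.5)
Your proposal is correct and is essentially the paper's argument: the paper writes $K_h=K_{g_\varepsilon}(e_u,e_\theta)+\kappa k_\theta$ by the Gauss equation, discards $\cos^2\alpha\,K_{r\theta}+\sin^2\alpha\,K_{\phi\theta}\geq0$, and bounds $k_\theta=\kappa a'I/(af)\geq\kappa$ using the same lens comparison $a'I\geq af$ (via \eqref{eq:cap-I} on the cap and \eqref{eq:smoothing-W} on the collar and core). Your intrinsic route through $-b''/b$, \eqref{eq:bderivatives} and $y'=\kappa-(f'/f)y$ produces exactly these three terms, with the same two discarded, so the two proofs coincide term by term.
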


\begin{proof}
Let $e_u$ be the unit meridional tangent of the lens in the packet.  The
Gauss equation and \eqref{eq:shapes} give
\[
 K_h=K_{g_\varepsilon}(e_u,e_\theta)+\kappa k_\theta.
\]
Since the curvature operator of the doubly warped packet metric is
diagonal in the coordinate two-planes and
$e_u=\cos\alpha\,e_r-\sin\alpha\,e_\phi$, one has
\[
 K_{g_\varepsilon}(e_u,e_\theta)
 =\cos^2\alpha\,K_{r\theta}
  +\sin^2\alpha\,K_{\phi\theta}\geq0.
\]
Here nonnegativity follows on the exact cap from \eqref{eq:cap}, on the
smoothing collar from \eqref{eq:smoothing-monotone}, and on the core from
Lemma~\ref{lem:core-curvature}.  Equation~\eqref{eq:y-solution} gives
\[
 k_\theta=\kappa\frac{a'I}{af}.
\]
On the exact cap, $a=r$ and \eqref{eq:cap-I} gives $a'I-af\geq0$;
on the smoothing collar and subsequent core, the strict inequality is
\eqref{eq:smoothing-W}.  Hence $k_\theta\geq\kappa$ and
$K_h\geq\kappa^2$.  These inequalities extend to the pole and equator by
smoothness.  Scaling by $D_{\varepsilon,U}^2$ and using
\eqref{eq:D} gives
\[
 K_{g_{\mathrm{ang}}}
 \geq D_{\varepsilon,U}^{-2}\kappa^2
 =\left(\frac{2\kappa\ell}
 {\pi\cos r_{\mathrm W}}\right)^2.
\]
The last quantity is greater than one by \eqref{eq:charge-window}.
\end{proof}

Intrinsic curvature alone does not verify the relative gluing condition:
the packet's outward shape operator must also dominate the shape operator
of the removed angular ball.  Using the normal-angle and
normalized-arclength coordinates $\alpha$ and $\chi$ from Section~2, the
next lemma reduces this tensor inequality to two scalar inequalities.

\begin{lemma}
\label{lem:seam}
Assume \eqref{eq:charge-window}.  Suppose that the quotient
$\tan\alpha/\tan\chi$, initially defined on $0<u<\ell$, has continuous
endpoint limits and satisfies
\begin{equation}\label{eq:angle-window}
 \frac{\tan\alpha(u)}{\tan\chi(u)}>\cos r_{\mathrm W}
 \qquad(0\leq u\leq\ell),
\end{equation}
where the endpoint values are understood as these limits.
Under the boundary isometry, let $S^{\mathrm{pkt}}$ be the outward shape
operator of the scaled packet and let $S^{\mathrm{hole}}$ be the shape
operator of the removed angular ball for the normal pointing from that
ball into the perforated region.  Then
\[
 S^{\mathrm{pkt}}-S^{\mathrm{hole}}>0.
\]
Equivalently, the sum of the outward second fundamental forms of the
scaled packet and of the perforated angular region is positive definite.
\end{lemma}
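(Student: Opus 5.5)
The plan is to compute both shape operators explicitly in the common principal frame and compare them eigenvalue by eigenvalue. Both the scaled packet boundary and the angular sphere $\Sigma_{\mathrm{lens}}$ are invariant under the $\theta$-rotation. The boundary isometry of Lemma~\ref{lem:inverse-realization} matches meridional arclength, $D_{\varepsilon,U}\,du=\cos r_{\mathrm W}\,d\chi$, and matches the $\theta$-orbits. So it carries meridians to meridians and circles to circles, and both shape operators are diagonal in the frame $(e_u,e_\theta)$. It therefore suffices to prove two scalar inequalities, one meridional and one circular, at each $0<u<\ell$, and then to pass to the pole and equator by continuity.

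\emph{Meridional direction.} By \eqref{eq:shapes} and the construction in Lemma~\ref{lem:lens-ode}, the unscaled packet has meridional principal curvature $\kappa$. After scaling the metric by $D_{\varepsilon,U}^2$, this becomes $\kappa/D_{\varepsilon,U}=2\kappa\ell/(\pi\cos r_{\mathrm W})$. By \eqref{eq:charge-window} this exceeds $1$, which is the meridional entry of $S^{\mathrm{hole}}$ recorded in Theorem~\ref{thm:attachment}(i).

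\emph{Circle direction.} The packet's circle principal curvature is $k_\theta/D_{\varepsilon,U}$ with $k_\theta=(a'/a)\sin\alpha$. Since $b=a$ and $b'=a'\cos\alpha$ by \eqref{eq:bderivatives}, this can be rewritten as $(b'/b)\tan\alpha/D_{\varepsilon,U}$. For the hole, $q_\theta=\tan t\,\mathcal R'/\mathcal R$. Substitute \eqref{eq:R-prime}, the identity $\mathcal R=\tan r_{\mathrm W}D_{\varepsilon,U}b$ from \eqref{eq:ambient-profile}, and $\sin t=\sin r_{\mathrm W}\sin\chi$. This gives
\[
 q_\theta=\frac{\sin t\,b'}{\cos\chi\,\tan r_{\mathrm W}D_{\varepsilon,U}b}
 =\cos r_{\mathrm W}\tan\chi\,\frac{b'}{D_{\varepsilon,U}b}.
\]
The ratio of the packet's circle curvature to $q_\theta$ is therefore exactly $\tan\alpha/(\cos r_{\mathrm W}\tan\chi)$. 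By \eqref{eq:angle-window} this ratio is greater than $1$, and $q_\theta>0$, so the circle entry of $S^{\mathrm{pkt}}-S^{\mathrm{hole}}$ is positive.

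\emph{Endpoints.} At the pole and at the equator the individual factors degenerate: $b\to0$ at the pole, while $b'\to0$ and $\tan\chi\to\infty$ at the equator. The actual curvatures are nevertheless smooth there, and $q_\theta>0$ by Theorem~\ref{thm:attachment}(i). The ratio identity above holds on $0<u<\ell$ and passes to the limits. Since \eqref{eq:angle-window} is assumed to hold for these limits as well, the circle inequality persists at $u=0$ and $u=\ell$. The meridional inequality holds trivially there, because $\kappa/D_{\varepsilon,U}$ is constant.

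The main point to check carefully is the sign conventions. The packet normal must be outward and the hole normal must point into $\mathcal P_{\varepsilon,U}$, so that both are computed with the convention $k(E)=\langle\nabla_E\nu,E\rangle$ used in \eqref{eq:shapes}. With that settled, the reformulation as positivity of the sum of the outward second fundamental forms follows, because the outward form of $\mathcal P_{\varepsilon,U}$ along this seam is $-S^{\mathrm{hole}}$.
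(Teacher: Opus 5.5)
Your proposal is correct and follows essentially the same route as the paper: the meridional comparison $\kappa/D_{\varepsilon,U}>1$ comes from \eqref{eq:charge-window}, and your circle comparison via $q_\theta=\cos r_{\mathrm W}\tan\chi\,b'/(D_{\varepsilon,U}b)$ is exactly the paper's factorization \eqref{eq:circle-seam-factorization}, whose prefactor $b'\tan\chi/(aD_{\varepsilon,U})$ equals $q_\theta/\cos r_{\mathrm W}$. Your endpoint step (continuity plus $q_\theta>0$ at the pole and equator) is the paper's check that this prefactor has positive endpoint limits; that positivity already follows from Lemma~\ref{lem:inverse-realization} ($\mathcal R,\mathcal R'>0$ on $(0,r_{\mathrm W}]$ and $q_\theta\to1$ at the pole), so you do not need Theorem~\ref{thm:attachment}, whose smallness hypotheses are not assumed in this lemma.
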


\begin{proof}
For the normal used to define $S^{\mathrm{hole}}$, the angular sphere has
meridional principal curvature $1$.  Its circle principal curvature
follows from \eqref{eq:ambient-profile}, \eqref{eq:R-prime}, and
\eqref{eq:D}:
\begin{equation}\label{eq:qtheta}
 q_\theta=\tan t\frac{\mathcal R'}{\mathcal R}
 =\frac{2\ell}{\pi}\frac{b'}b\tan\chi.
\end{equation}
The scaled packet lens has principal curvatures
\[
 \frac{\kappa}{D_{\varepsilon,U}},\qquad
 \frac1{D_{\varepsilon,U}}\frac{a'}a\sin\alpha.
\]
Their meridional difference is
\[
 \frac{\kappa}{D_{\varepsilon,U}}-1
 =\frac{2\kappa\ell-\pi\cos r_{\mathrm W}}
 {\pi\cos r_{\mathrm W}}>0
\]
by \eqref{eq:charge-window}.  Since $y=\sin\alpha$,
equation \eqref{eq:bderivatives} gives $b'=a'\cos\alpha$.  Together with
$b=a$ and $2\ell/\pi=\cos r_{\mathrm W}/D_{\varepsilon,U}$, this
factorizes the circle difference as
\begin{equation}\label{eq:circle-seam-factorization}
 \frac{b'\tan\chi}{aD_{\varepsilon,U}}
 \left(\frac{\tan\alpha}{\tan\chi}-\cos r_{\mathrm W}\right).
\end{equation}
The first factor is positive for $0<u<\ell$ and has the positive endpoint
limits
\[
 \lim_{u\downarrow0}\frac{b'\tan\chi}{aD_{\varepsilon,U}}
 =\frac1{\cos r_{\mathrm W}},\qquad
 \lim_{u\uparrow\ell}\frac{b'\tan\chi}{aD_{\varepsilon,U}}
 =-\frac{2\ell}{\pi}
   \frac{b''(\ell)}{a(\ell)D_{\varepsilon,U}}>0.
\]
The second factor in \eqref{eq:circle-seam-factorization} is positive on
the entire closed interval by \eqref{eq:angle-window}, with its endpoint
values interpreted as in the statement.  Thus
\eqref{eq:circle-seam-factorization} is positive everywhere, which proves
both eigenvalues of $S^{\mathrm{pkt}}-S^{\mathrm{hole}}$ are positive.
\end{proof}

It remains to verify the normalized-bending and angle inequalities in a common
parameter regime.  The next lemma obtains both from the lens asymptotics
of Section~2, thereby supplying the hypotheses of
Lemmas~\ref{lem:Kgreater} and~\ref{lem:seam} simultaneously.

\begin{lemma}\label{lem:docking-windows}
Fix $r_{\mathrm W}$ as in \eqref{eq:r-range}.  There is $U_1>0$ such
that, for every fixed $U\in(0,U_1)$ and every sufficiently small
$\varepsilon>0$, the quotient $\tan\alpha/\tan\chi$ extends continuously
to the two endpoints and satisfies \eqref{eq:angle-window}.  In the same
parameter regime,
\begin{equation}\label{eq:strong-charge}
 \mathfrak m=\frac{2\kappa\ell}{\pi}>
 \frac{\sin r_{\mathrm W}}{r_{\mathrm W}}>\cos r_{\mathrm W}.
\end{equation}
\end{lemma}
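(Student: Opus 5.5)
The bending inequality \eqref{eq:strong-charge} comes first, since it feeds the angle window. By \eqref{eq:charge-asymp}, $\mathfrak m=1-U^3/(8C_\varepsilon)+O(U^6)$ uniformly once $\varepsilon/U$ is small. The fixed number $\sin r_{\mathrm W}/r_{\mathrm W}$ lies strictly below $1$ because $r_{\mathrm W}>0$. The inequality $\sin r/r>\cos r$ is equivalent to $\tan r>r$ on $(0,\pi/2)$. So it suffices to choose $U_1$ with
\[
 1-U^3/(8C_\varepsilon)-CU^6>\sin r_{\mathrm W}/r_{\mathrm W}
\]
for $U<U_1$; here $C_\varepsilon\to1$. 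The constant in $O(U^6)$ must be uniform in the iterated regime, which is what Lemma~\ref{lem:packet-asymptotics} provides.

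For the angle window, the natural tool is \eqref{eq:alpha-lower}, namely $\alpha\geq\mathfrak m\chi$, which holds on the whole graph face. It gives
\[
 \frac{\tan\alpha}{\tan\chi}\geq\frac{\tan(\mathfrak m\chi)}{\tan\chi}.
\]
I would then study $g_m(\chi):=\tan(m\chi)/\tan\chi$ for fixed $m\in(0,1]$ on $(0,\pi/2)$. I expect $g_m$ to be decreasing, from the value $m$ at $\chi=0$. Its limit at $\pi/2$ is $0$ when $m<1$, so this crude bound fails near the equator and must be supplemented there.

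I would therefore split the interval. On a region $\chi\leq\chi_1$, with $\chi_1$ chosen independently of $\varepsilon$, the bound $g_{\mathfrak m}(\chi)>\cos r_{\mathrm W}$ follows from $\mathfrak m>\cos r_{\mathrm W}$ with a uniform margin and continuity in $\chi$. Near the equator I would use \eqref{eq:chi-alpha} instead. It gives $d\chi/d\alpha=\widehat E/\mathfrak m$, where $\widehat E/\mathfrak m=1+O(U^3)$ uniformly by \eqref{eq:M-refined} together with $\widehat E,\mathfrak m=1+O(U^3)$. Integrating from the equator yields
\[
 \tfrac\pi2-\chi=(1+O(U^3))\bigl(\tfrac\pi2-\alpha\bigr).
\]
Hence $\cot\chi/\cot\alpha\to1+O(U^3)$, so the quotient $\tan\alpha/\tan\chi$ is at least $1-CU^3$ on $[\pi/2-\chi_1',\pi/2)$, and this exceeds $\cos r_{\mathrm W}$ for small $U$.

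In fact this comparison works globally. On $(0,\pi/2)$ one has
\[
 |\alpha-\chi|\leq CU^3\min\{\chi,\tfrac\pi2-\chi\}
\]
by integrating from whichever endpoint is nearer. This gives $\tan\alpha/\tan\chi=1+O(U^3)$ uniformly, with constants independent of $\varepsilon$. The uniformity needs care, because $\tan$ blows up at $\pi/2$ and the relative error in $\tan$ there is controlled only by the relative error in $\pi/2-\alpha$. That is exactly the reason for integrating from the equator. The estimate alone would then give \eqref{eq:angle-window} with margin $1-\cos r_{\mathrm W}-CU^3>0$.

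For the continuous endpoint limits, at the pole I would use $\alpha'(0)=d\alpha/du=\kappa$ (from $y_r\to\kappa$ on the cap) and $\chi'=\pi/(2\ell)$, which give the limit $\mathfrak m$. At the equator, smoothness of $y$ and $y'(r_U)>0$, as in Lemma~\ref{lem:lens-smooth}, give $\tfrac\pi2-\alpha\sim c\,(\ell-u)$ and $\tfrac\pi2-\chi=\pi(\ell-u)/(2\ell)$. The limit is then the positive ratio
\[
 \frac{\pi}{2\ell\,(d\alpha/du)(\ell)}=\frac{1}{\widehat E(\pi/2)/\mathfrak m}.
\]
Both limits exceed $\cos r_{\mathrm W}$ by the uniform bound. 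The main obstacle is this uniform control near the equator; the first thing to try there is the endpoint integration of \eqref{eq:chi-alpha}.
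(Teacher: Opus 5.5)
Your proposal is correct and follows the paper's proof. Both derive \eqref{eq:strong-charge} from \eqref{eq:charge-asymp}, use $\alpha\geq\mathfrak m\chi$ away from the equator, and integrate \eqref{eq:chi-alpha} from the equator near it. The paper uses the sharper $O(U^6)$ from \eqref{eq:M-refined} there, but your uniform bound $d\chi/d\alpha=1+O(U^3)$ suffices; it rests on $\widehat E=1+O(U^3)$ on the whole face, which the paper proves inside Lemma~\ref{lem:normalized-identities}, and it even removes the need for the split. One harmless slip: since $\frac{\pi}{2\ell}\frac{du}{d\alpha}=\frac{d\chi}{d\alpha}$, the equatorial limit is $\widehat E(\pi/2)/\mathfrak m=E_{\mathrm{core}}(U)/\mathfrak m$, not its reciprocal; both are $1+O(U^3)$, so the conclusion is unaffected.
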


\begin{proof}
Choose $U_1$ below the small-$U$ thresholds in
Lemmas~\ref{lem:packet-asymptotics}, \ref{lem:normalized-identities},
and \ref{lem:uniform-concavity}.  By \eqref{eq:constants},
$C_\varepsilon\to1$, and \eqref{eq:charge-asymp} therefore implies,
uniformly for all sufficiently small $\varepsilon$,
\[
 \mathfrak m
 =1-\frac{U^3}{8C_\varepsilon}+O(U^6)
 \geq1-C_0U^3.
\]
Because $\sin r_{\mathrm W}/r_{\mathrm W}<1$, reducing $U_1$ gives the
first inequality in \eqref{eq:strong-charge}; the second follows from
$\tan r_{\mathrm W}>r_{\mathrm W}$.

On $0<\chi\leq\pi/3$, equation \eqref{eq:alpha-lower} gives
\[
 \frac{\tan\alpha}{\tan\chi}
 \geq\frac{\tan(\mathfrak m\chi)}{\tan\chi}.
\]
As observed immediately after \eqref{eq:alpha-lower},
$0<\mathfrak m\leq1$; the preceding estimate for $\mathfrak m$ then gives
$1-\mathfrak m=O(U^3)$.  The mean-value theorem, together with
$\tan\chi\geq\chi$ and the uniform bound for $\sec^2\chi$ on this
interval, therefore gives
\begin{equation}\label{eq:angle-away-equator}
 \frac{\tan\alpha}{\tan\chi}\geq1-C_1U^3.
\end{equation}
At the pole, \eqref{eq:cap}, \eqref{eq:y-solution}, and
\eqref{eq:lensmetric} give $y=\kappa r+O(r^3)$ and
$u=r+O(r^3)$, hence $\alpha=\arcsin y=\kappa u+O(u^3)$.
Together with the definition $\chi=\pi u/(2\ell)$, this shows that the
quotient extends continuously with value $\mathfrak m$.

Suppose now that $\chi\geq\pi/3$.  After reducing $U_1$ so that
$\mathfrak m\geq3/4$, equation \eqref{eq:alpha-lower} gives
$\alpha\geq\pi/4$.  Set
\[
 x=\frac\pi2-\alpha,\qquad X=\frac\pi2-\chi.
\]
On $0\leq x\leq\pi/4$, one has
$-\log\sin\alpha=-\log\cos x\leq Cx^2$.  Combining
\eqref{eq:M-refined} with \eqref{eq:chi-alpha}, and using the preceding
uniform lower bound for $\mathfrak m$, yields
\[
 \frac{d\chi}{d\alpha}
 =\frac{E_{\mathrm{core}}(U)}{\mathfrak m}+O(U^6x^2),
 \qquad
 \frac{E_{\mathrm{core}}(U)}{\mathfrak m}=1+O(U^6).
\]
Using the endpoint conditions $\alpha(\ell)=\chi(\ell)=\pi/2$ stated
before Lemma~\ref{lem:normalized-identities},
\begin{align*}
 X
 &=\int_\alpha^{\pi/2}\frac{d\chi}{d\beta}\,d\beta\notag\\
 &=\frac{E_{\mathrm{core}}(U)}{\mathfrak m}x+O(U^6x^3),
 \qquad |X-x|\leq C U^6x.
\end{align*}
The function $\tan t/t$, extended by the value $1$ at $t=0$, is smooth
and positive on the relevant compact interval.  Consequently, uniformly,
\begin{equation}\label{eq:angle-near-equator}
 \frac{\tan\alpha}{\tan\chi}
 =\frac{\tan X}{\tan x}
 =\frac{X}{x}\frac{\tan X/X}{\tan x/x}
 =1+O(U^6)
\end{equation}
Moreover, its equatorial endpoint value is
\[
 \lim_{u\uparrow\ell}\frac{\tan\alpha}{\tan\chi}
 =\frac{E_{\mathrm{core}}(U)}{\mathfrak m}.
\]
Equations \eqref{eq:angle-away-equator} and
\eqref{eq:angle-near-equator} give lower bounds $1-C_1U^3$ and
$1-C_2U^6$, respectively.  Since $\cos r_{\mathrm W}<1$, a final
reduction of $U_1$ makes both bounds, including their endpoint limits,
strictly greater than $\cos r_{\mathrm W}$.  This proves
\eqref{eq:angle-window}.
\end{proof}

Together with Lemma~\ref{lem:seam}, the preceding lemma supplies the
required shape comparison at the packet-to-pants seam.  To construct the
companion neck at the other inner boundary, we must rewrite the matched
metric in the global latitude form used in Proposition~\ref{prop:neck}.
The next lemma supplies this coordinate representation, the pole behavior
needed for smoothness, and the meridional-length identity used in the neck
interpolation.

\begin{lemma}
\label{lem:latitude-form}
Put $w:=w_{\varepsilon,U}$.  The metric
$g_{\mathrm{ang}}=D_{\varepsilon,U}^2h$ admits a latitude parameter
$\varphi\in[-\pi/2,\pi/2]$, with $\varphi=0$ at the equator and
$\varphi=\pm\pi/2$ at the two poles, and a smooth positive function
$A_{\mathrm{ang}}$ such that
\[
 g_{\mathrm{ang}}
 =A_{\mathrm{ang}}(\varphi)^2d\varphi^2
  +w^2\cos^2\varphi\,d\theta^2,
\]
and
\[
 \begin{gathered}
 A_{\mathrm{ang}}(\pm\pi/2)=w,\qquad
 \dfrac{A_{\mathrm{ang}}(\varphi)}w
 =1+O(\cos^2\varphi)
 \quad\text{as }\varphi\to\pm\pi/2,\\
 \displaystyle\int_{-\pi/2}^{\pi/2}A_{\mathrm{ang}}\,d\varphi
 =\pi\cos r_{\mathrm W}.
 \end{gathered}
\]
\end{lemma}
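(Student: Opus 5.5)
The plan is to take as latitude the coordinate in which the orbit radius is exactly $w\cos\varphi$, and to define $A_{\mathrm{ang}}$ by the chain rule. By \eqref{eq:lensmetric} and \eqref{eq:exact-metric}, on one pole-to-equator half we have $g_{\mathrm{ang}}=D_{\varepsilon,U}^2(du^2+b(u)^2d\theta^2)$. By \eqref{eq:w}, $w=D_{\varepsilon,U}a_\varepsilon(r_U)=D_{\varepsilon,U}b(\ell)$. On the upper half, $0\le u\le\ell$, I define $\varphi\in[0,\pi/2]$ by
\[
 \cos\varphi=\frac{b(u)}{b(\ell)},
\]
and on the other half I use the reflected definition with $\varphi\le0$. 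Lemma~\ref{lem:lens-ode} gives $b'>0$ on $[0,\ell)$, with $b(0)=0$. Hence $u\mapsto\varphi$ is a strictly decreasing bijection from $[0,\ell]$ onto $[0,\pi/2]$, with $\varphi=\pi/2$ at the pole and $\varphi=0$ at the equator. It is a smooth diffeomorphism on the open interval. Setting
\[
 A_{\mathrm{ang}}:=D_{\varepsilon,U}\left|\frac{du}{d\varphi}\right|
\]
gives the displayed form of $g_{\mathrm{ang}}$. Differentiating the defining relation gives the explicit formula
\[
 A_{\mathrm{ang}}=\frac{w\sin\varphi}{b'(u)},
\]
which is positive on the open half.

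The integral identity is then immediate. The integral of $A_{\mathrm{ang}}\,d\varphi$ over $[-\pi/2,\pi/2]$ is the $g_{\mathrm{ang}}$-length of a full meridian. That length is $2D_{\varepsilon,U}\ell=\pi\cos r_{\mathrm W}$ by \eqref{eq:D}.

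At the poles I use $\psi:=\pi/2-\varphi$, so that $w\sin\psi=D_{\varepsilon,U}b(u)$. Smoothness of $h$ at the pole makes $b$ odd with $b'(0)=1$, and $\sin$ is odd with derivative $1$. By the inverse function theorem, $\psi$ is therefore a smooth odd function of $u$ with nonvanishing derivative. Consequently $u$ is smooth and odd in $\psi$, and
\[
 A_{\mathrm{ang}}=\frac{w\cos\psi}{b'(u(\psi))}
\]
is smooth and even in $\psi$, with value $w$ at $\psi=0$. Evenness gives $A_{\mathrm{ang}}/w=1+O(\psi^2)$. Since $\cos\varphi=\sin\psi\asymp\psi$ near the pole, this is $1+O(\cos^2\varphi)$. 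The same argument applies at $\varphi=-\pi/2$.

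The step needing the most care is the equator, where $b'=0$ and the formula for $A_{\mathrm{ang}}$ is of the form $0/0$. Put $v=\ell-u$. Smooth doubling makes $b(\ell-v)$ a smooth even function of $v$, so
\[
 1-\frac{b}{b(\ell)}=G(v^2),\qquad G(0)=0,\quad G'(0)=-\frac{b''(\ell)}{2b(\ell)}>0,
\]
where the sign comes from $b''(\ell)<0$ in Lemma~\ref{lem:lens-ode}. Likewise $1-\cos\varphi=F(\varphi^2)$, with $F$ smooth and $F'(0)=1/2$. Equating the two gives $\varphi^2=F^{-1}(G(v^2))$. Since $F^{-1}\circ G$ is smooth with positive derivative at $0$, it equals $v^2k(v^2)$ with $k>0$ smooth. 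Hence $\varphi=v\sqrt{k(v^2)}$ is a smooth odd function of $v$ with nonzero derivative. With the sign convention $\varphi<0$ on the reflected half, this defines one smooth coordinate across the equator. Therefore $u$ is smooth in $\varphi$ there. It follows that $A_{\mathrm{ang}}=D_{\varepsilon,U}\,|du/d\varphi|$ is smooth and positive across $\varphi=0$, with value $D_{\varepsilon,U}/\sqrt{k(0)}>0$. Combining the pole, interior and equator analyses yields a globally smooth positive $A_{\mathrm{ang}}$ with all the stated properties.
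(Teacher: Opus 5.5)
Your proof is correct and follows the paper's argument. It uses the same latitude $\cos\varphi=b(u)/b(\ell)$, the same formula $A_{\mathrm{ang}}=w\sin\varphi/b'(u)$, the same meridian-length identity $2D_{\varepsilon,U}\ell=\pi\cos r_{\mathrm W}$, and the same pole and equator analyses. Your equator step (writing both sides as smooth functions of $v^2$ and $\varphi^2$) and pole step (via $\psi=\pi/2-\varphi$) make explicit the smoothness of $A_{\mathrm{ang}}$ as a function of $\varphi$, which the paper's Taylor-expansion argument leaves somewhat implicit.
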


\begin{proof}
By Lemma~\ref{lem:lens-ode} and \eqref{eq:lensmetric}, on one
pole-to-equator half
\[
 h=du^2+b(u)^2d\theta^2,\qquad b'>0\quad(0<u<\ell),
 \qquad b''(\ell)<0.
\]
Moreover, $b(\ell)=a(r_U)$, so \eqref{eq:w} gives
$w=D_{\varepsilon,U}b(\ell)$.  Define
\[
 \cos\varphi=\frac{b(u)}{b(\ell)},\qquad
 0\leq\varphi\leq\frac{\pi}{2},
\]
where $\varphi=\pi/2$ at the pole and $\varphi=0$ at the equator.
Differentiation gives
\[
 -\sin\varphi\,\frac{d\varphi}{du}
 =\frac{b'(u)}{b(\ell)},
\]
and hence
\[
 A_{\mathrm{ang}}(\varphi)
 =D_{\varepsilon,U}\left|\frac{du}{d\varphi}\right|
 =\frac{w\sin\varphi}{b'(u(\varphi))}.
\]
This proves the asserted metric formula away from the equator.

Put $v=\ell-u$ on this half and $c=-b''(\ell)>0$.  The smooth even
reflection from Lemma~\ref{lem:lens-smooth} gives
\[
 b(\ell-v)=b(\ell)-\frac c2v^2+O(v^4).
\]
Comparing this with
$\cos\varphi=1-\varphi^2/2+O(\varphi^4)$ gives
\[
 \varphi=\sqrt{\frac{c}{b(\ell)}}\,v+O(v^3).
\]
Furthermore,
\[
 b'(\ell-v)=cv+O(v^3),\qquad
 \sin\varphi=\sqrt{\frac{c}{b(\ell)}}\,v+O(v^3),
\]
so
\[
 A_{\mathrm{ang}}(0)
 =\frac{w}{\sqrt{c\,b(\ell)}}>0.
\]
With the signed equatorial variable, the numerator and denominator in
the quotient defining $A_{\mathrm{ang}}$ are smooth odd functions;
their quotient is therefore smooth and even.  Assigning the opposite
sign to $\varphi$ on the reflected half gives the global range
$[-\pi/2,\pi/2]$ and a smooth positive even function
$A_{\mathrm{ang}}$.

At the pole $\varphi=\pi/2$, rotational smoothness from
Lemma~\ref{lem:lens-smooth} gives
\[
 b(u)=u+O(u^3),\qquad b'(u)=1+O(u^2),
 \qquad \cos\varphi=\frac{u}{b(\ell)}+O(u^3).
\]
Hence $\sin\varphi=1+O(u^2)$ and
\[
 \frac{A_{\mathrm{ang}}(\varphi)}w
 =\frac{\sin\varphi}{b'(u)}
 =1+O(u^2)=1+O(\cos^2\varphi),
\]
which gives $A_{\mathrm{ang}}(\pi/2)=w$ and the stated asymptotic there.
Evenness gives the same conclusions at $-\pi/2$.  Finally, since
$A_{\mathrm{ang}}=D_{\varepsilon,U}|du/d\varphi|$ and the two halves
are reflections of one another,
\[
 \int_{-\pi/2}^{\pi/2}A_{\mathrm{ang}}(\varphi)\,d\varphi
 =2D_{\varepsilon,U}\int_0^\ell du
 =2D_{\varepsilon,U}\ell
 =\pi\cos r_{\mathrm W}
\]
by \eqref{eq:D}.
\end{proof}

The latitude form places the matched boundary metric in the class treated
by the rotational boundary-metric interpolation in
\cite[Appendix~A]{PriorAngularConstruction}.  The following proposition
gives the version needed here: it joins such a metric to a small round
sphere while making the angular-end shape operator large enough to
dominate $S^{\mathrm{hole}}$.

\begin{proposition}
\label{prop:neck}
Fix parameters
\[
 0<r_{\mathrm W}<\pi/10,
 \qquad 1<q_*<V<\Gamma,
\]
and choose $\rho,w>0$ such that
\begin{equation}\label{eq:rho-w}
 0<\rho<\min\{r_{\mathrm W}/\pi,\Gamma^{-1}\},\qquad
 0<w<\rho^2,
 \qquad w<\frac{\rho^2\Gamma}{V^2}.
\end{equation}
Let $A_{\mathrm{ang}}:[-\pi/2,\pi/2]\to(0,\infty)$ be smooth, and suppose
that
\[
 g_{\mathrm{ang}}
 =A_{\mathrm{ang}}(\varphi)^2d\varphi^2
  +w^2\cos^2\varphi\,d\theta^2
\]
extends smoothly across the two poles $\varphi=\pm\pi/2$ to a rotational
metric on $\mathbb S^2$ and satisfies
\begin{equation}\label{eq:neck-hyp}
 K_{g_{\mathrm{ang}}}>\Gamma^2\quad\text{on }\mathbb S^2,\qquad
 A_{\mathrm{ang}}(\pm\pi/2)=w,\qquad
 \int_{-\pi/2}^{\pi/2}A_{\mathrm{ang}}\,d\varphi
       =\pi\cos r_{\mathrm W}.
\end{equation}

Then there exist $\ell_{\mathrm{neck}},\lambda>0$ and smooth positive
functions $R_{\mathrm{neck}}(s)$ and $F(s,\varphi)$ such that the following
is a smooth Ricci-positive metric on
$[0,\ell_{\mathrm{neck}}]\times\mathbb S^2$:
\[
 g_{\mathrm{neck}}
 =ds^2+R_{\mathrm{neck}}(s)^2
 \left(F(s,\varphi)^2d\varphi^2
       +\cos^2\varphi\,d\theta^2\right).
\]
Its two boundary ends have
the following properties.
\begin{enumerate}[label=\textup{(\roman*)}]
\item At the angular end $s=\ell_{\mathrm{neck}}$,
\[
 R_{\mathrm{neck}}(\ell_{\mathrm{neck}})=w,
 \qquad
 F(\ell_{\mathrm{neck}},\varphi)
 =\frac{A_{\mathrm{ang}}(\varphi)}w.
\]
Thus the induced metric is $g_{\mathrm{ang}}$, and, with respect to the
outward normal $+\partial_s$, both principal curvatures are greater than
$q_*$.

\item At the round end $s=0$,
\[
 R_{\mathrm{neck}}(0)=\frac\rho\lambda,
 \qquad F(0,\varphi)=1.
\]
Thus the induced metric is the round metric of radius $\rho/\lambda$, and,
with respect to the outward normal $-\partial_s$, both principal curvatures
are equal to $-\lambda$.
\end{enumerate}
Throughout the neck,
\begin{equation}\label{eq:neck-radius}
 R_{\mathrm{neck}}'>0,\qquad R_{\mathrm{neck}}''<0.
\end{equation}
Moreover,
\begin{equation}\label{eq:lambda-neck-length}
 \lambda>\frac{\rho}{w},\qquad \ell_{\mathrm{neck}}<1.
\end{equation}
\end{proposition}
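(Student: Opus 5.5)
The neck is built in three consecutive radial segments, $s\in[0,s_1]\cup[s_1,s_2]\cup[s_2,\ell_{\mathrm{neck}}]$, with $R_{\mathrm{neck}}$ a single smooth concave increasing profile across all of them.

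\emph{Segment 1 (round cap).} On $[0,s_1]$ take $F\equiv1$. Here the metric is a rotationally symmetric warped product $ds^2+R^2g_{\Sph^2}$, and Ricci positivity reduces to
\[
 R''<0,\qquad 1-R'^2-RR''>0,
\]
so $R'<1$ suffices together with concavity. Start at $R(0)=\rho/\lambda$ with $R'(0)=\rho$. Then the sphere at $s=0$ has principal curvature $R'/R=\lambda$ for $+\partial_s$, hence $-\lambda$ for the outward normal $-\partial_s$, as required in (ii). The condition $\lambda>\rho/w$ is exactly $R(0)<w$, which is consistent with $R$ increasing up to $w$.

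\emph{Segment 2 (deformation of the boundary metric).} On the middle segment, $F(s,\cdot)$ interpolates from $1$ to $A_{\mathrm{ang}}/w$. To keep the area-type normalization I would use the path
\[
 F_\tau=(1-\tau)+\tau A_{\mathrm{ang}}/w
\]
(or the interpolation of \cite[Appendix~A]{PriorAngularConstruction}). The endpoint condition $A_{\mathrm{ang}}(\pm\pi/2)=w$, together with smoothness, guarantees that every $F_\tau$ gives a smooth rotational metric at the poles. The integral condition $\int A_{\mathrm{ang}}=\pi\cos r_{\mathrm W}$ bounds the meridian lengths along the path.

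For $ds^2+R^2\gamma_s$ one has schematically
\[
 \Ric=\Ric_{\gamma_s}-(R'^2+RR'')\gamma_s+O\bigl(R\,|\partial_s\gamma|\,|R'|+R^2|\partial_s^2\gamma|\bigr),
\]
plus mixed terms $R\,\operatorname{div}(\partial_s\gamma)$ in the $(s,\cdot)$ directions. The Gauss curvature of $w^2\gamma_s$ stays large: it interpolates between $w^{-2}$ and $>\Gamma^2$, and the curve $\tau\mapsto F_\tau$ keeps the curvature of $\gamma_\tau$ bounded below by a positive constant comparable to $\min(1,\Gamma^2w^2)$. This is the step I would check via the explicit formula
\[
 K=-(F^{-1}(\cos\varphi)')'/(F\cos\varphi).
\]
Making the $s$-length of this segment long relative to $w$, so that the $\partial_s\gamma$ terms are $O(1/\text{length})$, while keeping $R'$ small, gives Ricci positivity. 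Keeping $R'$ small is possible because $R$ only needs to grow from about $\rho/\lambda$ to $w$.

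\emph{Segment 3 (raising the angular-end curvature).} Near $s=\ell_{\mathrm{neck}}$ keep $F\equiv A_{\mathrm{ang}}/w$ fixed. The principal curvatures for $+\partial_s$ are then both $R'/R$, and at the end this equals $R'(\ell_{\mathrm{neck}})/w$. We need this to exceed $q_*$; I would aim for $R'(\ell_{\mathrm{neck}})\approx Vw$. Fixed-$\gamma$ Ricci positivity requires
\[
 K_\gamma-R'^2-RR''>0,\qquad K_\gamma=w^2K_{g_{\mathrm{ang}}}>\Gamma^2w^2>V^2w^2,
\]
which holds with concave $R$. Concavity, however, forces $R'$ to be \emph{larger} earlier, so the whole profile must satisfy $R'\geq Vw$ throughout. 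Consequently segment 1 must also satisfy this, which is compatible with $R'(0)=\rho<1$ and $Vw<\rho$ (the latter follows from the hypotheses $w<\rho^2\Gamma/V^2$ and $\rho<\Gamma^{-1}$).

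\emph{Length budget.} This is the main obstacle. Since $R$ is increasing with $R'\geq Vw$ and rises by less than $w$, the total length is $\lesssim 1/V<1$. But segment 2 wants to be long, so there is a tension between the length bound $\ell_{\mathrm{neck}}<1$ and keeping the $O(|\partial_s\gamma|)$ error terms small. I would resolve it by running segment 2 on length of order $\rho$ (or $w/(Vw)$) and verifying that the error terms $RR'|\partial_s\gamma|\lesssim w\cdot Vw/\rho$ are dominated by the curvature margin $\sim w^2\min(1,\Gamma^2)$ using $w<\rho^2$ and $\rho<r_{\mathrm W}/\pi$. These numerical constraints in \eqref{eq:rho-w} appear designed exactly for this bookkeeping. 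Concavity across the junctions is handled by smoothing $R''$ with negative bumps, as in Lemma~\ref{lem:R-extension}.
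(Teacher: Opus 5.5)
Your endpoint bookkeeping (round end with $R'(0)=\rho$, terminal $R'\approx Vw$, length below $1/V$) agrees with the paper. Segment~2, however, has a genuine gap. You weigh the deformation only against the tangential curvature margin and never examine $\Ric(\partial_s,\partial_s)$. There the deformation has a scale-invariant cost that no amount of slowness removes.

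For $g=ds^2+R^2(F^2d\varphi^2+\cos^2\varphi\,d\theta^2)$ one has exactly
\[
 \Ric(\partial_s,\partial_s)=-\frac{R''}{R}-\frac{(RF)''}{RF}.
\]
Put $\tau_*w=\max A_{\mathrm{ang}}$. Since $\pi\tau_*w\geq\int A_{\mathrm{ang}}=\pi\cos r_{\mathrm W}>\pi\rho>\pi w$, this maximum lies away from the poles and $\tau_*>\rho/w$. Also $K_{g_{\mathrm{ang}}}=(\tau_*w)^{-2}$ there, so $\tau_*w<\Gamma^{-1}$.

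At that point your affine path stretches $F$ from $1$ to $\tau_*$. With $u=\log F$ there, positivity reads $-2R''/R>u''+2(R'/R)u'+u'^2$, i.e.\ $(R^2u')'<-2RR''$. For linear $R$ this is impossible, since $u'$ starts at $0$. For concave increasing $R$, integrate over the deformation interval $[s_a,s_b]$ and use $\int_{s'}^{s_b}R^{-2}\,ds\leq 1/(R'(s_b)R(s'))$ to get
\[
 \log\tau_*<2\Bigl(\frac{R'(s_a)}{R'(s_b)}-1\Bigr).
\]
So $R'$ must fall by a large factor \emph{while} $F$ is being stretched. In the slowly varying regime the factor is about $\tau_*^{1/2}$. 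The hypothesis $w<\rho^2\Gamma/V^2$, together with $\tau_*w<\Gamma^{-1}$, is precisely what gives $\rho/(Vw)>\tau_*^{1/2}$.

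This is incompatible with how you run Segment~2. Your bound $\min(1,\Gamma^2w^2)$ on the intermediate slice curvature is actually correct: for fixed $\varphi$, the curvature along the affine path is minimized at an endpoint. But it only controls the tangential terms when $R'\lesssim\Gamma w$, which is how you use it. Then $R'(s_a)/R'(s_b)\lesssim\Gamma/V$, and the display fails once $\log\tau_*$ is large (e.g.\ for small $w$, the case used later).

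What is missing is a \emph{scale-dependent} slice-curvature bound. It must allow $R'$ of order $\rho$ early in the deformation and let $R'$ decrease in lockstep with the stretching. The paper proves exactly this:
\begin{enumerate}
\item With $p_0=\log\tau_*/\log(\rho/w)\in(1,2)$, a one-variable critical-point argument shows that the slices $v_0(\tau)^2(F_\tau^2d\varphi^2+\cos^2\varphi\,d\theta^2)$, $v_0=\rho\tau^{-1/p_0}$, have curvature $>\Gamma^2$. This is along the same affine path, reparametrized by $\tau\in[1,\tau_*]$.
\item It locks the deformation to the scale $v=\rho\tau^{-1/p}$, with $p\in(p_0,2)$ fixed by $\tau_*^{1/p_0-1/p}=V$.
\item It takes the cone $dt^2+t^2v^2(F_{\tau(t)}^2d\varphi^2+\cos^2\varphi\,d\theta^2)$ with $d\log\tau/d\log t=O(1/\log L)$.
\end{enumerate}
The normalized slice curvature is then at least $1+\mu$ for some $\mu>0$. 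The radial Ricci curvature is $(2-pX)(\beta'+2\beta/t)-O(\beta^2)$, with $\beta=-(\log v)'$ and $X\leq1$; the bound $2-pX\geq2-p>0$ is exactly where $p<2$ enters. The remaining errors are $O(1/\log L)$, and rescaling by $(LV)^{-1}$ gives $\lambda=LV>\rho/w$ and $\ell_{\mathrm{neck}}<1/V$.

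Your numerical check also fails on its own terms. Comparing $w\cdot Vw/\rho$ against a margin of order $w^2$ or $\Gamma^2w^2$ gives a ratio at least $V/(\rho\Gamma^2)>V/\Gamma$, which is not small. It also omits that $|\partial_\sigma\gamma|_\gamma=2h/(1+\sigma h)$, with $h=A_{\mathrm{ang}}/w-1$, is of size $\tau_*$ at the start of the path.
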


\begin{proof}
\smallskip
\noindent\emph{Step 1. High-curvature slice path.}
Normalize the angular deformation by setting
\begin{equation}\label{eq:neck-u-alpha}
 \tau_*=\max_\varphi\frac{A_{\mathrm{ang}}(\varphi)}w,\qquad
 \eta=\frac{A_{\mathrm{ang}}/w-1}{\tau_*-1},\qquad
 p_0=\frac{\log \tau_*}{\log(\rho/w)}.
\end{equation}
The meridian identity in \eqref{eq:neck-hyp} and
$\rho<r_{\mathrm W}/\pi<\cos r_{\mathrm W}$ give
\[
 \tau_*w\geq\frac1\pi
 \int_{-\pi/2}^{\pi/2}A_{\mathrm{ang}}\,d\varphi
 =\cos r_{\mathrm W}>\rho.
\]
Thus $\tau_*>\rho/w>1$, so the definition of $p_0$ gives $p_0>1$.
A maximum of $A_{\mathrm{ang}}$ occurs away from the poles.  At such a
maximum the Gaussian curvature is
$(\tau_*w)^{-2}$, so the curvature hypothesis gives
\begin{equation}\label{eq:umax-bound}
 \tau_*w<\Gamma^{-1}<1.
\end{equation}
Moreover, \eqref{eq:umax-bound} and $w<\rho^2$ imply
\[
 \tau_*<\frac1w<\left(\frac\rho w\right)^2,
\]
so
\[
 1<p_0<2,\qquad \rho\tau_*^{-1/p_0}=w.
\]
The last identity is the reason for the choice of $p_0$: it makes the
slice path below start at radius $\rho$ and end at radius $w$.

For $F_\tau=1+(\tau-1)\eta$, set
\begin{equation}\label{eq:slice-path}
 \widetilde g_{\tau,v}
 =v^2\left(F_\tau^2d\varphi^2+\cos^2\varphi\,d\theta^2\right),
 \qquad v_0(\tau)=\rho \tau^{-1/p_0}.
\end{equation}
For $1\leq\tau\leq\tau_*$,
\[
 F_\tau
 =\frac{\tau_*-\tau}{\tau_*-1}
  +\frac{\tau-1}{\tau_*-1}\frac{A_{\mathrm{ang}}}{w}>0.
\]
By the definition of $p_0$,
\[
 \widetilde g_{1,v_0(1)}=\rho^2
 (d\varphi^2+\cos^2\varphi\,d\theta^2),
 \qquad
 \widetilde g_{\tau_*,v_0(\tau_*)}=g_{\mathrm{ang}}.
\]
At either pole, smooth rotational extension means that
$A_{\mathrm{ang}}/w$ has a smooth even expansion in
$x=\cos\varphi$, with value $1$ at $x=0$; equivalently, near each pole,
\[
 \frac{A_{\mathrm{ang}}}{w}
 =1+\cos^2\varphi\,\omega_\pm(\cos^2\varphi)
\]
for smooth functions $\omega_\pm$.  The affine interpolation
$F_\tau$ preserves these pole jets, so every metric in the path is
smooth.  In particular,
\[
 \eta=O(\cos^2\varphi),\qquad \eta'=O(\cos\varphi).
\]
With meridional arclength $dr=vF_\tau\,d\varphi$ and orbit radius
$f=v\cos\varphi$, the identity $K=-f_{rr}/f$ gives
\begin{equation}\label{eq:slice-K}
 K_{\tau,v}=\frac1{v^2}
 \left(\frac1{F_\tau^2}
 -\frac{(\tau-1)\eta'\tan\varphi}{F_\tau^3}\right).
\end{equation}
We claim that this curvature remains greater than $\Gamma^2$ along the
path $v=v_0(\tau)$.  If $\eta'\tan\varphi<0$, then the second term in
\eqref{eq:slice-K} is nonnegative.  Since $\eta\leq1$, one also has
$F_\tau\leq\tau$, and therefore
\[
 K_{\tau,v_0}\geq\frac1{v_0^2F_\tau^2}
 \geq\frac1{\tau^2v_0^2}
 \geq\frac1{\tau_*^2w^2}>\Gamma^2,
\]
where $\tau v_0(\tau)$ is increasing and \eqref{eq:umax-bound} was used.
It remains to exclude an interior minimum when
$q:=\eta'\tan\varphi\geq0$.  Put
\[
 N(\tau,\varphi)=1+(\tau-1)(\eta-q).
\]
Then
\[
 K_{\tau,v_0}=\frac{N}{v_0^2F_\tau^3}.
\]
For fixed $\varphi$, the numerator $N$ is affine in $\tau$, with
$N(1,\varphi)=1$ and $N(\tau_*,\varphi)>0$ because the terminal Gaussian
curvature is positive.  Hence $N>0$ on $[1,\tau_*]$, and logarithmic
differentiation is legitimate.  It gives
\begin{equation}\label{eq:K-log-derivative}
 \frac{d}{d\tau}\log K_{\tau,v_0}
 =\frac{\mathcal D(\tau,\varphi)}{p_0\tau},\qquad
 \mathcal D
 :=2+\frac{p_0\tau(\eta-q)}{N}
 -\frac{3p_0\tau\eta}{F_\tau}.
\end{equation}
For the following sign analysis, fix $\varphi$, so that
$\eta=\eta(\varphi)$ and $q=q(\varphi)$ are constants.  We show that
every zero of $\mathcal D$ has negative derivative.  Recall that
\[
 p_0>1,\qquad F_\tau>0,\qquad N>0,
 \qquad q\geq0,\qquad \eta\leq1.
\]
If $\eta>0$, direct simplification gives
\[
 F_\tau\mathcal D
 =2(1-\eta)+2\eta\tau(1-p_0)-\frac{p_0\tau q}{N}.
\]
Hence
\[
 \frac{d}{d\tau}(F_\tau\mathcal D)
 =2\eta(1-p_0)
 -p_0q\frac{1-\eta+q}{N^2}<0.
\]
The first term is strictly negative, while the second is nonpositive.
Thus, at any zero $\tau_0$ of $\mathcal D$,
\[
 F_{\tau_0}\mathcal D'(\tau_0)
 =\left.\frac{d}{d\tau}(F_\tau\mathcal D)\right|_{\tau=\tau_0}<0,
\]
and therefore $\mathcal D'(\tau_0)<0$.

If $\eta\leq0$, write
\[
 \mathcal D=2-\frac{p_0\tau}{F_\tau}P,
 \qquad P:=2\eta+\frac qN.
\]
Here
\[
 \frac d{d\tau}\left(\frac{\tau}{F_\tau}\right)
 =\frac{1-\eta}{F_\tau^2}>0,
 \qquad
 P'=-\frac{q(\eta-q)}{N^2}\geq0.
\]
At any zero $\tau_0$ of $\mathcal D$, one has
$P(\tau_0)=2F_{\tau_0}/(p_0\tau_0)>0$, and therefore
\[
 \mathcal D'(\tau_0)
 =-p_0\left[
   \left(\frac{\tau}{F_\tau}\right)'P
   +\frac{\tau}{F_\tau}P'
  \right]_{\tau=\tau_0}<0.
\]
Thus every zero of $\mathcal D$ has negative derivative.  Since
$K'_{\tau,v_0}/K_{\tau,v_0}=\mathcal D/(p_0\tau)$, every interior
critical point of $K_{\tau,v_0}$ is a strict local maximum.  Its minimum
on $[1,\tau_*]$ is therefore attained at an endpoint.  The endpoint
curvatures are $\rho^{-2}>\Gamma^2$ and
$K_{g_{\mathrm{ang}}}>\Gamma^2$; the pole values follow by continuity.
Consequently
\begin{equation}\label{eq:base-path-Gamma}
 K_{\tau,v_0}>\Gamma^2\qquad(1\leq \tau\leq \tau_*).
\end{equation}

\smallskip
\noindent\emph{Step 2. Slow parametrization.}
The identity defining $p_0$ and \eqref{eq:umax-bound} give
\begin{equation}\label{eq:V-room}
 \tau_*^{1/p_0-1/2}
 =\frac{\rho}{w\sqrt{\tau_*}}
 >\rho\sqrt{\frac{\Gamma}{w}}>V,
\end{equation}
where the final inequality is \eqref{eq:rho-w}.  The map
$p\mapsto\tau_*^{1/p_0-1/p}$ is continuous and strictly increasing on
$[p_0,2]$; its value at $p_0$ is $1<V$, while its value at $2$ is
greater than $V$ by \eqref{eq:V-room}.  Hence there is a unique
$p\in(p_0,2)$ satisfying the endpoint relation
\begin{equation}\label{eq:p-choice}
 \tau_*^{1/p_0-1/p}=V.
\end{equation}
Moreover,
\[
 \frac{\rho \tau^{-1/p}}{v_0(\tau)}
 =\tau^{1/p_0-1/p}\leq V,
\]
for $1\leq\tau\leq\tau_*$.  Since Gaussian curvature scales by the
inverse square of the metric scale,
\[
 K_{\tau,\rho\tau^{-1/p}}
 =\left(\frac{v_0(\tau)}{\rho\tau^{-1/p}}\right)^2
  K_{\tau,v_0}
 >\frac{\Gamma^2}{V^2}>1.
\]
Fix $\mu$ with $0<\mu<\Gamma^2/V^2-1$.  Then
\begin{equation}\label{eq:modified-path-K}
 K_{\tau,\rho \tau^{-1/p}}\geq1+\mu
 \qquad(1\leq \tau\leq \tau_*).
\end{equation}

We first identify the differential inequality required of the
parametrization.  For $L>2$ and any smooth increasing
$\tau:[1,L]\to[1,\tau_*]$, set
\[
 v(t)=\rho\tau(t)^{-1/p},\qquad
 \beta(t):=-\frac{d}{dt}\log v(t)
 =\frac1p\frac{\tau'(t)}{\tau(t)}.
\]
Thus $\beta$ is the logarithmic rate at which the slice scale $v$
decreases.  Consider the cone-type metric
\[
 \widehat g=dt^2+t^2v(t)^2
 \left(F_{\tau(t)}^2d\varphi^2+\cos^2\varphi\,d\theta^2\right).
\]
Away from the poles, let
$e_\theta=(tv\cos\varphi)^{-1}\partial_\theta$.  The radius of the
$\theta$-circle is $tv\cos\varphi$, and $\cos\varphi$ is independent of
$t$.  Hence its radial sectional curvature is
\begin{equation}\label{eq:neck-radial-theta}
 K_{\widehat g}(\partial_t,e_\theta)
 =-\frac{(tv)''}{tv}
 =\beta'+\frac{2\beta}{t}-\beta^2.
\end{equation}
Thus the parametrization should make $\beta'+2\beta/t$ positive and
larger than $\beta^2$.  At the same time, keeping $t\beta$ small
preserves the tangential curvature margin in
\eqref{eq:modified-path-K}.  We now construct a path with both
properties.

Fix a smooth nondecreasing function $\zeta:[0,\infty)\to[0,1]$ such that
$\zeta(0)=0$, $\zeta'(0)>0$, $\zeta(x)>0$ for $x>0$, and
$\zeta=1$ on $[1,\infty)$.  Set
\[
 \gamma_L:=
 \frac{\log\tau_*}{\displaystyle
  \int_1^L\frac{\zeta(s-1)}s\,ds}
\]
and, for $1\leq t\leq L$, define
\[
 \tau(t):=
 \exp\left(\gamma_L\int_1^t\frac{\zeta(s-1)}s\,ds\right).
\]
Then
\[
 \tau(1)=1,\qquad \tau(L)=\tau_*,\qquad
 v(1)=\rho,\qquad \frac{v(L)}w=V.
\]
Moreover,
\[
 \log(L/2)
 \leq\int_1^L\frac{\zeta(s-1)}s\,ds
 \leq\log L,
\]
so, as $L\to\infty$, $\gamma_L\asymp1/\log L$.  For this choice, the
definition of $\beta$ gives
\[
 \beta(t)=\frac1p\frac{\tau'(t)}{\tau(t)}=-\frac{v'(t)}{v(t)}
 =\frac{\gamma_L}{p}\frac{\zeta(t-1)}t.
\]
The key computation is
\[
 \beta'+\frac{2\beta}{t}
 =\frac{\gamma_L}{p\,t^2}
  \bigl(t\zeta'(t-1)+\zeta(t-1)\bigr).
\]
The factor in parentheses has a positive lower bound on $[1,2]$ and
equals $1$ on $[2,L]$.  Hence, for all sufficiently large $L$,
\begin{equation}\label{eq:slow-speed-estimates}
 \begin{aligned}
 \beta'+\frac{2\beta}{t}
 &\geq\frac{c}{t^2\log L},&
 |\beta'|&\leq\frac{C}{t^2\log L},\\
 \beta^2&\leq\frac{C}{t^2(\log L)^2},&
 0\leq t\beta&\leq\frac{C}{\log L}.
 \end{aligned}
\end{equation}
At the endpoints,
\begin{equation}\label{eq:slow-endpoint-speed}
 \beta(1)=0,\qquad
 L\beta(L)=\frac{\gamma_L}{p}
 \leq\frac{\log\tau_*}{p\log(L/2)}.
\end{equation}
Thus the path still reaches the prescribed terminal slice exactly, while
both its dimensionless speed $t\beta$ and its terminal speed
$L\beta(L)$ tend to zero as $L\to\infty$.

\smallskip
\noindent\emph{Step 3. Ricci positivity.}
We now verify that the cone-type metric $\widehat g$ introduced above has
positive Ricci curvature.  The margin in
\eqref{eq:modified-path-K} controls its tangential curvature, while the
slow variation supplies positive radial Ricci curvature.
Let $e_\varphi$ denote the unit meridional vector on each slice, and
retain $e_\theta$ from above.  The slice principal curvatures in these
two directions are
\[
 h_\varphi=\frac1t-\beta+p\beta X,
 \qquad h_\theta=\frac1t-\beta,
\]
where $X:=\tau\eta/F_\tau$, which is uniformly bounded on the whole
path.
The curvature of a slice is $t^{-2}K_{\tau,v}$, so the Gauss equation
gives
\[
 K_{\widehat g}(e_\varphi,e_\theta)
 =\frac{K_{\tau,v}}{t^2}-h_\varphi h_\theta,
 \qquad
 \left|h_\varphi h_\theta-\frac1{t^2}\right|
 \leq C\left(\frac{\beta}{t}+\beta^2\right).
\]
Since $t\beta\leq C/\log L$, equation
\eqref{eq:modified-path-K} implies, after increasing $L$, that
\begin{equation}\label{eq:neck-tangential-sectional}
 K_{\widehat g}(e_\varphi,e_\theta)\geq\frac{\mu}{2t^2}.
\end{equation}

Differentiating $X$ gives
\[
 X'=p\beta X(1-X).
\]
Using
$K_{\widehat g}(\partial_t,e_i)=-(h_i'+h_i^2)$, the other radial
sectional curvature is
\begin{equation}
 K_{\widehat g}(\partial_t,e_\varphi)
 =(1-pX)\left(\beta'+\frac{2\beta}{t}\right)
  -\beta^2\bigl(1+p(p-2)X\bigr).                             \label{eq:neck-radial-E}
\end{equation}
Adding this to \eqref{eq:neck-radial-theta} gives the exact identity
\begin{equation}\label{eq:radial-Ricci-neck}
 \Ric_{\widehat g}(\partial_t,\partial_t)
 =(2-pX)\left(\beta'+\frac{2\beta}{t}\right)
 -\beta^2\bigl(2+p(p-2)X\bigr).
\end{equation}
If $\eta>0$, then $0<X\leq1$, while $X\leq0$ if $\eta\leq0$.
Since $p<2$, one has $2-pX\geq2-p>0$, and all remaining
coefficients are bounded.

For all sufficiently large $L$, \eqref{eq:slow-speed-estimates} gives
\begin{equation}\label{eq:neck-normal-Ricci-lower}
 \Ric_{\widehat g}(\partial_t,\partial_t)
 \geq\frac{c}{t^2\log L}
\end{equation}
The same formulas give
\begin{equation}\label{eq:neck-radial-sectional-bound}
 |K_{\widehat g}(\partial_t,e_\varphi)|
 +|K_{\widehat g}(\partial_t,e_\theta)|
 \leq\frac{C}{t^2\log L}.
\end{equation}
Combining this with \eqref{eq:neck-tangential-sectional}, and enlarging
$L$ if necessary, gives
\begin{equation}\label{eq:neck-tangential-Ricci-lower}
 \Ric_{\widehat g}(e_\varphi,e_\varphi)\geq\frac{c}{t^2},\qquad
 \Ric_{\widehat g}(e_\theta,e_\theta)\geq\frac{c}{t^2}.
\end{equation}

It remains to control the mixed Ricci term.  The pole expansion
$\eta=O(\cos^2\varphi)$ gives
$\eta\tan\varphi=O(\cos\varphi)$, while $F_\tau$ is uniformly bounded
away from zero.  Hence
$\eta\tan\varphi/F_\tau^2$ extends continuously to zero at the poles and
is uniformly bounded on the whole path.  The contracted Codazzi identity
now gives the only nonzero mixed Ricci term:
\[
 \Ric_{\widehat g}(\partial_t,e_\varphi)
 =-(h_\varphi-h_\theta)\frac{\tan\varphi}{tvF_\tau}
 =-\frac{\eta \tau'\tan\varphi}{tvF_\tau^2}.
\]
Since $\tau'=p\beta\tau$ and $\tau/v$ is uniformly bounded,
\eqref{eq:slow-speed-estimates} gives
\[
 |\Ric_{\widehat g}(\partial_t,e_\varphi)|
 \leq\frac{C}{t^2\log L}.
\]
The other mixed entries vanish.  For all sufficiently large $L$,
\begin{equation}\label{eq:neck-Schur}
 \begin{aligned}
 \frac{|\Ric_{\widehat g}(\partial_t,e_\varphi)|^2}
 {\Ric_{\widehat g}(e_\varphi,e_\varphi)}
 &\leq\frac{C}{t^2(\log L)^2}
 \leq\frac12\Ric_{\widehat g}(\partial_t,\partial_t),\\
 \Ric_{\widehat g}(\partial_t,\partial_t)
 -\frac{|\Ric_{\widehat g}(\partial_t,e_\varphi)|^2}
 {\Ric_{\widehat g}(e_\varphi,e_\varphi)}
 &\geq\frac12\Ric_{\widehat g}(\partial_t,\partial_t)>0.
 \end{aligned}
\end{equation}
Since $\Ric_{\widehat g}(e_\varphi,e_\varphi)>0$,
\eqref{eq:neck-Schur} says precisely that the symmetric matrix
\[
 \begin{pmatrix}
  \Ric_{\widehat g}(\partial_t,\partial_t)
  &\Ric_{\widehat g}(\partial_t,e_\varphi)\\
  \Ric_{\widehat g}(\partial_t,e_\varphi)
  &\Ric_{\widehat g}(e_\varphi,e_\varphi)
 \end{pmatrix}
\]
is positive definite.  The direction $e_\theta$ is orthogonal to this
block for the Ricci tensor, and its Ricci curvature is positive by
\eqref{eq:neck-tangential-Ricci-lower}.  Hence
$\Ric_{\widehat g}>0$.

\smallskip
\noindent\emph{Step 4. Rescaling and boundary geometry.}
Put
\begin{align*}
 \delta_{\mathrm{neck}}&=\frac{w}{Lv(L)}=\frac1{LV},&
 s&=\delta_{\mathrm{neck}}(t-1),\\
 \lambda&=\delta_{\mathrm{neck}}^{-1}=LV,&
 \ell_{\mathrm{neck}}&=\delta_{\mathrm{neck}}(L-1).
\end{align*}
For $t=1+s/\delta_{\mathrm{neck}}$, define
\[
 F(s,\varphi):=F_{\tau(t)}(\varphi),\qquad
 R_{\mathrm{neck}}(s):=\delta_{\mathrm{neck}}t v(t).
\]
Then the rescaled metric is
\[
 g_{\mathrm{neck}}
 =\delta_{\mathrm{neck}}^2\widehat g
 =ds^2+R_{\mathrm{neck}}(s)^2
 \left(F(s,\varphi)^2d\varphi^2+\cos^2\varphi\,d\theta^2\right).
\]
At $s=0$, one has $F_{\tau(1)}=1$ and
$R_{\mathrm{neck}}(0)=\rho/\lambda$, so this end is round.  Its outward
normal is $-\partial_s$; since $\beta(1)=0$, both principal
curvatures are $-\lambda$.

At $s=\ell_{\mathrm{neck}}$, one has
$F_{\tau(L)}=A_{\mathrm{ang}}/w$ and
$R_{\mathrm{neck}}(\ell_{\mathrm{neck}})=w$, so the induced metric is
$g_{\mathrm{ang}}$.  With outward normal $+\partial_s$, the circle and
meridional principal curvatures there are, respectively,
\[
 V\bigl(1-L\beta(L)\bigr),\qquad
 V\bigl(1-L\beta(L)+pL\beta(L)X\bigr).
\]
Since $X$ is uniformly bounded, both are bounded below by
\begin{equation}\label{eq:terminal-shape-bound}
 V[1-CL\beta(L)].
\end{equation}
By \eqref{eq:slow-endpoint-speed} and $V>q_*$, all the preceding
estimates hold and both terminal shapes are greater than $q_*$ for all
sufficiently large $L$.  Since $\rho$, $w$, and $V$ are already fixed
and $L$ remains free, enlarge $L$ further, if necessary, so that
$L>\rho/(wV)$.  Then $LV>\rho/w$.  Fix this value of $L$.

It remains to verify the monotonicity and concavity required in
\eqref{eq:neck-radius}.  From the definition of $\beta$,
\[
 (tv)'=v(1-t\beta)>0,\qquad
 \frac{(tv)''}{tv}
 =-\left(\beta'+\frac{2\beta}{t}\right)+\beta^2<0,
\]
where the two inequalities follow from
\eqref{eq:slow-speed-estimates} for large $L$.  Since
$R_{\mathrm{neck}}(s)=\delta_{\mathrm{neck}}t v(t)$ and
$dt/ds=\delta_{\mathrm{neck}}^{-1}$, this gives
\[
 R_{\mathrm{neck}}'=(tv)'>0,\qquad
 R_{\mathrm{neck}}''
 =\frac{(tv)''}{\delta_{\mathrm{neck}}}<0.
\]
Finally, the explicit normalization gives
\[
 \lambda=LV>\frac\rho w,\qquad
 \ell_{\mathrm{neck}}=\frac{L-1}{LV}<\frac1V<1.
\]
This proves
\eqref{eq:lambda-neck-length} and completes the construction.
\end{proof}

\begin{proof}[Proof of Theorem~\ref{thm:attachment}]
Choose $U_0=U_0(r_{\mathrm W})>0$ and, for each
$U\in(0,U_0)$, $\varepsilon_0=\varepsilon_0(U,r_{\mathrm W})>0$ so
that Lemma~\ref{lem:uniform-concavity} applies whenever
$0<\varepsilon<\varepsilon_0$, and fix such $U$ and $\varepsilon$.
Lemma~\ref{lem:inverse-realization}
applied to the reconstruction \eqref{eq:ambient-profile} identifies the
metric on $\mathcal S_{r_{\mathrm W}}(o_0)$ with
$D_{\varepsilon,U}^2h$ by \eqref{eq:exact-metric}.  It also proves that
$\mathcal R$ is smooth and odd across $t=0$, with
$\mathcal R(0)=0$, $\mathcal R'(0)=1$, all even derivatives equal to
zero there, and $\mathcal R'>0$ on $[0,r_{\mathrm W}]$.
Lemma~\ref{lem:uniform-concavity} gives
$\mathcal R''<0$ on $(0,r_{\mathrm W}]$ and the positive axis limit
\eqref{eq:axis-curvature-limit}.  These conclusions verify the
hypotheses of Lemma~\ref{lem:R-extension}, which leaves $\mathcal R$
unchanged on $[0,r_{\mathrm W}]$ and extends it to
$[0,5r_{\mathrm W}]$ with
\[
 \mathcal R>0,\qquad \mathcal R'>0,\qquad \mathcal R''<0
 \qquad(0<t\leq5r_{\mathrm W}).
\]

For the doubly warped metric \eqref{eq:ambient}, the coordinate
two-planes are eigenspaces of the curvature operator, and their
sectional curvatures are $\tan^2r_{\mathrm W}$ times the three
expressions in \eqref{eq:ambient-sections}.  Since
$5r_{\mathrm W}<\pi/2$, the three signs above make all of these
curvatures positive for $0<t\leq5r_{\mathrm W}$.  At $t=0$, the first
expression in \eqref{eq:ambient-sections} remains $1$, smooth oddness
gives
$(\mathcal R'/\mathcal R)\tan t\to1$, and
\eqref{eq:axis-curvature-limit} gives a strictly positive limit for
$-\mathcal R''/\mathcal R$.  Thus $g_{\mathrm{amb}}$ extends smoothly
across the collapsed $\theta$-orbit and is sectionally positive there as
well.  This proves the asserted profile properties and the sectional
positivity of every subdomain considered below.

We next verify the position and topology of the three angular balls.
If $\pi(t,\zeta,\theta)=(t,\zeta)$, then \eqref{eq:ambient} gives
\[
 L_{g_{\mathrm{amb}}}(\gamma)
 \geq\cot r_{\mathrm W}\,
 L_{dt^2+\cos^2t\,d\zeta^2}(\pi\circ\gamma)
\]
for every curve $\gamma$.  Hence the distance from $o_{\zeta_0}$ is
at least $\cot r_{\mathrm W}\,\varrho_{\zeta_0}$.  Conversely, a minimizing
round geodesic from $(0,\zeta_0)$ to a point with
$\varrho_{\zeta_0}\leq4r_{\mathrm W}$ stays in
$0\leq t\leq4r_{\mathrm W}<5r_{\mathrm W}$.  It therefore has a lift
with constant $\theta$ and the same length multiplied by
$\cot r_{\mathrm W}$; its initial $\theta$-value is immaterial because
the $\theta$-orbit collapses at $o_{\zeta_0}$.  Hence
\[
 d_{g_{\mathrm{amb}}}
 \bigl(o_{\zeta_0},(t,\zeta,\theta)\bigr)
 =\cot r_{\mathrm W}\,\varrho_{\zeta_0}(t,\zeta),
\]
whenever $\varrho_{\zeta_0}\leq4r_{\mathrm W}$.  Combining this
equality with the preceding lower bound shows, for every
$0<s\leq4r_{\mathrm W}$, that $\mathcal B_s(o_{\zeta_0})$ is exactly
the metric ball of radius $s\cot r_{\mathrm W}$.  Because
$4r_{\mathrm W}<\pi/2$, the round geodesic segments used here are
minimizing, and the relevant distances are
\[
 \begin{gathered}
 \operatorname{rad}(\mathcal B_{r_{\mathrm W}})
 =r_{\mathrm W}\cot r_{\mathrm W},\qquad
 \operatorname{rad}(\mathcal B_{4r_{\mathrm W}})
 =4r_{\mathrm W}\cot r_{\mathrm W},\\
 d(o_0,o_{\pm2r_{\mathrm W}})
 =2r_{\mathrm W}\cot r_{\mathrm W},\qquad
 d(o_{2r_{\mathrm W}},o_{-2r_{\mathrm W}})
 =4r_{\mathrm W}\cot r_{\mathrm W}.
 \end{gathered}
\]

The separation of the two inner centers is
$4r_{\mathrm W}\cot r_{\mathrm W}$, which is greater than the sum
$2r_{\mathrm W}\cot r_{\mathrm W}$ of their radii, so the inner
closed balls are disjoint.  Each inner center is at distance
$2r_{\mathrm W}\cot r_{\mathrm W}$ from $o_0$, and this distance plus
the inner radius is $3r_{\mathrm W}\cot r_{\mathrm W}$, strictly less
than the outer radius $4r_{\mathrm W}\cot r_{\mathrm W}$.  Hence both
inner balls lie strictly inside the outer ball.  In the
$\theta$-orbit space, an angular ball is a round half-disk whose
diameter lies on $t=0$; restoring the $\theta$-circles and collapsing
them along that diameter produces a smooth three-ball.  It follows that
$\mathcal P_{\varepsilon,U}$ is a three-ball with the interiors of two
disjoint three-balls removed.  It is compact because it is a closed
subset of the compact outer ball.

Equation \eqref{eq:exact-metric} identifies the metric of
$\mathcal S_{r_{\mathrm W}}(o_0)$ with $D_{\varepsilon,U}^2h$.
Because the coefficients of $g_{\mathrm{amb}}$ are independent of
$\zeta$, translation in $\zeta$ is an isometry and carries this sphere
to $\Sigma_{\mathrm{lens}}$ and $\Sigma_{\mathrm{neck}}$.  Thus both
inner boundary metrics are isometric to $D_{\varepsilon,U}^2h$.

It remains to check their shape operators.  For an angular sphere
$\mathcal S_s(o_{\zeta_0})$, put $\delta=\zeta-\zeta_0$ and choose the
normal pointing outward from the angular ball.  The meridional
principal curvature is the geodesic curvature of a round circle of
radius $s$, with the ambient homothety included, and the circle
principal curvature is the normal derivative of
$\log\mathcal R$.  Explicitly,
\[
 k_m=\frac{\cot s}{\cot r_{\mathrm W}},\qquad
 k_\theta=\frac1{\cot r_{\mathrm W}}
 \frac{\mathcal R'}{\mathcal R}
 \frac{\sin t\cos\delta}{\sin s}.
\]
For either inner sphere, $s=r_{\mathrm W}$ and its defining equation is
$\cos t\cos\delta=\cos r_{\mathrm W}$.  Consequently
\[
 k_m=1,\qquad
 k_\theta=\tan t\,\frac{\mathcal R'}{\mathcal R}=:q_\theta.
\]
This normal points from the removed ball into
$\mathcal P_{\varepsilon,U}$, exactly as required in the definition of
$S^{\mathrm{hole}}$.  Strict
concavity and $\mathcal R(0)=0$ imply, for $t>0$,
\[
\mathcal R(t)-t\mathcal R'(t)
 =\int_0^t\bigl(\mathcal R'(s)-\mathcal R'(t)\bigr)\,ds\geq0.
\]
Every point of an inner angular sphere has $0\leq t\leq r_{\mathrm W}$.
Since $\tan t/t$ is increasing,
\begin{equation}\label{eq:angular-circle-bound}
 q_\theta
 \leq\frac{\tan t}{t}
 \leq q_*\qquad(0<t\leq r_{\mathrm W}).
\end{equation}
The inequalities $\mathcal R'>0$ and $\mathcal R>0$ show that
$q_\theta>0$ away from a pole, while smooth oddness gives
$q_\theta\to1$ at a pole.  Therefore
$S^{\mathrm{hole}}=\operatorname{diag}(1,q_\theta)$ and
$0<S^{\mathrm{hole}}\leq q_*\operatorname{Id}$.  This completes
\textup{(i)}.

Finally, apply the same general shape formulas with $s=4r_{\mathrm W}$,
$\zeta_0=0$, and the outward normal of the outer ball.  They give
\begin{equation}\label{eq:outer-shapes}
 k_m^{\mathrm{out}}
 =\frac{\cot(4r_{\mathrm W})}{\cot r_{\mathrm W}}>0,\qquad
k_\theta^{\mathrm{out}}
 =\frac1{\cot r_{\mathrm W}}\frac{\mathcal R'}{\mathcal R}
 \frac{\sin t\cos\zeta}{\sin(4r_{\mathrm W})}>0.
\end{equation}
The first curvature is positive because $4r_{\mathrm W}<\pi/2$.
At every regular point of the orbit-space semicircle, $t>0$ and
$\mathcal R'/\mathcal R>0$; its defining equation
$\cos t\cos\zeta=\cos(4r_{\mathrm W})>0$ also gives
$\cos\zeta>0$.  Hence the second curvature is positive there.

At
either collapsed-circle pole it has the smooth limit
$\cot(4r_{\mathrm W})/\cot r_{\mathrm W}$, equal to the first
principal curvature.  Thus both principal curvatures are positive and
$\Sigma_{\mathrm{out}}$ is strictly convex.  For a principal
orthonormal frame $(e_m,e_\theta)$, the Gauss equation reads
\[
 K_{\Sigma_{\mathrm{out}}}
 =K_{g_{\mathrm{amb}}}(e_m,e_\theta)
  +k_m^{\mathrm{out}}k_\theta^{\mathrm{out}}>0.
\]
The ambient sectional-curvature term and the product of the two
principal curvatures are both positive.  This proves \textup{(ii)} and
completes the proof.
\end{proof}

\begin{proof}[Proof of Corollary~\ref{cor:angular-attachments}]
Let $U_0$ be supplied by Theorem~\ref{thm:attachment}, and let $U_1$
be the constant in Lemma~\ref{lem:docking-windows}.  Since
$\sin r_{\mathrm W}/r_{\mathrm W}<1$,
\[
 q_*:=\frac{\tan r_{\mathrm W}}{r_{\mathrm W}}
 <\frac1{\cos r_{\mathrm W}}.
\]
Choose constants
\begin{equation}\label{eq:q-Gamma-margin}
 1<q_*<V<\Gamma<\Gamma_-<\frac1{\cos r_{\mathrm W}},
\end{equation}
and then choose
\[
 0<\rho<\min\left\{\frac{r_{\mathrm W}}\pi,\Gamma^{-1}\right\}.
\]
By \eqref{eq:charge-asymp} and $C_\varepsilon\to1$, after decreasing
$U_0\leq U_1$ if necessary, for every fixed $U\in(0,U_0)$ and all
sufficiently small $\varepsilon$,
\begin{equation}\label{eq:uniform-charge-margin}
 \frac{\mathfrak m}{\cos r_{\mathrm W}}\geq\Gamma_-.
\end{equation}
Fix such a $U$, and choose $\varepsilon_0=\varepsilon_0(U,r_{\mathrm W})$
so that Theorem~\ref{thm:attachment},
Lemma~\ref{lem:docking-windows}, and
\eqref{eq:uniform-charge-margin} all hold whenever
$0<\varepsilon<\varepsilon_0$.

Proposition~\ref{prop:high-action-lens} gives the normalized
Ricci-positive packet with boundary metric $D_{\varepsilon,U}^2h$.
Let $\Phi_{\mathrm L}$ be the boundary isometry with
$\Sigma_{\mathrm{lens}}$ supplied by
Theorem~\ref{thm:attachment}\textup{(i)}.
Lemma~\ref{lem:docking-windows} gives
\eqref{eq:angle-window}, with the quotient at $u=0,\ell$ interpreted by
its continuous endpoint limits.  Equation~\eqref{eq:strong-charge} also
gives $2\kappa\ell>\pi\cos r_{\mathrm W}$.  These are exactly the two
hypotheses of Lemma~\ref{lem:seam}, which gives
\[
 S^{\mathrm{pkt}}-S^{\mathrm{hole}}>0.
\]

The normal used to define $S^{\mathrm{hole}}$ points from the removed
ball into $\mathcal P_{\varepsilon,U}$, whereas the outward normal of
$\mathcal P_{\varepsilon,U}$ along this inner boundary points into the
removed ball.  Its outward shape operator is therefore
$-S^{\mathrm{hole}}$.  Since
$\II(X,Y)=g_{\partial}(SX,Y)$, the sum in
\eqref{eq:relative-shape-hypothesis} is
\[
 \II_{\mathrm{pkt}}+\Phi_{\mathrm L}^*\II_{\mathcal P}
 =D_{\varepsilon,U}^2h
 \bigl((S^{\mathrm{pkt}}-S^{\mathrm{hole}})\,\cdot,\cdot\bigr)>0.
\]
The packet is Ricci positive by
Proposition~\ref{prop:high-action-lens}; the pair of pants is
sectionally positive, and hence Ricci positive, by
Theorem~\ref{thm:attachment}.  Thus all hypotheses of
Proposition~\ref{prop:relative-gluing} hold at the packet seam.  This
proves \textup{(i)}.

For the other inner boundary, Lemma~\ref{lem:latitude-form} gives
\[
 g_{\mathrm{ang}}
 =A_{\mathrm{ang}}(\varphi)^2d\varphi^2
  +w_{\varepsilon,U}^2\cos^2\varphi\,d\theta^2,
\]
where the metric extends smoothly across the two poles and
\[
 A_{\mathrm{ang}}(\pm\pi/2)=w_{\varepsilon,U},
 \qquad
 \int_{-\pi/2}^{\pi/2}A_{\mathrm{ang}}\,d\varphi
 =\pi\cos r_{\mathrm W}.
\]
Moreover, \eqref{eq:angular-curvature-lower} and
\eqref{eq:uniform-charge-margin} give
\[
 K_{g_{\mathrm{ang}}}
 \geq\left(\frac{\mathfrak m}{\cos r_{\mathrm W}}\right)^2
 \geq\Gamma_-^2>\Gamma^2.
\]
For the fixed $U$, \eqref{eq:w-asymp} gives
$w_{\varepsilon,U}\to0$ as $\varepsilon\downarrow0$.  Reduce
$\varepsilon_0$ so that
\[
 w_{\varepsilon,U}
 <\min\left\{\rho^2,\frac{\rho^2\Gamma}{V^2}\right\}.
\]
The displayed profile identities, \eqref{eq:q-Gamma-margin}, the choice
of $\rho$, and these curvature and waist bounds verify
\eqref{eq:rho-w}--\eqref{eq:neck-hyp}.  Hence
Proposition~\ref{prop:neck} applies.  At the angular end of the resulting
neck, the induced metric is $g_{\mathrm{ang}}$ and is therefore isometric
to $\Sigma_{\mathrm{neck}}$.  With respect to the outward normal
$+\partial_s$, its shape operator satisfies
\[
 S^{\mathrm{neck}}>q_*\operatorname{Id}.
\]
On the other hand, Theorem~\ref{thm:attachment}\textup{(i)} and
\eqref{eq:angular-circle-bound} give
\[
 S^{\mathrm{hole}}=\operatorname{diag}(1,q_\theta),
 \qquad 0<q_\theta\leq q_*.
\]
Since $q_*>1$,
\[
 S^{\mathrm{hole}}\leq q_*\operatorname{Id},
 \qquad
 S^{\mathrm{neck}}-S^{\mathrm{hole}}>0.
\]
The pair-of-pants outward shape operator at this inner boundary is again
$-S^{\mathrm{hole}}$.  Thus, for the angular-end isometry
$\Phi_{\mathrm N}$,
\[
 \II_{\mathrm{neck}}+\Phi_{\mathrm N}^*\II_{\mathcal P}
 =g_{\mathrm{ang}}
 \bigl((S^{\mathrm{neck}}-S^{\mathrm{hole}})\,\cdot,\cdot\bigr)>0.
\]
Both the neck and the pair of pants are Ricci positive, so
Proposition~\ref{prop:relative-gluing} applies at this seam.

At the other end, Proposition~\ref{prop:neck}\textup{(ii)} and
\eqref{eq:lambda-neck-length} give a round metric of radius
$\rho/\lambda$, outward shape operator
$-\lambda\operatorname{Id}$, and
\[
 \lambda>\frac{\rho}{w_{\varepsilon,U}},
 \qquad \ell_{\mathrm{neck}}<1.
\]
A coordinate curve with $(\varphi,\theta)$ fixed joins the two ends and
has length $\ell_{\mathrm{neck}}$, so their distance is less than $1$.
This proves \textup{(ii)}.
\end{proof}

\section{The quantitative blocks and global construction}

\begin{proposition}
\label{prop:hybrid-block}
There are constants
\[
 Q_0\geq1,\qquad c_*,\rho_*,C_w,c_L,D_{\mathrm{acc}}>0
\]
such that the following holds.  For every $Q\geq Q_0$ there is
$0<w_Q\leq C_w/Q$, and, whenever
$0<\rho<\rho_*$ and $w_Q<c_*\rho^2$, there is a smooth
Ricci-positive annulus
\[
 H_{\mathrm{pre}}(Q,\rho)\cong\mathbb S^2\times[0,1]
\]
with the following properties:
\begin{enumerate}[label=\textup{(H\arabic*)}]
\item its input is round of radius $\rho/\lambda$, with outward shape
operator $-\lambda\operatorname{Id}$, where
\begin{equation}\label{eq:hybrid-input-summary}
 \lambda>\frac{\rho}{w_Q}>\frac1\rho>2;
\end{equation}
\item it contains a compact set $E_Q$, unchanged by all seam smoothings,
such that
\begin{equation}\label{eq:hybrid-action-summary}
 \int_{E_Q}\Scal\,dV\geq c_LQ,\qquad
 \sup_{x\in E_Q}\dist(x,\partial_{\mathrm{in}}H_{\mathrm{pre}})
 \leq D_{\mathrm{acc}}.
\end{equation}
\end{enumerate}
The other boundary admits Ricci-positive outgoing cylindrical extensions of
arbitrarily large radial length, each ending in a round strictly convex
boundary.
\end{proposition}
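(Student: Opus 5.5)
The block is assembled from the pieces of Sections~2 and~3, with the packet parameter $\varepsilon$ tied to $Q$. First fix $r_{\mathrm W}\in(0,\pi/10)$ once and for all. Next fix $U\in(0,U_0)$, with $U_0$ as in Corollary~\ref{cor:angular-attachments} (so also below the thresholds of Proposition~\ref{prop:high-action-lens} and Theorem~\ref{thm:attachment}). Fix the constants $q_*<V<\Gamma$ of \eqref{eq:q-Gamma-margin}. Then set $\rho_*:=\min\{r_{\mathrm W}/\pi,\Gamma^{-1},1/2\}$ and $c_*:=\min\{1,\Gamma/V^2\}$.

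By \eqref{eq:w-asymp}, the map $\varepsilon\mapsto w_{\varepsilon,U}$ satisfies $w_{\varepsilon,U}\asymp_U\varepsilon$. For $Q\ge Q_0$ choose $\varepsilon=\varepsilon_Q:=\varepsilon_0/Q$, say, so that $w_Q:=w_{\varepsilon_Q,U}\le C_w/Q$ with $C_w$ depending only on $U$ and $r_{\mathrm W}$. Here $Q_0$ is taken large enough that $\varepsilon_Q$ lies below every $\varepsilon_0(U,r_{\mathrm W})$ threshold used in Section~2 and Section~3. The ones depending on $\rho$ are exactly the waist conditions $w<\rho^2$ and $w<\rho^2\Gamma/V^2$ in \eqref{eq:rho-w}, and these are guaranteed by the hypothesis $w_Q<c_*\rho^2$.

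Given $Q$ and $\rho$ as in the statement, take the angular pair of pants $\mathcal P_{\varepsilon_Q,U}$ of Theorem~\ref{thm:attachment}. Attach the scaled packet $C_{\varepsilon_Q,U}$ of Proposition~\ref{prop:high-action-lens} along $\Sigma_{\mathrm{lens}}$, and attach the companion neck of Proposition~\ref{prop:neck}, built with this $\rho$, along $\Sigma_{\mathrm{neck}}$. Corollary~\ref{cor:angular-attachments} verifies the strict shape inequalities at both seams. So the relative gluing Proposition~\ref{prop:relative-gluing} applies in arbitrarily thin collars and yields a smooth Ricci-positive manifold.

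Topologically, filling one hole of a ball-minus-two-balls with a ball and capping the other with a neck $[0,\ell]\times\mathbb S^2$ leaves an annulus $\mathbb S^2\times[0,1]$. Its input is the round end of the neck and its other boundary is $\Sigma_{\mathrm{out}}$. Property (H1) is then \eqref{eq:lambda-neck-length}: $\lambda>\rho/w_Q$. Since $w_Q<c_*\rho^2\le\rho^2$, we get $\rho/w_Q>1/\rho$, and $\rho<1/2$ gives $1/\rho>2$.

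For (H2), let $E_Q$ be the protected set $\mathcal E_{\varepsilon_Q,U}$ of Proposition~\ref{prop:high-action-lens}. Because it is disjoint from a thin boundary collar (Lemma~\ref{lem:protected-set}), we can choose the packet-seam smoothing away from it, so it is unchanged. The action bound gives $\int_{E_Q}\Scal\ge c/w_Q\ge (c/C_w)Q=:c_LQ$.

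For the distance bound, a point of $E_Q$ is within $C=C(U,r_{\mathrm W})$ of $\Sigma_{\mathrm{lens}}$ by \eqref{eq:high-action-summary}. Within $\mathcal P$, any point of $\Sigma_{\mathrm{lens}}$ is within a fixed distance of any point of $\Sigma_{\mathrm{neck}}$; since $\mathcal P\subset\mathcal B_{4r_{\mathrm W}}(o_0)$, the diameter is at most $8r_{\mathrm W}\cot r_{\mathrm W}$. The neck has length $\ell_{\mathrm{neck}}<1$. The gluing modifies metrics only in thin collars with $C^0$-controlled distortion, which can be made to change lengths by at most a factor $2$. So $D_{\mathrm{acc}}:=2(C+8r_{\mathrm W}\cot r_{\mathrm W}+1)$ works, and it is independent of $Q$ and $\rho$.

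For the outgoing extensions, I would invoke the path-cylinder tool of Appendix~\ref{app:boundary-tools}. This is Proposition~4.2 in \cite{PriorAngularConstruction}, reproduced there. It attaches to a strictly convex boundary sphere of positive Gauss curvature, which $\Sigma_{\mathrm{out}}$ is by Theorem~\ref{thm:attachment}(ii), a Ricci-positive cylinder that deforms the boundary metric to a round one through positively curved metrics and ends in a round strictly convex sphere. The length is made arbitrarily large by slowing the parametrization, as in Step~2 of Proposition~\ref{prop:neck}. Gluing it on is again Proposition~\ref{prop:relative-gluing}, since convexity at $\Sigma_{\mathrm{out}}$ plus the cylinder's inner shape can be arranged to sum positively.

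I expect the main obstacle to be this outgoing extension: making its radial length arbitrarily large while keeping $\Ric>0$ and ending strictly convex. Strict convexity must be preserved while the metric slowly rounds and the cylinder lengthens, so the slices cannot become totally geodesic. The first thing I would try is a warped-product extension $ds^2+R(s)^2g_s$ with $R$ increasing and concave but very slowly growing, for instance logarithmically. Rescaling then keeps the slices' intrinsic curvature above the squared mean curvature, mirroring the Gauss-equation margin used in \eqref{eq:neck-tangential-sectional}.

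A secondary point to check is uniformity. All constants must depend only on $U$ and $r_{\mathrm W}$, not on $Q$ or $\rho$. This holds because the $\rho$-dependence enters only through the neck, whose length bound $\ell_{\mathrm{neck}}<1$ is uniform.
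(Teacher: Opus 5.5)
Your proposal is correct and follows the paper's proof essentially step for step: the same choices of $U$, $V$, $\Gamma$, $c_*$, $\rho_*$ and $\varepsilon_Q$ comparable to $1/Q$, the same two seam attachments with the $\rho$-dependent hypotheses reduced to the waist conditions, the same derivation of (H1) and of the action bound, and the outgoing extension via Lemma~\ref{lem:path-to-round} and Theorem~\ref{thm:path-cylinder} with $\delta_{\mathrm{in}}=k_{\mathrm{out}}/2$, whose lengths $(e^\ell-1)/a\to\infty$ already settle it, so no new warped-product construction is needed. The only slip is in the access bound: $\mathcal P\subset\mathcal B_{4r_{\mathrm W}}(o_0)$ bounds ambient distances, not distances inside the perforated region, so you should use the paper's explicit route instead --- a meridian of $\Sigma_{\mathrm{lens}}$ to its pole on the axis $t=0$ (length at most $\pi\cos r_{\mathrm W}$), then the axis segment through $o_0$ to $\Sigma_{\mathrm{neck}}$ (length $2r_{\mathrm W}\cot r_{\mathrm W}$) --- which is independent of $Q$ and is still dominated by your constant.
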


\begin{proof}
Fix \(r_{\mathrm W}\) satisfying \eqref{eq:r-range}, and put
\[
 q_*=\frac{\tan r_{\mathrm W}}{r_{\mathrm W}}.
\]
Since \(q_*<1/\cos r_{\mathrm W}\), choose
\[
 q_*<V<\Gamma<\frac1{\cos r_{\mathrm W}}.
\]
Recall that
\[
 \mathfrak m_{\varepsilon,U}
 :=\frac{2\kappa_{\varepsilon,U}\ell_{\varepsilon,U}}{\pi}
 =\frac2\pi\int_0^{\ell_{\varepsilon,U}}k_m\,du
\]
is the pole-to-equator meridional bending normalized by the round value
$\pi/2$; here $k_m\equiv\kappa_{\varepsilon,U}$ on the lens boundary.
For each fixed \(U\), equation \eqref{eq:charge-asymp} and
\(C_\varepsilon\to1\) give
\[
 \liminf_{\varepsilon\downarrow0}
 \mathfrak m_{\varepsilon,U}
 =1-\frac{U^3}{8}+O(U^6).
\]
Choose \(U>0\) below the small-\(U\) thresholds in
Proposition~\ref{prop:high-action-lens},
Theorem~\ref{thm:attachment}, and
Corollary~\ref{cor:angular-attachments}, and so small that
\[
 \liminf_{\varepsilon\downarrow0}
 \frac{\mathfrak m_{\varepsilon,U}}{\cos r_{\mathrm W}}
 >\Gamma.
\]
Set
\[
 c_*=\min\left\{1,\frac{\Gamma}{V^2}\right\},
 \qquad
 \rho_*=\min\left\{\frac{r_{\mathrm W}}{\pi},\Gamma^{-1}\right\}.
\]

For \(Q\geq1\), put \(\varepsilon_Q=Q^{-1}\).  After increasing \(Q_0\),
all the fixed-\(U\) conclusions just cited hold for every \(Q\geq Q_0\),
\(7\varepsilon_Q<U\), and
\begin{equation}\label{eq:uniform-Q-charge}
 \frac{\mathfrak m_{\varepsilon_Q,U}}{\cos r_{\mathrm W}}
 >\Gamma.
\end{equation}
Let
\[
 (C_Q,g_Q)=(C_{\varepsilon_Q,U},g_{\varepsilon_Q,U}),\qquad
 E_Q=\mathcal E_{\varepsilon_Q,U},\qquad
 w_Q=w_{\varepsilon_Q,U}.
\]
The waist asymptotic \eqref{eq:w-asymp} and
Proposition~\ref{prop:high-action-lens} give
\(C_w,c_L,D_{\mathrm{dep}}>0\).  These constants are independent of
\(Q\) and satisfy
\begin{equation}\label{eq:packet-family-bounds}
 w_Q\leq\frac{C_w}{Q},\qquad
 \int_{E_Q}\Scal_{g_Q}\,dV_{g_Q}\geq c_LQ,\qquad
 \sup_{x\in C_Q}\dist_{g_Q}(x,\partial C_Q)
 \leq D_{\mathrm{dep}}.
\end{equation}
The same proposition permits the packet gluing collar to be chosen
disjoint from \(E_Q\).

Theorem~\ref{thm:attachment} supplies the sectionally positive angular
pair of pants
\begin{equation}\label{eq:Omega}
 \Omega_Q=\mathcal B_{4r_{\mathrm W}}(o_0)\setminus
 \left(\operatorname{int}\mathcal B_{r_{\mathrm W}}(o_{2r_{\mathrm W}})
       \sqcup
       \operatorname{int}\mathcal B_{r_{\mathrm W}}(o_{-2r_{\mathrm W}})
 \right).
\end{equation}
By Corollary~\ref{cor:angular-attachments}\textup{(i)}, the packet
boundary is isometric to one inner boundary of \(\Omega_Q\), and the sum
of the two outward second fundamental forms is positive definite.
Proposition~\ref{prop:relative-gluing} therefore attaches \(C_Q\) there;
choose its smoothing collar disjoint from \(E_Q\).

Now let \(0<\rho<\rho_*\) and suppose that \(w_Q<c_*\rho^2\).
Write \(h_Q\) for the metric on the remaining inner boundary.
Lemma~\ref{lem:latitude-form} puts \(h_Q\) in the latitude form required
by Proposition~\ref{prop:neck}.  Equation
\eqref{eq:angular-curvature-lower} and
\eqref{eq:uniform-Q-charge} give
\[
 K_{h_Q}>\Gamma^2.
\]
Moreover, the definitions of \(\rho_*\) and \(c_*\) give
\[
 \rho<\min\left\{\frac{r_{\mathrm W}}{\pi},\Gamma^{-1}\right\},
 \qquad
 w_Q<\rho^2,\qquad
 w_Q<\frac{\rho^2\Gamma}{V^2}.
\]
Thus all the hypotheses of Proposition~\ref{prop:neck} hold.  The
resulting companion neck has angular-end shape operator
\[
 S^{\mathrm{neck}}>q_*\operatorname{Id}.
\]
Theorem~\ref{thm:attachment}\textup{(i)} gives
\(0<S^{\mathrm{hole}}\leq q_*\operatorname{Id}\), so
\[
 S^{\mathrm{neck}}-S^{\mathrm{hole}}>0.
\]
Proposition~\ref{prop:relative-gluing} therefore attaches the neck to the
remaining inner boundary.  Its exposed end is round of radius
\(\rho/\lambda\), has outward shape operator
\(-\lambda\operatorname{Id}\), and is at distance less than \(1\) from
the angular end, where \(\lambda>\rho/w_Q\).  Since
\(w_Q<c_*\rho^2\leq\rho^2\) and
\(\rho<r_{\mathrm W}/\pi<1/2\), this proves
\[
 \lambda>\frac{\rho}{w_Q}>\frac1\rho>2.
\]

After these two attachments, the resulting manifold
\(H_{\mathrm{pre}}(Q,\rho)\) is diffeomorphic to
\(\mathbb S^2\times[0,1]\).  Its other boundary is the unchanged outer
boundary of the angular pair of pants.  The metric on \(E_Q\) is
unchanged, so its action satisfies \eqref{eq:hybrid-action-summary} by
\eqref{eq:packet-family-bounds}.

It remains to bound the distance from the round input to \(E_Q\).  The
neck contributes less than \(1\).  In the pair of pants, the nearest
points of the two inner boundaries can be joined through \(o_0\) by a
path of length \(2r_{\mathrm W}\cot r_{\mathrm W}\).

\noindent Any point of the
packet boundary can be joined to the endpoint of this central path by
one of the two pole routes; the shorter route has length at most
\(\pi\cos r_{\mathrm W}\).  Finally, the packet depth costs at most
\(D_{\mathrm{dep}}\).  The relative smoothing collars may be chosen so
thin, and their metrics so \(C^0\)-close to the unsmoothed metrics, that
they add less than \(2\) to the concatenated route.  Hence
\begin{equation}\label{eq:access-explicit}
 D_{\mathrm{acc}}
 =3+2r_{\mathrm W}\cot r_{\mathrm W}
   +\pi\cos r_{\mathrm W}+D_{\mathrm{dep}}
\end{equation}
satisfies the distance estimate in
\eqref{eq:hybrid-action-summary}.  The packet collar was chosen disjoint
from \(E_Q\), while the neck seam lies on the other inner boundary and
the cylinder seam below lies on the outer boundary.  Hence all three
smoothing seams are disjoint from \(E_Q\), so its metric and scalar action
remain unchanged.

Finally, Theorem~\ref{thm:attachment}\textup{(ii)} says that the remaining
boundary has positive Gaussian curvature and a positive least outward
principal curvature \(k_{\mathrm{out}}\).
Lemma~\ref{lem:path-to-round} joins its induced metric to a round metric
through metrics of positive Gaussian curvature.  Apply
Theorem~\ref{thm:path-cylinder} with
\(\delta_{\mathrm{in}}=k_{\mathrm{out}}/2\).  At the initial seam, the
sum of the two outward shape operators is bounded below by
\[
 \frac{k_{\mathrm{out}}}{2}\operatorname{Id}>0,
\]
so Proposition~\ref{prop:relative-gluing} attaches the cylinder.  Since
the radial lengths in Theorem~\ref{thm:path-cylinder} tend to infinity,
the cylinder may be chosen, after reserving its gluing collar, to have
radial length at least any prescribed \(L_0\).  Its far boundary is
round and strictly convex.  This proves the proposition.
\end{proof}

We next record a packet estimate used both in the global induction and in
the proof of Proposition~\ref{prop:constructed-collapse}.

\begin{lemma}
\label{lem:packet-collapse}
Retain the fixed $U$ and the normalized packets
\[
 (C_Q,g_Q)=(C_{Q^{-1},U},g_{Q^{-1},U})
\]
chosen in the proof of Proposition~\ref{prop:hybrid-block}.  There are constants
$V_0,\delta>0$, independent of $Q$, such that, for every sufficiently
large $Q$, there exists $x_Q\in C_Q$ satisfying
\begin{equation}\label{eq:packet-volume-inradius}
 \operatorname{Vol}(C_Q,g_Q)\leq\frac{V_0}{Q},
 \qquad
 \dist_{g_Q}(x_Q,\partial C_Q)\geq\delta.
\end{equation}
\end{lemma}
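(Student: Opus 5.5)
The plan is to treat the two assertions in \eqref{eq:packet-volume-inradius} separately. For each, I would first work in the unscaled metric $g_\varepsilon$ on $\Omega_{\varepsilon,U}$ with $\varepsilon=Q^{-1}$, and then multiply by the appropriate power of $D_{\varepsilon,U}=\pi\cos r_{\mathrm W}/(2\ell)$. By \eqref{eq:ell-asymp}, $\ell\asymp_U\varepsilon$, so $D_{\varepsilon,U}\asymp_U\varepsilon^{-1}=Q$ for the fixed $U$.

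\emph{Volume.} Since $dV=af\,dr\,d\theta\,d\phi$ and the $\phi$-range at radius $r$ has length $2\Psi(r)\le2\Delta_{\varepsilon,U}$,
\[
 \operatorname{Vol}(\Omega_{\varepsilon,U},g_\varepsilon)\le4\pi\Delta_{\varepsilon,U}\int_0^{r_U}af\,dr .
\]
On the cap and smoothing collar, $a\le r_{\mathrm{pre}}$ and $f\le2$. By \eqref{eq:cap-transition-negligible} this part contributes $O_U(e^{-c/\varepsilon^3})$. On the unchanged core I would use \eqref{eq:af} and \eqref{eq:dr} to write
\[
 \int af\,dr=8c_\varepsilon^2\varepsilon^4\int_{z_{\mathrm{pre}}}^UA_\varepsilon(z)^2z^{-4}\,dz=O_U(\varepsilon^4),
\]
using that $A_\varepsilon$ is increasing with $A_\varepsilon(U)\asymp_U1$. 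Lemma~\ref{lem:action} gives $\Delta_{\varepsilon,U}=O_U(1)$. Hence the unscaled volume is $O_U(\varepsilon^4)$. Scaling by $D_{\varepsilon,U}^3=O_U(\varepsilon^{-3})$ then gives $\operatorname{Vol}(C_Q,g_Q)\le V_0\varepsilon=V_0/Q$.

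\emph{Inradius.} I would pick $x_Q$ on $\phi=0$ inside the endpoint layer of Lemma~\ref{lem:uniform-endpoint}, at the point $z_1=U(1-\sigma)$, i.e.\ $\xi=1$ in \eqref{eq:endpoint-variable}. Let $z_{1/2}=U(1-\sigma/2)$ be the point $\xi=1/2$. Since $g_\varepsilon\ge dr^2+f^2d\phi^2$, any curve $\gamma$ from $x_Q$ to $\partial\Omega_{\varepsilon,U}$ falls into one of two cases. In the first case, $\gamma$ reaches the region $z\ge z_{1/2}$, and its length is at least $r(z_{1/2})-r(z_1)$. In the second case, $\gamma$ stays in $z\le z_{1/2}$ and hits the graph $|\phi|=\Psi(r)$ there. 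Since $\Psi$ decreases in $r$, it must then change $|\phi|$ by at least $\Psi(z_{1/2})$. Also $f=2\varepsilon/z\ge2\varepsilon/U$ there, so the length is at least $(2\varepsilon/U)\Psi(z_{1/2})$.

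\emph{Layer estimates.} It remains to show both lower bounds are $\gtrsim_U\varepsilon$, so that after multiplying by $D_{\varepsilon,U}\asymp_U\varepsilon^{-1}$ both are at least some $\delta>0$. For the radial term, I would integrate \eqref{eq:dr} over the layer using \eqref{eq:A-layer}:
\[
 r(z_{1/2})-r(z_1)=\frac{4c_\varepsilon\varepsilon UA_\varepsilon(U)}{8C_\varepsilon}\bigl(e^{-1/2}-e^{-1}+O(\sigma)\bigr),
\]
which is $\asymp_U\varepsilon$, consistent with \eqref{eq:rU-asymp}. For the angular term, I would use \eqref{eq:Psi-z} with \eqref{eq:A-layer}--\eqref{eq:y-layer}:
\[
 \Psi(z_{1/2})\ge c\,U^2A_\varepsilon(U)\int_0^{1/2}\frac{e^{-2\xi}}{\sqrt{1-e^{-2\xi}}}\,d\xi\,\bigl(1-O(\sigma)\bigr)\gtrsim_U1 .
\]
The main obstacle is making these layer expansions uniform for the fixed $U$ as $\varepsilon\to0$, including control of the $(1-y^2)^{-1/2}$ singularity and keeping the cap and collar away from the chosen points. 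The majorant \eqref{eq:endpoint-majorant} together with \eqref{eq:endpoint-cap-error} should handle this.
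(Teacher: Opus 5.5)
Your proof is correct, but both halves take a different route from the paper. For the volume, the paper argues geometrically. Corollary~\ref{cor:angular-attachments}(i) and Theorem~\ref{thm:attachment}(i) give $S^{\mathrm{pkt}}>S^{\mathrm{hole}}>0$, so $\partial C_Q$ is strictly convex. The Riccati inequality under $\Ric>0$ then gives normal Jacobian $J\le1$ before the cut time, hence $\operatorname{Vol}(C_Q)\le D_{\mathrm{dep}}\operatorname{Area}(\partial C_Q)$. Finally, the latitude form of Lemma~\ref{lem:latitude-form} bounds the area by $2\pi^2\cos r_{\mathrm W}\,w_Q$. You instead integrate $4\pi\Psi af$ directly in the doubly warped coordinates. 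That is more elementary and avoids the Section~3 seam comparison. The paper's bound is more structural: it uses only convexity, the depth bound and the waist, not the fine form of the profiles.

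For the inradius, the paper takes $x_Q$ at $z=U/2$. It uses only the local uniform convergence of $\Psi$ from Lemma~\ref{lem:action} to fit a fixed rectangle $[U/3,2U/3]\times[-\eta,\eta]$, on which the scaled quotient metric dominates $c_0(dz^2+d\phi^2)$. The constant $c_0$ degenerates rapidly as $U\downarrow0$, which is harmless since $U$ is fixed. Your point in the endpoint layer, with the radial/angular split via monotonicity of $f$ and $\Psi$, needs layer estimates. Crude ones suffice: $A_\varepsilon(z_{1/2})\ge e^{-1}A_\varepsilon(U)$, $y(z_{1/2})\ge e^{-1}$ and $(1-y^2)^{-1/2}\ge1$. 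After multiplication by $D_{\varepsilon,U}$ they give the explicit bounds $\cos r_{\mathrm W}(e^{-1/2}-e^{-1})$ and $\cos r_{\mathrm W}\sqrt{1-e^{-1}}$, up to factors $1+O(U^3)$. So your $\delta$ is even uniform in $U$.

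One step needs a better reason. Monotonicity of $A_\varepsilon$ and $A_\varepsilon(U)\asymp_U1$ do not by themselves bound $\int_{z_{\mathrm{pre}}}^UA_\varepsilon^2z^{-4}\,dz$, because $z_{\mathrm{pre}}\approx2\varepsilon$ and $\int_{2\varepsilon}^Uz^{-4}\,dz\sim\varepsilon^{-3}$ would destroy the estimate. Use instead that $A_\varepsilon(z)^2z^{-4}=z^{-11/3}\exp\{\tfrac{16C_\varepsilon}{3}(1-z^{-3})\}$ is bounded on $(0,U]$. Alternatively, use $A_\varepsilon^2z^{-4}\le A_\varepsilon(U)A_\varepsilon z^{-4}$ together with $H_\varepsilon(U)=O(A_\varepsilon(U))$ from \eqref{eq:H-asymptotic}.
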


\begin{proof}
Corollary~\ref{cor:angular-attachments}\textup{(i)} gives
$S^{\mathrm{pkt}}>S^{\mathrm{hole}}$, while
Theorem~\ref{thm:attachment}\textup{(i)} gives
$S^{\mathrm{hole}}>0$.  Hence the packet boundary is strictly outward
convex.  Every packet point has boundary distance at most
$D_{\mathrm{dep}}$ by \eqref{eq:packet-family-bounds}.  The inward normal
exponential map covers $C_Q$ up
to its cut locus.  If $J(t)$ is its normal Jacobian, then, before the cut
time,
\begin{align*}
 J(0)&=1,& (\log J)'(0)&=-H_{\partial C_Q}<0,\\
 (\log J)''&=-|S|^2-\Ric(\partial_t,\partial_t)\leq0.
\end{align*}
Here $S(t)$ is the shape operator of the parallel hypersurface reached by
the inward normal flow.

\noindent
Thus $J(t)\leq1$ up to the cut time.  Every such cut time is at most
$D_{\mathrm{dep}}$, because it is the length of a minimizing segment from
an interior point to the boundary.  The normal exponential map covers the
packet up to a set of measure zero, so the area formula gives
\[
 \operatorname{Vol}(C_Q,g_Q)
 \leq D_{\mathrm{dep}}\operatorname{Area}(\partial C_Q,h_Q).
\]
By Lemma~\ref{lem:latitude-form}, write the boundary metric as
\[
 h_Q=\alpha_Q(\varphi)^2d\varphi^2
      +w_Q^2\cos^2\varphi\,d\theta^2,
 \qquad
 \int_{-\pi/2}^{\pi/2}\alpha_Q(\varphi)\,d\varphi
 =\pi\cos r_{\mathrm W}.
\]
Using \eqref{eq:packet-family-bounds},
\begin{align*}
 \operatorname{Area}(\partial C_Q,h_Q)
 &=2\pi w_Q\int_{-\pi/2}^{\pi/2}
       \alpha_Q(\varphi)\cos\varphi\,d\varphi\\
 &\leq2\pi^2\cos r_{\mathrm W}\,w_Q
 \leq\frac{2\pi^2\cos r_{\mathrm W}\,C_w}{Q}.
\end{align*}
Thus the volume estimate holds with
$V_0:=2\pi^2D_{\mathrm{dep}}C_w\cos r_{\mathrm W}$.

For the inradius estimate, retain $\varepsilon_Q=Q^{-1}$ and take $x_Q$ at
the core coordinates $z=U/2$ and $\phi=0$.  Formula
\eqref{eq:Psi-z} and the local uniform limit established in the proof of
Lemma~\ref{lem:action} give a number $\eta>0$ such
that, for every large $Q$,
\[
 [U/3,2U/3]\times[-\eta,\eta]
 \subset\{(z,\phi):|\phi|<\Psi(z)\}.
\]
On this rectangle the orbit-space part of the scaled packet metric is
\begin{equation}\label{eq:packet-quotient-metric}
 D_{\varepsilon_Q,U}^2\left[
  \bigl(4c_{\varepsilon_Q}\varepsilon_Q
        A_{\varepsilon_Q}(z)z^{-3}\bigr)^2dz^2
  +\left(\frac{2\varepsilon_Q}{z}\right)^2d\phi^2\right].
\end{equation}
Equations \eqref{eq:ell-asymp} and \eqref{eq:D} show that
$D_{\varepsilon_Q,U}\varepsilon_Q$ is bounded above and below by positive
constants depending only on the fixed $U$ and $r_{\mathrm W}$.
Since $A_{\varepsilon_Q}$ is uniformly positive on $[U/3,2U/3]$, the metric in
\eqref{eq:packet-quotient-metric} is bounded below by
$c_0(dz^2+d\phi^2)$ for some $c_0=c_0(U,r_{\mathrm W})>0$.
Projection to the orbit space does not increase length, and every path from
$x_Q$ to the packet boundary, which consists of the two graph faces, must
leave the displayed rectangle.  Its length is therefore at least
$\sqrt{c_0}\min\{U/6,\eta\}$.  Taking this fixed positive number as
$\delta$ gives the second estimate in
\eqref{eq:packet-volume-inradius}.
\end{proof}

\begin{proof}[Proof of Theorem~\ref{thm:main}]
Fix constants
\[
 Q_0,c_*,\rho_*,C_w,c_L,D_{\mathrm{acc}}
\]
and the associated data $w_Q$, $H_{\mathrm{pre}}(Q,\rho)$, and $E_Q$
furnished by Proposition~\ref{prop:hybrid-block}.
After increasing $Q_0$ if necessary, fix the constants $V_0,\delta$ from
Lemma~\ref{lem:packet-collapse} for every $Q\geq Q_0$.
We start with the round cap
\[
 X_0=\left([0,\pi/4]\times\Sph^2,
 du^2+\sin^2u\,g_{\Sph^2}\right),
\]
smoothly collapsed at $u=0$, and take the collapsed point as the
basepoint $p$.  Its boundary $O_0$ is round of radius
$R_0=\sin(\pi/4)=2^{-1/2}$ and has outward principal curvature
$k_0=\cot(\pi/4)=1$.

Suppose inductively that a compact Ricci-positive prefix $X_{j-1}$ has
been constructed, with a single strictly convex round boundary
$O_{j-1}$ of radius $R_{j-1}>0$ and least outward principal curvature
$k_{j-1}>0$.  Set
$D_{j-1}:=\max_{x\in O_{j-1}}d_{X_{j-1}}(p,x)<\infty$.

The parameters at stage $j$ are chosen in the following order.  First choose
\begin{equation}\label{eq:rho-choice}
 0<\rho_j<\min\left\{\rho_*,
                    \frac{k_{j-1}R_{j-1}}2\right\}.
\end{equation}

Next, using $w_Q\leq C_w/Q$, choose $Q_j\geq Q_0$ so large that
\begin{equation}\label{eq:Q-choice}
 w_{Q_j}<c_*\rho_j^2,
 \qquad
 c_LQ_j\frac{2R_{j-1}/\rho_j}
 {D_{j-1}+1+2D_{\mathrm{acc}}R_{j-1}/\rho_j}\geq j.
\end{equation}
All quantities on the right side of \eqref{eq:Q-choice} are fixed before
$Q_j$ is selected.

Apply Proposition~\ref{prop:hybrid-block} with
$(Q,\rho)=(Q_j,\rho_j)$, and let $\lambda_j$ be the number in
\eqref{eq:hybrid-input-summary}.  In constructing this block, choose the
packet-seam smoothing support inside the original packet's
$\delta/4$ boundary-distance collar.  This is possible because the
relative gluing construction allows arbitrarily thin collars.  Scale the
metric of $H_{\mathrm{pre}}(Q_j,\rho_j)$ by $c_j^2$, where
$c_j:=R_{j-1}\lambda_j/\rho_j$.  The input metric and shape then satisfy
\[
 c_j\frac{\rho_j}{\lambda_j}=R_{j-1},\qquad
 -\frac{\lambda_j}{c_j}=-\frac{\rho_j}{R_{j-1}},\qquad
 k_{j-1}-\frac{\rho_j}{R_{j-1}}
 >\frac{k_{j-1}}2>0.
\]
Here the second identity uses the fact that principal curvatures are divided
by $c_j$ under the homothety, and the last inequality follows from
\eqref{eq:rho-choice}.  Relative Ricci-positive gluing joins the
scaled block to $X_{j-1}$.  The collar used at this round input is chosen
disjoint from the compact set $E_j$ defined below.

Let $E_j$ be the image of $E_{Q_j}$ in the scaled stage-$j$ block.  It has
scalar action
\begin{equation}\label{eq:scaled-action}
 \int_{E_j}\Scal\,dV
 =c_j\int_{E_{Q_j}}\Scal_{g_{Q_j}}\,dV_{g_{Q_j}}
 \geq c_Lc_jQ_j.
\end{equation}
The equality uses the three-dimensional homothety law
$\Scal_{c^2g}\,dV_{c^2g}=c\,\Scal_g\,dV_g$.

For each $x\in E_j$, choose an input point realizing the access estimate in
\eqref{eq:hybrid-action-summary}, join that point to $p$ by an old-prefix
path of length at most $D_{j-1}$, and concatenate the two paths.  Choose the
input gluing collar sufficiently thin and its Hermite smoothing sufficiently
$C^0$-close that all the resulting distance bounds increase by less than
$1$.  Their final lengths are therefore strictly smaller than
$D_{j-1}+1+D_{\mathrm{acc}}c_j$.  The metric and scalar action on $E_j$
are unchanged at this seam.  The outgoing-cylinder seam is at the other
boundary of the block and can likewise be chosen disjoint from $E_j$ and
these access paths.  Define
$R_j^{\mathrm{act}}:=D_{j-1}+1+D_{\mathrm{acc}}c_j$.
The radius inequality $\lambda_j>2$ from
\eqref{eq:hybrid-input-summary} implies
\begin{equation}\label{eq:cj-lower}
 c_j>\frac{2R_{j-1}}{\rho_j}.
\end{equation}
Since $c\mapsto c/(D_{j-1}+1+D_{\mathrm{acc}}c)$ is increasing on
$(0,\infty)$,
\eqref{eq:cj-lower} and \eqref{eq:Q-choice} yield
\begin{equation}\label{eq:action-radius-stage}
 \begin{aligned}
 \frac{c_Lc_jQ_j}{R_j^{\mathrm{act}}}
 &=\frac{c_Lc_jQ_j}{D_{j-1}+1+D_{\mathrm{acc}}c_j}\\
 &\geq c_LQ_j\frac{2R_{j-1}/\rho_j}
 {D_{j-1}+1+2D_{\mathrm{acc}}R_{j-1}/\rho_j}
 \geq j.
 \end{aligned}
\end{equation}

Only after \eqref{eq:action-radius-stage}, choose the outgoing cylinder
supplied by Proposition~\ref{prop:hybrid-block} with radial length so large
that its $c_j^2$-homothetic copy, after reserving smoothing collars at both
ends, contains an untouched closed radial slab
\begin{equation}\label{eq:protected-radial-slab}
 \mathcal Z_j=[\tau_j^-,\tau_j^+]\times\Sph^2,
 \qquad \tau_j^+-\tau_j^-\geq1,
\end{equation}
and attach this homothetic copy to the scaled outer boundary.  At its initial
seam, the positive sum of the two outward shape operators is the corresponding
unscaled sum divided by $c_j$, so the relative gluing inequality remains
strict.  Here $s$ is
arclength in the radial direction, so the cylinder metric is
$ds^2+h_j(s)$ on the slab.  The
projection to $s$ is $1$-Lipschitz; hence the two boundary components of
$\mathcal Z_j$ have mutual distance at least $1$.  Reserve collars at the
two ends of the cylinder that are disjoint from $\mathcal Z_j$.  Let
$X_j$ be the resulting
compact prefix, and denote its terminal round radius and least outward
principal curvature by $R_j>0$ and $k_j>0$; their positivity follows from
the final conclusion of Proposition~\ref{prop:hybrid-block}.  The possibly
very large length of this cylinder is now included in the finite number
$D_j$ at the next stage.  Nothing in \eqref{eq:action-radius-stage}
required an upper bound for that downstream length.  The set $E_j$, its
chosen access paths, and the slab $\mathcal Z_j$ all lie outside the
reserved terminal collar and are never changed at later stages.

This completes the induction.

Choose the underlying inclusions so that each $X_j$ embeds in
$\operatorname{int}X_{j+1}$, and let
$M=\bigcup_{j=0}^{\infty}\operatorname{int}X_j$ as an increasing union.
The metric on the
reserved terminal collar of $X_j$ is superseded once, when stage $j+1$ is
attached; away from that collar it is never changed again.  At every stage the relative
smoothing is supported in prescribed collars.  The internal lens and
angular-neck seams are smoothed inside $H_{\mathrm{pre}}$, and the outgoing
path-cylinder seam is smoothed at the end of the corresponding stage.
Attaching stage $j+1$ changes stage $j$
only in its reserved terminal collar and is disjoint from $E_j$ and
$\mathcal Z_j$.  The exhaustion is strict,
$X_j\Subset\operatorname{int}X_{j+1}$ after adjoining a short piece of
the next collar.  Each point is contained in a finite prefix and can meet
at most one later smoothing support.  Thus the metrics stabilize on a
neighborhood of every point and define a unique smooth metric $g$ on $M$,
with $\Ric_g>0$.

The paths and protected sets retained during the induction now give
\begin{equation}\label{eq:ball-containment}
 E_j\subset B_g(p,R_j^{\mathrm{act}}).
\end{equation}
The scalar action in \eqref{eq:scaled-action} is unchanged, and
$\Scal_g>0$ outside $E_j$ as well.  Hence
\begin{equation}\label{eq:final-stage-action}
 \frac1{R_j^{\mathrm{act}}}
 \int_{B_g(p,R_j^{\mathrm{act}})}\Scal_g\,dV_g
 \geq\frac{c_Lc_jQ_j}{R_j^{\mathrm{act}}}\geq j
\end{equation}
by \eqref{eq:action-radius-stage}.

The slabs $\mathcal Z_j$ are pairwise disjoint and ordered along the
product end.  Each $\mathcal Z_j$ separates the initial cap from every
later block.
Any curve escaping all compact prefixes must cross $\mathcal Z_j$ from
one boundary component to the other for every sufficiently large $j$.
By \eqref{eq:protected-radial-slab}, the radial projection is
$1$-Lipschitz and each such crossing has length at least $1$.  The disjoint
crossings therefore give infinite total length.  By the divergent-path
criterion, equivalently Hopf--Rinow, $(M,g)$ is complete.  The same
separation argument applies to any path from $p$ to $E_j$: it must cross
$\mathcal Z_1,\ldots,\mathcal Z_{j-1}$, and therefore
$d(p,E_j)\geq j-1$.  Together with \eqref{eq:ball-containment}, this gives
\begin{equation}\label{eq:radii-infinity}
 R_j^{\mathrm{act}}\longrightarrow\infty.
\end{equation}

The cap $X_0$ is a three-ball and every hybrid block is
$\Sph^2\times[0,1]$.  Choose a product identification of each entire
stage annulus, compatible with its two boundary identifications.  It extends
a diffeomorphism from $X_{j-1}$ onto a round ball to one from $X_j$ onto a
larger round ball.  These extensions may be chosen compatible on smaller
prefixes; their union is a diffeomorphism $M\to\mathbb R^3$.  In
particular, $M$ has one end.

Combining \eqref{eq:radii-infinity} with
\eqref{eq:final-stage-action} proves the assertion for $p$.

It remains to verify the conclusion for every basepoint.  If $p_1\in M$
and $d=d_g(p,p_1)$, then
$B_g(p,R)\subset B_g(p_1,R+d)$.
Using $R=R_j^{\mathrm{act}}$ and \eqref{eq:final-stage-action},
\[
 \frac1{R_j^{\mathrm{act}}+d}
 \int_{B_g(p_1,R_j^{\mathrm{act}}+d)}\Scal_g\,dV_g
 \geq
 j\frac{R_j^{\mathrm{act}}}{R_j^{\mathrm{act}}+d}
 \longrightarrow+\infty.
\]
Thus \eqref{eq:limsup} holds for every basepoint.
\end{proof}

\begin{proof}[Proof of Proposition~\ref{prop:constructed-collapse}]
First note that the induction forces $Q_j\to\infty$.

\noindent Indeed,
equation \eqref{eq:Q-choice} and
\[
 \frac{2R_{j-1}/\rho_j}
 {D_{j-1}+1+2D_{\mathrm{acc}}R_{j-1}/\rho_j}
 <\frac1{D_{\mathrm{acc}}}
\]
give
\begin{equation}\label{eq:Q-divergence}
 Q_j>\frac{D_{\mathrm{acc}}}{c_L}j.
\end{equation}
For every sufficiently large $j$, let $x_j$ denote the image, in the
$c_j^2$-scaled packet, of the point supplied by
Lemma~\ref{lem:packet-collapse} in $C_{Q_j}$.  By the collar choice made
during the stage construction, the corresponding unscaled point has
boundary distance at least $\delta$.  Hence its distance from the chosen
$\delta/4$ packet-boundary collar is at least $3\delta/4$.

\noindent After scaling,
every path from $x_j$ to that collar has length at least
$3c_j\delta/4$ before it reaches a region where the packet metric may have
changed.  Every path that exits the packet must first enter this boundary
collar.  Consequently, if $s_j:=\delta c_j/4$, then the global ball
$B_g(x_j,s_j)$ is contained in the untouched part of the scaled packet, and
\[
 \operatorname{Vol}_g B_g(x_j,s_j)\leq\frac{V_0c_j^3}{Q_j}.
\]
Later stages do not alter this packet.  Hence
\begin{equation}\label{eq:stage-volume-ratio}
 \frac{\operatorname{Vol}_g B_g(x_j,s_j)}{s_j^3}
 \leq\frac{64V_0}{\delta^3Q_j}.
\end{equation}
Equations \eqref{eq:stage-volume-ratio} and \eqref{eq:Q-divergence}
prove \eqref{eq:collapsing-balls-intro} with
$C_{\mathrm{col}}=64V_0/\delta^3$.  Finally, Bishop--Gromov
monotonicity at $x_j$ gives
\[
 \operatorname{AVR}(M,g)
 \leq\frac{\operatorname{Vol}_g B_g(x_j,s_j)}{(4\pi/3)s_j^3}
 \leq\frac{48V_0}{\pi\delta^3Q_j}.
\]
The asymptotic volume ratio is independent of the center.  Letting
$j\to\infty$ proves $\operatorname{AVR}(M,g)=0$.
\end{proof}

\appendix

\section{Relative Ricci-positive boundary tools}
\label{app:boundary-tools}

The Gaussian formulas and path-cylinder construction are recorded in
\cite[Lemmas~3.1--3.2 and Theorem~3.3]{PriorAngularConstruction}.  The
proof of Proposition~\ref{prop:relative-gluing} below gives the localized
form of Perelman's interpolation argument~\cite[Section~4]{PerelmanGluing}
used in this paper.

\begin{lemma}\label{lem:gaussian-ricci}
Let
\begin{equation}\label{eq:gaussian-metric}
 g=ds^2+h(s)
\end{equation}
be a metric on a product collar, and put $L=\tfrac12\dot h$.  Thus $L$ is
the second fundamental form of an $s$-slice with respect to $\partial_s$.
For vectors $X,Y$ tangent to an $s$-slice,
\begin{align}
 \Ric_g(\partial_s,\partial_s)
 &=-\frac12\operatorname{tr}_h\ddot h
   +\frac14\operatorname{tr}
      \bigl((h^{-1}\dot h)^2\bigr),                  \label{eq:gauss-normal}\\
 \Ric_g(\partial_s,X)
 &=(\operatorname{div}_hL)(X)-d(\operatorname{tr}_hL)(X),
                                                        \label{eq:gauss-mixed}\\
 \Ric_g(X,Y)
 &=\Ric_h(X,Y)-\frac12\ddot h(X,Y)
   +\frac12(\dot h\,h^{-1}\dot h)(X,Y)\notag\\
 &\hspace{28mm}
   -\frac14\operatorname{tr}_h(\dot h)\dot h(X,Y).
                                                        \label{eq:gauss-tangent}
\end{align}
\end{lemma}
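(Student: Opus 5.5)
The plan is a direct computation in a local frame adapted to the product structure. In a collar where $g=ds^2+h(s)$, the vector field $\partial_s$ is a unit geodesic field, so $\nabla_{\partial_s}\partial_s=0$. The $s$-slices form a family of parallel hypersurfaces with unit normal $\partial_s$. Their second fundamental form is $L=\tfrac12\dot h$, and the shape operator is $S=h^{-1}L=\tfrac12 h^{-1}\dot h$.

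The tangential formula \eqref{eq:gauss-tangent} will come from the Gauss equation combined with the Riccati (radial curvature) equation. First I would use the Riccati equation $\dot S+S^2=-R(\cdot,\partial_s)\partial_s$. Since $h(SX,Y)=L(X,Y)$, differentiating in $s$ expresses $\langle R(X,\partial_s)\partial_s,Y\rangle$ as
\[
-\tfrac12\ddot h(X,Y)+\tfrac14(\dot h h^{-1}\dot h)(X,Y).
\]
Next, the traced Gauss equation gives
\[
\Ric_g(X,Y)-\langle R(X,\partial_s)\partial_s,Y\rangle=\Ric_h(X,Y)-(\operatorname{tr}S)L(X,Y)+L(SX,Y).
\]
Here $\operatorname{tr}S=\tfrac12\operatorname{tr}_h\dot h$ and $L(SX,Y)=\tfrac14(\dot h h^{-1}\dot h)(X,Y)$. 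Adding the two identities, the two quarter terms combine into $\tfrac12\dot h h^{-1}\dot h$, and the trace term gives $-\tfrac14\operatorname{tr}_h(\dot h)\dot h$. This is \eqref{eq:gauss-tangent}.

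For the normal formula \eqref{eq:gauss-normal}, I would take the trace of the Riccati equation: $\Ric(\partial_s,\partial_s)=-\operatorname{tr}\dot S-\operatorname{tr}S^2$. Writing $\dot S=\tfrac12(-h^{-1}\dot h h^{-1}\dot h+h^{-1}\ddot h)$ gives
\[
\operatorname{tr}\dot S=-\tfrac12\operatorname{tr}\bigl((h^{-1}\dot h)^2\bigr)+\tfrac12\operatorname{tr}_h\ddot h,
\]
and $\operatorname{tr}S^2=\tfrac14\operatorname{tr}\bigl((h^{-1}\dot h)^2\bigr)$. Combining these yields \eqref{eq:gauss-normal}.

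For the mixed term \eqref{eq:gauss-mixed}, I would use the contracted Codazzi equation on each slice. Summing the Codazzi equation over an $h$-orthonormal frame $e_i$, with $\nu=\partial_s$, gives
\[
\sum_i\langle R(e_i,X)\nu,e_i\rangle=(\operatorname{div}_hL)(X)-d(\operatorname{tr}_hL)(X),
\]
and the left side is $\Ric(X,\partial_s)$ up to the curvature sign convention.

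The only real obstacle is sign bookkeeping. The convention for $R$, the orientation of the normal, and $L=+\tfrac12\dot h$ versus its negative must all agree. I would fix them by checking against the Euclidean cone $ds^2+s^2g_{\Sph^{2}}$, where every curvature vanishes. There $\dot h=2h/s$ and $\ddot h=2h/s^2$, so the normal formula gives $-3/s^2+3/s^2=0$. The tangential formula gives $\tfrac{1}{s^2}h-\tfrac1{s^2}h+\tfrac{2}{s^2}h-\tfrac{3}{s^2}\cdot\tfrac{2}{s}\cdot\tfrac{s}{2}\cdot\tfrac1{s}h$, which reduces to $(1-1+2-3+1)h/s^2\cdot$ after using $\Ric_h=h/s^2$, i.e. it vanishes. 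The mixed formula is $0$ because $L$ is parallel with constant trace. These checks confirm the signs. Since the lemma is the standard one recorded in \cite[Lemma~3.1]{PriorAngularConstruction}, I could alternatively cite it after these checks.
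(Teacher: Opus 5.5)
Your proof is correct and is essentially the paper's own. The paper also obtains the three formulas from the Gauss equation, the contracted Codazzi equation, and the radial identity $\operatorname{Rm}_g(\partial_s,X,Y,\partial_s)=-\dot L(X,Y)+(Lh^{-1}L)(X,Y)$, read off from $\Gamma^s_{ij}=-L_{ij}$ and $\Gamma^k_{si}=(h^{-1}L)^k{}_i$; that identity is exactly your Riccati step $\dot S+S^2=-R(\cdot,\partial_s)\partial_s$, and the paper then traces and substitutes $L=\tfrac12\dot h$ just as you do.

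Two small remarks on the cone check, neither of which affects the proof. First, the arithmetic is garbled: for $\Sph^2$ slices it should read $-2/s^2+2/s^2$ for the normal formula and $(1-1+2-2)h/s^2$ for the tangential one. Second, the check cannot detect an overall sign error in the normal formula, or any sign error in the mixed formula, since those vanish on the cone regardless of sign. Nothing rests on it, however. With $\Ric(Y,Z)=\operatorname{tr}(W\mapsto R(W,Y)Z)$ your derivation already fixes every sign. In particular, $\sum_i\langle R(e_i,X)\partial_s,e_i\rangle$ is exactly $\Ric_g(X,\partial_s)$, not merely "up to convention."
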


\begin{proof}
We use the conventions
\[
 \begin{aligned}
 R(U,V)W
 &=\nabla_U\nabla_VW-\nabla_V\nabla_UW-\nabla_{[U,V]}W,\\
 \operatorname{Rm}(U,V,W,Z)&=g(R(U,V)W,Z),
 \end{aligned}
\]
and $(\operatorname{div}_hL)_i=\nabla^jL_{ji}$.  In product coordinates the
dot on the covariant tensor $L$ means differentiation under the product
identification, and the non-slice Christoffel symbols are
\[
 \Gamma^s_{ij}=-L_{ij},\qquad
 \Gamma^k_{si}=\Gamma^k_{is}=(h^{-1}L)^k{}_i.
\]
The Gauss--Codazzi equations give
\begin{align*}
 \operatorname{Rm}_g(\partial_s,X,Y,\partial_s)
   &=-\dot L(X,Y)+(Lh^{-1}L)(X,Y),\\
 \Ric_g(\partial_s,X)
   &=(\operatorname{div}_hL)(X)-d(\operatorname{tr}_hL)(X),\\
 \operatorname{Rm}_g(X,Y,Z,W)
   &=\operatorname{Rm}_h(X,Y,Z,W)-L(X,W)L(Y,Z)\\
   &\hspace{31mm}+L(X,Z)L(Y,W).
\end{align*}
Tracing and substituting $L=\tfrac12\dot h$ proves
\eqref{eq:gauss-normal}--\eqref{eq:gauss-tangent}.
\end{proof}

\begin{proposition}
\label{prop:relative-gluing}
Let $(M_i,g_i)$, $i=1,2$, be compact manifolds with
$\Ric_{g_i}>0$.  Suppose unions of boundary components
$N_i\subset\partial M_i$ are
identified by an isometry $\Phi:N_1\to N_2$.  If, after pullback by
$\Phi$, the sum of the second fundamental forms with respect to the two
outward normals is positive definite,
\begin{equation}\label{eq:relative-shape-hypothesis}
 \II_1+\Phi^*\II_2>0,
\end{equation}
then $M_1\cup_\Phi M_2$ carries a smooth metric of positive Ricci
curvature.  Given arbitrary collars of $N_i$, the new metric can be made
equal to $g_i$ outside those collars.  The collars may be chosen with
arbitrarily small normal width, the replacement can be made arbitrarily
$C^0$-close (and hence pointwise bilipschitz-close) to the continuous glued
metric, and any prescribed compact sets disjoint from the collars are left
unchanged.  The same statement
holds simultaneously for finitely many disjoint seams.
\end{proposition}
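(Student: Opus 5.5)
The plan is the standard Perelman interpolation, done in Gaussian collars with Lemma~\ref{lem:gaussian-ricci} as the only curvature input. Since $\Phi$ is an isometry, put $\gamma:=g_1|_{N_1}=\Phi^*(g_2|_{N_2})$ and write $\II_1$, $\II_2$ for the outward second fundamental forms, both viewed on $N:=N_1$ via $\Phi$. Using normal exponential maps, write each metric near its seam as $ds^2+h_i(s)$. On $M_1$ take $s\in[-a,0]$, with $s=0$ at $N_1$ and $\partial_s$ outward, so that $\tfrac12\dot h_1(0)=\II_1$. On $M_2$ take $s\in[0,a]$, with $s=0$ at $N_2$ and $\partial_s$ pointing inward, so that $\tfrac12\dot h_2(0)=-\II_2$. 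The glued metric $ds^2+h(s)$ on $[-a,a]\times N$ is then continuous, its zero jets agree, and $\dot h$ drops by $2(\II_1+\II_2)>0$ across $s=0$. Replacing $h$ by a smooth $\widetilde h$ that agrees with $h$ outside $[-\eta,\eta]$, with $\eta$ small, gives the new metric. Supports, collar widths and $C^0$-closeness are then automatic, so the entire content is keeping $\Ric>0$.

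For the smoothing I would mimic Lemma~\ref{lem:cap-smoothing}, that is, interpolate only the second jet. Take cutoffs $\vartheta_\eta$ and define $\widetilde h$ on $[-\eta,\eta]$ so that $\widetilde h=h+O(\eta^2)$ and $\dot{\widetilde h}$ moves monotonically, as a convex combination, from $\dot h_1(-\eta)$ to $\dot h_2(\eta)$. Then $\ddot{\widetilde h}=-\eta^{-1}\psi(s/\eta)\,2(\II_1+\II_2)+O(1)$ with $\psi\ge0$ and $\int\psi=1$. Matching the zero jet at the right end needs the same two-moment correction as before. Here I would add a bump of size $O(1)$ supported in the right half-interval, which is harmless because it stays bounded while the main term is of order $\eta^{-1}$.

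Next, substitute into \eqref{eq:gauss-normal}--\eqref{eq:gauss-tangent}. In \eqref{eq:gauss-normal} the term $-\tfrac12\operatorname{tr}_h\ddot{\widetilde h}$ equals $\eta^{-1}\psi\operatorname{tr}_\gamma(\II_1+\II_2)+O(1)$. Where $\psi$ is bounded below this dominates. Where $\psi$ is small, $\Ric$ is within $o(1)$ of a convex combination of the curvatures of the two nearby one-sided metrics. In \eqref{eq:gauss-tangent}, $-\tfrac12\ddot{\widetilde h}$ contributes $\eta^{-1}\psi(\II_1+\II_2)$, which is positive definite, while $\Ric_h$ and the quadratic terms in $\dot{\widetilde h}$ stay bounded.

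The main obstacle is \eqref{eq:gauss-mixed} together with the transitional region. The mixed term is only $O(1)$ because it involves tangential derivatives of $L$, so it is controlled by a Schur complement exactly as in \eqref{eq:neck-Schur}. The real difficulty is where $\psi$ is not yet large: there the quadratic terms in $\dot{\widetilde h}$ for an intermediate $L$ are not convex combinations of the endpoint quadratics. I would handle this by reparametrizing, taking $\vartheta$ so steep that the set where $\psi<\eta^{-1/2}$, on which $L$ has not yet begun to move appreciably, has measure $o(\eta)$. Elsewhere the term $\eta^{-1}\psi$ beats every $O(1)$ error. Alternatively, following Perelman, I would first apply a conformal change $e^{2f}$, with $f$ supported in the collars, to make both second fundamental forms small, and then interpolate. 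The final ingredient is compactness of $N$: it gives uniform constants, and a partition of the seams treats finitely many disjoint seams at once.
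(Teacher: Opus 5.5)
Your framework matches the paper's: Gaussian collars, the jump $G=\dot h_1(0)-\dot h_2(0)=2(\II_1+\II_2)>0$, a negative $\ddot h$ of size $\eta^{-1}$ inserted into Lemma~\ref{lem:gaussian-ricci}, and a Schur complement for the $O(1)$ mixed term. The gap is the step you yourself flag as the real difficulty, the tails of the bump $\psi$.

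\textbf{Why your fix fails.} Making $\{\psi<\eta^{-1/2}\}$ have measure $o(\eta)$ is irrelevant. $\Ric>0$ is a pointwise condition, and the tails of any bump are nonempty however steep $\vartheta$ is. Nor is $\Ric$ on the tails close to a convex combination of the two one-sided tensors.

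\textbf{What does hold there.} The right tool is a one-sided comparison. Put $\Theta(x)=\int_{-1}^x\psi$. On the initial tail one has $\widetilde h=h_1(s)+O(\eta)$, $\dot{\widetilde h}=\dot h_1(s)-\Theta(s/\eta)G$ and $\ddot{\widetilde h}=\ddot h_1(s)-\eta^{-1}\psi G$. So where $\Theta\le\epsilon$ you get $\Ric=\Ric_{g_1}+O(\epsilon+\eta)+\eta^{-1}\psi P$, with $P\ge0$ block diagonal, and the margin $\Ric_{g_1}\ge\mu g_1$ absorbs the error. Symmetrically, use $g_2$ where $\Theta\ge1-\epsilon$. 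Now take a fixed smooth $\psi$ positive on the interior of its support. Then $\{\epsilon\le\Theta\le1-\epsilon\}$ is a compact subset of that interior on which $\psi\ge c_\epsilon>0$, so $\eta^{-1}\psi$ dominates there. No $\eta$-dependent steepening is needed.

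\textbf{Constraint on the moment correction.} The argument above needs the bounded part of $\ddot{\widetilde h}$ on each tail to be exactly the one-sided $\ddot h_i$. Your $O(1)$ moment-correction bump in the ``right half-interval'' breaks this if it reaches the terminal tail, where an $O(1)$ change of $\ddot h$ can exceed the one-sided margin. It must be placed where $\psi\ge c>0$. It is also $O(1)$ only if $\int x\psi(x)\,dx=0$; otherwise the zero-jet mismatch is of order $\eta$ and the correction is of order $\eta^{-1}$.

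\textbf{The conformal alternative.} This does not rescue the step. A factor $e^{2f}$ with $f|_N=0$ changes each $\II_i$ only by a multiple of $\gamma$. It therefore cannot make forms with nonzero trace-free part small.

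\textbf{How the paper avoids the tails.} It interpolates on $[-\delta,\delta]$ with the cubic Hermite polynomial matching $h$ and $\dot h$ at both ends. The leading $\tau$-dependence of its second derivative cancels, so $\ddot h_\delta=-G/(2\delta)+O(1)$ uniformly and $\Ric\ge c/(2\delta)$ on the whole open collar. The cost is that the metric is only $C^1$ at $s=\pm\delta$. At those corners the first jets agree, and Ricci is affine in $\ddot h$ with continuous lower jets. So mollifying in $s$ gives, up to $o(1)$, an average of two Ricci-positive tensors. A cutoff then localizes the mollification with $O(\eta)$ error in $C^2$.

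Your convex-combination intuition is therefore correct exactly at a $C^1$ corner, which is where the paper uses it. In a smooth-bump scheme it must be replaced by the one-sided comparison above. With that replacement, and the bump placement just described, your route also works.
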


\begin{proof}[Proof of Proposition~\ref{prop:relative-gluing}]
Identify the boundaries, write $N$ for the resulting common seam, and use a
signed Fermi coordinate increasing from $M_1$ to $M_2$:
\[
 \begin{gathered}
 g=ds^2+h_-(s)\quad(s\leq0),\qquad
 g=ds^2+h_+(s)\quad(s\geq0),\\
 h_-(0)=h_+(0)=h_0.
 \end{gathered}
\]
The two outward normals are $+\partial_s$ and $-\partial_s$.  Therefore
\begin{equation}\label{eq:normal-derivative-gap}
 G:=\dot h_-(0)-\dot h_+(0)
   =2(\II_1+\Phi^*\II_2)>0.
\end{equation}
For small $\delta>0$, put
\[
 \tau=\frac{s+\delta}{2\delta}\in[0,1].
\]
The tensor-valued cubic Hermite polynomial that matches $h_-$ and
$\dot h_-$ at $s=-\delta$ and $h_+$ and $\dot h_+$ at
$s=\delta$ is
\[
 \begin{aligned}
 h_\delta(s)
 ={}&(2\tau^3-3\tau^2+1)h_-(-\delta)
   +2\delta(\tau^3-2\tau^2+\tau)\dot h_-(-\delta)\\
 &+(-2\tau^3+3\tau^2)h_+(\delta)
   +2\delta(\tau^3-\tau^2)\dot h_+(\delta).
 \end{aligned}
\]
Indeed, since $\partial_s=(2\delta)^{-1}\partial_\tau$, this formula
gives
\[
 \begin{gathered}
 h_\delta(-\delta)=h_-(-\delta),\qquad
 \dot h_\delta(-\delta)=\dot h_-(-\delta),\\
 h_\delta(\delta)=h_+(\delta),\qquad
 \dot h_\delta(\delta)=\dot h_+(\delta).
 \end{gathered}
\]
Differentiating twice yields
\[
 \begin{aligned}
 \ddot h_\delta(s)
 ={}&\frac{6(2\tau-1)}{4\delta^2}
       \bigl(h_-(-\delta)-h_+(\delta)\bigr)\\
 &+\frac1{2\delta}
   \bigl((6\tau-4)\dot h_-(-\delta)
          +(6\tau-2)\dot h_+(\delta)\bigr).
 \end{aligned}
\]
Taylor expansion of the endpoint data at the seam $s=0$ gives
\[
 \begin{aligned}
 h_-(-\delta)-h_+(\delta)
 &=-\delta\bigl(\dot h_-(0)+\dot h_+(0)\bigr)+O(\delta^2),\\
 \dot h_-(-\delta)&=\dot h_-(0)+O(\delta),\qquad
 \dot h_+(\delta)=\dot h_+(0)+O(\delta).
 \end{aligned}
\]
Writing $p=\dot h_-(0)$ and $q=\dot h_+(0)$, the terms of order
$\delta^{-1}$ combine as
\[
 -\frac{3(2\tau-1)}{2\delta}(p+q)
 +\frac{(6\tau-4)p+(6\tau-2)q}{2\delta}
 =\frac{q-p}{2\delta}=-\frac{G}{2\delta}.
\]
Thus all $\tau$-dependence cancels at leading order, uniformly for
$\tau\in[0,1]$.  The same explicit formula gives, uniformly in the
indicated spatial norms,
\begin{equation}\label{eq:hermite-estimates}
 \begin{aligned}
 h_\delta&=h_0+O(\delta)&&\text{in }C^2,\\
 \dot h_\delta&=O(1)&&\text{in }C^1,\\
 \ddot h_\delta&=-\frac{G}{2\delta}+O(1)&&\text{in }C^0.
 \end{aligned}
\end{equation}
Since $h_0$ is positive definite and the seam is compact, the first
estimate shows that $h_\delta$ is a Riemannian metric for all sufficiently
small $\delta$.
Insert \eqref{eq:hermite-estimates} in
\eqref{eq:gauss-normal}--\eqref{eq:gauss-tangent}.  On the open
interpolation collar,
\begin{align}
 \Ric(\partial_s,\partial_s)
   &=\frac1{4\delta}\operatorname{tr}_{h_\delta}G+O(1),\notag\\
 \left.\Ric\right|_{TN\times TN}
   &=\frac{G}{4\delta}+O(1),                         \label{eq:gluing-ricci}\\
 \Ric(\partial_s,\mathord\cdot)&=O(1).\notag
\end{align}
Compactness and $G>0$ give constants $c,C>0$ independent of small
$\delta$ such that
\[
 \Ric(\partial_s,\partial_s)\geq\frac c\delta-C,
 \qquad
 \left.\Ric\right|_{TN\times TN}
 \geq\left(\frac c\delta-C\right)h_\delta,
 \qquad
 \|\Ric(\partial_s,\mathord\cdot)\|_{h_\delta}\leq C.
\]
After decreasing $\delta$, the first two lower bounds are at least
$c/(2\delta)$.  In particular, the tangential restriction
$\left.\Ric\right|_{TN\times TN}$ is positive definite and
\[
 \left(\left.\Ric\right|_{TN\times TN}\right)^{-1}
 \leq\frac{2\delta}{c}h_\delta^{-1}.
\]
Its Schur complement in the splitting
$\mathbb R\partial_s\oplus TN$ is consequently bounded below by
\begin{equation}\label{eq:gluing-schur}
 \begin{aligned}
 &\Ric(\partial_s,\partial_s)
 -\left(\left.\Ric\right|_{TN\times TN}\right)^{-1}
   \left(
    \left.\Ric(\partial_s,\mathord\cdot)\right|_{TN},
    \left.\Ric(\partial_s,\mathord\cdot)\right|_{TN}
   \right)\\
 &\hspace{35mm}\geq
 \frac{c}{2\delta}-\frac{2C^2\delta}{c}>0.
 \end{aligned}
\end{equation}
Thus the $C^1$ Hermite metric has positive Ricci curvature wherever it
is smooth.

It remains to smooth the two joins at $s=\pm\delta$.  Translate one join
to $t=0$ and write the tangential metric as a $C^1$ tensor $k(t)$,
smooth on either side.  For an even nonnegative mollifier
$\varphi_\eta$ supported in $(-\eta,\eta)$, put
$k_\eta:=\varphi_\eta*k$ and let
$\ddot k_{\rm pw}$ denote the classical second derivative of $k$ on its
two smooth sides.  Since $k$ is $C^1$, convolution in $t$ gives
\[
 \ddot k_\eta(t)=\int\varphi_\eta(r)\ddot k_{\rm pw}(t-r)\,dr.
\]
Convolution acts only in the normal variable and commutes with all spatial
derivatives.  On a fixed neighborhood of the join,
$k_\eta\to k$ uniformly in spatial $C^2$ and
$\dot k_\eta\to\dot k$ uniformly in spatial $C^1$.  The Ricci expressions
in Lemma~\ref{lem:gaussian-ricci} are affine in $\ddot k$ and depend
smoothly on the other displayed spatial jets.  Hence, in tensor $C^0$
relative to a fixed background metric on the compact seam,
\begin{equation}\label{eq:gluing-mollified-ricci}
 \Ric_{dt^2+k_\eta(t)}
 =\int\varphi_\eta(r)\Ric_{dt^2+k(t-r)}\,dr+o_\eta(1).
\end{equation}
First fix $\delta$ so that the Ricci tensors on the Hermite and original
sides are bounded below by $\nu_\delta g$ near the join.  Then choose
$\eta$ so that the error in \eqref{eq:gluing-mollified-ricci} is less than
$\nu_\delta/2$.  The mollified metric is Ricci positive.

To localize, choose $\chi_\eta$ equal to one on $|t|\leq2\eta$, supported
in $|t|<\sqrt\eta$, and satisfying
$|\chi_\eta'|\leq C\eta^{-1/2}$ and
$|\chi_\eta''|\leq C\eta^{-1}$.  Define
\[
 \widetilde k_\eta:=k+\chi_\eta(k_\eta-k).
\]
On the transition annulus $2\eta\leq|t|\leq\sqrt\eta$, convolution
samples only one smooth side of $k$.  Evenness of the mollifier and Taylor
expansion therefore give, in spatial norms,
\[
 \|\partial_t^j(k_\eta-k)\|_{C^{2-j}(N)}=O(\eta^2),
 \qquad j=0,1,2.
\]
The product rule then gives
$\|\widetilde k_\eta-k\|_{C^2}=O(\eta)$ on this annulus.  Thus
$dt^2+\widetilde k_\eta$ equals the Ricci-positive mollified metric near
the join, equals the original metric outside $|t|<\sqrt\eta$, and remains
Ricci positive on the transition annulus.

\noindent First choose $\delta$ small
enough that the Hermite collar lies in the prescribed collars and has the
Ricci bounds above; then choose $\eta\ll\delta^2$ small enough for the
mollification and localization estimates.  This also proves the asserted
$C^0$ and support control.  Finitely many disjoint seams are treated
independently.
\end{proof}

\begin{lemma}
\label{lem:path-to-round}
Every smooth metric $h$ on $\mathbb S^2$ with positive Gaussian
curvature is joined to a round metric by a smooth path $(h_q)_{0\leq q\leq1}$
of positive-Gaussian-curvature metrics which is constant near both
endpoints.
\end{lemma}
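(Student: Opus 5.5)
The plan is to use uniformization together with a normalized Ricci flow, or more elementarily a conformal interpolation, and then reparametrize so the path is constant near its endpoints.

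\emph{Step 1: reduce to conformal metrics.} By uniformization, $h=e^{2u}h_0$ for a round metric $h_0$ of curvature $1$ and some smooth function $u$, after pulling back by a diffeomorphism. The Gaussian curvature is
\[
 K_{e^{2u}h_0}=e^{-2u}\bigl(1-\Delta_{h_0}u\bigr).
\]
The linear path $e^{2qu}h_0$ does not preserve positivity in general, since $1-q\Delta u$ need not stay positive. So I would not use the linear path and would instead run a flow.

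\emph{Step 2: flow to the round metric.} Apply Hamilton's normalized Ricci flow on $\mathbb S^2$ starting at $h$. For an initial metric with $K>0$, Hamilton and Chow show that the flow exists for all time, preserves positive curvature (by the maximum principle applied to the evolution $\partial_tK=\Delta K+K(K-r)$, which keeps $\min K>0$), and converges smoothly to a round metric $h_\infty$ as $t\to\infty$.

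The main obstacle is that convergence holds only as $t\to\infty$. I would handle this with a finite time $T$ and a short linear interpolation:
\begin{itemize}
\item For large $T$, the metric $h(T)$ is $C^2$-close to $h_\infty$.
\item Write $h(T)=e^{2v}\phi^*h_\infty$ with $v$ small in $C^2$; alternatively, work with the metrics $(1-q)h(T)+qh_\infty$.
\item Gaussian curvature depends continuously on the $2$-jet of the metric, so every metric on the straight-line segment from $h(T)$ to $h_\infty$ has $K>0$ once $T$ is large.
\end{itemize}
Concatenating the flow on $[0,T]$ with this segment gives a continuous, piecewise smooth path of positive-curvature metrics from $h$ to the round metric $h_\infty$.

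\emph{Step 3: smoothness and constancy near the endpoints.} Reparametrize the path by a smooth monotone map $\sigma:[0,1]\to[0,1]$ that is locally constant near $0$, near the concatenation point, and near $1$, with all derivatives of $\sigma$ vanishing at those points. Then $q\mapsto h_{\sigma(q)}$ is smooth and constant near both endpoints. Positivity of $K$ is unaffected, since the reparametrization only changes the speed along the path, not the set of metrics visited.
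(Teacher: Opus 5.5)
Your proof is correct, but it rests on a false remark. In Step 1 you discard the linear conformal path on the grounds that it can lose positivity. That is wrong, and the discarded path is exactly the paper's proof. Your own curvature formula gives $1-\Delta_{h_0}u=e^{2u}K_h>0$, hence
\[
 K_{e^{2qu}h_0}=e^{-2qu}\bigl(1-q\Delta_{h_0}u\bigr)
 =e^{-2qu}\bigl((1-q)K_{h_0}+q\,e^{2u}K_h\bigr)>0
 \qquad(0\le q\le1).
\]
So along $e^{2qu}h_0$ the curvature is a positive combination of the two endpoint curvatures.

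The paper uses Hamilton's flow only to produce a round metric $h_\infty$ in the conformal class of $h$, so that $h=e^{2u}h_\infty$. Uniformization would do equally well, since $\phi^*h_{\mathrm{round}}$ is itself round. It then takes $h_q=e^{2(1-\psi(q))u}h_\infty$, where $\psi$ is a cutoff reparametrization that is constant near $0$ and $1$. Positivity is then an exact identity, with no closeness estimate and no concatenation.

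Your route uses the flow itself as the path, and each step you rely on is valid:
\begin{enumerate}
\item[(a)] the maximum principle for $\partial_tK=\Delta K+K(K-r)$ keeps $\min K>0$ on every finite time interval, though the ODE comparison alone only gives a possibly decaying lower bound;
\item[(b)] the flow is smooth up to $t=0$ for smooth initial data;
\item[(c)] $C^2$ convergence makes the final straight segment from $h(T)$ to $h_\infty$ positively curved for large $T$;
\item[(d)] the reparametrization that is locally constant at the junction makes the concatenated path smooth.
\end{enumerate}
The argument therefore goes through, but it uses considerably more analytic input than the paper's one-line identity requires.
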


\begin{proof}
Run the area-normalized Ricci flow from $h$.  Hamilton's surface
theorem~\cite{HamiltonSurface} gives smooth convergence on the fixed sphere
to a constant-positive-curvature metric $h_\infty$.  The flow remains in
the conformal class of $h$, so $h=e^{2u}h_\infty$ for a smooth function
$u$.  For $0\leq s\leq1$, set
\[
 \widetilde h_s=e^{2su}h_\infty.
\]
The conformal curvature formula gives
\[
 K_{\widetilde h_s}
 =e^{-2su}\bigl((1-s)K_{h_\infty}+s e^{2u}K_h\bigr)>0.
\]
Thus $\widetilde h_s$ joins $h_\infty$ to $h$ through metrics of positive
Gaussian curvature.  Choose a smooth nondecreasing map
$\psi:[0,1]\to[0,1]$, constant near both endpoints, with
$\psi(0)=0$ and $\psi(1)=1$, and define
\[
 h_q:=e^{2(1-\psi(q))u}h_\infty.
\]
Then $h_0=h$, $h_1=h_\infty$, the path is constant near its endpoints,
and the preceding curvature formula shows that every $h_q$ has positive
Gaussian curvature.
\end{proof}

\begin{theorem}
\label{thm:path-cylinder}
Let $N^n$ be closed and let $(g_q)_{0\leq q\leq1}$ be a smooth path with
$\Ric_{g_q}>0$, constant near its endpoints.  Given
$\delta_{\rm in}>0$, there are constants $a,\sigma>0$ and
$\ell_0$ such that for every $\ell\geq\ell_0$ there is a
Ricci-positive cylinder $N\times[0,L_\ell]$, where $L_\ell$ is its radial
length.  Writing $A_{\rm in}$ and $A_{\rm out}$ for its two outward shape
operators, its input induced metric is $g_0$, and its input outward shape
operator satisfies $A_{\rm in}>-\delta_{\rm in}\operatorname{Id}$.  Its
output induced metric is $\Lambda_\ell^2g_1$, and its output outward shape
operator satisfies $A_{\rm out}>0$.  More precisely,
\begin{equation}\label{eq:pc-exact-data}
 \begin{aligned}
 \Lambda_\ell&=e^{\ell-\sigma},&
 L_\ell&=\frac{e^\ell-1}{a},\\
 A_{\rm in}&=-a\left(1-\frac{2\sigma}{\ell}\right)\operatorname{Id},&
 A_{\rm out}&=ae^{-\ell}
       \left(1-\frac{\sigma}{2\ell}\right)\operatorname{Id}.
 \end{aligned}
\end{equation}
In particular, $L_\ell\to\infty$ and the output remains strictly convex.
\end{theorem}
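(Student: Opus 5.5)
The plan is to follow the warped-product construction of \cite[Theorem~3.3]{PriorAngularConstruction}, which produced a ``path cylinder'' from a Ricci-positive path. The metric will be sought in the form
\[
 g=dr^2+\Phi(r)^2\,g_{q(r)},\qquad r\in[0,L_\ell],
\]
with a warping function $\Phi$ and a slow reparametrization $q(r)$. The pieces are chosen as follows. Because the path is constant near its endpoints, the Ricci bounds $\Ric_{g_q}\geq\mu g_q$ are uniform, and the $q$-derivatives $\partial_qg_q$ and $\partial_q^2g_q$ are uniformly bounded. We take $\Phi$ of logarithmic type, $\Phi=e^{\phi}$ with $\phi$ ranging from $\sigma$-scale data to $\ell-\sigma$. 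In the variable $x=\log(1+ar)$ one has $L_\ell=(e^\ell-1)/a$, and $\Phi$ is a reparametrized $e^{x}$ profile, so that the terminal scale is $\Lambda_\ell=e^{\ell-\sigma}$. The shape operator of a slice for the $+\partial_r$ normal is $(\Phi'/\Phi)\operatorname{Id}+\tfrac12q'\,g_q^{-1}\partial_qg_q$. We therefore arrange $q'=0$ near both ends, which makes both boundary shape operators exact multiples of the identity, and we tune $\Phi'/\Phi$ at the ends to give the displayed values. At the input, $A_{\mathrm{in}}=-(\Phi'/\Phi)(0)=-a(1-2\sigma/\ell)$. At the output, $A_{\mathrm{out}}=(\Phi'/\Phi)(L_\ell)=ae^{-\ell}(1-\sigma/(2\ell))>0$. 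Finally, choosing $a<\delta_{\mathrm{in}}$ gives $A_{\mathrm{in}}>-\delta_{\mathrm{in}}\operatorname{Id}$.

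Ricci positivity is checked with Lemma~\ref{lem:gaussian-ricci}, using $h(r)=\Phi^2g_{q(r)}$. The radial Ricci term is
\[
 -n\frac{\Phi''}{\Phi}-O\bigl(q'^2+|q''|+q'\Phi'/\Phi\bigr).
\]
In the $x$-variable, with $\Phi$ essentially $e^{x}$ corrected by a factor $(1-c/x)$-type, $-\Phi''/\Phi$ is positive of size $a^2e^{-2x}/\ell$ scale. So we choose $q$ to vary in $x$ on a long interval of $x$-length comparable to $\ell$; then $q'$ and $q''$ contribute $O(a^2e^{-2x}/\ell^2)$, which is dominated once $\ell\geq\ell_0$. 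The tangential Ricci term is
\[
 \Ric_{g_q}-\bigl((n-1)\Phi'^2-\ldots\bigr)\Phi^2g_q+O(\cdot).
\]
Since $\Phi'^2\approx a^2\Phi^2/(1+ar)^2$ and $\Phi\approx (1+ar)e^{-\sigma\ldots}$, the term $\Phi'^2$ is of size $a^2$ relative to the scale, and it is beaten by $\mu$ provided $a$ is small, namely $a^2(n-1)<\mu/2$. This is the role of the constant $a$. The mixed term $\Ric(\partial_r,X)=O(q')$ is handled by the same Schur-complement argument as \eqref{eq:neck-Schur}.

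The main obstacle is matching the exact endpoint formulas in \eqref{eq:pc-exact-data} while keeping $\Phi''<0$ with enough margin. One needs a concrete profile, for example
\[
 \log\Phi=x-\sigma\,\psi(x/\ell)
\]
with $\psi$ fixed, $\psi(0)$ and $\psi'(0)$ producing the factor $1-2\sigma/\ell$, and $\psi(1)=1$ with $\psi'(1)$ producing $1-\sigma/(2\ell)$. One then has to verify that the sign of $\Phi''$ in the $r$-variable is governed by $-a^2e^{-2x}(1+O(\sigma/\ell))$ plus the strictly concave $\psi''$ contribution. We would first try a quadratic $\psi$ and check these three constraints directly, and only then place the support of $q'$ in the middle third of the $x$-interval.
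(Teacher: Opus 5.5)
Your ansatz is in fact the paper's. With $t=e^{\ell}(1+ar)$ and $x=\log(1+ar)$, the paper's metric $(ae^\ell)^{-2}\bigl(dt^2+a^2t^2e^{-2\sigma q}g_q\bigr)$ becomes $dr^2+\Phi^2g_q$ with $\log\Phi=x-\sigma\psi(x/\ell)$ and $\psi(s)=2s/(1+s)$. The path parameter is the same function, $q=\psi(x/\ell)$.

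\textbf{The gap: the radial Ricci estimate.} Expanding \eqref{eq:gauss-normal} with $h=\Phi^2g_{q(r)}$ gives
\[
 \Ric(\partial_r,\partial_r)=-n\frac{\Phi''}{\Phi}
 -\frac12T(q)\Bigl(q''+2\frac{\Phi'}{\Phi}q'\Bigr)+O(q'^2),
 \qquad T(q)=\operatorname{tr}_{g_q}\partial_qg_q .
\]
You assert that all the $q$-terms are $O(a^2e^{-2x}/\ell^2)$, but only the $q'^2$ term is.
\begin{itemize}
\item For $q=Q(x/\ell)$ one has $q''=Q''x_r^2/\ell^2+Q'x_{rr}/\ell$, and $x_{rr}=-a^2e^{-2x}$ is not small.
\item Also $(\Phi'/\Phi)q'=Q'a^2e^{-2x}/\ell+O(\ell^{-2})$.
\item Hence $q''+2(\Phi'/\Phi)q'=Q'a^2e^{-2x}/\ell+O(\ell^{-2})$.
\end{itemize}
This is exactly the order of the good term $-n\Phi''/\Phi=n\sigma\psi'a^2e^{-2x}/\ell+O(\ell^{-2})$. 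Since $T$ has no sign, enlarging $\ell$ cannot help. You need $n\sigma\psi'>\tfrac12|T|Q'$ pointwise, so $\sigma$ must be chosen large compared with $\sup|\operatorname{tr}_{g_q}\partial_qg_q|$ before $\ell_0$ is fixed.

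Your argument never constrains $\sigma$ beyond $\ell>2\sigma$. For a path with $T$ large and positive and $\sigma$ small, the radial Ricci curvature is negative on the support of $q'$ for all large $\ell$. This is the missing idea.

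The paper builds it in by using the same $q$ in the warping, $f=ate^{-\sigma q}$. Then
\[
 \Ric(\partial_t,\partial_t)=\Bigl(n\sigma-\tfrac12T\Bigr)\Bigl(q''+\frac{2q'}{t}\Bigr)+O(q'^2),
 \qquad q''+\frac{2q'}{t}=\frac{q'}{t}\Bigl(1-\frac{2}{\log t}\Bigr)>0,
\]
and the paper imposes $n\sigma-\tfrac12\sup|T|\geq1$ in \eqref{eq:pc-parameter-choice}. Your decoupled choice, with the support of $q'$ in the middle third, can be repaired the same way, but the condition has to be imposed.

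\textbf{Two smaller corrections.}
\begin{itemize}
\item The endpoint data give four conditions, not three: $\psi(0)=0$ (so the input metric is $g_0$), $\psi'(0)=2$, $\psi(1)=1$, $\psi'(1)=\tfrac12$. No quadratic satisfies them; the first three force $\psi=2s-s^2$, which has $\psi'(1)=0$. Take a cubic or $2s/(1+s)$.
\item $e^x=1+ar$ is linear in $r$, so the unperturbed profile contributes nothing to $\Phi''$. The negativity of $\Phi''$ comes entirely from the $\sigma\psi'/\ell$ term, not from a term $-a^2e^{-2x}$. The $\psi''$ contribution enters only at order $\ell^{-2}$.
\end{itemize}
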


\begin{proof}
Choose $\mu,C>0$ so that
\begin{equation}\label{eq:pc-uniform-bounds}
 \Ric_{g_q}\geq\mu g_q,\qquad
 |\dot g_q|+|\nabla^{g_q}\dot g_q|+|\ddot g_q|\leq C.
\end{equation}
Here and below all norms in the second estimate are measured using
$g_q$.  Choose $a,\sigma>0$ with
\begin{equation}\label{eq:pc-parameter-choice}
 a<\delta_{\rm in},\qquad
 \frac{\mu}{a^2}>2n,\qquad
 n\sigma-\frac12\sup_{q,N}
       |\operatorname{tr}_{g_q}\dot g_q|\geq1.
\end{equation}
For $t\in[e^\ell,e^{2\ell}]$ set
\begin{equation}\label{eq:pc-ansatz}
 q_\ell(t)=2\left(1-\frac{\ell}{\log t}\right),\qquad
 f(t)=at\,e^{-\sigma q_\ell(t)},\qquad
 \bar g=dt^2+f(t)^2g_{q_\ell(t)}.
\end{equation}
Writing $q_\ell=q_\ell(t)$ below, one has
\begin{equation}\label{eq:pc-derivatives}
 q_\ell'=\frac{2\ell}{t(\log t)^2},\qquad
 q_\ell''=-\frac{q_\ell'}t
       \left(1+\frac2{\log t}\right),\qquad
 \frac{f'}f=\frac1t-\sigma q_\ell'.
\end{equation}
The slice shape endomorphism with respect to $+\partial_t$ is
\begin{equation}\label{eq:pc-slice-shape}
 \mathcal A_t=\left(\frac1t-\sigma q_\ell'\right)\operatorname{Id}
 +\frac{q_\ell'}2g_{q_\ell}^{-1}\dot g_{q_\ell}.
\end{equation}

Put $T(q)=\operatorname{tr}_{g_q}\dot g_q$.  Substitution in
Lemma~\ref{lem:gaussian-ricci}, using the first two identities in
\eqref{eq:pc-derivatives}, gives
\[
 \Ric_{\bar g}(\partial_t,\partial_t)
 =\left(n\sigma-\frac12T(q_\ell)\right)
   \left(q_\ell''+\frac{2q_\ell'}t\right)
   +O((q_\ell')^2),
\]
where
$q_\ell''+2q_\ell'/t=(q_\ell'/t)(1-2/\log t)$.  Thus the normal component
is
\begin{equation}\label{eq:pc-normal}
 \Ric_{\bar g}(\partial_t,\partial_t)
 =\left(n\sigma-\frac12\operatorname{tr}_{g_{q_\ell}}\dot g_{q_\ell}\right)
   \frac{q_\ell'}t\left(1-\frac2{\log t}\right)
   +O((q_\ell')^2).
\end{equation}
The error constant is independent of $\ell$.  On
$t\in[e^\ell,e^{2\ell}]$,
\[
 \frac1{2t^2\ell}\leq\frac{q_\ell'}t
 \leq\frac2{t^2\ell},
 \qquad
 1-\frac2{\log t}\geq\frac12
 \quad(\ell\geq4).
\]
The last condition in \eqref{eq:pc-parameter-choice} makes the first
factor in \eqref{eq:pc-normal} at least one.  Since
$(q_\ell')^2=O((t^2\ell^2)^{-1})$, the error can be absorbed for all
sufficiently large $\ell$, giving
\begin{equation}\label{eq:pc-normal-lower}
 \Ric_{\bar g}(\partial_t,\partial_t)
 \geq\frac1{8t^2\ell}.
\end{equation}

For the tangential block, raising one index with the slice metric gives
\begin{align*}
 (f^2g_{q_\ell})^{-1}\partial_t(f^2g_{q_\ell})
   &=\frac2t\operatorname{Id}+O((t\ell)^{-1}),\\
 (f^2g_{q_\ell})^{-1}\partial_t^2(f^2g_{q_\ell})
   &=\frac2{t^2}\operatorname{Id}+O((t^2\ell)^{-1}),\\
 \operatorname{tr}_{f^2g_{q_\ell}}\partial_t(f^2g_{q_\ell})
   &=\frac{2n}{t}+O((t\ell)^{-1}).
\end{align*}
Thus the three extrinsic terms in \eqref{eq:gauss-tangent} sum to
\[
 -\frac{n-1}{t^2}\operatorname{Id}+O((t^2\ell)^{-1}).
\]
Consequently, as an endomorphism obtained by raising an index with the
slice metric,
\[
 (f^2g_{q_\ell})^{-1}
 \left.\Ric_{\bar g}\right|_{TN\times TN}
 =f^{-2}g_{q_\ell}^{-1}\Ric_{g_{q_\ell}}
  -\frac{n-1}{t^2}\operatorname{Id}
  +O((t^2\ell)^{-1}).
\]
Since $f\leq at$, \eqref{eq:pc-uniform-bounds} and
\eqref{eq:pc-parameter-choice} imply, for all large $\ell$,
\begin{equation}\label{eq:pc-tangent-lower}
 \left.\Ric_{\bar g}\right|_{TN\times TN}
 \geq\frac n{t^2}f(t)^2g_{q_\ell(t)}.
\end{equation}
The mixed component is exact:
\begin{equation}\label{eq:pc-mixed-exact}
 \Ric_{\bar g}(\partial_t,\mathord\cdot)
 =\frac{q_\ell'}2\left(
   \operatorname{div}_{g_{q_\ell}}\dot g_{q_\ell}
   -d\operatorname{tr}_{g_{q_\ell}}\dot g_{q_\ell}\right),
\end{equation}
Since $0\leq q_\ell\leq1$ and
$q_\ell'\leq2/(t\ell)$, one has
\[
 ae^{-\sigma}t\leq f(t)\leq at.
\]
Taking the norm of \eqref{eq:pc-mixed-exact} with the slice metric therefore
gives
\begin{equation}\label{eq:pc-mixed-bound}
 |\Ric_{\bar g}(\partial_t,\mathord\cdot)|_{f^2g_{q_\ell}}
 \leq\frac{C}{t^2\ell}.
\end{equation}
The inverse of the tangential Ricci block is at most
$\frac{t^2}{n}(f^2g_{q_\ell})^{-1}$.  Its Schur complement therefore obeys
\begin{equation}\label{eq:pc-schur}
 \Ric_{tt}-\Ric_{TN}^{-1}(\Ric_{tN},\Ric_{tN})
 \geq\frac1{8t^2\ell}-\frac{C^2}{nt^2\ell^2}>0
\end{equation}
for every sufficiently large $\ell$.  Equations
\eqref{eq:pc-tangent-lower} and \eqref{eq:pc-schur} prove
$\Ric_{\bar g}>0$.

Finally set $\widehat g=(ae^\ell)^{-2}\bar g$.  At the input the outward
normal is $-\partial_t$, whereas at the output it is $+\partial_t$; under
a homothety $c^2\bar g$, shape operators are multiplied by $c^{-1}$.
Since the path is constant at its endpoints,
\eqref{eq:pc-derivatives} and \eqref{eq:pc-slice-shape} therefore give
\begin{align*}
 \widehat g_{\rm in}&=g_0,&
 A_{\rm in}&=-a\left(1-\frac{2\sigma}{\ell}\right)\operatorname{Id},\\
 \widehat g_{\rm out}&=e^{2(\ell-\sigma)}g_1,&
 A_{\rm out}&=ae^{-\ell}
   \left(1-\frac{\sigma}{2\ell}\right)\operatorname{Id}.
\end{align*}
The radial length after this homothety is
\[
 (ae^\ell)^{-1}(e^{2\ell}-e^\ell)=\frac{e^\ell-1}{a}.
\]
Enlarge $\ell_0$ so that $\ell\geq4$, $\ell>2\sigma$, and all the uniform
error estimates above have been absorbed whenever $\ell\geq\ell_0$.
Then the first shape operator is greater than
$-\delta_{\rm in}\operatorname{Id}$ and the second is positive.  This proves
\eqref{eq:pc-exact-data} and the theorem.
\end{proof}

\end{document}